\theoremstyle{plain}
\newtheorem{introthm}{Theorem}
\newtheorem{theorem}{Theorem}[section]
\newtheorem{proposition}[theorem]{Proposition}
\newtheorem{lemma}[theorem]{Lemma}
\newtheorem{corollary}[theorem]{Corollary}
\newtheorem*{corollary*}{Corollary}
\theoremstyle{definition}
\newtheorem{remark}[theorem]{Remark}
\newtheorem{example}[theorem]{Example}
\newtheorem{question}[theorem]{Question}
\numberwithin{equation}{section}
\numberwithin{figure}{section}
\newcommand{\BA}{\mathbb{A}}
\newcommand{\BC}{\mathbb{C}}
\newcommand{\BD}{\mathbb{D}}
\newcommand{\BQ}{\mathbb{Q}}
\newcommand{\BR}{\mathbb{R}}
\newcommand{\BT}{\mathbb{T}}
\newcommand{\BZ}{\mathbb{Z}}
\newcommand{\MD}{\mathcal{D}}
\newcommand{\ME}{\mathcal{E}}
\newcommand{\MF}{\mathcal{F}}
\newcommand{\ML}{\mathcal{L}}
\newcommand{\MQ}{\mathcal{Q}}
\newcommand{\MS}{\mathcal{S}}
\newcommand{\MT}{\mathcal{T}}
\newcommand{\MU}{\mathcal{U}}
\newcommand{\MV}{\mathcal{V}}
\newcommand{\MX}{\mathcal{X}}
\newcommand{\MY}{\mathcal{Y}}
\title{Packing stability and the subleading asymptotics of symplectic Weyl laws}
\author{Oliver Edtmair}
\begin{document}
\maketitle

\begin{abstract}
We prove that symplectic ball packing stability holds for every compact, connected symplectic $4$-manifold with smooth boundary. This follows from a stronger result: the full volume of any such manifold can be filled by a single symplectic ellipsoid. As an application, we obtain estimates---with sharp exponents---for the error terms in the symplectic Weyl laws for embedded contact homology capacities, periodic Floer homology spectral invariants, and link spectral invariants. We also construct an example of a star-shaped domain in $\BR^4$, arbitrarily $C^1$ close to the unit ball and with boundary of regularity just below $C^2$ and smooth away from a single point, for which packing stability fails. Our proofs reveal a close connection between symplectic packing stability in the presence of smooth boundary and the algebraic structure of Hamiltonian diffeomorphism groups, particularly Banyaga's simplicity results.
\end{abstract}

\tableofcontents

\section{Introduction}

\subsection{Ball packing stability}

Consider a connected symplectic manifold $(M^{2n},\omega)$ of finite volume, which may be open or have boundary. For $k \geq 1$, the \textit{$k$-th ball packing number of $M$}, denoted by $p_k(M)$, quantifies how much of the volume of $M$ can be covered by $k$ disjoint, symplectically embedded balls of equal size. More precisely, the numbers $p_k(M)$ are defined by
\begin{equation*}
p_k(M) \coloneqq \sup_a \frac{k \cdot \operatorname{vol}(B(a))}{\operatorname{vol}(M)} \in [0,1],
\end{equation*}
where the supremum is taken over all $a>0$ such that the disjoint union of $k$ copies of the closed $2n$-dimensional ball $B(a) = B^{2n}(a)$ of symplectic width $a$ admits a symplectic embedding into the interior of $M$. The ball packing numbers serve as a fundamental quantitative measure of symplectic rigidity. Since Gromov’s seminal work \cite{gro85}, which revealed the first non-trivial obstructions to symplectic embeddings, these invariants have attracted considerable attention. We refer to \cite{sch, hut11b, chls07} for surveys.

In \cite[Remark 1.5.G]{mp94}, McDuff and Polterovich observed that $\lim_{k\rightarrow \infty} p_k(M) = 1$ for every symplectic manifold of finite volume. This implies that symplectic ball packings become asymptotically flexible as the number of balls increases. In \cite{bir97, bir99}, Biran proved a much stronger and deeper flexibility result for closed, connected symplectic $4$-manifolds $(M^4,\omega)$ which are rational, meaning that the symplectic form $\omega$ represents a rational cohomology class $[\omega] \in H^2(M;\BQ)$. He showed that there exists a finite threshold $k_0$, depending on $M$, such that $p_k(M) = 1$ for all $k\geq k_0$. This phenomenon is referred to as \textit{symplectic ball packing stability}.

A natural problem is to determine which connected symplectic manifolds of finite volume, beyond closed, rational symplectic $4$-manifolds, exhibit packing stability. This question is explicitly raised in the survey article \cite[\S 3.2, Problem 4]{chls07} by Cieliebak, Latschev, Hofer, and Schlenk, as well as in \cite[Conjecture 13.2]{sch} by Schlenk. Several results have been established in this direction. It was proved by Buse and Hind \cite{bh11, bh13} that packing stability holds for closed, rational symplectic manifolds of arbitrary dimension. In dimension four, Buse, Hind, and Opshtein \cite{bho16} eliminated the rationality assumption, showing that packing stability holds for all closed, connected symplectic $4$-manifolds. In higher dimensions, however, the general case remains open. Moreover, packing stability has been verified for certain open manifolds with highly symmetric boundary, including balls and ellipsoids \cite{bh13}, $4$-dimensional polydisks and pseudoballs \cite{bho16}, and $4$-dimensional rational convex toric domains \cite{chmp25}.

In contrast to the positive results described above, Cristofaro-Gardiner and Hind \cite{ch} recently identified the first examples of open symplectic manifolds of finite volume for which packing stability fails. They constructed open domains $U \subset \BR^4$, diffeomorphic to the open $4$-ball, such that $p_k(U) < 1$ for all $k\geq 1$. The failure of packing stability in these examples is connected to the domains' ``wild'' boundary behaviour; see Subsection \ref{subsec:limits_packing_stability} below for further discussion. This leaves a significant gap between these counterexamples and the known cases of packing stability, i.e. closed manifolds and special toric domains. It is an intriguing problem to more precisely identify and understand the transition between packing stability and its failure.

An important question is whether packing stability holds for compact symplectic manifolds with smooth boundary; see \cite[Question 7.1]{ch}. The known cases involving toric boundary provide limited insight into what one might expect in the general setting. For example, the symplectic $4$-ball $B^4(1)$ is symplectomorphic to the complement of a line in $\BC P^2$, equipped with a suitably scaled Fubini--Study symplectic form. More generally, any $4$-dimensional convex toric domain can be approximated by toric domains which are symplectomorphic to the complements of nodal divisors in closed symplectic $4$-manifolds; see \cite{cri19}. In other words, the known instances of packing stability in the presence of boundary essentially reduce to the case of nodal divisor complements and thus do not substantially extend beyond the closed case. In particular, no examples are known of packing stability for symplectic manifolds whose boundary carries a characteristic foliation with chaotic behaviour. In fact, general symplectic embedding results for manifolds with dynamically rich boundaries are rare; see \cite{edt24} for an exception.

A major motivation for studying ball packings in symplectic manifolds with general smooth boundary arises from a close connection to the asymptotic behaviour of certain sequences of symplectic spectral invariants; see Subsection \ref{subsec:subleading_asymptotics} below. These invariants encode important dynamical information, making manifolds with boundary---where the dynamics occurs---central to our investigation. Our first main result establishes that packing stability holds for all compact, connected symplectic $4$-manifolds with smooth boundary.

\begin{introthm}[Ball packing stability]
\label{thm:ball_packing_stability}
Let $(M^4,\omega)$ be a compact, connected symplectic $4$-manifold, possibly with smooth boundary. Then symplectic ball packing stability holds for $M$. This means that there exists a positive integer $k_0$ such that $p_k(M) = 1$ for all $k \geq k_0$.
\end{introthm}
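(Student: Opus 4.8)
The plan is to deduce Theorem~\ref{thm:ball_packing_stability} from the stronger assertion announced in the abstract: the interior of $M$ admits a symplectic embedding of an \emph{open} ellipsoid $E$ of full volume, $\operatorname{vol}(E)=\operatorname{vol}(M)$. Granting this, the theorem follows formally. By packing stability for ellipsoids \cite{bh13} there is a $k_0$, depending only on the shape of $E$ (packing numbers being scale invariant), with $p_k(E)=1$ for all $k\ge k_0$. For $0<\lambda<1$ let $E_\lambda$ be the closed ellipsoid of the same shape as $E$ with $\operatorname{vol}(E_\lambda)=\lambda\operatorname{vol}(E)$; then $E_\lambda\subset E$, so $E_\lambda$ embeds symplectically into $\operatorname{int}(M)$. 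Packing $E_\lambda$ by $k\ge k_0$ equal balls of total volume $>(1-\varepsilon)\operatorname{vol}(E_\lambda)$ and composing with this embedding gives $p_k(M)>(1-\varepsilon)\lambda$; letting $\varepsilon\to0$ and $\lambda\to1$ yields $p_k(M)=1$ for all $k\ge k_0$. The entire content is therefore the construction of the full-volume ellipsoid filling.

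To construct it I would combine an iterative enlargement with a limiting argument. Starting from a Darboux ball $B_0\hookrightarrow\operatorname{int}(M)$, build an increasing sequence of symplectic embeddings $\phi_n\colon U_n\hookrightarrow\operatorname{int}(M)$, where $U_1=B_0\subset U_2\subset\cdots$ is a fixed exhaustion of the target open ellipsoid $E=\bigcup_n U_n$ by compact domains, each obtained from $B_0$ by attaching finitely many thin tubes (hence still diffeomorphic to a ball), where each $\phi_{n+1}$ restricts to $\phi_n$ on $U_n$, and where the leftover volumes $\delta_n:=\operatorname{vol}(M)-\operatorname{vol}(U_n)$ tend to $0$. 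Since each increment $U_{n+1}\setminus U_n$ is a long \emph{thin tube}, extending $\phi_n$ to $\phi_{n+1}$ amounts to routing a thin symplectic tube, attached along a prescribed disk in $\partial\phi_n(U_n)$, inside the complement $R_n:=\operatorname{int}(M)\setminus\phi_n(U_n)$. As $\phi_n(U_n)$ is a smoothly embedded closed ball in the connected manifold $M$, the set $R_n$ is open and connected with $\operatorname{vol}(R_n)=\delta_n$; one routes the tube through $R_n$ so as to recover all but an arbitrarily small fraction of its volume, yielding $\delta_{n+1}\le\delta_n/2$. Compatibility of the $\phi_n$ then lets them glue to a symplectic embedding $E\hookrightarrow\operatorname{int}(M)$, whose image has full measure because $\delta_n\to0$.

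The crux is the routing step: given $\phi_n$ with leftover volume $\delta$, extend it across a thin tube absorbing an arbitrarily large fraction of $\delta$. This rests on two inputs. The first is a flexibility statement---that a connected open subset of a symplectic $4$-manifold of prescribed volume can be almost exhausted, in volume, by a single long thin embedded symplectic tube emanating from a prescribed boundary piece, compatibly with a prescribed embedding of its interior---whose proof uses fragmentation and perfectness of compactly supported Hamiltonian diffeomorphism groups, and ultimately Banyaga's simplicity results: one needs a rich supply of such diffeomorphisms to progressively ``straighten'' $R_n$ and thread the tube through it. The second, and this is where the smoothness of $\partial M$ is indispensable, is that no definite fraction of the leftover volume remains trapped near the boundary beyond the reach of any such tube; a neighbourhood of a smooth boundary is a tame symplectic collar, so the leftover volume there is organised along the characteristic foliation in a controlled way and can be reached. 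By contrast, for the ``wild'' boundaries of Cristofaro-Gardiner and Hind \cite{ch} precisely this trapping occurs, which is why full-volume ellipsoid fillings---and hence packing stability---fail there, even though $p_k\to1$ always holds \cite{mp94}.

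I expect this routing and volume-absorption step to be the main obstacle, both because it must be engineered to be compatible from one stage to the next so that the embeddings genuinely glue, and because quantifying how much volume a thin tube can recover from a possibly intricate connected region, uniformly enough to force $\delta_n\to0$, is delicate; it is here that the interplay between symplectic rigidity, the algebra of Hamiltonian diffeomorphism groups, and the tameness afforded by a smooth boundary is genuinely used. The remaining points---the reduction above, the connectedness of $R_n$, and the gluing of the compatible embeddings of the $U_n$---are comparatively routine. An alternative route would be to reduce directly to the closed case \cite{bho16}, realising $M$ as a closed symplectic $4$-manifold with an open neighbourhood of a low-dimensional complex deleted and then displacing the known full-volume ellipsoid of the closed manifold off that neighbourhood; the difficulty there is that such neighbourhoods may carry nontrivial symplectic topology, so that the displacement again requires the flexibility supplied by Banyaga's theorem.
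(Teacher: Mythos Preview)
Your reduction in the first paragraph is correct and matches the paper's logical structure: Theorem~\ref{thm:ball_packing_stability} is deduced from ellipsoid embedding stability (Theorem~\ref{thm:ellipsoid_embedding_stability}), together with the known ball packing stability for ellipsoids. Note, however, that the paper does \emph{not} actually construct a full-volume open ellipsoid embedding; it proves $p_a^E(M)=1$ for large $a$, i.e.\ closed ellipsoids of a fixed aspect ratio fill the volume up to arbitrarily small error. (The paper explicitly leaves the open-ellipsoid question for future work in the Remark following Theorem~\ref{thm:ellipsoid_embedding_stability}.) Your reduction still works with this weaker input, since $k_0$ depends only on the aspect ratio.

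The construction you propose, however, has a genuine gap: the ``routing step'' is asserted but not proved, and as stated it is essentially a restatement of the theorem. You invoke a ``flexibility statement'' that a connected open region can be almost volume-exhausted by a single thin tube emanating from a prescribed disk, compatibly with a previously fixed embedding, and say this follows from Banyaga's simplicity results---but you give no mechanism by which simplicity or perfectness of $\operatorname{Ham}_c$ yields such an embedding. As the complements $R_n$ become thin collars of $\partial M$ with increasingly intricate characteristic dynamics, routing a tube that captures a \emph{definite fraction} of the volume, while extending the fixed embedding $\phi_n$, is precisely the hard quantitative content, and nothing you have written addresses it. Your setup is also not quite coherent: you fix the target ellipsoid $E$ in advance and exhaust it by domains $U_n$ ``obtained from $B_0$ by attaching thin tubes,'' but for the increments $U_{n+1}\setminus U_n$ to be thin tubes the aspect ratio of $E$ would have to be chosen after one knows how much routing is needed.

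The paper's route is entirely different and not iterative. It decomposes $M$ via a Morse function into slabs $[0,1]\times Y$, stratifies each slab into explicit pieces (perturbed symplectic frusta and cuboids) using the characteristic foliation, and reduces to showing that subgraphs of Hamiltonians on the annulus that are $C^\infty$-small perturbations of affine Hamiltonians admit controlled packings by balls and polydisks. Banyaga's results enter in a specific quantitative form: a $C^\infty$-small Hamiltonian diffeomorphism of the annulus is written as a product of a \emph{bounded} number of conjugates of a fixed rotation (Theorem~\ref{thm:rot_decomp}), which translates into a decomposition of the subgraph into finitely many balls and polydisks. These are then connected into a single ellipsoid via ``symplectic ribbon complexes.'' The crucial point---which your sketch does not address---is the quantitative control on the number of rotation factors, without which the pieces in the decomposition could degenerate. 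This is exactly why the paper needs a quantitative refinement of Banyaga's perfectness theorem rather than the qualitative statement.
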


We note that in Theorem \ref{thm:ball_packing_stability} we only assume that the boundary is smooth and in particular do not impose any contact type assumptions.

\subsection{Ellipsoid embedding stability}

Recall that the \textit{symplectic ellipsoid} $E(a_1,\dots, a_n)$ with widths $a_i>0$ is defined by
\begin{equation*}
E(a_1,\dots,a_n) \coloneqq \left\{ z\in \BC^n \enspace\Big|\enspace \sum_j\frac{\pi|z_j|^2}{a_j} \leq 1 \right\},
\end{equation*}
where $\BR^{2n} \cong \BC^n$ is equipped with the standard symplectic form $\omega_0 = \sum_i dx_i\wedge dy_i$. Let $(M^{2n},\omega)$ be a connected symplectic manifold of finite volume. For every real number $a\geq 1$, the \textit{ellipsoid embedding number} $p_a^E(M)$ quantifies how much of the volume of $M$ can be filled by a single symplectic ellipsoid obtained from $E(1,\dots,1,a)$ by scaling. It is defined by
\begin{equation*}
p_a^E(M) \coloneqq \sup_\lambda \frac{\operatorname{vol}(E(\lambda,\dots,\lambda,\lambda a))}{\operatorname{vol}(M)}\in [0,1],
\end{equation*}
where the supremum is taken over all positive numbers $\lambda>0$ such that the ellipsoid $E(\lambda,\dots,\lambda,\lambda a)$ admits a symplectic embedding into the interior of $M$; see \cite[\S 1.3.2]{sch05b}. Since the ellipsoid $E(a,\dots,a)$ is simply the ball $B(a)$, we have $p_1^E(M) = p_1(M)$. It is well known that, for every positive integer $k\geq 1$, the ellipsoid $E(1,\dots,1,k)$ can be fully packed by $k$ balls of equal size. This implies that $p_k^E(M) \leq p_k(M)$ for all $k\geq 1$.

Schlenk proved in \cite[\S 1.3.2, Theorem 3]{sch05b} that, for every connected symplectic manifold $M$ of finite volume, we have $\lim_{a\rightarrow \infty} p_a^E(M) = 1$. In other words, symplectic embeddings of ellipsoids into $M$ become asymptotically flexible as the ellipsoids become increasingly skinny. We say that \textit{ellipsoid embedding stability} holds for $M$ if there exists a finite threshold $a_0$ such that $p_a^E(M) = 1$ for all $a\geq a_0$.

Since ball packing stability holds for every ellipsoid, the condition $p_a^E(M) = 1$ for even a single value of $a$ implies ball packing stability. In particular, ellipsoid embedding stability implies ball packing stability. Moreover, it was shown in \cite[Theorem 1.1]{bh13} that ellipsoid embedding stability holds for every ellipsoid, so knowing that $p_a^E(M) = 1$ for one value of $a$ also implies ellipsoid embedding stability.

It follows from \cite{bir01} and \cite{ops13}---see also \cite[Theorem 1.3]{bh13}---that every closed, connected, rational symplectic manifold $M$ can be fully filled by one single ellipsoid; that is $p_a^E(M) = 1$ for some $a\geq 1$. This means that ellipsoid embedding stability holds for such manifolds. While ball packing stability is known to hold for all closed symplectic $4$-manifolds \cite{bho16}, the corresponding statement for ellipsoid embedding stability is open; see \cite[Conjecture 1]{bho16} and \cite[Conjecture 13.8]{sch}.

We prove that our ball packing stability result (Theorem \ref{thm:ball_packing_stability}) can, in fact, be strengthened to ellipsoid embedding stability. Although this result is new even in the closed case, the primary case of interest concerns compact symplectic manifolds whose boundary exhibits rich dynamical behaviour.

\begin{introthm}[Ellipsoid embedding stability]
\label{thm:ellipsoid_embedding_stability}
Let $(M^4,\omega)$ be a compact, connected symplectic $4$-manifold, possibly with smooth boundary. Then symplectic ellipsoid embedding stability holds for $M$. This means that there exists a number $a_0$ such that $p_a^E(M) = 1$ for all $a \geq a_0$.
\end{introthm}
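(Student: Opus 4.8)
By the remarks preceding the statement, together with \cite[Theorem 1.1]{bh13} (ellipsoid embedding stability for every ellipsoid), it suffices to produce a single number $a_0$ with $p_{a_0}^E(M)=1$; it is convenient to look among positive integers $a_0=k$. As recalled above, $E(1,k)$ can be fully packed by $k$ equal balls, and in fact its weight expansion is $(1,\dots,1)$ ($k$ entries), so $E(1,k)$ decomposes into $k$ copies of $B(1)$ in McDuff's sense. Hence, if one had the hard direction of McDuff's ball-to-ellipsoid correspondence for the target $M$ --- that ``$\bigsqcup_{i=1}^{k}\operatorname{int}B(\lambda)\hookrightarrow\operatorname{int}M$'' implies ``$\operatorname{int}E(\lambda,k\lambda)\hookrightarrow\operatorname{int}M$'' --- then Theorem~\ref{thm:ball_packing_stability} would close the argument at once: a nearly full packing of $M$ by $k$ equal balls $B(\lambda)$ would reassemble into an ellipsoid $E(\lambda',k\lambda')$ with $\lambda'<\lambda$ and volume arbitrarily close to $\operatorname{vol}(M)$, giving $p_k^E(M)=1$. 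The snag is that McDuff's correspondence is classically known only for closed symplectic $4$-manifolds (and certain toric targets), whereas here $M$ is an arbitrary compact symplectic $4$-manifold with smooth --- and possibly dynamically wild --- boundary, which need not embed into any closed symplectic manifold; moreover $p_k^E(M)=1$ forces the $k$ balls to spread over nearly all of $M$, so the correspondence cannot be localised to a Darboux chart.

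The plan is therefore to build the filling ellipsoid directly inside $M$. For $N$ large, $E(\lambda,N\lambda)$ is symplectically very soft: following Schlenk \cite{sch05b}, a skinny ellipsoid can be embedded into a long, thin tubular region around an embedded arc. First I would fix a fine handle decomposition --- or a fine ``grid'' --- of $M$, then thread such a thin ``noodle'' through $M$ cell by cell, using an explicit tube-like embedding on each cell, so that its image covers all but $\varepsilon$ of the volume of $M$. Gluing the local pieces into one globally embedded noodle requires sliding them into compatible position without creating self-intersections, and this is where the algebraic structure of the Hamiltonian diffeomorphism group of $M$ enters: the fragmentation and simplicity phenomena for $\operatorname{Ham}$ studied by Banyaga, which express Hamiltonian isotopies through ones supported in Darboux balls, should supply the flexibility needed to realise these global rearrangements irrespective of the topology of $M$ or the behaviour of the characteristic foliation on $\partial M$. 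Letting $\varepsilon\to0$ gives $p_N^E(M)=1$ for this $N$, and then \cite[Theorem 1.1]{bh13} applied to $E(1,N)$, composed with the embedding just built, upgrades this to $p_a^E(M)=1$ for all $a\geq a_0$.

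The hard part, I expect, is the routing step, and in particular making it run with a single finite skinniness parameter $N=N(M)$: one must fix $N$ in terms of the combinatorial complexity of the chosen decomposition of $M$ alone, and then keep the noodle embedded as it is threaded past every handle, with the most delicate control needed near $\partial M$, where --- no contact-type or toric hypothesis being available --- the characteristic foliation may be chaotic. This uniformity is exactly what separates ellipsoid \emph{embedding stability}, with its finite threshold, from Schlenk's asymptotic statement $\lim_{a\to\infty}p_a^E(M)=1$, and it is where the structural input on $\operatorname{Ham}(M)$ --- Banyaga's simplicity results in particular --- is indispensable.
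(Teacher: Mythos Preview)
Your proposal is not a proof but a strategy outline; you correctly identify the broad architecture---build the ellipsoid directly via some decomposition of $M$, invoke Banyaga-type input to control the pieces, and fight for a finite skinniness parameter---and this is indeed the paper's route. But every substantive step is left as ``should work'' or ``I expect,'' and the places where you wave your hands are precisely where the paper expends its technical effort.

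Concretely, two gaps stand out. First, your invocation of Banyaga is too vague: you say fragmentation and simplicity ``should supply the flexibility needed'' for routing, but you give no mechanism. The paper's use is highly specific and not obviously what you have in mind: Banyaga's perfectness (in a quantitative form, Theorem~\ref{thm:quantitative_perfectness}) is used to factor Hamiltonian diffeomorphisms of the \emph{annulus} into a bounded number of conjugates of a fixed rotation (Theorem~\ref{thm:rot_decomp}); this factorization is then used to stratify the subgraph of a Hamiltonian on the annulus into explicit pieces symplectomorphic to rational ellipsoids and polydisks (Theorem~\ref{thm:stratification_perturbed_affine_subgraph}). This is how the boundary dynamics is tamed---not by moving a noodle around via $\operatorname{Ham}(M)$, but by reducing to a $2$-dimensional problem where the factorization has geometric meaning. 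Second, your ``noodle'' picture is not how the ellipsoid is produced. The paper introduces an intermediate object, the \emph{symplectic ribbon complex} (Section~\ref{sec:symplectic_ribbons}): a graph of balls and polydisks joined by thin ribbons. One first shows that $M$ admits a \emph{tame packing} by such a complex (Sections~\ref{sec:tame_packings}--\ref{sec:proof_stratification_lemma}, via a Morse decomposition of $M$ into products $[0,1]\times Y$ and a further stratification of each slice into ``symplectic frusta''), and then proves separately (Theorem~\ref{thm:ellipsoid_packing_of_ribbon_complex}) that any connected ribbon complex of balls and polydisks satisfies ellipsoid embedding stability, by filling each cell with a skinny ellipsoid and tunnelling between them. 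The finite-$N$ problem you flag is solved by the quantitative control in Theorem~\ref{thm:quantitative_perfectness}: the number of rotation factors, and hence the parameters of the pieces in the stratification, are bounded uniformly in a $C^\infty$ neighbourhood of the identity, which translates into uniform bounds on the ribbon complex's width (Corollary~\ref{cor:tame_packing_perturbed_affine_subgraph} and the uniformity clauses in Theorem~\ref{thm:stratification_perturbed_affine_subgraph}).
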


\begin{remark}
Let $(M^4,\omega)$ be a compact, connected symplectic $4$-manifold with smooth boundary. Let $a_0$ denote the threshold from Theorem~\ref{thm:ellipsoid_embedding_stability}. Fix $a>a_0$ and consider the symplectic ellipsoid $E = E(r,r a)$ where the parameter $r>0$ is chosen so that $\operatorname{vol}(M) = \operatorname{vol}(E)$. This means that the closed symplectic ellipsoid $\lambda E$ symplectically embeds into the interior of $M$ for all $\lambda \in (0,1)$. This naturally raises the question of whether the interior of $E$ itself admits a symplectic embedding into the interior of $M$. In light of \cite[Theorem~4.4]{pv15}, this would follow if one could arrange the symplectic embeddings of $\lambda E$ into the interior of $M$ to depend smoothly on $\lambda$ as $\lambda$ approaches $1$. It is conceivable that, with additional care, such a smooth dependence can be achieved using the symplectic embedding constructions developed in this paper. However, we will not pursue this direction further here and leave it for future work.
\end{remark}

In the literature, the ellipsoid embedding numbers $p_a^E(M)$ of a symplectic $4$-manifold $(M,\omega)$ are often encoded in terms of the \textit{ellipsoid embedding function} $c_M:[0,\infty)\rightarrow \BR$; see, for example, \cite{chmp25}. This function is defined by
\begin{equation*}
c_M(a) \coloneqq \inf\left\{ r \mid E(1,a) \overset{s}{\hookrightarrow} (M,r\omega) \right\},
\end{equation*}
where $X\overset{s}{\hookrightarrow}Y$ denotes the existence of a symplectic embedding from $X$ into $Y$. The function $c_M(a)$ encodes the same information as the ellipsoid embedding numbers $p_a^E(M)$.

The ellipsoid embedding function $c_M$ is notoriously difficult to compute. In a landmark paper, McDuff and Schlenk \cite{ms12} succeeded in computing the ellipsoid embedding function $c_{B(1)}$ of the $4$-dimensional ball---arguably the simplest nontrivial target manifold. Their result is an intricate yet explicit function featuring the so-called infinite Fibonacci staircase. A further key feature of $c_{B(1)}$ is that for sufficiently large $a$, the function stabilizes to the volume curve:
\begin{equation*}
c_{B(1)}(a) = \sqrt{\frac{a}{2\operatorname{vol}(B(1))}} = \sqrt{a}.
\end{equation*}
This stabilization is equivalent to the assertion that ellipsoid embedding stability holds for $B(1)$.

For a general compact, connected symplectic $4$-manifold $M$ with smooth boundary, fully computing the function $c_M$ appears entirely out of reach. Nevertheless, Theorem \ref{thm:ellipsoid_embedding_stability} shows that $c_M$ always stabilizes to the volume curve.

\subsection{Limits of packing stability}
\label{subsec:limits_packing_stability}

As mentioned above, Cristofaro-Gardiner and Hind \cite{ch} constructed the first examples of symplectic manifolds of finite volume that do not exhibit ball packing stability. For a discussion of the failure of packing stability in the context of packings by certain generalized convex toric domains, rather than by balls, we refer the reader to \cite{cmm}.

The examples in \cite{ch} are unbounded concave toric domains of finite volume. More precisely, let $f: [0,\infty) \rightarrow (0,1]$ be a convex function such that $\int_0^\infty f(x) dx < \infty$. Define $\Omega\subset \BR^2$ to be the region in the first quadrant lying below the graph of $f$, and set
\begin{equation}
\label{eq:unbounded_toric_domain}
X_f \coloneqq \left\{ (z_1,z_2)\in \BC^2 \mid \pi(|z_1|^2,|z_2|^2) \in \Omega \right\} \subset \BC^2.
\end{equation}
This domain has finite volume, and it is shown in \cite{ch} that packing stability fails for $X_f$.

Moreover, for suitable choices of $f$, \cite{ch} proves that one can symplectically fold the infinite ``tail'' of $X_f$ into a bounded region of $\BC^2$, thereby producing an open, bounded domain $U$ that is diffeomorphic to the open ball and symplectomorphic to the interior of $X_f$. Packing stability must then also fail for $U$. However, the folding construction yields ``wild'' boundary behaviour: in particular $\partial U$ is not a topological submanifold of $\BC^2$. To more precisely locate the transition between packing stability and its failure, it is natural to seek counterexamples with more regular boundary. This is the content of the following result.

\begin{introthm}[$C^{2-\varepsilon}$ failure of packing stability]
\label{thm:failure_packing_stability}
Arbitrarily $C^1$ close to the unit ball in $\BR^4$, there exists a star-shaped domain $X\subset \BR^4$ with the following properties:
\begin{enumerate}
\item The boundary of $X$ is in the regularity class $\bigcap_{\alpha\in (0,1)}C^{1,\alpha}$, but not in $C^{1,1}$. Here $C^{1,\alpha}$ denotes the H\"{o}lder space of differentiable functions with $C^{0,\alpha}$ H\"{o}lder continuous derivative.
\item The boundary of $X$ is smooth on the complement of a single point.
\item Ball packing stability and ellipsoid embedding stability fail for $X$. In fact, we have the following more general assertion: Let $W\subset \BR^4$ be an arbitrary compact domain with smooth boundary, possibly disconnected. Consider the number
\begin{equation*}
p^W(X) \coloneqq\sup_r \frac{\operatorname{vol}(rW)}{\operatorname{vol}(X)} \in [0,1],
\end{equation*}
where the supremum is taken over all $r>0$ such that $rW$ admits a symplectic embedding into $X$. Then $p^W(X)$ is strictly less than $1$.
\end{enumerate}
\end{introthm}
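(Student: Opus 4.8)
The plan is to start from the unbounded concave toric domains $X_f$ of Cristofaro-Gardiner and Hind and to replace the crude symplectic folding of \cite{ch} by a carefully controlled embedding, engineered so that the resulting bounded domain is star-shaped with boundary of exactly the prescribed regularity. Concretely, I would fix a convex, nonincreasing function $f\colon[0,\infty)\to(0,1]$ with $\int_0^\infty f(x)\,dx<\infty$ subject to three requirements: (a) $f(x)=1-x$ for $x$ near $0$, so that the ``fat part'' of $X_f$ coincides with a piece of the round ball $B(1)$ and differs from $B(1)$ only on a region of arbitrarily small volume, which will supply the $C^1$-closeness; (b) the tail of $f$ has a borderline decay rate --- of the form $f(x)\sim x^{-2}(\log x)^{-2}$ for large $x$, or a similar log-refined power law --- whose precise shape is calibrated against the folding schedule below; (c) by \cite{ch}, packing stability, and more precisely the property $p^W(X_f)<1$ for every compact $W\subset\BR^4$ with smooth boundary, fails for $X_f$. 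Granting such an $f$, the domain $X$ will be the closure of the image of a symplectic embedding of $\operatorname{int}(X_f)$ onto a bounded open subset of $\BR^4$.

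The heart of the argument is the construction of that embedding. I would decompose $\operatorname{int}(X_f)=\mathcal F\cup\mathcal T$, where $\mathcal F$ is a fat region symplectomorphic to the interior of a smooth star-shaped domain $C^1$-close to $B(1)$ and $\mathcal T$ is the thin tail $\{\pi|z_1|^2\ge T\}\cap X_f$. Parametrizing $\mathcal T$ by an arclength-type coordinate along the tail, one folds it --- accordion fashion --- into a thin wedge-shaped neighbourhood $N$ of a chosen boundary point $p_0$ of the image of $\mathcal F$, arranging that the successive folds nest monotonically and accumulate precisely at $p_0$, that the union of the image of $\mathcal F$ with $N$ is again an open star-shaped domain, and that its boundary is smooth away from $p_0$. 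The regularity at $p_0$ then reduces to a bookkeeping computation: if at ``depth'' $t$ along the tail the folded slice has symplectic width of order $w(t)$ and lies at distance of order $d(t)$ from $p_0$, the radial function of $X$ near $p_0$ is governed by the relation between $w$ and $d$ inherited from $f$ and the folding schedule; these are matched so that this radial function has first derivative in $C^{0,\alpha}(S^3)$ for every $\alpha\in(0,1)$ but not Lipschitz, i.e. so that $\partial X$ lies in $\bigl(\bigcap_{\alpha\in(0,1)}C^{1,\alpha}\bigr)\setminus C^{1,1}$ and is smooth elsewhere.

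Once $X$ has been built, properties (1) and (2) hold by construction, and the $C^1$-closeness to $B(1)$ follows from (a) together with the thinness of $N$. Property (3) is then formal: $\operatorname{int}(X)$ is symplectomorphic to $\operatorname{int}(X_f)$, and since $\partial X$ is $C^1$ --- hence Lebesgue-null --- one has $\operatorname{vol}(X)=\operatorname{vol}(X_f)$; therefore $p^W(X)=p^W(X_f)$ for every compact $W$ with smooth boundary (a compact copy $rW$ embeds into $\operatorname{int}(X)$ if and only if it embeds into $\operatorname{int}(X_f)$, by conjugating with the symplectomorphism, and the normalizing volumes coincide), and the assertion is precisely (c). The obstruction behind (c) is the one exhibited in \cite{ch}: it is extracted from the ECH capacities of $X_f$ --- equivalently, from the McDuff weight expansion of the concave toric domain --- which, because of the infinitely long thin tail, are too small to be compatible with a filling whose volume is close to $\operatorname{vol}(X_f)$. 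Extending this from packings by equal balls to an arbitrary smooth $W$ is routine, using monotonicity and the volume identity for ECH capacities.

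I expect the principal difficulty to lie in the embedding construction of the second paragraph: producing a symplectic folding of the tail that is at the same time (i) compatible with star-shapedness of the total domain, so that its image is literally the interior of a star-shaped $X$; (ii) smooth away from the single accumulation point $p_0$; and (iii) of exactly the sharp H\"older regularity at $p_0$, which forces one to match the decay profile of $f$ against the rate at which the folds crowd towards $p_0$. Controlling these three features simultaneously is the delicate point; the remainder of the proof is either bookkeeping or a direct appeal to \cite{ch}.
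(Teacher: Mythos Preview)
Your proposal takes a route completely different from the paper's, and the main step is not carried out.

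The paper does \emph{not} fold a Cristofaro-Gardiner--Hind domain. It builds the counterexample directly as $X=X(H)$, where $H\colon\BT\times\BD\to\BR$ is an explicit Hamiltonian and $X(H)$ is the closure of the image, under a fixed symplectomorphism $\Phi$, of the subgraph $\{s<H(t,z)\}$. The boundary regularity of $X(H)$ is then \emph{identical} to that of $H$, so properties (1) and (2) reduce to a short H\"older computation for $H=\lambda\sum_n g_n(t)\sum_i f_n^i(z)$, a sum of radial bumps with tuned heights $\ln(n)/n^2$ and radii $1/(n\ln(n)^2)$. For (3) the paper shows that the Weyl-law error terms $e_k^{\operatorname{alt}}(X(H))\to-\infty$, not by any toric ECH computation (the domain is not toric) but by a detour: the link spectral invariants $c_{\underline{L}_d}(H)$ are computable via Lagrangian control because the bumps are radial, Chen's inequalities transfer this to PFH, and a cobordism argument from \cite{eh} transfers it to $c_k^{\operatorname{alt}}$. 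The contradiction with $p^W(X)=1$ then uses Theorem~\ref{thm:subleading_asymptotics_ech}, i.e.\ $e_k^{\operatorname{alt}}(W)=O(1)$ for smooth $W$.

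Your principal gap is exactly the one you flag: the folding. You give only a schematic --- nest the folds, accumulate at $p_0$, match decay of $f$ to the schedule --- and no construction. Symplectic folding typically leaves the boundary piecewise smooth along the fold hypersurfaces; arranging infinitely many such seams to assemble into a single globally $C^{1,\alpha}$ (for every $\alpha<1$) star-shaped hypersurface, smooth off one point, while the folds crowd to $p_0$, is a genuine construction problem and there is no indication in your sketch that it can be done. The paper's approach sidesteps this entirely: regularity is read off from $H$ with no assembly required.

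A secondary gap: your claim that extending from equal-ball packings to arbitrary smooth $W$ is ``routine, using monotonicity and the volume identity for ECH capacities'' is not correct. The Weyl law gives only $e_k(W)=o(k^{1/2})$, which is compatible with $e_k(W)\to-\infty$ and hence with $c_k(W)\le c_k(X_f)$. What you actually need is a \emph{lower} bound $e_k(W)\ge -C$ for smooth $W$; equivalently, that $W$ admits a full ball packing, so that $p^W(X_f)=1$ would force $p_N(X_f)=1$ for some $N$. That is precisely Theorem~\ref{thm:ball_packing_stability} of this paper, not something available from \cite{ch} or the Weyl law alone. The paper's own proof of Theorem~\ref{thm:failure_packing_stability} invokes Theorem~\ref{thm:subleading_asymptotics_ech} at the same point, so this dependence is unavoidable; you should acknowledge it rather than call it routine.
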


We do not currently know whether counterexamples to packing stability exist for symplectic manifolds with boundary regularity of class $C^2$ or higher. We expect that our methods for proving packing stability in the smooth case can be extended to manifolds whose boundary has finite differentiability. However, we do not expect that our present techniques can reach the $C^2$ threshold; see Subsection \ref{subsec:open_problems} for further discussion.

It is a classical problem, dating back to work of Eliashberg and Hofer \cite{eh92} (see also \cite{cfhw96, cie97}), to understand when an open symplectic manifold can be given a boundary, and to what extent the interior of a symplectic manifold with boundary encodes information about the boundary. Questions of this nature have also been studied in \cite{ch}. For example, it is shown there that, under suitable decay conditions on the function $f$, the interior of the domain $X_f$ defined in equation \eqref{eq:unbounded_toric_domain} is not symplectomorphic to the interior of any compact symplectic manifold with smooth boundary.

Note that if the interior of a symplectic $4$-manifold $M$ is symplectomorphic to the interior of a compact symplectic manifold with smooth boundary, then packing stability must hold for each component of $M$ by Theorem \ref{thm:ball_packing_stability}. Thus Theorem \ref{thm:failure_packing_stability} has the following consequence:

\begin{corollary*}[$C^{2-\varepsilon}$ domains with non-smoothable interior]
There exists a star-shaped domain $X\subset \BR^4$ which is $C^1$ close to the unit ball, whose boundary is in the regularity class $\bigcap_{\alpha \in (0,1)} C^{1,\alpha}$ and smooth on the complement of a single point, and whose interior is not symplectomorphic to the interior of a compact symplectic manifold with smooth boundary.
\end{corollary*}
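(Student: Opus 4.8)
The plan is to deduce the corollary directly from Theorems~\ref{thm:failure_packing_stability} and~\ref{thm:ball_packing_stability}; no new ideas are needed, only a short formal argument. I would take $X \subset \BR^4$ to be the star-shaped domain furnished by Theorem~\ref{thm:failure_packing_stability}. By that theorem it is $C^1$ close to the unit ball, its boundary lies in $\bigcap_{\alpha \in (0,1)} C^{1,\alpha}$ (but not in $C^{1,1}$) and is smooth away from a single point, so the only remaining task is to check that $\operatorname{int}(X)$ is not symplectomorphic to the interior of a compact symplectic $4$-manifold with smooth boundary.

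Suppose, for contradiction, that there is a compact symplectic $4$-manifold $M$ with smooth boundary and a symplectomorphism $\operatorname{int}(X) \xrightarrow{\ \sim\ } \operatorname{int}(M)$. Since $X$ is star-shaped, $\operatorname{int}(X)$ is connected, so its image lies in a single connected component of $\operatorname{int}(M)$, which is the interior of a single component $M_0$ of $M$; after replacing $M$ by $M_0$ we may assume $M$ is connected. The ball packing numbers depend only on the interior, since by definition $p_k$ is a supremum over symplectic embeddings of disjoint balls into the interior; hence the symplectomorphism gives $p_k(X) = p_k(M)$ for all $k \ge 1$. (One should record here the routine point that, in the definition of $p_k(X)$ for a domain $X \subset \BR^4$, requiring the balls to embed into $X$ rather than into $\operatorname{int}(X)$ yields the same supremum: by invariance of domain, the image of the interior of a closed ball under a symplectic embedding into $X$ is open in $\BR^4$, hence contained in $\operatorname{int}(X)$.)

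Now Theorem~\ref{thm:ball_packing_stability}, applied to the compact connected symplectic $4$-manifold $M$ with smooth boundary, produces $k_0$ with $p_k(M) = 1$ for all $k \ge k_0$, so $p_k(X) = 1$ for all $k \ge k_0$. This directly contradicts Theorem~\ref{thm:failure_packing_stability}(3), which asserts that ball packing stability fails for $X$; indeed, the more general assertion there, applied with $W$ a disjoint union of $k$ unit balls (a compact domain with smooth, possibly disconnected boundary), gives $p_k(X) < 1$ for every $k$. This contradiction proves the corollary.

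Regarding difficulty: there is essentially no obstacle. The corollary is a soft consequence of the two main theorems already in hand, and the only points requiring (entirely routine) attention are the reduction to the connected case and the identification of the two notions of ball packing number for a domain in $\BR^4$; neither is substantive.
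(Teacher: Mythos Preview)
Your proposal is correct and follows exactly the approach the paper uses: the paper deduces the corollary in one sentence from Theorems~\ref{thm:ball_packing_stability} and~\ref{thm:failure_packing_stability}, noting that a symplectomorphism of interiors would transfer packing stability from $M$ to $X$. You have simply spelled out the routine details (reduction to the connected component, the identification $p^W(X)=p_k(X)$ for $W$ a disjoint union of $k$ balls, and the invariance-of-domain remark) that the paper leaves implicit.
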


\subsection{Subleading asymptotics of symplectic Weyl laws}
\label{subsec:subleading_asymptotics}

Given a symplectic $4$-manifold $(M^4,\omega)$, possibly open or with boundary, Hutchings \cite{hut11} defines a sequence of numbers
\begin{equation*}
0 = c_0(M) < c_1(M) \leq c_2(M) \leq \cdots \leq \infty
\end{equation*}
called \emph{embedded contact homology (ECH) capacities}; see \cite{hut14} for a survey. For every compact domain $X\subset \BR^4$ with piecewise smooth boundary, he proves the following remarkable asymptotic formula:
\begin{equation}
\label{eq:weyl_law_ech}
c_k(X) = 2\sqrt{\operatorname{vol}(X) k} + o(k^{1/2}) \qquad (k\rightarrow \infty).
\end{equation}
This identity shows that ECH capacities asymptotically recover the symplectic volume of the domain. By analogy with the classical Weyl law for the spectrum of the Laplace operator, formula \eqref{eq:weyl_law_ech} is often referred to as the \textit{ECH Weyl law}. We refer to \cite{chr15} for a version of the Weyl law for ECH spectral invariants associated with a contact form on a $3$-manifold. See also \cite{iri15} for a major application of the ECH Weyl law concerning the smooth closing lemma for $3$-dimensional Reeb flows.

It is a natural and compelling problem to study the error terms
\begin{equation*}
e_k(X) \coloneqq c_k(X) - 2\sqrt{\operatorname{vol}(X) k}
\end{equation*}
in the ECH Weyl law.
In the case of the classical Weyl law for the Laplace spectrum, the corresponding error terms have been extensively investigated over the past century; see, for example, \cite{cou20, lev52, dg75, ivr80, cg23}, among many others. In the symplectic setting, Sun \cite{sun19} proved that $e_k(X) = O(k^{125/252})$ for every star-shaped domain $X\subset \BR^4$ with smooth boundary. This was improved by Cristofaro-Gardiner and Savale \cite{cs20}, who established the bound $e_k(X) = O(k^{2/5})$. Later Hutchings \cite[Theorem 1.1]{hut22} showed that in fact $e_k(X) = O(k^{1/4})$ for every compact domain $X \subset \BR^4$ with smooth boundary.

On the other hand, in all known examples where the capacities $c_k(X)$ have been explicitly computed, one finds that $e_k(X) = O(1)$; see, for instance, \cite{wor23}. These examples comprise different classes of toric domains and are somewhat limited in scope, as the Reeb dynamics on the boundary of a toric domain is exceptionally simple and fails to reflect the complex, chaotic dynamics that can occur on the boundary of a general domain.

Nevertheless, Hutchings \cite[Conjecture 1.5]{hut22} conjectured that, for a generic star-shaped domain $X\subset \BR^4$ with smooth boundary, the error terms satisfy
\begin{equation}
\label{eq:hutchings_conjecture}
\lim_{k\rightarrow \infty} e_k(X) = -\frac{1}{2}\operatorname{Ru}(X),
\end{equation}
where $\operatorname{Ru}(X)$ denotes the \textit{Ruelle invariant} \cite{rue85} associated to the natural Reeb flow on the boundary of $X$; see \cite[\S 1.2]{hut22} for more details. This conjecture can be viewed as a symplectic analogue of the well-known Weyl conjecture on the error terms in the classical Weyl law; see, for instance, \cite[Conjecture 3.3.7]{lmp23}.

The following theorem is an application of our symplectic ball packing stability result, Theorem \ref{thm:ball_packing_stability}. It establishes that the error terms in the ECH Weyl law are bounded for all compact domains in $\BR^4$ with smooth boundary. This significantly improves upon the best previously known bound of $O(k^{1/4})$, and can also be viewed as a step towards Hutchings' conjecture \eqref{eq:hutchings_conjecture}.

\begin{introthm}[Subleading asymptotics ECH]
\label{thm:subleading_asymptotics_ech}
Let $X\subset \BR^4$ be a compact domain with smooth boundary. Then the ECH capacities satisfy
\begin{equation*}
c_k(X) = 2 \sqrt{\operatorname{vol}(X) k} + O(1) \qquad (k\rightarrow \infty).
\end{equation*}
In other words, $e_k(X) = O(1)$.
\end{introthm}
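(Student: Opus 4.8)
The plan is to deduce the bounded error term from ellipsoid embedding stability (Theorem~\ref{thm:ellipsoid_embedding_stability}, which implies Theorem~\ref{thm:ball_packing_stability}) combined with the known behaviour of ECH capacities of ellipsoids. Recall that for the ellipsoid $E(a_1,a_2)$ the ECH capacities $c_k(E(a_1,a_2))$ form the sequence obtained by writing all numbers of the form $m a_1 + n a_2$ with $m,n\geq 0$ in nondecreasing order. From this combinatorial description one gets the sharp lattice-point count: $c_k(E(r,ra)) = 2\sqrt{\operatorname{vol}(E(r,ra))\,k} + O_a(1)$ as $k\to\infty$, where the implicit constant depends only on the eccentricity $a$ (and not on the scaling $r$). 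This is exactly the statement that the error term is $O(1)$ for ellipsoids, and it is where the $O(1)$ in the conclusion ultimately comes from.

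First I would fix, using Theorem~\ref{thm:ellipsoid_embedding_stability}, a value $a\geq a_0$ so that $p_a^E(X) = 1$, i.e. for every $\lambda<1$ the ellipsoid $\lambda E$ embeds symplectically into the interior of $X$, where $E = E(r,ra)$ is normalized so that $\operatorname{vol}(E) = \operatorname{vol}(X)$. By the monotonicity of ECH capacities under symplectic embeddings, $c_k(\lambda E) \leq c_k(X)$ for all $\lambda<1$, and by continuity/scaling of ECH capacities ($c_k(\lambda E) = \lambda\, c_k(E)$) we let $\lambda\to 1$ to obtain the lower bound
\begin{equation*}
c_k(X) \geq c_k(E) = 2\sqrt{\operatorname{vol}(X)\,k} + O(1).
\end{equation*}
For the matching upper bound I would use the same strategy in reverse: it suffices to embed a slightly shrunk copy of $X$ into a (suitably scaled) ellipsoid with controlled error term. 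The cleanest way is to invoke that any compact domain $X\subset\BR^4$ with smooth boundary is contained, after scaling, in a large ball $B(R)$; but that only gives $c_k(X)\leq c_k(B(R)) = 2\sqrt{\operatorname{vol}(B(R))k} + O(1)$, which has the wrong leading constant. Instead, one applies ellipsoid embedding stability (or rather ball packing stability) to the complement or to an auxiliary manifold: concretely, one embeds $X$ itself into an ellipsoid of volume only slightly larger than $\operatorname{vol}(X)$. This is where Theorem~\ref{thm:ball_packing_stability}/\ref{thm:ellipsoid_embedding_stability}, applied not to $X$ but to the closure of its complement inside a large ball or ellipsoid, is used: filling all but $\varepsilon$ of the volume of the complementary region by one ellipsoid shows $X$ itself embeds into an ellipsoid of volume $\operatorname{vol}(X)+\varepsilon$, and then monotonicity gives $c_k(X) \leq c_k(E') = 2\sqrt{(\operatorname{vol}(X)+\varepsilon)k} + O_\varepsilon(1)$.

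The issue with the argument as just sketched is that the $\sqrt{\operatorname{vol}(X)+\varepsilon}$ still carries an $\varepsilon$, which after subtracting $2\sqrt{\operatorname{vol}(X)k}$ contributes a term of size $\Theta(\varepsilon\sqrt{k})$, not $O(1)$, and $\varepsilon$ cannot be sent to $0$ uniformly in $k$ because the $O_\varepsilon(1)$ constant blows up. So the main obstacle is getting the \emph{exact} volume on both sides: the upper bound requires that $X$ embed into an ellipsoid of volume \emph{exactly} $\operatorname{vol}(X)$, equivalently that the complement be \emph{fully} filled, not just $\varepsilon$-filled. I would resolve this by applying ellipsoid embedding stability to a symplectic manifold built from the complement: take a large ellipsoid $E_L$ of volume, say, $2\operatorname{vol}(X)$ with $X$ embedded inside it, let $N$ be the closure of $E_L\setminus X$ — a compact symplectic $4$-manifold with smooth (though disconnected, as it includes $\partial X$) boundary — and apply Theorem~\ref{thm:ellipsoid_embedding_stability} to $N$ (after arranging connectedness, e.g. by attaching a thin handle, or working component by component) to fully fill $N$ by one ellipsoid $E''$ of volume $\operatorname{vol}(N) = \operatorname{vol}(X)$. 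A packing-type argument then shows $E_L$ is symplectically filled by the disjoint union of $X$ and $E''$, and by the ``inflation/replacement'' technique one concludes that $X$ embeds into $E_L\setminus E''$, which in turn (by stability for ellipsoids again, or directly) embeds into an ellipsoid of volume exactly $\operatorname{vol}(X)$. Running monotonicity of ECH capacities through this gives $c_k(X)\leq c_k(E) = 2\sqrt{\operatorname{vol}(X)k} + O(1)$, matching the lower bound and completing the proof. The delicate point throughout is keeping the eccentricity of all auxiliary ellipsoids bounded so that all the $O(1)$ error terms have constants independent of $k$; this is automatic once the eccentricities are fixed before $k\to\infty$.
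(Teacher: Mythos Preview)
Your lower bound is correct and matches the paper's Step~3 (the paper uses disjoint balls rather than a single ellipsoid, but the logic is identical).

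The upper bound, however, has a genuine gap. You correctly identify that embedding $X$ into an ellipsoid of volume $\operatorname{vol}(X)+\varepsilon$ and sending $\varepsilon\to 0$ fails because the implicit constant blows up. But your proposed fix does not work: from the fact that $X$ and $E''$ disjointly fill $E_L$, there is no ``inflation/replacement technique'' that lets you conclude $X$ embeds into $E_L\setminus E''$; that would be a highly nontrivial symplectic embedding statement (essentially a cancellation law), and is not available. Likewise, the claim that $E_L\setminus E''$ embeds into an ellipsoid of volume exactly $\operatorname{vol}(X)$ is unproven. The paper's own remark after Theorem~\ref{thm:ellipsoid_embedding_stability} notes that even embedding the open full-volume ellipsoid into $M$ is not established, so the reverse direction you need is certainly not a consequence of what has been shown.

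The missing idea is the \emph{disjoint union formula} for ECH capacities,
\[
c_m(X\sqcup Y)=\max_{k+\ell=m}\bigl(c_k(X)+c_\ell(Y)\bigr),
\]
which you never invoke. Once you have $X\sqcup E''$ (or a disjoint union of balls filling the complement, as in the paper) embedded in $E_L$, monotonicity and the disjoint union formula give
\[
c_k(X)+c_\ell(E'')\le c_{k+\ell}(E_L)\qquad\text{for every }\ell\ge 0.
\]
Now you optimize over $\ell$: since $E''$ and $E_L$ both have $O(1)$ error terms and $\operatorname{vol}(E'')+\operatorname{vol}(X)=\operatorname{vol}(E_L)$, an elementary calculation (choose $\ell\approx k\cdot\operatorname{vol}(E'')/\operatorname{vol}(X)$) shows the right-hand side minus $c_\ell(E'')$ equals $2\sqrt{\operatorname{vol}(X)k}+O(1)$. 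This is exactly the paper's Step~4; the point is that you never need to embed $X$ into anything of the correct volume---the disjoint union formula does the bookkeeping for you.
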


The exponent $0$ in the bound $O(1)$ on the error terms $e_k(X)$ is sharp. There are examples of smooth domains $X$, for example the ball $B(a)$, such that $e_k(X)$ does not converge to zero; see, for instance, \cite[Example 1.2]{hut22}.

\begin{remark}
\label{rem:elementary_ech_capacities}
Recently, Hutchings \cite{hut22b} defined an alternative sequence of capacities $c_k^{\operatorname{alt}}(X)$ which retain most of the formal properties of the ECH capacities $c_k(X)$, thereby recovering many key applications. Unlike the ECH capacities, however, their construction avoids the sophisticated machinery of embedded contact homology and is instead considerably more elementary. The proof of Theorem \ref{thm:subleading_asymptotics_ech} carries over essentially unchanged to this setting and implies that the error terms in the Weyl law for these alternative capacities are also bounded. More precisely, for every compact domain $X\subset \BR^4$ with smooth boundary, one has
\begin{equation*}
c_k^{\operatorname{alt}}(X) = 2 \sqrt{\operatorname{vol}(X) k} + O(1) \qquad (k\rightarrow \infty).
\end{equation*}
\end{remark}

A variant of embedded contact homology, known as \emph{periodic Floer homology (PFH)}, was introduced in \cite{hut02, hs05}. This theory allows one to extract numerical invariants---called \textit{PFH spectral invariants}---from area-preserving surface diffeomorphisms. It was recently established independently by Cristofaro-Gardiner, Prasad, and Zhang \cite{cpz}, and by Hutchings and the author \cite{eh}, that these invariants satisfy a Weyl-type asymptotic formula, analogous to the ECH Weyl law. Important applications of the PFH Weyl law include smooth closing lemmas for area preserving surface maps established in \cite{cpz, eh} and the resolution of the long-standing simplicity conjecture in \cite{chs24}.

As in the ECH case, understanding the asymptotic behavior of the associated error terms is an interesting problem; see, for instance, \cite[Problem 2.2]{aim}. Our ball packing stability result (Theorem~\ref{thm:ball_packing_stability}) implies that the error terms in the PFH Weyl law are bounded. More precisely, we obtain the following result. For the relevant notation and terminology, we refer to \cite[Theorem 8.1]{eh}.

\begin{introthm}[Subleading asymptotics PFH]
\label{thm:subleading_asymptotics_pfh}
Consider a closed surface $\Sigma$ equipped with an area form $\omega$ of total area $A$. Fix a rational area preserving diffeomorphism $\phi\in\operatorname{Diff}(\Sigma,\omega)$ and let $Y_\phi$ denote its mapping torus. Let $\gamma_i\subset Y_\phi$ be a sequence of monotone reference cycles whose degrees $d_i$ diverge to $+\infty$ and let $\sigma_i\in HP(\phi,\gamma_i)$ be a sequence of non-zero PFH classes. Then, for every pair of Hamiltonians $H_\pm\in C^\infty(Y_\phi)$, we have
\begin{equation*}
c_{\sigma_i}(\phi,\gamma_i,H_+) - c_{\sigma_i}(\phi,\gamma_i,H_-)+\int_{\gamma_i} (H_+-H_-)dt = d_iA^{-1}\int_{Y_\phi} (H_+-H_-)dt\wedge\omega_\phi + O(1) \qquad (k\rightarrow \infty).
\end{equation*}
\end{introthm}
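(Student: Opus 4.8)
The plan is to deduce this from Theorem~\ref{thm:ball_packing_stability} by a cobordism argument in the stabilized mapping torus $Y_\phi\times\BR$, which upgrades the lower-bound half of the proof of the PFH Weyl law \cite[Theorem~8.1]{eh} from an $o(d_i)$ statement to an $O(1)$ statement. I first reduce the assertion to a single inequality. Write $F_i(H_+,H_-)$ for the left-hand side of the claimed identity and $R_i(H_+,H_-)\coloneqq d_iA^{-1}\int_{Y_\phi}(H_+-H_-)\,dt\wedge\omega_\phi$ for the main term on the right. Both quantities are additive: $F_i(H_+,H_-)=F_i(H_+,\tilde H)-F_i(H_-,\tilde H)$ and likewise for $R_i$, for any third Hamiltonian $\tilde H$. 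Taking $\tilde H$ to be a constant below $\min(H_+,H_-)$, it suffices to treat pairs with $H_-<H_+$ pointwise; and for such a pair it is enough to prove the lower bound
\begin{equation}
\label{eq:pfh_lower}
c_{\sigma_i}(\phi,\gamma_i,H_+)-c_{\sigma_i}(\phi,\gamma_i,H_-)+\int_{\gamma_i}(H_+-H_-)\,dt\geq R_i(H_+,H_-)-O(1),
\end{equation}
because the reverse inequality then follows formally: pick constants $N_-<\min H_-$ and $N_+>\max H_+$, observe that $c_{\sigma_i}(\phi,\gamma_i,N_+)=c_{\sigma_i}(\phi,\gamma_i,N_-)$ since a constant Hamiltonian leaves the $\gamma_i$-relative PFH action functional unchanged (it perturbs the stable Hamiltonian structure trivially and contributes no $\gamma_i$-relative action), apply \eqref{eq:pfh_lower} to the pairs $(H_+,N_+)$ and $(N_-,H_-)$, and combine these using that $\operatorname{vol}$ and $\int_{\gamma_i}(\cdot)\,dt$ are additive over the decomposition of $Y_\phi\times[N_-,N_+]$ into the three regions stacked between the graphs of $N_-,H_-,H_+,N_+$.

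The cobordism in question is the region
\begin{equation*}
R(H_-,H_+)\coloneqq\{(p,s)\in Y_\phi\times\BR\mid H_-(p)\leq s\leq H_+(p)\}
\end{equation*}
between the two graphs, where $Y_\phi\times\BR$ carries the symplectic form $\omega_\phi+ds\wedge dt$ (with $dt$ pulled back from $\BR/\BZ$ and $\omega_\phi$ the closed fibrewise area form). When $H_-<H_+$ the map $(p,s)\mapsto\bigl(p,(s-H_-(p))/(H_+(p)-H_-(p))\bigr)$ identifies $R(H_-,H_+)$ with $Y_\phi\times[0,1]$, so $R(H_-,H_+)$ is a compact connected symplectic $4$-manifold with smooth boundary (if $\Sigma$ is disconnected one argues componentwise), and hence Theorem~\ref{thm:ball_packing_stability} applies to it. A direct computation gives $\operatorname{vol}(R(H_-,H_+))=\int_{Y_\phi}(H_+-H_-)\,dt\wedge\omega_\phi$, so that $R_i(H_+,H_-)=d_iA^{-1}\operatorname{vol}(R(H_-,H_+))$.

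To prove \eqref{eq:pfh_lower} I would feed a nearly-full ball packing into the cobordism. Let $k_0$ be the packing-stability threshold of $R(H_-,H_+)$ supplied by Theorem~\ref{thm:ball_packing_stability}. Then for every $\delta>0$ there are $k_0$ pairwise disjoint symplectic balls, each of width at most $\sqrt{2\operatorname{vol}(R(H_-,H_+))/k_0}$ and of total volume $(1-\delta)\operatorname{vol}(R(H_-,H_+))$, embedded in the interior of $R(H_-,H_+)$. The construction underlying \cite[Theorem~8.1]{eh}---reading $c_{\sigma_i}(\phi,\gamma_i,H_\pm)$ as spectral invariants associated with the two ends of the cobordism $R(H_-,H_+)$ and using these balls as interior test domains that carry action---should produce a bound of the form
\begin{equation*}
c_{\sigma_i}(\phi,\gamma_i,H_+)-c_{\sigma_i}(\phi,\gamma_i,H_-)+\int_{\gamma_i}(H_+-H_-)\,dt\geq d_iA^{-1}(1-\delta)\operatorname{vol}(R(H_-,H_+))-C,
\end{equation*}
where the $\int_{\gamma_i}$-term records the change of the reference-cycle normalization and the constant $C$ depends only on $k_0$, on the (bounded) ball widths, and on $(\Sigma,\omega,\phi)$---crucially not on $i$---and remains bounded as $\delta\to0$, because the ECH/PFH Weyl-law error of a single ball $B(a)$ is $O(a)$ and only $k_0$ balls enter. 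Letting $\delta\to0$ gives \eqref{eq:pfh_lower}, and with it the theorem.

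The \textbf{main obstacle} is exactly this last step: \cite[Theorem~8.1]{eh} as stated carries only $o(d_i)$ control, so the real work is to isolate, inside its proof, the contribution of the \emph{fixed} embedded balls from the $d_i\to\infty$ asymptotics, and to check that this contribution appears with an error that is uniform in $d_i$ and in $\delta$---in particular, that passing to a full-volume packing (which is realized only as a limit of genuine embeddings) introduces no unbounded defect, and that the reference-cycle normalization contributes exactly the displayed $\int_{\gamma_i}$-term. The remaining pieces---the additivity reductions, the volume identity for $R(H_-,H_+)$, and the ``complement'' derivation of the reverse inequality---are then routine bookkeeping.
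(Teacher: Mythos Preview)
Your overall architecture is the same as the paper's---form the graph cobordism, apply Theorem~\ref{thm:ball_packing_stability} to it, and feed the resulting ball packing into the machinery of \cite{eh}---but two points deserve correction or sharpening.

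First, your reduction to a one-sided inequality is more complicated than necessary and rests on a claim that is not correct. You assert $c_{\sigma_i}(\phi,\gamma_i,N_+)=c_{\sigma_i}(\phi,\gamma_i,N_-)$ for constant Hamiltonians, but shifting the Hamiltonian by a constant \emph{does} change the PFH spectral invariant; what is invariant is the full expression $c_{\sigma_i}(\phi,\gamma_i,H)+\int_{\gamma_i}H\,dt - d_iA^{-1}\int_{Y_\phi}H\,dt\wedge\omega_\phi$ (cf.\ \cite[Remark~4.4]{eh}). The paper's reduction is simpler: once the lower bound \eqref{eq:pfh_lower} is established for \emph{all} pairs $(H_+,H_-)$, one obtains the upper bound by interchanging $H_+$ and $H_-$ and negating. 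The shift-invariance of the full inequality then allows one to assume $H_+>H_-$ when proving the lower bound.

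Second, the ``main obstacle'' you flag is resolved in the paper not by reopening the proof of \cite[Theorem~8.1]{eh}, but by invoking \cite[Lemma~5.2]{eh} directly: for any packing $\sqcup_{j=1}^N B(b)\hookrightarrow\operatorname{int}(M)$ one has
\[
c_{\sigma_i}(\phi,\gamma_i,H_+)-c_{U^{k}\sigma_i}(\phi,\gamma_i,H_-)+\int_{\gamma_i}(H_+-H_-)\,dt\;\ge\;c_k^{\operatorname{alt}}\bigl(\textstyle\sqcup_j B(b)\bigr)
\]
for every $k\ge 0$. The $U^k$-shifted class is then converted back to $\sigma_i$ using $U$-cyclicity of order $m$ (guaranteed by \cite{cppz}), via \cite[Proposition~4.2(a)]{eh}: $c_{U^{k_i}\sigma_i}=c_{\sigma_i}-mn_iA$ when $k_i=mn_i(d_i-g+1)$. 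Choosing $n_i=\lfloor d_i^2V/(mA^2(d_i-g+1))\rfloor$ and computing $c_{k_i}(\sqcup_j B(a))=2A^{-1}Vd_i+O(1)$ and $mn_iA=A^{-1}Vd_i+O(1)$ yields \eqref{eq:pfh_lower} with an explicit $O(1)$ constant. The limit $b\to a$ is handled by continuity of capacities in $b$, which avoids any uniformity-in-$\delta$ issue. This is the missing mechanism in your sketch; your hoped-for bound with $C$ independent of $\delta$ is effectively what these two ingredients from \cite{eh} deliver.
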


\begin{remark}
In Theorem \ref{thm:subleading_asymptotics_pfh}, we set the subgroups $G_i \subset \operatorname{ker}(\omega_\phi)$ appearing in \cite[Theorem 8.1]{eh} all equal to $\operatorname{ker}(\omega_\phi)$. As explained in \cite[Remark 8.2]{eh} and \cite[Remark 7.3]{eh}, this choice, combined with the assumption that $\phi$ is rational and the reference cycles $\gamma_i$ are monotone, ensures that the $U$-cycle assumption in \cite[Theorem 8.1]{eh} is automatically satisfied. This was proved in \cite{cppz}. We also emphasize that Theorem \ref{thm:subleading_asymptotics_pfh} is not vacuous: there exists a sequence of monotone reference cycles $\gamma_i$ with diverging degrees, along with a sequence of non-zero classes $\sigma_i \in HP(\phi,\gamma_i)$; see again \cite[Remark 8.2]{eh}.
\end{remark}

\begin{remark}
In \cite{edta}, we introduced an alternative to PFH spectral invariants using more elementary methods; see also Remark \ref{rem:elementary_ech_capacities}.
These elementary PFH spectral invariants likewise satisfy a Weyl law, and again our methods show that the associated error terms are $O(1)$. More precisely, in the notation of \cite[Theorem 1.6]{edta}, we have
\begin{equation*}
c_{d_i}(H_+,H_-) = d_i A^{-1} \int_{Y_\phi} (H_+-H_-)dt\wedge\omega_\phi + O(1) \qquad (i\rightarrow \infty)
\end{equation*}
for every diverging sequence of positive integers $d_i$ such that $c_{d_i}$ is finite for each $i$.
\end{remark}

In \cite{chmss22}, Cristofaro-Gardiner, Humili\`{e}re, Mak, Seyfaddini, and Smith introduced another sequence of spectral invariants, called \emph{link spectral invariants}, which also satisfy a Weyl law. As in the previous cases, it is natural to investigate their subleading asymptotics; see \cite[Problem 2.1]{aim}. Let $(\Sigma,\omega)$ be a closed surface equipped with an area form of total area $A$. A \emph{Lagrangian link} $\underline{L}\subset \Sigma$ is a disjoint union of smoothly embedded circles in $\Sigma$. For links $\underline{L}$ satisfying a certain monotonicity condition, the associated link spectral invariant is a function
\begin{equation*}
c_{\underline{L}} : C^\infty([0,1]\times \Sigma) \rightarrow \BR.
\end{equation*}
It is shown in \cite{chmss22} that for an equidistributed sequence of $d$-component links $\underline{L}_d$, one has
\begin{equation*}
c_{\underline{L}_d}(H) = A^{-1} \int_{[0,1]\times \Sigma} H dt\wedge \omega + o(1) \qquad (d\rightarrow \infty).
\end{equation*}
In subsequent work \cite{chmss}, the same authors established the sharp bound $O(d^{-1})$ on the subleading asymptotics of this Weyl law in the case where $\Sigma$ is the 2-sphere. For higher genus surfaces, Mak and Trifa \cite{mt} obtained a weaker bound of $O(d^{-1/2})$.

Using Theorem \ref{thm:subleading_asymptotics_pfh} and results of Chen \cite{chea,cheb} relating PFH spectral invariants to link spectral invariants, we are able to obtain the sharp bound $O(d^{-1})$ for arbitrary genus. We emphasize, however, that the class of links considered in Chen’s work differs slightly from that in \cite{chmss22}. Our result applies to the former class.

We say that a link $\underline{L}\subset \Sigma$ is \textit{admissible} if the complement $\Sigma \setminus \underline{L}$ consists of open disks and a single planar surface. We say that $\underline{L}$ is \textit{monotone} if each connected component of $\Sigma \setminus \underline{L}$ has the same area.

\begin{introthm}[Subleading asymptotics link spectral invariants]
\label{thm:subleading_asymptotics_link_invariants}
Let $(\Sigma,\omega)$ be a closed connected surface of total area $A$ and consider a sequence $\underline{L}_d$ of admissible, monotone $d$-component links. Then, for every $H \in C^\infty([0,1]\times \Sigma)$, we have
\begin{equation*}
c_{\underline{L}_d}(H) = A^{-1} \int_{[0,1]\times \Sigma} H dt \wedge \omega + O(d^{-1}) \qquad (d\rightarrow \infty).
\end{equation*}
\end{introthm}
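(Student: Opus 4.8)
The plan is to reduce the link spectral invariant asymptotics to the PFH spectral invariant asymptotics of Theorem~\ref{thm:subleading_asymptotics_pfh} via Chen's comparison results \cite{chea, cheb}. First I would recall the precise dictionary from Chen's work: to an admissible monotone $d$-component link $\underline{L}_d \subset \Sigma$ one associates an area-preserving diffeomorphism (or rather a suitable mapping torus / stabilization setup) together with a monotone reference cycle of degree growing linearly in $d$, and Chen proves that the link spectral invariant $c_{\underline{L}_d}(H)$ agrees with — or differs by an explicitly controlled, $H$-independent or $O(1)$ bounded amount from — a corresponding PFH spectral invariant $c_{\sigma_d}(\phi, \gamma_d, H)$ attached to this data. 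Here the key point is that the degree $d_i$ in Theorem~\ref{thm:subleading_asymptotics_pfh} is a fixed linear function of the number of link components $d$ (say $d_i = d$ or $d_i = d+1$, depending on Chen's normalization of the planar component), so that ``$O(1)$ in $d_i$'' translates to ``$O(1)$ in $d$''.

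Next I would apply Theorem~\ref{thm:subleading_asymptotics_pfh} with $H_+ = H$ and $H_- = 0$ (and the subgroups $G_i$ all equal to $\ker(\omega_\phi)$, with the monotone reference cycles supplied by Chen's construction so that the $U$-cycle hypothesis holds). This yields
\begin{equation*}
c_{\sigma_d}(\phi,\gamma_d,H) - c_{\sigma_d}(\phi,\gamma_d,0) + \int_{\gamma_d} H\, dt = d\, A_\phi^{-1} \int_{Y_\phi} H\, dt \wedge \omega_\phi + O(1),
\end{equation*}
where $A_\phi$ is the total area in the mapping torus picture. Dividing through by $d$, the left-hand side — after translating back through Chen's identification — becomes $c_{\underline{L}_d}(H)$ up to terms of order $O(d^{-1})$, while the integral term, by the monotonicity of $\underline{L}_d$ and the normalization relating $\omega_\phi$ to $\omega$ on $\Sigma$, becomes exactly $A^{-1}\int_{[0,1]\times\Sigma} H\, dt \wedge \omega$. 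The remaining bookkeeping is to check that the normalization constant $c_{\sigma_d}(\phi,\gamma_d,0)$, the term $\int_{\gamma_d} H\, dt$, and the conversion factor between $\int_{Y_\phi} H\,dt\wedge\omega_\phi$ and $d \cdot A^{-1}\int H\, dt\wedge\omega$ all match up with no leftover linear-in-$d$ discrepancy; this is forced by the $o(1)$ Weyl law of \cite{chmss22} together with Chen's results, since any linear term would already contradict the known leading-order asymptotics.

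The main obstacle I anticipate is \emph{not} the analysis but the translation layer: Chen's papers \cite{chea, cheb} work with a class of links that, as the excerpt itself emphasizes, differs slightly from that of \cite{chmss22}, and one must verify that ``admissible and monotone'' in our sense is exactly the class for which Chen's PFH-to-link comparison is proved, and with uniform (in $d$) error bounds rather than merely for fixed $d$. Concretely, I would need to track that Chen's identification of spectral invariants comes with an error that is genuinely $O(1)$ in $d$, not merely finite for each $d$; if Chen's statement is an exact equality this is automatic, but if it carries an error term one must inspect its $d$-dependence. A secondary subtlety is that Theorem~\ref{thm:subleading_asymptotics_pfh} is phrased for a single fixed diffeomorphism $\phi$ with a varying sequence of reference cycles, whereas an equidistributing sequence of links may naturally produce a varying $\phi_d$; I would handle this either by choosing the $\underline{L}_d$ so that they all arise from one fixed $\phi$ (the admissibility condition, giving disks plus one planar piece, is designed precisely to make such a uniform choice possible via an open book / Reeb picture), or by noting that the $O(1)$ bound in Theorem~\ref{thm:subleading_asymptotics_pfh} can be made uniform over the relevant compact family. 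Once these identifications are in place, the sharpness of the exponent $-1$ follows from the sphere case treated in \cite{chmss}, which shows the bound cannot be improved even for $\Sigma = S^2$.
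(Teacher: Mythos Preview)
Your approach is essentially the paper's, and your worries about the translation layer dissolve more simply than you fear. In the paper, the diffeomorphism is always $\phi = \operatorname{id}_\Sigma$, so the mapping torus is $\BT\times\Sigma$, the reference cycle is $\gamma_d = d\cdot(\BT\times\{*\})$, and Chen's PFH unit $e_d$ is the class; nothing varies with $d$ except the degree itself, which disposes of your ``varying $\phi_d$'' concern entirely.

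The one substantive point you are missing is that the paper does \emph{not} rely on Chen supplying a two-sided comparison with uniform $O(1)$ error. Instead it first reduces to the single inequality
\[
c_{\underline{L}_d}(H) \leq A^{-1}\int_{[0,1]\times\Sigma} H\,dt\wedge\omega + O(d^{-1})
\]
by substituting $\overline{H}$ and invoking the subadditivity property $0 = c_{\underline{L}_d}(H\#\overline{H}) \leq c_{\underline{L}_d}(H) + c_{\underline{L}_d}(\overline{H})$ from \cite[Theorem~1.13]{chmss22}. After this reduction only Chen's one-sided inequality $d\,c_{\underline{L}_d}(H) \leq c_{e_d}(\operatorname{id}_\Sigma,\gamma_d,H) + \int_{\gamma_d} H\,dt$ is needed, together with Theorem~\ref{thm:subleading_asymptotics_pfh} (applied with $H_+=H$, $H_-=0$) and the observation that $c_{e_d}(\operatorname{id}_\Sigma,\gamma_d,0)=0$, which is read off from Chen's chain-level description of $e_d$. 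Your appeal to the known $o(1)$ Weyl law to rule out a stray linear term is then unnecessary: once you have the subadditivity reduction, the bookkeeping is exact.
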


\subsection{Overview of the proofs and outline of the paper}
\label{subsec:discussion}

We now highlight some key ideas underlying the proofs of our main results. We begin with Theorems \ref{thm:ball_packing_stability} and \ref{thm:ellipsoid_embedding_stability}, which concern ball packing and ellipsoid embedding stability, respectively. Let $(M,\omega)$ be a compact, connected symplectic $4$-manifold with smooth boundary. As noted above, since both ball packing and ellipsoid embedding stability are known to hold when the target is a symplectic ellipsoid, it suffices to fill the volume of $M$---up to arbitrarily small error---by a single symplectic ellipsoid of fixed aspect ratio. In the case where $(M,\omega)$ is closed and rational, Opshtein \cite[Proposition 1.3]{ops13} observed that such an ellipsoid filling can be deduced from the existence of a Donaldson divisor \cite{don96}. This is closely related to the well-known Biran decomposition of a symplectic manifold \cite{bir01}. In contrast, our approach does not appeal to Donaldson's theorem. Instead, we rely crucially on deep results of Banyaga concerning the simplicity and perfectness of the group of Hamiltonian diffeomorphisms \cite{ban78, ban97}. We refer to \cite{bs} for another recent flexibility result related to Banyaga's theorems.

To illustrate how the algebraic structure of Hamiltonian diffeomorphism groups enters into our symplectic embedding constructions, we consider the following simple toy example. Let $\sigma$ be an area form on the $2$-sphere $S^2$ of total area $1$. Let $\BT \coloneqq \BR/\BZ$ denote the circle, and define the open $4$-manifold $W \coloneqq \BR \times \BT \times S^2$, equipped with the symplectic form $\Omega \coloneqq ds \wedge dt + \sigma$, where $(s,t)$ are coordinates on the cylinder $\BR \times \BT$. Let $H : \BT \times S^2 \rightarrow \BR$ be a $1$-periodic, mean-normalized Hamiltonian. We naturally view its graph as a codimension-$1$ hypersurface $\operatorname{graph}(H) \subset (W,\Omega)$. The characteristic foliation on $\operatorname{graph}(H)$ induced by $\Omega$ lifts the Hamiltonian flow $\varphi_H^t$ on $(S^2,\sigma)$ generated by $H$. In particular, $\operatorname{graph}(H)$ can be interpreted as the mapping torus of $\varphi_H^1$. Let $C<0$ be a constant strictly less than the minimum of $H$, and define $M$ to be the compact region in $W$ bounded from below by $\{C\} \times \BT \times S^2$ and from above by $\operatorname{graph}(H)$. We view $(M, \Omega)$ as a toy model for a compact symplectic $4$-manifold with smooth boundary. Since $H$ is arbitrary, the dynamics on the boundary component $\operatorname{graph}(H)$ can be complicated. In what follows, we sketch how to decompose $M$, for sufficiently negative values of $C$, into much simpler pieces---namely finitely many balls and polydisks.

In suitable polar coordinates $(z,\theta) \in (0,1)\times \BT$ on $S^2$, the area form $\sigma$ can be written as $\sigma = dz \wedge d\theta$. Define the autonomous Hamiltonian
\begin{equation*}
F : S^2 \rightarrow \BR \qquad F(z,\theta) \coloneqq \frac{1}{2} z - \frac{1}{4}.
\end{equation*}
This Hamiltonian is mean-normalized and its time-$1$ flow $\varphi_F^1$ is a half-rotation of $S^2$. Consider the normal closure of the subgroup $\left\{ \operatorname{id},\varphi_F^1 \right\}\subset \operatorname{Ham}(S^2,\sigma)$ generated by $\varphi_F^1$. Since $\operatorname{Ham}(S^2,\sigma)$ is a simple group by Banyaga's theorem, this normal closure must equal the entire group $\operatorname{Ham}(S^2)$. This implies that every Hamiltonian diffeomorphism of $(S^2,\sigma)$ can be expressed as a finite composition of conjugates of the half-rotation $\varphi_F^1$. In particular, we can find finitely many Hamiltonian diffeomorphisms $\psi_1, \dots, \psi_n \in \operatorname{Ham}(S^2,\sigma)$ such that
\begin{equation}
\label{eq:proof_overview_factorization}
\varphi_H^1 = (\psi_n^{-1}\circ\varphi_F^1\circ\psi_n) \circ \cdots \circ (\psi_1^{-1}\circ\varphi_F^1\circ\psi_1) = \varphi_{\psi_n^*F}^1 \circ \cdots \circ \varphi_{\psi_1^*F}^1.
\end{equation}
Moreover, this identity continues to hold on the level of the universal cover of $\operatorname{Ham}(S^2,\sigma)$, possibly after adding two additional factors of $\varphi_F^1$.

We now use the factorization \eqref{eq:proof_overview_factorization} of $\varphi_H^1$ into half-rotations $\varphi_{\psi_i^*F}^1$ to construct a decomposition of $M$ into balls and polydisks. To this end, we define a special Hamiltonian $G:\BT\times S^2 \rightarrow \BR$ with the same time-$1$ map as $H$, that is $\varphi_G^1 = \varphi_H^1$. This Hamiltonian is constructed such that the path $(\varphi_G^t)_{t\in [0,1]}$ is the concatenation of the $n$ paths $(\varphi_{\psi_i^*F}^t)_{t\in [0,1]}$. We divide the circle $\BT$ into intervals $I_i \coloneqq [\frac{i-1}{n},\frac{i}{n}]$ for $1 \leq i \leq n$. Let $\rho_n : \BT \rightarrow [0,\infty)$ be a non-negative function such that $\rho_n|_{I_i}$ is compactly supported in the interior of $I_i$, and $\int_{I_i}\rho_n dt = 1$. We set
\begin{equation*}
G(t,p) \coloneqq \rho_n(t) \psi_i^*F(p) \qquad \text{for $p \in S^2$ and $t \in I_i$}.
\end{equation*}
Here the role of $\rho_n$ is to reparametrize the paths $(\varphi_{\psi_i^*F}^t)_{t\in [0,1]}$ in such a way that $G$ is smooth.

Now assume that the constant $C<0$ is smaller than the minimum of $G$. We then consider the compact region $N$ in $W$ bounded below by $\left\{ C \right\}\times \BT \times S^2$ and bounded above by $\operatorname{graph}(G)$. It is an elementary but important observation that $(M,\Omega)$ and $(N,\Omega)$ are symplectomorphic, provided $H$ can be connected to $G$ through a smooth family $H^\lambda$ of mean-normalized Hamiltonians, strictly greater than $C$, and satisfying $\varphi_{H^\lambda}^1 = \varphi_H^1 = \varphi_G^1$ for all $\lambda$; see Lemma \ref{lem:isotopies_of_subgraphs}. Since the factorization \eqref{eq:proof_overview_factorization} is assumed to hold on the level of the universal cover, such a connecting family of Hamiltonians exists whenever $C$ is sufficiently negative. We have therefore reduced the task of decomposing $(M,\Omega)$ into balls and polydisks to the analogous task for $(N,\Omega)$.

We now cut $(N,\Omega)$ into $n$ pieces of the form $N \cap (\BR \times I_i \times S^2)$. It is straightforward to verify that each of these $n$ pieces is symplectomorphic to the domain $K\subset \BR \times [0,1] \times S^2$ bounded below by $\left\{ \frac{C}{n} \right\} \times [0,1] \times S^2$ and above by the graph of $F$. Here, we equip $\BR \times [0,1] \times S^2$ with the symplectic form $ds\wedge dt + \sigma$ and regard $F$ as a Hamiltonian $F: [0,1]\times S^2 \rightarrow \BR$. Hence it suffices to decompose $K$ into balls and polydisks. This can be done explicitly, since $F$ is a simple height function on $S^2$.

We remove the poles of $S^2$ and cut open the resulting annulus $(0,1)\times \BT$ along the line segment $\left\{ \theta = 0 \right\}$. This reveals that the open domain
\begin{equation*}
U \coloneqq \left\{ (s,t,z,\theta) \in \BR \times (0,1)^3 \mid \frac{C}{n} < s < \frac{1}{2}z - \frac{1}{4} \right\} \subset (\BR^4,ds\wedge dt + dz\wedge \theta)
\end{equation*}
can be naturally regarded as an open subset of $K$ of full volume. The domain $U$ can be viewed as the Lagrangian product of the right trapezoid
\begin{equation*}
\left\{ (s,z)\in \BR \times (0,1) \mid  \frac{C}{n} < s < \frac{1}{2}z - \frac{1}{4}\right\}
\end{equation*}
and the square $(0,1)^2$. We can cut the right trapezoid into a rectangle and a triangle. This yields a decomposition of $U$ into two pieces: a Lagrangian product of a rectangle and a square, which is symplectomorphic to a polydisk, and a Lagrangian product of a right triangle and a square, which is symplectomorphic to an ellipsoid. In our case, this ellipsoid is $E(1,\frac{1}{2})$, which can in turn be decomposed into two disjoint copies of the ball $B(\frac{1}{2})$. This concludes our construction of a decomposition of $(M,\Omega)$ into balls and polydisks. We carry out a variation of this construction in more detail in Section \ref{sec:strat_of_subgraphs}.

A significant amount of work is required to extend this simple toy example to obtain a filling of a general compact, connected symplectic $4$-manifold by a single ellipsoid. One immediate challenge is to upgrade a decomposition into finitely many balls and polydisks to an ellipsoid filling. Very roughly speaking, our strategy is as follows. Since ellipsoid embedding stability is already known to hold for balls and polydisks, we can fill each of the balls and polydisks in our decomposition with a skinny ellipsoid. We assume that the smaller of the two widths of each skinny ellipsoid is the same across all the ellipsoids. We then connect these skinny ellipsoids through tunnels at the interfaces between the balls and polydisks in the decomposition, ultimately forming a single ellipsoid that fills the entire volume. A detailed implementation of this strategy is provided in Sections \ref{sec:symplectic_ribbons} and \ref{sec:tame_packings}. It is interesting to point out that these arguments, combined with the proof of ball packing stability for general, possibly irrational, closed symplectic $4$-manifolds given in \cite{bho16}, are already sufficient to prove ellipsoid embedding stability for general closed symplectic $4$-manifolds. This special case of Theorem \ref{thm:ellipsoid_embedding_stability} resolves \cite[Conjecture 1]{bho16}.

Another challenge is that a general symplectic $4$-manifold $M$ is not necessarily symplectomorphic to the subgraph of some Hamiltonian on a surface. Our strategy is to decompose $M$ into pieces that are symplectomorphic to subgraphs. To achieve this, it becomes necessary to replace the $2$-sphere $S^2$ with the annulus $\BA$ since $M$ could be exact and, in particular, not contain any closed symplectic surfaces. Working on the annulus requires versions of Banyaga's results for Hamiltonian diffeomorphisms on a surface with boundary, where the diffeomorphisms are not required to restrict to the identity on the boundary. We establish such generalizations in a separate paper \cite{edt}. They are reviewed in Section \ref{sec:ham_diffeo_surface_with_boundary}. The decomposition of a general manifold $M$ into subgraphs of Hamiltonians on the annulus $\BA$ is somewhat technical in nature and is carried out in Sections \ref{sec:symplectic_frusta}, \ref{sec:proof_ellipsoid_embedding_stability}, and \ref{sec:proof_stratification_lemma}.

A more subtle issue with our toy example above is the role of the negative constant $C$. Recall that we only explained how to decompose the domain $(M,\Omega)$ inside $W$ into balls and polydisks for sufficiently negative $C$. The problem is that the threshold of how negative $C$ has to be depends on $n$, the number of half-rotations in the factorization of $\varphi_H^1$. Specifically, the oscillation of the Hamiltonian $G$ is of the same order of magnitude as $n$, meaning that $|C|$ must also be of the order of $n$. However, the number $n$ of half-rotations in the factorization of $\varphi_H^1$ can be arbitrarily large. In fact, this number is bounded from below by the autonomous norm of $\varphi_H^1$, which is defined as the least number of autonomous Hamiltonian diffeomorphisms required in a factorization of $\varphi_H^1$. The autonomous norm on Hamiltonian diffeomorphism groups is known to be unbounded in many cases; see, for instance, \cite[\S 6.3]{gg04}, \cite{bk13}, \cite{bks18}. This is problematic because we would like to decompose $(M,\Omega)$ into balls and polydisks for a fixed value of $C$, not just for sufficiently negative values. In order to resolve this, we establish certain quantitative refinements of Banyaga's results in \cite{edt}. These refinements are of independent interest and are reviewed in Section \ref{sec:ham_diffeo_surface_with_boundary}. As a consequence, we show in Section \ref{sec:quant_decomp_rot} that the number of rotations needed to factor a Hamiltonian diffeomorphism is bounded in some $C^\infty$ open neighbourhood of the identity. In Section \ref{sec:strat_of_subgraphs}, we use this quantitative factorization result to decompose subgraphs of Hamiltonians on $\BA$ that are $C^\infty$ small perturbations of affine Hamiltonians into balls and polydisks.

In Section \ref{sec:bounding_subleading_asymptotics}, we prove Theorems \ref{thm:subleading_asymptotics_ech}, \ref{thm:subleading_asymptotics_pfh}, and \ref{thm:subleading_asymptotics_link_invariants}, establishing bounds on the error terms in the Weyl laws for ECH capacities, PFH spectral invariants, and link spectral invariants. The proof of the ECH Weyl law for domains in $\BR^4$ given in \cite{hut11} relies on packing domains with balls up to arbitrarily small error, where the number of balls depends on the error. The more efficient ball packing provided by Theorem \ref{thm:ball_packing_stability} allows us to upgrade the estimates in \cite{hut11} and obtain $O(1)$ bounds on the error terms. Similarly, in the case of PFH, the computations in \cite{eh}, which also rely on ball packings, can be improved to yield bounded error terms. Finally, the bounds on the error terms in the Weyl law for link spectral invariants are derived from the PFH bounds via Chen’s works \cite{chea,cheb}, which relate the two sets of spectral invariants.

Finally, in Section \ref{sec:packing_failure}, we construct a star-shaped domain $X\subset \BR^4$ with regularity slightly less than $C^2$ for which packing stability fails. To prove the failure of packing stability, we estimate the error terms in the ECH Weyl law (strictly speaking, the Weyl law for alternative ECH capacities, see Remark \ref{rem:elementary_ech_capacities}) and show that they diverge to negative infinity. It is noteworthy that our domain $X$ is not toric, so we cannot appeal to known computational techniques for ECH capacities of toric domains. Instead, our estimates rely on translating the problem of estimating the ECH capacities of $X$ into estimating the link spectral invariants of a certain Hamiltonian, which, in our situation, is much easier. This approach to estimating ECH capacities may be of independent interest; see \cite[Problem 2.4 \& 2.5]{aim}.

\subsection{Discussion and open problems}
\label{subsec:open_problems}

One of the main insights of this work is the connection between symplectic packing stability for manifolds with smooth boundary and the algebraic structure of Hamiltonian diffeomorphism groups, particularly their simplicity and perfectness properties. This reflects a broader pattern: several deep questions about Hamiltonian diffeomorphism groups---especially those related to Hofer geometry---have counterparts in the theory of symplectic embeddings. Illustrative examples include recent parallel developments concerning the large-scale geometry of the Hofer metric on Hamiltonian diffeomorphism groups and the symplectic Banach--Mazur metric on spaces of symplectic domains; see, e.g., \cite{ps23, chs24b, chb} and the references therein. Another instance is provided by closely related results on the local structure of geodesics with respect to the Hofer distance \cite{bp94, lm95} and the symplectic Banach--Mazur distance \cite{abe}.

In this light, the failure of packing stability for domains with rough boundary (Theorem \ref{thm:failure_packing_stability}) may be viewed as an analogue of recent non-simplicity theorems for various groups of Hamiltonian homeomorphisms, beginning with the inital breakthrough \cite{chs24} resolving the long-standing \textit{simplicity conjecture}. It is proved in \cite{chs24b} that the group of area preserving homeomorphisms of the $2$-sphere, denoted $\overline{\operatorname{Ham}}(S^2)$, is not simple---contrasting the simplicity of the smooth group $\operatorname{Ham}(S^2)$. This non-simplicity result does not directly correspond to failure of packing stability for domains with rough boundary, since the proof relies, loosly speaking, on detecting area preserving homeomorphisms with ``infinite'' Hofer energy and Calabi invariant, which would correspond to symplectic domains of infinite volume, for which the ball packing numbers are not even defined. However, $\overline{\operatorname{Ham}}(S^2)$ contains a natural subgroup $\operatorname{Hameo}(S^2)$ of \textit{Hamiltonian hameomorphisms}, introduced by Oh and M\"{u}ller \cite{om07}, whose elements have finite Hofer energy. It was shown in \cite{chmss} that $\operatorname{Hameo}(S^2)$ is not a simple group either, and this result serves as a close analogue of Theorem \ref{thm:failure_packing_stability}.

Given a compact, connected symplectic $4$-manifold $(M, \omega)$ with smooth boundary, a natural problem is to estimate the threshold $k_0 = k_0(M)$ in Theorem \ref{thm:ball_packing_stability} at which the ball packing numbers $p_k(M)$ stabilize to $1$. This appears to be a difficult question in general, as the threshold is highly sensitive to the geometry of the boundary of $M$. For example, if $X_n\subset \BR^4$ is a sequence of smooth domains converging to a rough domain $X_*\subset \BR^4$ as in Theorem \ref{thm:failure_packing_stability}, for which packing stability fails, then the stability thresholds $k_0(X_n)$ tend to infinity. Interestingly, for the toy model domains $(M,\Omega)$ inside $W$ considered in Subsection \ref{subsec:discussion}, the stability threshold $k_0(M)$ can be estimated in terms of the minimal number of rotations needed to factor $\varphi_H^1$. Closely related quantities---such as the \textit{autonomous norm} of a Hamiltonian diffeomorphism---have been studied by various authors; see, for instance, \cite{gg04, bk13,bks18}. It would be interesting to further investigate this relationship.

Another natural problem is to determine explicit bounds on the error terms in the ECH Weyl law. As before, this is challenging due to the strong dependence of the error terms on the boundary of the domain. Notably, the problem of bounding these error terms is closely tied to that of estimating ball packing stability thresholds. Indeed, as seen in our proof of Theorem \ref{thm:subleading_asymptotics_ech} in Section \ref{sec:bounding_subleading_asymptotics}, one can obtain explicit bounds on the error terms $e_k(X)$ for a smooth domain $X\subset \BR^4$ in terms of the stability thresholds $k_0(X)$ and $k_0(B\setminus X)$, where $B\subset \BR^4$ is a sufficiently large ball containing $X$.

The Ruelle invariant $\operatorname{Ru}(X)$, which appears in Hutchings' conjecture \eqref{eq:hutchings_conjecture} on the error terms in the ECH Weyl law, can be defined for star-shaped domains $X\subset \BR^4$ with $C^2$ boundary and it is continuous with respect to the $C^2$ topology on the space of star-shaped domains \cite[Proposition 2.13 c]{ce22}. In light of Hutchings' conjecture, one would therefore expect that the error terms $e_k(X)$ are $O(1)$ for compact domains with $C^2$ boundary. However, we do not yet know how to prove this. We show (Proposition \ref{prop:divergent_error_terms_ech}) that the domains in Theorem \ref{thm:failure_packing_stability}, which have regularity just below $C^2$ and for which packing stability fails, have unbounded error terms $e_k^{\operatorname{alt}}$ in the Weyl law for the alternative ECH capacities (see Remark \ref{rem:elementary_ech_capacities}). The same is expected to hold for the error terms $e_k$ in the usual ECH Weyl law, but we strictly speaking do not prove this since we rely on results from \cite{eh} which are only stated for the alternative ECH capacities. This leads to the following natural question, which could serve as a test for Hutchings' conjecture.

\begin{question}
Is $C^2$ the regularity threshold at which the error terms in the ECH Weyl law start to be $O(1)$?
\end{question}

If we knew packing stability for domains with $C^2$ boundary, this would imply $O(1)$ subleading asymptotics for such domains. As already noted above, we expect that our methods yield packing stability for domains in some finite differentiability class, but we do not expect that our present techniques can reach $C^2$ regularity. The reason for this is that our packing construction uses (quantitative versions of) Banyaga's simplicity results, which ultimately rely on a KAM result due to Herman \cite[Annexe, Theorem 2.2]{her70}. This result says that a $C^\infty$ small Hamiltonian perturbation of a Diophantine rotation of the $2$-dimensional torus $T^2$ is still conjugated to the original rotation. For sufficiently Diophantine rotation vectors, this continues to hold for $C^{3+\varepsilon}$ perturbations, but fails for $C^{3-\varepsilon}$ perturbations, see e.g. \cite{her83, cw13}. Since $C^{4+\varepsilon}$ Hamiltonians generate $C^{3+\varepsilon}$ Hamiltonian diffeomorphisms, it seems plausible that our methods can show packing stability with $C^{4+\varepsilon}$ boundary, but they likely break down for lower regularity.

\begin{question}
What is the regularity threshold at which packing stability starts to hold? Does it agree with the regularity threshold for $O(1)$ subleading asymptotics in the ECH Weyl law?
\end{question}

The star-shaped domain $X\subset \BR^4$ of regularity just below $C^2$ in our counterexample to packing stability (Theorem~\ref{thm:failure_packing_stability}) is not convex. In fact, there does not even exist a symplectomorphism of $\BR^4$ mapping $X$ to a convex domain. The reason is that the sectional curvature of the boundary of $X$ (away from the single non-smooth point) is unbounded from below. A more interesting question is whether the interior of $X$ is symplectomorphic to the interior of a convex domain. It is reasonable to conjecture that this is not the case. Indeed, by \cite{ce22} the Ruelle invariant of a smooth convex domain can be bounded from above and below by constants only depending on the volume and the action of a systole of the domain. Since volume and action of a systole are continuous on the space of convex domains with respect to the Hausdorff topology, Hutchings' conjecture~\eqref{eq:hutchings_conjecture} suggests that the error terms in the ECH Weyl law of any (not necessarily smooth) convex domain should be bounded. Since the error terms of the domain $X$ in Theorem~\ref{thm:failure_packing_stability} diverge to negative infinity, one expects the interior of $X$ to not be symplectomorphic to the interior of a convex domain.

In view of this discussion, ECH capacities conjecturally do not yield obstructions to symplectic packing stability for general convex domains and we are led to the following question:

\begin{question}
Does packing stability hold for the interiors of arbitrary, not necessarily smooth, convex domains?
\end{question}

Let $(Y^3,\xi)$ be a closed connected contact $3$-manifold. Let $\Gamma \in H_1(Y;\BZ)$ be a homology class such that $c_1(\xi)+2\operatorname{PD}(\Gamma)$ is a torsion element of $H_2(Y;\BZ)$ and let $I$ be an absolute $\BZ$-grading of the embedded contact homology $ECH(Y,\xi,\Gamma)$. Then the ECH Weyl law in \cite[Theorem 1.3]{chr15} states that, for every sequence of non-zero homogeneous ECH classes $\sigma_k \in ECH(Y,\xi,\Gamma)$ with grading $I(\sigma_k)$ diverging to $+\infty$ and for every contact form $\lambda$ defining $\xi$, the ECH spectral invariants $c_{\sigma_k}(Y,\lambda)$ satisfy the asymptotic formula
\begin{equation}
\label{eq:general_ech_weyl_law}
c_{\sigma_k}(Y,\lambda) = \sqrt{\operatorname{vol}(Y,\lambda) I(\sigma_k)} + o(I(\sigma_k)^{1/2}) \qquad (k\rightarrow \infty).
\end{equation}
In view of Theorem \ref{thm:subleading_asymptotics_ech}, the following question is natural:

\begin{question}
Are the error terms in the ECH Weyl law \eqref{eq:general_ech_weyl_law} bounded as well?
\end{question}

In this direction, our packing stability result Theorem \ref{thm:ball_packing_stability} together with the arguments in \cite[\S 3]{chr15} imply the inequality
\begin{equation*}
c_{\sigma_k}(Y,\lambda) \geq \sqrt{\operatorname{vol}(Y,\lambda) I(\sigma_k)} + O(1) \qquad (k\rightarrow \infty).
\end{equation*}
Moreover, if the error terms are $O(1)$ for one single contact form $\lambda_0$ defining $\xi$, then they are $O(1)$ for every contact form $\lambda$ defining $\xi$.

Finally, it is natural to explore packing stability in higher dimensions. Since Banyaga's results apply in arbitrary dimensions, it seems promising to attempt to extend the $4$-dimensional symplectic embedding techniques developed in this paper to higher dimensions. However, the question of whether there exist higher-dimensional domains with rough boundaries for which packing stability fails remains wide open. Given the connections between symplectic packing and the algebraic structure of Hamiltonian diffeomorphism groups established in this paper, this question can be viewed as a variant of the higher-dimensional version of the simplicity conjecture, which asks whether the groups of compactly supported Hamiltonian homeomorphisms $\overline{\operatorname{Ham}}(B^{2n}(1))$ and hameomorphisms $\operatorname{Hameo}(B^{2n}(1))$ of the open ball $B^{2n}(1)$ in dimension $2n \geq 4$ are simple or not; see, for instance, \cite{mss}. We expect these questions to be of the same level of difficulty.\\

\paragraph{\textbf{Acknowledgements}}

The author thanks Dan Cristofaro-Gardiner, Richard Hind, and Michael Hutchings for helpful discussions and Felix Schlenk for his interest and comments on a draft of this paper. The author is supported by Dr.\ Max R\"{o}ssler, the Walter Haefner Foundation, and the ETH Z\"{u}rich Foundation.

\section{Hamiltonian diffeomorphisms of surfaces with boundary}
\label{sec:ham_diffeo_surface_with_boundary}

Given a symplectic surface with boundary, we introduce Hamiltonian diffeomorphisms which are not required to restrict to the identity on the boundary. This is a slightly non-standard notion, but has appeared before in the literature in similar forms; see, for instance, \cite[\S 2]{abhs18}. Moreover, we state Theorem \ref{thm:quantitative_perfectness} proved in \cite{edt}, which is a quantitative refinement of Banyaga's perfectness and simplicity results for Hamiltonian diffeomorphism groups \cite{ban78, ban97}.\\

Let $(\Sigma,\omega)$ be a $2$-manifold equipped with an area form. We allow $\Sigma$ to have boundary and do not require it to be compact. Let $\operatorname{Diff}_c(\Sigma,\omega)$ denote the group of all compactly supported diffeomorphisms of $\Sigma$ which preserve the area form $\omega$. Diffeomorphisms $\varphi\in \operatorname{Diff}_c(\Sigma,\omega)$ are not required to restrict to the identity on the boundary $\partial\Sigma$. Consider an isotopy $\varphi^t$ in $\operatorname{Diff}_c(\Sigma,\omega)$. Let $X_t$ be the time dependent vector field generating $\varphi^t$. Since $\varphi^t$ is a compactly supported area preserving isotopy, the vector field $X_t$ is compactly supported, tangent to the boundary $\partial\Sigma$, and satisfies $\ML_{X_t}\omega = 0$. Conversely, any such vector field $X_t$ generates a compactly supported isotopy in $\operatorname{Diff}_c(\Sigma,\omega)$. Vector fields $X_t$ satisfying these properties are in bijective correspondence with families of compactly supported closed $1$-forms $\alpha_t$ on $\Sigma$ which vanish on vectors tangent to the boundary $\partial \Sigma$. This correspondence is characterized by the identity $\alpha_t = \iota_{X_t} \omega$.

Consider a time dependent Hamiltonian $H:[0,1]\times \Sigma \rightarrow \BR$. The induced family of closed $1$-forms $dH_t$ is compactly supported and vanishes on vectors tangent to the boundary $\partial\Sigma$ if and only if, for every fixed time $t$, the function $H_t$ is locally constant on the boundary $\partial\Sigma$ and outside some compact subset of $\Sigma$. If this is the case, we call the Hamiltonian $H$ \textit{$\partial$-admissible}. If $H$ is $\partial$-admissible, then the vector field $X_{H_t}$ characterized by $dH_t = \iota_{X_{H_t}}\omega$ induces an isotopy $\varphi_H^t$ in $\operatorname{Diff}_c(\Sigma,\omega)$.

If $H$ and $G$ are $\partial$-admissible Hamiltonians, we define new Hamiltonians $H\#G$ and $\overline{H}$ by
\begin{equation}
\label{eq:hamiltonian_of_composition}
(H\#G)_t \coloneqq H_t + G_t \circ (\varphi_G^t)^{-1} \qquad \text{and} \qquad \overline{H}_t \coloneqq - H_t \circ \varphi_H^t.
\end{equation}
The Hamiltonians $H\#G$ and $\overline{H}$ are also $\partial$-admissible. They generate the isotopies $\varphi_H^t\circ \varphi_G^t$ and $(\varphi_H^t)^{-1}$, respectively.

We call an isotopy $\varphi^t$ in $\operatorname{Diff}_c(\Sigma,\omega)$ a \textit{compactly supported Hamiltonian isotopy} if it is of the form $\varphi^t = \varphi_H^t$ for a $\partial$-admissible $H$ which in addition is compactly supported and vanishes on the boundary $\partial \Sigma$. We call a diffeomorphism $\varphi\in \operatorname{Diff}_c(\Sigma,\omega)$ a \textit{compactly supported Hamiltonian diffeomorphism} if there exists a compactly supported Hamiltonian isotopy $(\varphi^t)_{t\in [0,1]}$ starting at $\varphi^0=\operatorname{id}$ and ending at $\varphi^1 = \varphi$. The compactly supported Hamiltonian diffeomorphisms form a subgroup
\begin{equation*}
\operatorname{Ham}_c(\Sigma,\omega)\subset \operatorname{Diff}_c(\Sigma,\omega).
\end{equation*}
We emphasize that since $H$ is only required to vanish on the boundary $\partial\Sigma$ but not in a neighbourhood of it, Hamiltonian diffeomorphisms need not restrict to the identity on the boundary. We caution that while every $\partial$-admissible Hamiltonian $H$ induces an isotopy in $\operatorname{Diff}_c(\Sigma,\omega)$, this isotopy is not necessarily a Hamiltonian isotopy.

Suppose now that $\Sigma$ is connected and not closed. Let $\widetilde{\operatorname{Ham}}_c(\Sigma,\omega)$ denote the universal cover of $\operatorname{Ham}_c(\Sigma,\omega)$. The \textit{Calabi homomorphism}
\begin{equation*}
\operatorname{Cal} : \widetilde{\operatorname{Ham}}_c(\Sigma,\omega) \rightarrow \BR
\end{equation*}
is defined as follows: Given an element $\widetilde{\varphi}\in \widetilde{\operatorname{Ham}}_c(\Sigma,\omega)$, choose a Hamiltonian isotopy $(\varphi^t)_{t\in [0,1]}$ representing $\widetilde{\varphi}$. Let $H$ be the unique Hamiltonian generating $\varphi^t$ which is compactly supported and vanishes on the boundary $\partial\Sigma$. Then $\operatorname{Cal}(\widetilde{\varphi})$ is defined by
\begin{equation*}
\operatorname{Cal}(\widetilde{\varphi}) \coloneqq \int_{[0,1]\times \Sigma} H dt\wedge \omega.
\end{equation*}
This integral turns out to be independent of the choice of representative $\varphi^t$ of $\widetilde{\varphi}$. We define
\begin{equation*}
\operatorname{Ham}_c^0(\Sigma,\omega)\subset \operatorname{Ham}_c(\Sigma,\omega)
\end{equation*}
to be the subgroup consisting of all Hamiltonian diffeomorphisms $\varphi\in \operatorname{Ham}_c(\Sigma,\omega)$ which possess a lift $\widetilde{\varphi}\in \widetilde{\operatorname{Ham}}_c(\Sigma,\omega)$ which is contained in the kernel of the Calabi homomorphism $\operatorname{Cal}$.

For closed surfaces $\Sigma$, it will be convenient to set $\operatorname{Ham}_c^0(\Sigma,\omega) \coloneqq \operatorname{Ham}(\Sigma,\omega)$. In the case that $\Sigma$ is disconnected, we define $\operatorname{Ham}_c^0(\Sigma,\omega)$ to be the subgroup consisting of all Hamiltonian diffeomorphisms $\varphi\in \operatorname{Ham}_c(\Sigma,\omega)$ such that, for every connected component $\Sigma'$ of $\Sigma$, the restriction of $\varphi$ to $\Sigma'$ belongs to $\operatorname{Ham}_c^0(\Sigma',\omega)$.

It was proved by Banyaga in \cite{ban78} (see also \cite[\S 4.3]{ban97}) that if $(\Sigma,\omega)$ does not have boundary, then the group $\operatorname{Ham}_c^0(\Sigma,\omega)$ is perfect, i.e. it agrees with its commutator subgroup. If in addition $\Sigma$ is connected, then $\operatorname{Ham}_c^0(\Sigma,\omega)$ is also simple, i.e. it does not have any non-trivial normal subgroups. If $\Sigma$ has non-empty boundary, then the group $\operatorname{Ham}_c^0(\Sigma,\omega)$ is never simple because the diffeomorphisms restricting to the identity on the boundary form a non-trival normal subgroup. However, we prove in \cite{edt} that $\operatorname{Ham}_c^0(\Sigma,\omega)$ is still perfect. In the context of general diffeomorphism groups, an analogous result is obtained in \cite{ryb98}. However, the methods there do not adapt well to the conservative setting and our proofs in \cite{edt} rely on different tools.

For our purposes, the following refinement of the perfectness of $\operatorname{Ham}_c^0(\Sigma,\omega)$ proved in \cite{edt} will be essential. In the non-conservative setting, similar results were obtained in \cite{hrt13}, but it is unclear whether the proofs there can be adapted to the conservative settings and our arguments in \cite{edt} are rather different.

\begin{theorem}[Quantitative perfectness \cite{edt}]
\label{thm:quantitative_perfectness}
There exists a positive integer $m>0$ such that the following is true. Let $(\Sigma,\omega)$ be a symplectic surface, possibly with boundary and not necessarily compact, and let $U\Subset V\Subset \Sigma$ be relatively compact open subsets. Consider the map
\begin{equation*}
\Phi : \operatorname{Ham}_c^0(V)^{2m} \rightarrow \operatorname{Ham}_c^0(V) \quad (u_1,v_1,\dots,u_m,v_m) \mapsto \prod\limits_{i=1}^m [u_i,v_i].
\end{equation*}
Let us equip both $\operatorname{Ham}_c^0(V)$ and $\operatorname{Ham}_c^0(U)$ with the topology of uniform $C^\infty$ convergence on the compact set $\overline{V}$. Then there exist a $C^\infty$ open neighbourhood $\MU\subset \operatorname{Ham}_c^0(U)$ of the identity and a continuous map
\begin{equation*}
\Psi : \MU \rightarrow \operatorname{Ham}_c^0(V)^{2m}
\end{equation*}
which is a right inverse of $\Phi$, i.e. which satisfies $\Phi\circ \Psi = \operatorname{id}_{\mathcal{U}}$. Moreover, we can choose $\MU$ and $\Psi$ such that $\Psi(\operatorname{id})$ is arbitrarily $C^\infty$ close to the tuple $(\operatorname{id},\dots,\operatorname{id})$.
\end{theorem}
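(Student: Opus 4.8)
The plan is to reduce, by a uniform and continuous fragmentation, to the case of diffeomorphisms supported in a single Darboux disk inside $V$, and then to dispatch that model case by a KAM-type conjugation trick in the style of Herman, organizing the whole construction in terms of explicit algebraic formulas so that continuity of $\Psi$ and the condition $\Psi(\operatorname{id})\approx(\operatorname{id},\dots,\operatorname{id})$ come essentially for free.

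First I would fix a finite cover of $\overline{U}$ by Darboux disks $D_1,\dots,D_N$ with $\overline{D_j}\subset V$. Given $\varphi\in\operatorname{Ham}_c^0(U)$ that is $C^\infty$-close to the identity, write its canonical small generating Hamiltonian as $H=\sum_j\chi_j H$ for a partition of unity $\{\chi_j\}$ subordinate to the cover; this produces a factorization $\varphi=\varphi_1\circ\cdots\circ\varphi_N$ with $\varphi_j$ supported in $D_j$, depending smoothly on $\varphi$ and trivial at $\varphi=\operatorname{id}$. Since $\sum_j\operatorname{Cal}(\varphi_j)=\operatorname{Cal}(\varphi)=0$, after inserting Calabi-correcting factors supported in a fixed auxiliary disk $D_*\Subset V$ and telescoping (which changes neither the product nor the supports beyond enlarging them mildly) one may assume in addition that each $\varphi_j\in\operatorname{Ham}_c^0(V)$ and is supported in a disk $\widehat D_j\Subset V$ with the $\widehat D_j$ pairwise disjoint. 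The heart of the matter is then the following model claim: a Calabi-zero map $\psi$ supported in a disk $\widehat D\Subset V$ and $C^\infty$-small is a single commutator of maps supported in a slightly larger disk. I would prove this by embedding $\widehat D$ symplectically into a flat two-torus, extending $\psi$ to a $C^\infty$-small symplectomorphism $\widehat\psi$ of $T^2$, and choosing a translation $R$ of $T^2$ by a small, sufficiently Diophantine vector; then $\widehat\psi\circ R$ is $C^\infty$-close to $R$, so Herman's theorem \cite[Annexe, Theorem~2.2]{her70} yields a symplectomorphism $g$ of $T^2$, close to the identity, depending continuously on $\widehat\psi$ and equal to the identity when $\widehat\psi=\operatorname{id}$, with $gRg^{-1}=\widehat\psi R$, that is $\widehat\psi=[g,R^{-1}]$. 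Because $\psi$ has vanishing Calabi invariant, $g$ has vanishing flux and can be localized near $\widehat D$; replacing $R$ by a compactly supported Hamiltonian diffeomorphism that agrees with it on the relevant region gives $\psi=[a,b]$ with $a,b\in\operatorname{Ham}_c^0(V)$ supported in a disk $\Subset V$, via formulas explicit in $\psi$.

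The delicate point — and the one I expect to be the main obstacle — is to keep the total number of commutators bounded by a constant $m$ that depends on nothing, in particular not on $N$ and hence not on the topology of $U$; carried out naively, the above yields of order $N$ commutators. My plan to circumvent this is to run the Herman step once, for the entire telescoped product of the disjoint disk-supported pieces at the same time: embed $\bigsqcup_j\widehat D_j$ together with $D_*$ into a single fixed model surface via one symplectic embedding, extend the whole product to one $C^\infty$-small symplectomorphism there, and apply the conjugation trick to it; what survives is a bounded product of commutators of maps supported in $V$. (An alternative route is a displacement/portability argument inside $V$, exploiting that near the identity the relevant homogeneous quasimorphisms are small relative to their defects.) Threading this universality through while checking at every stage that the output depends continuously on $\varphi$ and reduces to the identity tuple at $\varphi=\operatorname{id}$ is the real work; the concluding freedom of making $\Psi(\operatorname{id})$ merely $C^\infty$-close to $(\operatorname{id},\dots,\operatorname{id})$ rather than equal to it is then obtained by post-composing with an arbitrarily small perturbation. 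A secondary caveat is that Herman's theorem is an essentially $C^\infty$ (Nash--Moser) statement tied to Diophantine rotation vectors, which is precisely why this method does not reach low finite regularity. This theorem is proved in \cite{edt}; here it is only used as a black box.
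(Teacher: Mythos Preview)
The paper does not contain a proof of this theorem: it is quoted from the companion paper \cite{edt} and used as a black box, as you yourself note in your final sentence. There is therefore no proof in the present paper to compare your sketch against.

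That said, your outline is consistent with the hints the paper gives elsewhere about how the argument in \cite{edt} goes. In \S1.6 the author explicitly says that the quantitative Banyaga-type results ``ultimately rely on a KAM result due to Herman \cite[Annexe, Theorem 2.2]{her70}'', which is exactly the engine you propose for the model disk case. Your identification of the main difficulty --- obtaining a number $m$ of commutators that is independent of the multiplicity $N$ of the cover and hence of the topology of $U,V$ --- is also on target; the paper stresses that $m$ is a universal constant, and your naive fragmentation would only give $m$ of order $N$.

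Where your sketch is thinnest is precisely at that point. Your proposed fix (embed all the disjoint disks simultaneously into a single model surface and run Herman once) is suggestive but not obviously workable: the issue is that the number of disks $N$ depends on $(U,V)$, so the ``single fixed model surface'' cannot be chosen in advance, and Herman's theorem on $T^2$ does not directly give control on where the conjugating map $g$ is supported. Likewise, the step ``because $\psi$ has vanishing Calabi invariant, $g$ has vanishing flux and can be localized near $\widehat D$'' hides real work: vanishing flux does not by itself localize support. These are exactly the sorts of details one expects \cite{edt} to supply, and your sketch correctly flags them as the heart of the matter rather than pretending they are routine.
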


\begin{remark}
The right inverse $\Psi$ in Theorem \ref{thm:quantitative_perfectness} can in fact be chosen to be smooth in a suitable sense, see \cite{edt}. However, we will not need this here.
\end{remark}

\section{Quantitative factorization into rotations}
\label{sec:quant_decomp_rot}

The main result of this section is Theorem \ref{thm:rot_decomp}, which allows us to write Hamiltonian diffeomorphisms of the annuls which are $C^\infty$ close to the identity and have vanishing Calabi invariant as a composition of finitely many smooth conjugates of rotations in a quantitatively controlled way. A key ingredient is the quantitative perfectness result for Hamiltonian diffeomorphism groups Theorem \ref{thm:quantitative_perfectness}. Some of our arguments are inspired by the proof that perfectness implies simplicity for certain groups of homeomorphisms; see \cite{eps70} and \cite[Theorem 2.1.7]{ban97}.\\

Let $\BT\coloneqq \BR/\BZ$ denote the circle and consider the annulus $\BA\coloneqq [0,1]\times \BT$ equipped with coordinates $(x,y)$ and the area form $\omega\coloneqq dx\wedge dy$. For every real number $\alpha\in\BR$, we define the autonomous Hamiltonian
\begin{equation*}
H_\alpha:[0,1] \times \BA\rightarrow \BR \qquad H_\alpha(t,x,y)\coloneqq \alpha x.
\end{equation*}
The Hamiltonian $H_\alpha$ is $\partial$-admissible and the Hamiltonian vector field of $H_\alpha$ is given by $X_{H_\alpha} = -\alpha\partial_y$. The time-$1$ map $\varphi_{H_\alpha}^1$ of the induced flow is the rotation
\begin{equation*}
R_\alpha : \BA\rightarrow \BA \qquad R_\alpha(x,y) \coloneqq (x, y-\alpha).
\end{equation*}

\begin{theorem}
\label{thm:rot_decomp}
There exists an integer $\ell>0$ divisible by $4$ such that the following statement is true for every $\alpha\in \BR\setminus \frac{1}{2}\BZ$ and for every $C^\infty$ neighbourhood $\MU\subset \operatorname{Ham}^0(\BA)$ of the identity:

For every Hamiltonian diffeomorphism $\varphi \in \operatorname{Ham}^0(\BA)$ which is sufficiently $C^\infty$ close to the identity, there exist Hamiltonian diffeomorphisms $\psi_1,\dots,\psi_\ell \in \MU$ depending continuously on $\varphi$ with respect to the $C^\infty$ topology and satisfying the following property: For $1\leq i \leq \ell$, define $\varepsilon_i\coloneqq +1$ if $i$ is congruent to $0$ or $1$ modulo $4$ and $\varepsilon_i\coloneqq -1$ otherwise. Moreover, define $H_i \coloneqq \varepsilon_i \psi_i^* H_\alpha$. Then
\begin{equation*}
\varphi = \prod\limits_{i=1}^\ell \varphi_{H_i}^1.
\end{equation*}
\end{theorem}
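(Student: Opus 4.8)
The plan is to bootstrap from the quantitative perfectness result, Theorem~\ref{thm:quantitative_perfectness}, in two stages: first convert a product of commutators into a product of conjugates of a \emph{fragmented} Hamiltonian supported in a small ball, and then convert such small-ball Hamiltonians into conjugates of rotations $R_\alpha$. Concretely, fix a slightly larger annulus $V$ and an open disk $U \Subset V$ on which $R_\alpha$ (or rather its generating Hamiltonian $H_\alpha$) will do its work after conjugation. Given $\varphi \in \operatorname{Ham}^0(\BA)$ very $C^\infty$-close to the identity, I would first isotope it — continuously in $\varphi$ — so that it is supported in $U$; this uses a standard fragmentation-with-parameters argument (write $\varphi$ as a product of a bounded number of diffeomorphisms each supported in a chart, using a fixed partition of unity, and note each factor depends continuously on $\varphi$ and is $C^\infty$-small). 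The vanishing Calabi invariant of $\varphi$ is used here to ensure the pieces can be taken with vanishing Calabi invariant as well, so they lie in $\operatorname{Ham}_c^0$ of the charts.

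Next, apply Theorem~\ref{thm:quantitative_perfectness} with the pair $U \Subset V$: for $\varphi$ in a $C^\infty$ neighbourhood of the identity we obtain $2m$ diffeomorphisms $u_i, v_i \in \operatorname{Ham}_c^0(V)$, depending continuously on $\varphi$ and close to the identity, with $\varphi = \prod [u_i, v_i]$. Now the classical ``perfectness $\Rightarrow$ simplicity'' trick (Epstein~\cite{eps70}, \cite[Theorem~2.1.7]{ban97}) converts each commutator into a product of conjugates of any fixed element whose normal closure is the whole group — and here $R_\alpha$ qualifies, since $\alpha \notin \tfrac12\BZ$ guarantees $R_\alpha$ is a nontrivial element of the simple group $\operatorname{Ham}^0$ of a sphere/annulus, hence normally generates it. The point is to run this trick \emph{quantitatively and continuously}: using a displaceable support (the disk $U$ can be displaced inside $V$ by a Hamiltonian diffeomorphism, and $R_\alpha$ restricted to a sub-disk can be made to look like a generator after conjugation), one writes $[u,v]$ as a uniformly bounded product of conjugates of $R_\alpha^{\pm 1}$, with all conjugating diffeomorphisms depending continuously on $(u,v)$ and lying in the prescribed neighbourhood $\MU$. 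I would be careful to track the universal-cover/Calabi bookkeeping here: conjugates of $R_\alpha$ have Calabi invariant equal (up to sign) to that of $R_\alpha$, and the signs $\varepsilon_i$ together with the divisibility of $\ell$ by $4$ are exactly what is needed to make the total Calabi invariant vanish and the whole identity lift to $\widetilde{\operatorname{Ham}}$, matching $\varphi \in \operatorname{Ham}^0(\BA)$.

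The final step is cosmetic: a conjugate $\chi^{-1} R_\alpha \chi$ of the rotation is the time-$1$ map of $\chi^* H_\alpha$, so collecting all the conjugating factors $\psi_1,\dots,\psi_\ell$ (reindexing so that the sign pattern $++--$ of the $\varepsilon_i$ is respected, which forces $\ell$ divisible by $4$) yields $\varphi = \prod_{i=1}^\ell \varphi_{H_i}^1$ with $H_i = \varepsilon_i \psi_i^* H_\alpha$, and continuity of each $\psi_i$ in $\varphi$ follows by composing the continuous dependences at each stage. The main obstacle, I expect, is the quantitative/continuous version of the ``commutator as bounded product of conjugates'' step: the textbook argument produces a number of conjugates that a priori could depend on $\varphi$, and one must instead exploit displaceability of a \emph{fixed} support together with the explicit structure of $R_\alpha$ to get a uniform bound $\ell$ and continuous dependence, while simultaneously keeping every conjugating diffeomorphism inside the arbitrarily small neighbourhood $\MU$ — the latter requiring that the conjugators be built from $C^\infty$-small data, which in turn forces $u_i,v_i$ (and hence $\varphi$) to be taken sufficiently close to the identity.
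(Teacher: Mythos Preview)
Your overall strategy matches the paper's: fragment $\varphi$, apply quantitative perfectness (Theorem~\ref{thm:quantitative_perfectness}) to write each fragment as a product of commutators, then use the Epstein trick to convert each commutator $[u,v]$ into a product of conjugates of $R_\alpha^{\pm 1}$. You also correctly flag the main difficulty---making this last step uniform and continuous---and you are right that the conjugators end up being $\operatorname{id}$, $u^{-1}$, $v^{-1}$, which are $C^\infty$-small when $u,v$ are.

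There is, however, a genuine gap in the setup. You propose to work with a \emph{single} disk $U\Subset V\Subset\BA$ and ``isotope $\varphi$ so that it is supported in $U$''. This cannot be done: fragmentation produces pieces supported in \emph{several} charts covering $\BA$, not in one small disk, and conjugating those pieces into a common disk would introduce fixed (not $C^\infty$-small) conjugators, destroying the conclusion $\psi_i\in\MU$. More importantly, the Epstein identity you want, Lemma~\ref{lem:commutators} applied with $\varphi=R_\alpha$ and $\psi=R_\alpha^{-1}$, requires the support $V$ of $u,v$ to satisfy that $V$, $R_\alpha(V)$, $R_\alpha^{-1}(V)$ are \emph{pairwise disjoint}. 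A chart large enough to figure in a fixed finite cover of $\BA$ will not have this property; and displacing by some other Hamiltonian diffeomorphism (as you suggest) yields conjugates of \emph{that} diffeomorphism, not of $R_\alpha$.

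The paper resolves this with a combinatorial device you are missing: a coloring lemma (Lemma~\ref{lem:colorings}). One covers $\BA$ by balls of radius $1/k$ with $2/k<\operatorname{dist}(\alpha,\tfrac12\BZ)$ and, via a bounded-degree graph argument (Brooks' theorem), colors them with a \emph{universal} number $n$ of colors so that for balls $p,q$ of the same color the six sets $B_{1/k}(p)$, $R_\alpha^{\pm 1}(B_{1/k}(p))$, $B_{1/k}(q)$, $R_\alpha^{\pm 1}(B_{1/k}(q))$ are pairwise disjoint. One then fragments $\varphi$ into $n$ pieces $\varphi_i\in\operatorname{Ham}_c^0(U_i)$ (Lemma~\ref{lem:fragmentation_calabi}), where $U_i$ is the union of balls of color $i$; each $V_i\supset U_i$ is now genuinely displaced by $R_\alpha$ and $R_\alpha^{-1}$, so Lemma~\ref{lem:from_commutators_to_rotations} applies and gives exactly $8$ conjugates of $R_\alpha^{\pm 1}$ per commutator, with the sign pattern $+{-}{-}{+}{+}{-}{-}{+}$. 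This yields $\ell=8mn$, and the conjugators are drawn from $\{\operatorname{id},(u_i^j)^{-1},(v_i^j)^{-1}\}$, hence lie in $\MU$ and depend continuously on $\varphi$.
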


Let us abbreviate the two components of $\partial\BA$ by
\begin{equation*}
\partial_+\BA\coloneqq \left\{ 1 \right\}\times \BT \quad \text{and} \quad \partial_-\BA \coloneqq \left\{ 0 \right\}\times \BT.
\end{equation*}

\begin{corollary}
\label{cor:rot_decomp}
Let $\ell >0$ be an integer as in Theorem \ref{thm:rot_decomp}. Then for every $\alpha\in \BR\setminus \frac{1}{2}\BZ$, every $C^\infty$ neighbourhood $\MU \subset \operatorname{Ham}^0(\BA)$ of the identity, and every $C^\infty$ neighbourhood $\MV \subset C^\infty([0,1]\times \BA)$ of zero, the following statement holds:

For every $\partial$-admissible Hamiltonian $H:[0,1]\times \BA \rightarrow \BR$ which is sufficiently $C^\infty$ close to $0$ and satisfies
\begin{equation}
\label{eq:corollary_rot_decomp_hamiltonian}
\int\limits_{[0,1]\times \BA} H dt\wedge \omega = 0 \quad \text{and} \quad \int\limits_{[0,1]} H(t,\partial_\pm\BA) dt = 0,
\end{equation}
there exists a $1$-parameter family of $\partial$-admissible Hamiltonians $(H^\lambda)_{\lambda\in [0,1]}$ starting at $H^0 = H$ and satisfying the following properties:
\begin{enumerate}
\item The time-$1$-map $\varphi_{H^\lambda}^1$ is independent of $\lambda \in [0,1]$.
\item For all $\lambda \in [0,1]$, we have
\begin{equation*}
\int\limits_{[0,1]\times \BA} H^\lambda dt\wedge\omega = 0 \quad \text{and} \quad \int_{[0,1]} H^\lambda(t,\partial_\pm\BA) dt = 0.
\end{equation*}
\item For all $\lambda \in [0,1]$, we have $H^\lambda \in \MV$.
\item There exist Hamiltonian diffeomorphisms $\psi_i \in \MU$ such that
\begin{equation*}
H^1 = \#_{i=1}^\ell \varepsilon_i \psi_i^*H_\alpha,
\end{equation*}
where the signs $\varepsilon_i$ are defined as in Theorem \ref{thm:rot_decomp}.
\end{enumerate}
\end{corollary}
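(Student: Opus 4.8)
The plan is to deduce Corollary \ref{cor:rot_decomp} from Theorem \ref{thm:rot_decomp} by upgrading the group-level factorization $\varphi_H^1 = \prod_{i=1}^\ell \varphi_{H_i}^1$ into a statement about Hamiltonians, and then manufacturing the deformation $(H^\lambda)_{\lambda\in[0,1]}$ that connects $H$ to the concatenated Hamiltonian $H^1 = \#_{i=1}^\ell \varepsilon_i\psi_i^*H_\alpha$ while preserving both normalization conditions in \eqref{eq:corollary_rot_decomp_hamiltonian} and staying in the prescribed neighbourhood $\MV$. First I would apply Theorem \ref{thm:rot_decomp} to $\varphi \coloneqq \varphi_H^1$: since $H$ is $C^\infty$ small, $\varphi$ is $C^\infty$ close to the identity, and I need to check $\varphi \in \operatorname{Ham}^0(\BA)$, i.e. that some lift has vanishing Calabi invariant. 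The first condition in \eqref{eq:corollary_rot_decomp_hamiltonian} says $\operatorname{Cal}$ of the lift of $(\varphi_H^t)$ determined by the boundary-vanishing normalization is zero; but $H$ is only $\partial$-admissible, not boundary-vanishing, so I first replace $H$ by $H - c_\pm(t)$ on a collar of each boundary component (interpolating to $H$ in the interior) to get a genuinely boundary-vanishing Hamiltonian generating the same isotopy, and the second condition in \eqref{eq:corollary_rot_decomp_hamiltonian} ensures this adjustment does not change the Calabi integral. This gives $\psi_1,\dots,\psi_\ell \in \MU$, continuous in $\varphi$ hence in $H$, with $\varphi_H^1 = \prod_{i=1}^\ell \varphi_{H_i}^1$ where $H_i = \varepsilon_i\psi_i^*H_\alpha$.

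Next I would handle item (4): the Hamiltonian generating the composite isotopy $\varphi_{H_\ell}^t\circ\cdots\circ\varphi_{H_1}^t$ is exactly $H_\ell \# \cdots \# H_1 = \#_{i=1}^\ell \varepsilon_i\psi_i^*H_\alpha$ by the composition formula \eqref{eq:hamiltonian_of_composition}, so $H^1 \coloneqq \#_{i=1}^\ell \varepsilon_i\psi_i^*H_\alpha$ has time-$1$ map $\varphi_H^1$, giving item (1) at $\lambda = 1$. I should verify $H^1$ satisfies \eqref{eq:corollary_rot_decomp_hamiltonian}: each $\psi_i^*H_\alpha$ is $\partial$-admissible, and since $\psi_i \in \MU$ is $C^\infty$ small and $\psi_i$ fixes the boundary setwise, on $\partial_\pm\BA$ the value of $\psi_i^*H_\alpha$ is close to that of $H_\alpha$, which is constant ($0$ on $\partial_-\BA$, $\alpha$ on $\partial_+\BA$); a cleaner route is to note that the Calabi-type integrals of $H^1$ are determined by the lift of $\varphi_H^1$ in $\widetilde{\operatorname{Ham}}^0$, and by the "Moreover" clause of Theorem \ref{thm:rot_decomp} (that $\psi_i(\operatorname{id})$, and by continuity the whole decomposition, can be taken close to the identity), combined with tracking the signs $\varepsilon_i$ chosen in blocks of four so that the rotation contributions telescope, the boundary integrals of $H^1$ vanish. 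If a residual constant remains, I subtract a compactly-supported-in-time constant, which changes neither the time-$1$ map nor the Calabi integral once arranged to integrate to zero over $[0,1]$, restoring \eqref{eq:corollary_rot_decomp_hamiltonian}.

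For the deformation itself I would use the standard principle (essentially Lemma \ref{lem:isotopies_of_subgraphs}-type reasoning) that two $\partial$-admissible Hamiltonians with the same time-$1$ map and the same lift in the universal cover can be joined by a path of $\partial$-admissible Hamiltonians with constant time-$1$ map: concretely, the path of isotopies $(\varphi_H^t)$ and $(\varphi_{H^1}^t)$ are homotopic rel endpoints through loops based at $\operatorname{id}$ in $\operatorname{Ham}^0(\BA)$ precisely because they represent the same element of $\widetilde{\operatorname{Ham}}^0(\BA)$ — here I again use that $\operatorname{Cal}(\varphi_H^t) = 0 = \operatorname{Cal}(\varphi_{H^1}^t)$ and that $\widetilde{\operatorname{Ham}}^0$ injects appropriately — and such a homotopy is generated by a family $H^\lambda$; reparametrizing I can keep each $H^\lambda$ boundary-normalized, giving item (2), and by continuity of the whole construction in $H$ (all maps involved — the inverse map $\Psi$ from Theorem \ref{thm:quantitative_perfectness}, the $\psi_i$, the homotopy — are continuous and take $H = 0$ to the trivial data) the path stays inside the given neighbourhood $\MV$ once $H$ is $C^\infty$ small enough, giving item (3). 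The main obstacle I anticipate is the bookkeeping around the universal cover and the Calabi normalization: ensuring that the factorization of Theorem \ref{thm:rot_decomp}, which is a priori only a group-level identity of time-$1$ maps, actually lifts to an identity of paths (equivalently, that the product of the rotation-path-loops is contractible), and that the passage between $\partial$-admissible and boundary-vanishing normalizations is done consistently so that all the vanishing integrals in \eqref{eq:corollary_rot_decomp_hamiltonian} are genuinely preserved along $(H^\lambda)$ and not merely at the endpoints. This is where the specific divisibility of $\ell$ by $4$ and the alternating sign pattern $\varepsilon_i$ from Theorem \ref{thm:rot_decomp} should be used to kill the rotational holonomy.
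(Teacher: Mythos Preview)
Your outline matches the paper's proof: verify $\varphi_H^1\in\operatorname{Ham}^0(\BA)$ by correcting $H$ to a boundary-vanishing Hamiltonian, apply Theorem~\ref{thm:rot_decomp}, set $H^1=\#_{i=1}^\ell\varepsilon_i\psi_i^*H_\alpha$, and then connect $H$ to $H^1$ through small Hamiltonians with fixed time-$1$ map. Two places where you are making life harder than necessary:

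\emph{Normalization of $H^1$.} There is no ``residual constant'' to subtract. Since each $\psi_i$ preserves the boundary components and $H_\alpha$ is rotation-invariant, the boundary values and the integral of $\psi_i^*H_\alpha$ equal those of $H_\alpha$; the $\#$-composition is additive on both quantities, so $\int_{[0,1]}H^1(t,\partial_\pm\BA)\,dt=(\sum_i\varepsilon_i)\,H_\alpha(\partial_\pm\BA)=0$ and $\int H^1\,dt\wedge\omega=(\sum_i\varepsilon_i)\int H_\alpha\,dt\wedge\omega=0$, using only $\sum_i\varepsilon_i=0$ (which follows from $4\mid\ell$).

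\emph{The deformation.} Your appeal to the universal cover and ``lifting the factorization to paths'' is more than what is needed, and your continuity argument for staying in $\MV$ is circular without a concrete construction of the homotopy. The paper's route is cleaner: first connect each of $H$ and $H^1$ to a boundary-vanishing Hamiltonian via the explicit affine family $K^\lambda(t,x,y)=-\lambda\bigl(xH(t,\partial_+\BA)+(1-x)H(t,\partial_-\BA)\bigr)$ (which has $\varphi_{K^\lambda}^1=\operatorname{id}$ by \eqref{eq:corollary_rot_decomp_hamiltonian} and preserves all integrals), reducing to the case where both endpoints vanish on $\partial\BA$. Then both are $C^\infty$ small Hamiltonians generating paths in $\operatorname{Ham}(\BA)$ from $\operatorname{id}$ to $\varphi$, and one invokes \emph{local contractibility} of $\operatorname{Ham}(\BA)$ near the identity (via generating functions, as in \cite{abhs18}) to produce a small connecting family. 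No global statement about $\pi_1$ or $\widetilde{\operatorname{Ham}}$ is required; smallness of the homotopy is built into local contractibility rather than deduced from continuity of an unspecified construction.
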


\begin{proof}
Note that equation \eqref{eq:corollary_rot_decomp_hamiltonian} implies that $\varphi \coloneqq \varphi_H^1 \in \operatorname{Ham}^0(\BA)$. Indeed, define a $\partial$-admissible Hamiltonian $K$ by
\begin{equation*}
K : [0,1]\times \BA \rightarrow \BR \qquad K(t,x,y) \coloneqq -x H(t,\partial_+\BA) - (1-x) H(t,\partial_-\BA).
\end{equation*}
Then $K\# H$ clearly vanishes on $\partial\BA$. Moreover, it follows from equation \eqref{eq:corollary_rot_decomp_hamiltonian} that $\varphi_K^1 = \operatorname{id}_\BA$ and that $\int_{[0,1]\times\BA} K dt \wedge \omega = 0$. This implies that $\varphi_{K\#H}^1 = \varphi_H^1$ and that $\int_{[0,1]\times \BA} K\# H dt \wedge \omega = 0$. We deduce that $\varphi_H^1 \in \operatorname{Ham}^0(\BA)$.

We can therefore apply Theorem \ref{thm:rot_decomp} to $\varphi$. Proving Corollary \ref{cor:rot_decomp} then boils down to proving the following elementary assertion:

Let $G^0$ and $G^1$ be two $C^\infty$ small $\partial$-admissible Hamiltonians on $\BA$. Assume that $\varphi_{G^0}^1 = \varphi_{G^1}^1$ and that
\begin{equation}
\label{eq:corollary_rot_decomp_proof}
\int\limits_{[0,1]\times \BA} G^\lambda dt\wedge\omega = 0 \quad \text{and} \quad \int_{[0,1]} G^\lambda(t,\partial_\pm\BA) dt = 0
\end{equation}
for $\lambda \in \left\{ 0,1 \right\}$. Then there exists a family $(G^\lambda)_{\lambda\in [0,1]}$ of $C^\infty$ small $\partial$-admissible Hamiltonians on $\BA$ connecting $G^0$ to $G^1$ and satisfying $\varphi_{G^\lambda}^1 = \varphi_{G^0}^1$ and identity \eqref{eq:corollary_rot_decomp_proof} for all $\lambda \in [0,1]$.

In order to see this, first note that we can connect $G^0$ to a Hamiltonian which vanishes on $\partial\BA$ as follows. Define
\begin{equation*}
K^\lambda : [0,1]\times \BA \rightarrow \BR \qquad K^\lambda(t,x,y) \coloneqq -\lambda (x G^0(t,\partial_+\BA) + (1-x) G^0(t,\partial_-\BA)).
\end{equation*}
As already observed above for the Hamiltonian $K$, we have $\varphi_{K^\lambda}^1 = \operatorname{id}_\BA$ for all $\lambda$. Thus $(K^\lambda \# G^0)_{\lambda\in [0,1]}$ connects $G^0$ to the Hamiltonian $K^1\#G^0$, which vanishes on $\partial\BA$. Via an analogous construction, we connect $G^1$ to a Hamiltonian vanishing on $\partial\BA$.

We may therefore replace $G^0$ and $G^1$ by Hamiltonians vanishing on $\partial\BA$. In this situation, the Hamiltonians $G^j$ for $j \in \left\{ 0,1 \right\}$ generate paths $(\varphi_{G^j}^t)_{t\in [0,1]}$ in $\operatorname{Ham}(\BA)$ starting at the identity and ending at $\varphi$. The desired claim follows from the fact that $\operatorname{Ham}(\BA)$ is locally contractible with respect to the $C^\infty$ topology. As in the usual case of Hamiltonian diffeomorphisms on symplectic manifolds without boundary, this can be proved using generating functions. We refer to \cite[\S 2]{abhs18} for a discussion of generating functions in the context of Hamiltonian diffeomorphisms which do not necessarily restrict to the identity on the boundary.
\end{proof}

Before turning to the proof of Theorem \ref{thm:rot_decomp}, we establish some preliminary lemmas.

\begin{lemma}
\label{lem:commutators}
Let $X$ be a topological space and let $U\subset X$ be an open subset. Let $u$ and $v$ be homeomorphisms of $X$ supported in $U$. Let $\varphi$ and $\psi$ be a homeomorphisms of $X$ such that the three sets $U$, $\varphi(U)$, and $\psi(U)$ are pairwise disjoint. Then the following identity holds:
\begin{equation}
\label{eq:commutators}
[u,v] = [ [u,\varphi], [v,\psi]].
\end{equation}
\end{lemma}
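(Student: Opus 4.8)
The plan is to verify the identity \eqref{eq:commutators} by a direct computation, exploiting the disjointness of the supports. The key observation is that $u$ and $v$ are supported in $U$, so $\varphi u \varphi^{-1}$ is supported in $\varphi(U)$ and $\psi v \psi^{-1}$ is supported in $\psi(U)$; since $U$, $\varphi(U)$, $\psi(U)$ are pairwise disjoint, any two maps drawn from different members of the list $\{u, v\}$ (supported in $U$), $\{\varphi u \varphi^{-1}\}$ (supported in $\varphi(U)$), $\{\psi v \psi^{-1}\}$ (supported in $\psi(U)$) commute with one another. I would set $a \coloneqq [u,\varphi] = u \varphi u^{-1} \varphi^{-1}$ and $b \coloneqq [v,\psi] = v \psi v^{-1} \psi^{-1}$ and compute $[a,b]$ directly.

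First I would record the two auxiliary facts: (i) $[u,\varphi] = u \cdot (\varphi u^{-1} \varphi^{-1})$ is a product of a homeomorphism supported in $U$ and one supported in $\varphi(U)$; (ii) $[v,\psi] = v \cdot (\psi v^{-1} \psi^{-1})$ is a product of a homeomorphism supported in $U$ and one supported in $\psi(U)$. Now in $[a,b] = a b a^{-1} b^{-1}$, expand $a = u \cdot u'$ with $u' \coloneqq \varphi u^{-1}\varphi^{-1}$ supported in $\varphi(U)$, and $b = v \cdot v'$ with $v' \coloneqq \psi v^{-1} \psi^{-1}$ supported in $\psi(U)$. Since $u'$ is supported in $\varphi(U)$ and $v, v'$ are supported in $U \cup \psi(U)$, which is disjoint from $\varphi(U)$, the factor $u'$ commutes with both $v$ and $v'$, hence with $b$; likewise $v'$ commutes with $u$ and $u'$, hence with $a$. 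Therefore
\begin{equation*}
[a,b] = a b a^{-1} b^{-1} = (u u')(v v') (u u')^{-1} (v v')^{-1} = u u' v v' u'^{-1} u^{-1} v'^{-1} v^{-1}.
\end{equation*}
Using that $u'$ commutes with $v, v'$ and that $v'$ commutes with $u, u'$, I can move all the ``primed'' maps to one side: the $u'$ and $u'^{-1}$ cancel after being moved past the $v$-factors, and the $v'$ and $v'^{-1}$ cancel after being moved past the $u$-factors, leaving $u v u^{-1} v^{-1} = [u,v]$.

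I do not anticipate a genuine obstacle here; the only care needed is bookkeeping — tracking which maps are supported where and justifying each transposition of adjacent factors by the pairwise disjointness hypothesis. The mild subtlety worth stating explicitly is that two homeomorphisms with disjoint supports commute, and that the support of a conjugate $\varphi g \varphi^{-1}$ is $\varphi(\operatorname{supp} g)$; both are elementary. I would present the cancellation as a short chain of equalities, annotating each step with the relevant commutation, and conclude $[[u,\varphi],[v,\psi]] = [u,v]$, which is \eqref{eq:commutators}.
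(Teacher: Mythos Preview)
Your proposal is correct and takes essentially the same approach as the paper: both decompose $[u,\varphi]$ and $[v,\psi]$ into factors supported on the pairwise disjoint sets $U$, $\varphi(U)$, $\psi(U)$, then use that homeomorphisms with disjoint supports commute to cancel the unwanted factors. The only cosmetic difference is that the paper phrases this as a pointwise check (noting $[u,\varphi]|_U = u|_U$, $[v,\psi]|_U = v|_U$, and that both commutators act trivially outside $U\cup\varphi(U)$ resp.\ $U\cup\psi(U)$), whereas you carry out the equivalent group-theoretic cancellation explicitly.
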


\begin{proof}
We observe that
\begin{equation}
\label{eq:commutators_proof_a}
\operatorname{supp}([u,\varphi]) \subset U \cup \varphi(U) \quad \text{and} \quad [u,\varphi]|_U = u|_U.
\end{equation}
Similarly, we have
\begin{equation}
\label{eq:commutators_proof_b}
\operatorname{supp}([v,\psi]) \subset U\cup \psi(U) \quad \text{and} \quad [v,\psi]|_U = v|_U.
\end{equation}
Since any two homeomorphisms with disjoint supports commute, the desired identity \eqref{eq:commutators} is an immediate consequence of \eqref{eq:commutators_proof_a} and \eqref{eq:commutators_proof_b}.
\end{proof}

\begin{lemma}
\label{lem:from_commutators_to_rotations}
Let $U\subset \BA$ be an open subset of the annulus. Let $\alpha\in \BR$ and assume that the sets $U$, $R_\alpha(U)$, and $R_\alpha^{-1}(U)$ are pairwise disjoint. Let $u,v\in \operatorname{Ham}_c(U)$ be compactly supported Hamiltonian diffeomorphisms. Then we have
\begin{equation}
\label{eq:from_commutators_to_rotations}
[u,v] = \varphi_{(u^{-1})^*H_\alpha }^1 \circ
\varphi_{-H_\alpha}^1 \circ
\varphi_{-(v^{-1})^*H_\alpha }^1 \circ
\varphi_{H_\alpha}^1 \circ \varphi_{H_\alpha}^1 \circ
\varphi_{-(u^{-1})^*H_\alpha }^1 \circ
\varphi_{-H_\alpha}^1 \circ
\varphi_{(v^{-1})^*H_\alpha }^1.
\end{equation}
\end{lemma}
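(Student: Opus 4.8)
The plan is to combine the two preceding lemmas. The key observation is that Lemma~\ref{lem:commutators} applies to the situation at hand with $\varphi = R_\alpha$ and $\psi = R_\alpha^{-1}$, since by hypothesis the three sets $U$, $R_\alpha(U)$, and $R_\alpha^{-1}(U)$ are pairwise disjoint. Thus, viewing $u,v \in \operatorname{Ham}_c(U) \subset \operatorname{Diff}_c(\BA,\omega)$ as diffeomorphisms of $\BA$ supported in $U$, Lemma~\ref{lem:commutators} gives
\begin{equation*}
[u,v] = [[u,R_\alpha],[v,R_\alpha^{-1}]].
\end{equation*}
So it remains only to expand the right-hand side and rewrite each occurrence of $R_\alpha$ and $R_\alpha^{-1}$ in terms of time-$1$ maps of Hamiltonian flows, using the conjugation rule for Hamiltonian vector fields.

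First I would record the relevant conjugation identities. Recall that $R_\alpha = \varphi_{H_\alpha}^1$ and $R_\alpha^{-1} = \varphi_{\overline{H_\alpha}}^1$; since $H_\alpha$ is autonomous, $\overline{H_\alpha} = -H_\alpha$, so $R_\alpha^{-1} = \varphi_{-H_\alpha}^1$. For any diffeomorphism $g$ preserving $\omega$ one has $g \circ \varphi_K^1 \circ g^{-1} = \varphi_{(g^{-1})^* K}^1$. Applying this with $g = u$ and $K = H_\alpha$ yields $u R_\alpha u^{-1} = \varphi_{(u^{-1})^* H_\alpha}^1$, hence
\begin{equation*}
[u,R_\alpha] = u R_\alpha u^{-1} R_\alpha^{-1} = \varphi_{(u^{-1})^* H_\alpha}^1 \circ \varphi_{-H_\alpha}^1,
\end{equation*}
and similarly
\begin{equation*}
[v,R_\alpha^{-1}] = v R_\alpha^{-1} v^{-1} R_\alpha = \varphi_{-(v^{-1})^* H_\alpha}^1 \circ \varphi_{H_\alpha}^1.
\end{equation*}

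Then I would substitute these two expressions into $[[u,R_\alpha],[v,R_\alpha^{-1}]] = A B A^{-1} B^{-1}$ with $A = [u,R_\alpha]$ and $B = [v,R_\alpha^{-1}]$. Here $A^{-1} = \varphi_{H_\alpha}^1 \circ \varphi_{-(u^{-1})^* H_\alpha}^1$ and $B^{-1} = \varphi_{-H_\alpha}^1 \circ \varphi_{(v^{-1})^* H_\alpha}^1$, obtained simply by reversing the order of the two factors and negating the Hamiltonians (again using that these are time-$1$ maps and the Hamiltonians are the appropriate conjugates). Concatenating $A$, $B$, $A^{-1}$, $B^{-1}$ in this order gives exactly the eight-fold composition
\begin{equation*}
\varphi_{(u^{-1})^*H_\alpha}^1 \circ \varphi_{-H_\alpha}^1 \circ \varphi_{-(v^{-1})^*H_\alpha}^1 \circ \varphi_{H_\alpha}^1 \circ \varphi_{H_\alpha}^1 \circ \varphi_{-(u^{-1})^*H_\alpha}^1 \circ \varphi_{-H_\alpha}^1 \circ \varphi_{(v^{-1})^*H_\alpha}^1,
\end{equation*}
which is the claimed identity~\eqref{eq:from_commutators_to_rotations}. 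I do not anticipate a genuine obstacle here; the only point requiring a small amount of care is bookkeeping the order of composition and the signs when inverting $A$ and $B$, together with checking that all Hamiltonians involved are $\partial$-admissible so that the time-$1$ maps make sense in $\operatorname{Diff}_c(\BA,\omega)$ — this holds because $H_\alpha$ is $\partial$-admissible and $(u^{-1})^* H_\alpha$, $(v^{-1})^* H_\alpha$ are compactly supported inside $U \Subset \BA$, hence also $\partial$-admissible.
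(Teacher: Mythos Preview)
Your proof is correct and follows exactly the paper's approach: apply Lemma~\ref{lem:commutators} with $\varphi = R_\alpha$, $\psi = R_\alpha^{-1}$, then expand the double commutator using the conjugation rule for Hamiltonian flows. One small inaccuracy in your final parenthetical remark: the Hamiltonians $(u^{-1})^*H_\alpha$ and $(v^{-1})^*H_\alpha$ are \emph{not} compactly supported in $U$ (they agree with $H_\alpha$ outside $\operatorname{supp}(u)$ and $\operatorname{supp}(v)$ respectively), but they are nevertheless $\partial$-admissible precisely because they coincide with the $\partial$-admissible Hamiltonian $H_\alpha$ near $\partial\BA$.
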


\begin{proof}
We apply Lemma \ref{lem:commutators} with $\varphi=R_\alpha$ and $\psi=R_\alpha^{-1}$ and evaluate the right hand side of identity \eqref{eq:commutators}.
\end{proof}

The following fragmentation lemma is proved in \cite{edt}.

\begin{lemma}
\label{lem:fragmentation_calabi}
Let $U_1,\dots,U_n$ be an open covering of the annulus $\BA$. Then, for every $\varphi \in \operatorname{Ham}^0(\BA)$ which is sufficiently $C^1$ close to the identity, there exists a fragmentation
\begin{equation*}
\varphi = \varphi_1\circ \cdots \circ \varphi_n\qquad \text{with}\enspace \varphi_i\in \operatorname{Ham}^0_c(U_i).
\end{equation*}
The diffeomorphisms $\varphi_i$ depend continuously on $\varphi$ with respect to the $C^\infty$ topology on $\operatorname{Ham}^0(\BA)$ and if $\varphi = \operatorname{id}$, then $\varphi_i=\operatorname{id}$ for all $i$.
\end{lemma}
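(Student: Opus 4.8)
We sketch how one would establish Lemma~\ref{lem:fragmentation_calabi} (the details are worked out in \cite{edt}). The plan is to follow the classical scheme for fragmenting diffeomorphism groups (compare \cite{eps70} and \cite[\S 2.1]{ban97}), but to carry along, at every stage, the Calabi invariants of the pieces and their dependence on $\varphi$. \emph{Reduction to a small generating Hamiltonian.} As recalled in the proof of Corollary~\ref{cor:rot_decomp}, the group $\operatorname{Ham}^0(\BA)$ is locally contractible in the $C^\infty$ topology, via generating functions; see \cite[\S 2]{abhs18}. Hence on a $C^\infty$ neighbourhood of the identity one can choose---continuously in $\varphi$, and constantly equal to the identity when $\varphi=\operatorname{id}$---a Hamiltonian isotopy $t\mapsto\varphi_H^t$ from $\operatorname{id}$ to $\varphi$ generated by a $C^\infty$-small, $\partial$-admissible Hamiltonian $H$ which is compactly supported and vanishes on $\partial\BA$. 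Since the group of Calabi periods $\operatorname{Cal}\!\big(\pi_1(\operatorname{Ham}(\BA))\big)$ is discrete (it equals $\tfrac12\BZ$, generated by the full rotation loop), for $\varphi\in\operatorname{Ham}^0(\BA)$ close enough to the identity the number $\int_{[0,1]\times\BA}H\,dt\wedge\omega$, being small and contained in this period group, vanishes; so the constructed lift $\widetilde\varphi$ already lies in $\ker\operatorname{Cal}$. Finally, an induction on $n$---splitting $\BA=U_1\cup(U_2\cup\cdots\cup U_n)$ and applying the analogous statement, formulated for an arbitrary symplectic surface with boundary, to the open subset $U_2\cup\cdots\cup U_n$---reduces everything to the case $n=2$.

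\emph{Splitting into two pieces.} Write $\BA=U_1\cup V$ with $V=U_2$. By the shrinking lemma choose an open set $V'$ with $\overline{V'}\subset V$ and $U_1\cup V'=\BA$, and set $K:=\BA\setminus V'$, a compact subset of $U_1$. Pick $\chi\colon\BA\to[0,1]$ with $\chi\equiv1$ on a neighbourhood $N$ of $K$ and $\operatorname{supp}\chi\subset U_1$; then $\chi H$ is again $\partial$-admissible, compactly supported in $U_1$ and vanishing on $\partial\BA$, so it generates $\varphi_1:=\varphi_{\chi H}^1\in\operatorname{Ham}_c(U_1)$. Because $d\chi\equiv0$ on $N$, we have $X_{\chi H_t}=X_{H_t}$ there, so if $H$ is small enough that the $\varphi_H$-orbit of every point of $K$ remains in $N$ for $t\in[0,1]$, the flows $\varphi_{\chi H}^t$ and $\varphi_H^t$ agree on $K$ for all such $t$. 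Consequently $\psi:=\varphi_1^{-1}\circ\varphi$ is the identity on a neighbourhood of $K$, hence has support in $V'$, and tracking the associated isotopies yields $\psi\in\operatorname{Ham}_c(V)$ with $\varphi=\varphi_1\circ\psi$.

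\emph{Calabi correction and continuity.} In general $\operatorname{Cal}(\widetilde\varphi_1)$ is a small nonzero number and, by additivity of $\operatorname{Cal}$ and the vanishing in Step~1, $\operatorname{Cal}(\widetilde\varphi_1)+\operatorname{Cal}(\widetilde\psi)=0$. Since $\BA$ is connected and $U_1,V$ are nonempty open sets covering it, $U_1\cap V\neq\emptyset$; fix once and for all a small closed disk $D$ in the interior of $\BA$ with $D\subset U_1\cap V$, together with a smooth family $c\mapsto\delta_c\in\operatorname{Ham}_c(D)$ (for instance time-$1$ maps of multiples of a fixed bump function) with $\delta_0=\operatorname{id}$ and $\operatorname{Cal}(\widetilde\delta_c)=c$ near $c=0$. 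Replacing $(\varphi_1,\psi)$ by $\big(\varphi_1\circ\delta_{\operatorname{Cal}(\widetilde\varphi_1)}^{-1},\ \delta_{\operatorname{Cal}(\widetilde\varphi_1)}\circ\psi\big)$ leaves the product and the supports unchanged and, again by additivity of $\operatorname{Cal}$, makes both factors lie in $\operatorname{Ham}_c^0$ of $U_1$ and of $V$ respectively; then applying the inductive hypothesis to $\psi$ and the cover $U_2,\dots,U_n$ of $V$ finishes the fragmentation. All auxiliary data ($V'$, $N$, $\chi$, $D$, the family $\delta_c$) are fixed independently of $\varphi$, while $H=H(\varphi)$ depends continuously on $\varphi$ in the $C^\infty$ topology and vanishes when $\varphi=\operatorname{id}$; hence so do $\varphi_1$, $\psi$, the corrections, and all the $\varphi_i$, and $\varphi=\operatorname{id}$ forces every $\varphi_i=\operatorname{id}$.

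\emph{The main obstacle.} The purely smooth fragmentation step is routine; the difficulty is to enforce three requirements simultaneously: that the pieces lie in $\operatorname{Ham}_c^0(U_i)$ rather than merely in $\operatorname{Ham}_c(U_i)$, that they depend continuously on $\varphi$, and that only $C^1$-closeness to the identity is needed for existence. The delicate interaction is with $\partial\BA$: since Hamiltonian diffeomorphisms here may genuinely rotate the boundary circles, one must ensure that the cutoff Hamiltonian $\chi H$ and the Hamiltonians defining $\psi$, $\delta_{\operatorname{Cal}(\widetilde\varphi_1)}$, etc., remain $\partial$-admissible, vanish on $\partial\BA$, and are compactly supported in the respective $U_i$---even when $U_i$ meets $\partial\BA$. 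The natural remedy is to first peel off a model diffeomorphism, supported in a collar of $\partial\BA$ and matching the boundary behaviour of $\varphi$, thereby reducing to maps that are the identity near $\partial\BA$, for which the interior argument above applies directly; making this peeling depend continuously on $\varphi$ and propagating it through the Calabi corrections is the technical core handled in \cite{edt}. Relaxing $C^\infty$- to $C^1$-closeness for the existence statement is a further point, dealt with by running the peeling with the (only $C^1$-small) generating function of $\varphi$ in place of a Hamiltonian isotopy, using that a $C^0$-small map barely displaces the compact set $K$.
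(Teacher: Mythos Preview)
The paper does not prove this lemma: it simply states ``The following fragmentation lemma is proved in \cite{edt}'' and moves on. Your proposal is therefore not comparable to any argument in the paper itself; you are in effect sketching what \cite{edt} presumably contains, and you say as much in your opening sentence.

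As a sketch, your outline follows the expected classical scheme (cutoff fragmentation plus Calabi correction), and you correctly flag the genuine difficulty, namely the interaction with $\partial\BA$ when Hamiltonian diffeomorphisms are allowed to rotate the boundary. Two points deserve more care. First, your claim that $\operatorname{Cal}\big(\pi_1(\operatorname{Ham}(\BA))\big)=\tfrac12\BZ$ via ``the full rotation loop'' is not obviously correct under the paper's conventions: the rotation $R_\alpha$ is generated by $H_\alpha(x)=\alpha x$, which does \emph{not} vanish on $\partial\BA$, so $(R_\alpha)_{\alpha\in[0,1]}$ is not a loop in $\operatorname{Ham}_c(\BA)$ as defined in Section~\ref{sec:ham_diffeo_surface_with_boundary}; you should instead argue directly that the small Hamiltonian $H$ you construct is the \emph{canonical} generating Hamiltonian (vanishing on $\partial\BA$) of the chosen short path, so that $\int H\,dt\wedge\omega$ is exactly the Calabi invariant of that lift, which vanishes since it is small and lies in the discrete period group. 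Second, your induction step passes to the open set $U_2\cup\cdots\cup U_n$, which need not be connected; the Calabi correction via a single disk $D\subset U_1\cap V$ then requires an argument per component, and the ``analogous statement for an arbitrary symplectic surface'' you invoke must handle disconnected surfaces and surfaces meeting $\partial\BA$. None of this is fatal, but these are precisely the places where a sketch stops being a proof.
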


For every $p \in \BA$ and $r>0$, let $B_r(p)\subset \BA$ denote the open ball of radius $r$ centered at $p$, where we equip $\BA$ with the distance function induced by the Riemannian metric $dx^2 + dy^2$.

\begin{lemma}
\label{lem:colorings}
For every positive integer $k>0$, we define the lattice
\begin{equation*}
P^k \coloneqq (\frac{1}{k}\BZ \cap [0,1]) \times (\frac{1}{k}\BZ / \BZ) \subset \BA.
\end{equation*}
Then there exists a positive integer $n>0$ with the following property: For every $\alpha \in \BR\setminus \frac{1}{2}\BZ$ and for every positive integer $k$ such that $\frac{2}{k} < \operatorname{dist}(\alpha,\frac{1}{2}\BZ)$, there exists an $n$-coloring of $P^k$ such that, for any two distinct points $p\neq q \in P^k$ of the same color, the six sets
\begin{equation*}
B_{\frac{1}{k}}(p),\enspace R_\alpha(B_{\frac{1}{k}}(p)),\enspace R_\alpha^{-1}(B_{\frac{1}{k}}(p)),\enspace B_{\frac{1}{k}}(q), \enspace R_\alpha(B_{\frac{1}{k}}(q)),\enspace R_\alpha^{-1}(B_{\frac{1}{k}}(q))
\end{equation*}
are pairwise disjoint.
\end{lemma}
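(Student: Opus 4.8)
The plan is to reduce the statement to a purely combinatorial coloring problem about a bounded-diameter "conflict graph" on the lattice $P^k$, and then apply a standard greedy argument. First I would observe that the condition on the six sets being pairwise disjoint is, for a \emph{fixed} pair $p \neq q$, a condition that only depends on the positions of $p$ and $q$ relative to each other: since $R_\alpha$ acts by translation in the $y$-coordinate, the set $B_{1/k}(q) \cup R_\alpha(B_{1/k}(q)) \cup R_\alpha^{-1}(B_{1/k}(q))$ is contained in a fixed neighbourhood of $q$ whose shape depends only on $\alpha$ and $k$. The six-set disjointness then fails only if $p$ and $q$ are "close" in a precise sense. The key quantitative point is that, because $\tfrac{2}{k} < \operatorname{dist}(\alpha, \tfrac12\BZ)$, the three balls $B_{1/k}(p)$, $R_\alpha(B_{1/k}(p))$, $R_\alpha^{-1}(B_{1/k}(p))$ are \emph{themselves} pairwise disjoint (their $y$-centers are $0, -\alpha, \alpha \bmod 1$, and the pairwise gaps in $\BT$ are $\operatorname{dist}(\alpha,\BZ) \geq \operatorname{dist}(\alpha,\tfrac12\BZ)$ and $\operatorname{dist}(2\alpha,\BZ) = \operatorname{dist}(2\alpha, 2\BZ)$, both of which exceed $\tfrac{2}{k}$ — here one uses $\alpha \notin \tfrac12\BZ$ to control $2\alpha$). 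So the "self-conflict" never happens, and we only need to worry about genuine pairs.

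Second, I would make the conflict relation explicit. Two lattice points $p = (p_1,p_2)$ and $q = (q_1,q_2)$ of $P^k$ conflict if and only if at least one of the nine pairs of balls $R_\alpha^{a}(B_{1/k}(p))$, $R_\alpha^{b}(B_{1/k}(q))$ with $a,b \in \{-1,0,1\}$ intersects. Each such intersection forces $|p_1 - q_1| < \tfrac{2}{k}$ in the $x$-coordinate, hence $|p_1 - q_1| \leq \tfrac{1}{k}$ since both lie in $\tfrac1k\BZ$; so $p$ and $q$ lie in adjacent or equal $x$-columns, at most $3$ choices for $q_1$ given $p_1$. In the $y$-coordinate, the condition is that $p_2 - q_2$ (mod $1$) lies within $\tfrac{2}{k}$ of one of the nine values $(b-a)\alpha \bmod 1$ for $a,b\in\{-1,0,1\}$, i.e. within $\tfrac2k$ of an element of $\{0, \pm\alpha, \pm 2\alpha\} \bmod 1$. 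Since $p_2 - q_2 \in \tfrac1k\BZ/\BZ$, for each of these five target values there are at most $5$ lattice residues within distance $\tfrac2k$ (counting generously). So the number of $q \in P^k$ conflicting with a given $p$ is bounded by $3 \cdot 25 = 75$, a bound \emph{independent of $k$ and $\alpha$}. I would not worry about optimizing this constant.

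Third, with a uniform bound $D$ on the degree of the conflict graph on $P^k$, the greedy coloring algorithm shows this graph is $(D+1)$-colorable, and I would simply set $n \coloneqq D+1$ (e.g. $n = 76$), which works for all $k$ and all admissible $\alpha$ simultaneously. This is exactly the form of the conclusion: a single $n$ that works uniformly. One should double-check that the conflict relation is symmetric (it is: intersection of balls is symmetric, and the index set $\{-1,0,1\}$ is symmetric under negation) so that "conflict graph" is a well-defined undirected graph.

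The main obstacle — really the only place where care is needed — is the $y$-coordinate counting when $\alpha$ is close to (but not in) $\tfrac12\BZ$: then $2\alpha$ is close to an integer, so the targets $0$, $\pm 2\alpha$ cluster near $0 \bmod 1$, and one must make sure the count of nearby lattice residues is still bounded independently of how close $\alpha$ is to $\tfrac12\BZ$. This is fine because clustering of targets only \emph{decreases} the total number of lattice points within $\tfrac2k$ of their union (overlaps help us), so the crude per-target bound still applies. The other subtlety — that the self-conflict of $p$ with itself is excluded by the hypothesis $\tfrac2k < \operatorname{dist}(\alpha,\tfrac12\BZ)$ — I would handle exactly as sketched above, using $\operatorname{dist}(2\alpha,\BZ) \geq \operatorname{dist}(2\alpha, 2\BZ) = 2\operatorname{dist}(\alpha,\BZ)$ when $\operatorname{dist}(\alpha,\BZ) \leq \tfrac14$ and the trivial bound otherwise, together with $\operatorname{dist}(\alpha,\BZ) \geq \operatorname{dist}(\alpha,\tfrac12\BZ)$; the point is that all relevant $y$-gaps are bounded below by $\operatorname{dist}(\alpha,\tfrac12\BZ) > \tfrac2k$.
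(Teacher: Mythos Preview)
Your proposal is correct and follows essentially the same approach as the paper: build a conflict graph on $P^k$, bound its maximum degree uniformly in $k$ and $\alpha$, and invoke the greedy bound $\chi(G)\le\Delta(G)+1$ (the paper cites Brooks' theorem, but only uses this elementary consequence). Your counting is more explicit than the paper's, which simply observes that each ball $B_{1/k}(p)$ meets at most $9$ lattice balls and deduces a uniform degree bound from that.

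One small slip to fix: the identities you write for $\operatorname{dist}(2\alpha,\BZ)$ are not right as stated (in general $\operatorname{dist}(2\alpha,\BZ)\le\operatorname{dist}(2\alpha,2\BZ)$, not $\ge$ or $=$). The clean statement you want is $\operatorname{dist}(2\alpha,\BZ)=2\operatorname{dist}(\alpha,\tfrac12\BZ)$, which holds because $\operatorname{dist}(\alpha,\tfrac12\BZ)\le\tfrac14$ always; this immediately gives $\operatorname{dist}(2\alpha,\BZ)>\tfrac{4}{k}>\tfrac{2}{k}$ and handles the self-disjointness cleanly.
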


\begin{proof}
Let $\alpha \in \BR\setminus \frac{1}{2}\BZ$ and let $k$ be a positive integer such that $\frac{2}{k} < \operatorname{dist}(\alpha,\frac{1}{2}\BZ)$. Note that this condition implies that, for every point $p \in \BA$, the three sets $B_{\frac{1}{k}}(p)$, $R_\alpha(B_{\frac{1}{k}}(p))$, and $R_\alpha^{-1}(B_{\frac{1}{k}}(p))$ are pairwise disjoint. We construct a graph $G$ with vertex set $P^k$ as follows: For any two distinct $p\neq q \in P^k$, we insert an edge between $p$ and $q$ if and only if the intersection
\begin{equation*}
(B_{\frac{1}{k}}(p) \cup R_\alpha(B_{\frac{1}{k}}(p)) \cup R_\alpha^{-1}(B_{\frac{1}{k}}(p))) \cap (B_{\frac{1}{k}}(q) \cup R_\alpha(B_{\frac{1}{k}}(q)) \cup R_\alpha^{-1}(B_{\frac{1}{k}}(q)))
\end{equation*}
is non-empty. Note that vertex colorings of $G$, i.e. colorings of the vertex set of $G$ such that no two vertices of the same color are connected by an edge, are precisely colorings of the lattice $P^k$ satisfying the property in the statement of the lemma. Hence our task is to show that the chromatic number $\gamma(G)$, i.e. the least number of colors needed to color $G$, admits an upper bound independent of $\alpha$ and $k$. Let $\Delta(G)$ denote the maximal vertex degree of $G$, i.e. the maximal number of incident edges of any vertex of $G$. By Brooks' theorem \cite{bro41}, the chromatic number of any graph satisfies the bound $\gamma(G) \leq \Delta(G) + 1$. We observe that, for each $p \in P^k$, there exist at most $9$ distinct points $q\in P^k$ such that $B_{\frac{1}{k}}(p)$ and $B_{\frac{1}{k}}(q)$ have non-empty intersection. This implies that $\Delta(G)$ admits an upper bound independent of $\alpha$ and $k$. We deduce that the same is true for $\gamma(G)$, which concludes the proof of the lemma.
\end{proof}

\begin{proof}[Proof of Theorem \ref{thm:rot_decomp}]
Pick a positive integer $m>0$ such that the conclusion of Theorem \ref{thm:quantitative_perfectness} holds. Moreover, pick a positive integer $n>0$ such that the conclusion of Lemma \ref{lem:colorings} holds. Our goal is to show that Theorem \ref{thm:rot_decomp} holds with
\begin{equation*}
\ell \coloneqq 8mn.
\end{equation*}

Let $\alpha \in\BR\setminus \frac{1}{2}\BZ$. Fix a positive integer $k>0$ such that $\frac{2}{k} < \operatorname{dist}(\alpha,\frac{1}{2}\BZ)$. Pick an $n$-coloring of the lattice $P^k\subset \BA$ as in Lemma \ref{lem:colorings}. Fix a number $\frac{\sqrt{2}}{2} < r < 1$. For each color $1\leq i \leq n$, we define
\begin{equation*}
U_i \coloneqq \bigcup\limits_{\substack{p\in P^k\\ \text{$p$ has color $i$}}} B_{\frac{r}{k}}(p) \qquad \text{and} \qquad
V_i \coloneqq \bigcup\limits_{\substack{p\in P^k\\ \text{$p$ has color $i$}}} B_{\frac{1}{k}}(p)
\end{equation*}
Both unions are disjoint and $U_i \Subset V_i\Subset \BA$. By the property of the coloring of $P^k$ in the statement of Lemma \ref{lem:colorings}, the sets $V_i$, $R_\alpha(V_i)$, and $R_\alpha^{-1}(V_i)$ are pairwise disjoint. Moreover, note that $(U_i)_i$ forms an open cover of $\BA$.

Suppose that $\varphi\in \operatorname{Ham}_c^0(\BA)$ is $C^\infty$ close to the identity. By Lemma \ref{lem:fragmentation_calabi}, there exists a fragmentation
\begin{equation*}
\varphi = \varphi_1 \circ \cdots \circ \varphi_n \quad \text{with $\varphi_i \in \operatorname{Ham}_c^0(U_i)$}.
\end{equation*}
The diffeomorphisms $\varphi_i$ depend continuously on $\varphi$ and are $C^\infty$ close to the identity. By Theorem \ref{thm:quantitative_perfectness}, we can write
\begin{equation*}
\varphi_i = [u_i^1,v_i^1] \circ \cdots \circ [u_i^m,v_i^m] \quad \text{with $u_i^j,v_i^j \in \operatorname{Ham}_c^0(V_i)$}.
\end{equation*}
The diffeomorphisms $u_i^j$ and $v_i^j$ also depend continuously on $\varphi$ and are $C^\infty$ close to the identity.

By Lemma \ref{lem:from_commutators_to_rotations}, we have
\begin{equation*}
[u_i^j,v_i^j] = \varphi_{((u_i^j)^{-1})^*H_\alpha }^1 \circ
\varphi_{-H_\alpha}^1 \circ
\varphi_{-((v_i^j)^{-1})^*H_\alpha }^1 \circ
\varphi_{H_\alpha}^1 \circ \varphi_{H_\alpha}^1 \circ
\varphi_{-((u_i^j)^{-1})^*H_\alpha }^1 \circ
\varphi_{-H_\alpha}^1 \circ
\varphi_{((v_i^j)^{-1})^*H_\alpha }^1.
\end{equation*}
For an appropriate choice of Hamiltonian diffoemorphisms $\psi_1,\dots,\psi_\ell$ from the set
\begin{equation*}
\left\{ \operatorname{id}, (u_i^j)^{-1}, (v_i^j)^{-1} \right\}
\end{equation*}
we then have
\begin{equation*}
\varphi = \prod\limits_{i=1}^\ell \varphi_{\varepsilon_i \psi_i^* H_\alpha}^1.
\end{equation*}
Since the diffeomorphisms $u_i^j$ and $v_i^j$ depend continuously on $\varphi$ and are $C^\infty$ close to the identity, the same if true for the diffeomorphisms $\psi_i$.
\end{proof}

\section{Stratification of subgraphs}
\label{sec:strat_of_subgraphs}

In this section we prove Theorem \ref{thm:stratification_perturbed_affine_subgraph}, which says that the subgraph of a Hamiltonian on the annulus which is a $C^\infty$ small perturbation of an affine Hamiltonian can be decomposed into domains symplectomorphic to subgraphs of affine Hamiltonians. This decomposition induces a decomposition into symplectic ellipsoids and polydisks; see Remark \ref{rem:stratification_perturbed_affine_subgraph_ellipsoids_polydisks}.\\

It will be useful to introduce the following notation. Given a set $X$ and functions $f,g:X\rightarrow \BR$, we let $D(f,g)\subset \BR\times X$ denote the intersection between the supergraph of $f$ and the subgraph of $g$, i.e. we set
\begin{equation*}
D(f,g)\coloneqq\left\{ (s,x) \in \BR\times X \mid f(x) \leq s \leq g(x) \right\}. 
\end{equation*}
Moreover, we abbreviate $D(f) \coloneqq D(0,f)$. This is the subgraph of $f$, truncated at $0$.

We are particularly interested in the case that the set $X$ is of the form $X=[0,1]\times \Sigma$ for a symplectic surface $(\Sigma,\omega)$. In this case, we equip $M\coloneqq \BR\times [0,1]\times \Sigma$ with the symplectic form $\Omega \coloneqq ds\wedge dt + \omega$, where $(s,t)$ are the coordinates on $\BR\times [0,1]$. This turns the domains $D(f,g)\subset (M,\Omega)$ into symplectic manifolds.

Recall from Section \ref{sec:quant_decomp_rot} that $\BA = [0,1]\times \BT$ denotes the annulus. Moreover, recall that the Hamiltonian $H_\alpha : [0,1]\times \BA \rightarrow \BR$ is defined by $H_\alpha(t,x,y) = \alpha x$ for $\alpha \in \BR$. Given a real number $a>0$, we let $\BA(a)$ denote the annulus of area $a$. More precisely, we set $\BA(a) \coloneqq [0,a]\times \BT$ and equip it with the area form $\omega\coloneqq dx\wedge dy$.

\begin{theorem}
\label{thm:stratification_perturbed_affine_subgraph}
Let $h:[0,1]\times \BA \rightarrow \BR$ be a Hamiltonian of the form $h(t,x,y) = a + bx$ for real numbers $a$ and $b$. Assume that $h$ is strictly positive. Let $H:[0,1]\times \BA \rightarrow \BR$ be a $\partial$-admissible Hamiltonian. If $H$ is sufficiently $C^\infty$ close to $h$, then there exists a stratification $\MS$ of $D(H)$ such that the closure $\overline{S}$ of any top-dimensional stratum $S$ of $\MS$ is symplectomorphic either to $D(H_\alpha|_{[0,1]\times \BA(c)})$ for a rational number $\alpha=p/q>0$ and a real number $c>0$ or to $D(C|_{[0,1]\times \BA(c)})$ for a positive constant function $C$ and a real number $c>0$.

There exists an upper bound on the denominator $q$ of $\alpha$ which can be chosen uniformly among all $H$ sufficiently $C^\infty$ close to a fixed $h$. Moreover, there exists a uniform positive lower bound on the constants $c$ and $C$.
\end{theorem}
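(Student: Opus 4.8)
The plan is to combine the quantitative factorization result (Corollary~\ref{cor:rot_decomp}) with an explicit geometric decomposition of the subgraph of an affine Hamiltonian, as illustrated in the toy example of the introduction. First I would reduce to the case where $H$ vanishes on $\partial\BA$ and has the form $H = h\#K$ where $K$ is $C^\infty$ small and satisfies the two integral conditions \eqref{eq:corollary_rot_decomp_hamiltonian}: since $h$ is affine, $\varphi_h^1$ is a rotation $R_\beta$ with $\beta = b$ (up to sign), and subtracting the contribution of $h$ from $H$ produces a Hamiltonian generating an element of $\operatorname{Ham}^0(\BA)$ that is $C^\infty$ close to the identity; any failure of the integral conditions can be absorbed into a linear-in-$x$ correction as in the proof of Corollary~\ref{cor:rot_decomp}. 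The subtlety here is that $\beta = b$ may itself lie in $\tfrac12\BZ$, in which case one should perturb the affine model slightly — replacing $h$ by $h' = a + (b+\delta)x$ for a small rational $\delta$ so that $b+\delta \notin \tfrac12\BZ$ — and treat $H$ as a small perturbation of $h'$ instead; one must then check that the resulting stratum bound on the denominator of $\alpha$ is still uniform, which it is since $\delta$ is fixed.

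Next I would apply Corollary~\ref{cor:rot_decomp} to the small Hamiltonian, writing $\varphi_H^1$ (after the reductions) as $\prod_{i=1}^\ell \varphi_{\varepsilon_i\psi_i^*H_\alpha}^1$ with $\psi_i$ close to the identity, and, crucially, connecting $H$ through a family $(H^\lambda)$ of uniformly $C^\infty$ small $\partial$-admissible Hamiltonians — with time-$1$ map and both integrals preserved — to $H^1 = \#_{i=1}^\ell \varepsilon_i\psi_i^*H_\alpha$ concatenated with the affine piece. By a version of Lemma~\ref{lem:isotopies_of_subgraphs} referenced in the introduction, the subgraphs $D(H^\lambda)$ are all symplectomorphic, so it suffices to stratify $D(H^1)$. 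Because $H^1$ is (a reparametrization of) a concatenation of the paths generated by the finitely many Hamiltonians $\varepsilon_i\psi_i^*H_\alpha$ together with the affine path for $h$, I would cut the $t$-circle $\BT = \BR/\BZ$ into $\ell+1$ closed subintervals and observe that each slab $D(H^1)\cap(\BR\times I_j\times\BA)$ is symplectomorphic, via the time-$\psi_j$ flow trick and rescaling of the $t$-coordinate, to $D$ of an affine Hamiltonian $H_{\alpha}$ (or a constant) over $[0,1]\times\BA$; the symplectomorphism $\psi_j$ moves the conjugated piece $\psi_j^*H_\alpha$ to the honest affine $H_\alpha$. This is where the uniform bound on the denominator $q$ of $\alpha$ and the uniform lower bounds on $c$ and $C$ come from: $\ell$ is a fixed integer, $\alpha$ is a fixed rational (or ranges over a fixed finite set once $\delta$ is chosen), and the widths of the resulting annuli and the truncation constants depend only on $a$, $b$, $\ell$, and $\alpha$, not on $H$.

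Finally, within each affine slab $D(H_\alpha|_{[0,1]\times\BA(c)})$ (or its constant analogue $D(C|_{[0,1]\times\BA(c)})$), I would carry out the explicit planar stratification: remove the two boundary circles $\{x=0\}$ and $\{x=c\}$ of $\BA(c)$ — this is a codimension-$2$ (hence measure-zero, stratification-compatible) operation — and cut the resulting annulus open along $\{y = 0\}$, exhibiting an open full-measure subset as the Lagrangian product of a trapezoidal region in the $(s,x)$-plane with a square in the $(t,y)$-plane. Cutting the trapezoid into a rectangle and a triangle then yields, by the standard symplectomorphisms, the union of a polydisk (Lagrangian product of a rectangle and a square) and an ellipsoid (Lagrangian product of a right triangle and a square); the polydisk is $D(C'|_{[0,1]\times\BA(c')})$ for a constant $C'$ and the ellipsoid, after the further decomposition of $E(c,\alpha c)$ type domains, is again of the form $D(H_{\alpha'}|_{[0,1]\times\BA(c')})$. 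Taking the union of all these strata over all $\ell+1$ slabs, together with the lower-dimensional interfaces, produces the desired stratification $\MS$ of $D(H^1)\cong D(H)$. The main obstacle I anticipate is bookkeeping: verifying that the slab decomposition genuinely assembles into a \emph{stratification} in the technical sense (the closures of the top strata meeting along lower strata in the prescribed way) and that the reparametrizing function $\rho$ needed to make $H^1$ smooth at the interval endpoints does not spoil the identification of each slab with an honest affine subgraph — this requires care but is of the same elementary nature as the toy model computation, and the quantitative inputs (Theorem~\ref{thm:quantitative_perfectness}, Corollary~\ref{cor:rot_decomp}) have been arranged precisely to make the constants uniform.
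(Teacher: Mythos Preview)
Your overall strategy---apply Corollary~\ref{cor:rot_decomp}, connect $H$ through a family $(H^\lambda)$ via Lemma~\ref{lem:isotopies_of_subgraphs}, then stratify the endpoint $D(H^1)$---is the same as the paper's. But the specific stratification you propose differs in an important way and has two genuine gaps.

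\textbf{You cut in the $t$-direction; the paper stacks in the $s$-direction.} Following the toy model in the introduction, you slice $[0,1]$ into subintervals $I_j$ and claim each slab $D(H^1)\cap(\BR\times I_j\times\BA)$ is symplectomorphic to some $D(H_\alpha)$. The problem is the signs $\varepsilon_i$ in Corollary~\ref{cor:rot_decomp}: half of the factors are $-\psi_i^*H_\alpha$, so over those subintervals the concatenation Hamiltonian is $\approx -\rho(t)\alpha x\le 0$, and the ``slab'' is degenerate or empty rather than a copy of $D(H_\alpha)$. The toy model avoids this only because there all factors have the same sign. The paper sidesteps the issue by stacking layers in the $s$-direction instead: it shifts each increment to $H_i\coloneqq \varepsilon_i\psi_i^*H_\alpha-\min(\varepsilon_i\psi_i^*H_\alpha)\ge 0$, sets $F_0=V(H)-\ell\alpha/2$ and $F_i=F_{i-1}\#H_i$, and takes strata $D(F_0)$ and $D(F_{i-1},F_i)$. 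Lemma~\ref{lem:symplectomorphisms_of_subgraphs} then gives $D(F_{i-1},F_i)\cong D(H_i)\cong D(H_\alpha)$ directly, with no time reparametrization to undo.

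\textbf{The irrational slope $b$ is not handled.} Your reduction factors off the affine piece $h(t,x,y)=a+bx$ and leaves one slab carrying it. That slab has shape $D(a+bx)$, which decomposes into a constant piece and $D(H_b)$---but $b$ need not be rational, so this violates the conclusion. The paper addresses this in a separate reduction (its Step~3): choose rationals $\alpha_-<b<\alpha_+$, build a piecewise-linear $G(x)$ with slopes $\alpha_\pm$ matching $A_\pm(H)$ and $V(H)$, split $\BA$ into three sub-annuli accordingly, and reduce to the constant-model case on each. Your perturbation $b\mapsto b+\delta$ does not achieve this: you would need $\delta$ to make $b+\delta$ rational with uniformly bounded denominator, and then still verify $D(H)$ is symplectomorphic to a subgraph over the perturbed model, which is essentially what the paper's Step~3 does.

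Finally, your last paragraph (cutting $\BA$ open, Lagrangian products, trapezoids into rectangles and triangles) is not part of this theorem---the target strata \emph{are} the $D(H_\alpha|_{[0,1]\times\BA(c)})$ and $D(C|_{[0,1]\times\BA(c)})$; their further decomposition into ellipsoids and polydisks is Remark~\ref{rem:stratification_perturbed_affine_subgraph_ellipsoids_polydisks} and is used later, in Section~\ref{sec:tame_packings}.
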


\begin{remark}
\label{rem:stratification_perturbed_affine_subgraph_ellipsoids_polydisks}
The full volume of $D(H_\alpha|_{[0,1]\times \BA(c)})$ can be filled by a single copy of the ellipsoid $E(c,\alpha c)$; see Section \ref{sec:tame_packings}. Similarly, $D(C|_{[0,1]\times \BA(c)})$ can be fully filled by the polydisk $P(c,C)$ of widths $c$ and $C$. Theorem \ref{thm:stratification_perturbed_affine_subgraph} therefore says that every sufficiently small perturbation of an affine subgraph can be fully packed by some finite collection of polydisks and rational ellipsoids.
\end{remark}

We begin with some preliminary lemmas.

\begin{lemma}
\label{lem:symplectomorphisms_of_subgraphs}
Let $\Sigma$ be a compact surface, possibly with boundary. Let $\omega$ be an area form on $\Sigma$. Let $H:[0,1]\times \Sigma\rightarrow \BR$ be a $\partial$-admissible Hamiltonian. Then the map
\begin{equation*}
f_H:\BR\times [0,1]\times \Sigma \rightarrow \BR\times [0,1]\times \Sigma \qquad (s,t,p) \mapsto (s+H(t,\varphi_H^t(p)),t,\varphi_H^t(p))
\end{equation*}
is a symplectomorphism of $(M,\Omega)$. If $F,G:[0,1]\times \Sigma\rightarrow \BR$ are $\partial$-admissible Hamiltonians, then $f_H$ maps $D(F,G)$ to $D(H\#F, H\#G)$.
\end{lemma}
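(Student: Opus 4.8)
\textbf{Proof plan for Lemma~\ref{lem:symplectomorphisms_of_subgraphs}.}

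The plan is to verify the two assertions separately: first that $f_H$ is a symplectomorphism of $(M,\Omega)$, and then that it carries $D(F,G)$ onto $D(H\#F,H\#G)$. For the first part, I would observe that $f_H$ factors as a composition of two maps whose symplecticity is transparent. Write $g_t\colon \Sigma\to\Sigma$ for $\varphi_H^t$, and note that $\Phi\colon (s,t,p)\mapsto(s,t,\varphi_H^t(p))$ is the ``suspension'' of the Hamiltonian isotopy. A direct computation of $\Phi^*\Omega$ shows $\Phi^*(ds\wedge dt+\omega)=ds\wedge dt+(\varphi_H^t)^*\omega + dt\wedge\big(\iota_{X_{H_t}}\omega\big)\circ(\text{pullback})$; using $(\varphi_H^t)^*\omega=\omega$ (area preservation) and $\iota_{X_{H_t}}\omega=dH_t$, this becomes $ds\wedge dt+\omega+dt\wedge d(H_t\circ\varphi_H^t)$, where $d$ here denotes the differential in the $\Sigma$-directions. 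Now the remaining map $\Psi\colon(s,t,q)\mapsto(s+H(t,q),t,q)$ is the time-$1$ shift in the $s$-direction by the function $(t,q)\mapsto H(t,q)$, and $\Psi^*(ds\wedge dt)=ds\wedge dt+d_\Sigma H_t\wedge dt = ds\wedge dt - dt\wedge d_\Sigma H_t$; composing, the extra terms cancel and $f_H=\Psi\circ\Phi$ pulls $\Omega$ back to $\Omega$. (Alternatively, one can recognize $f_H$ as the time-$1$ map of the Hamiltonian flow on $(M,\Omega)$ generated by the autonomous-in-$s$ lift $\widehat H(s,t,p)=H(t,p)$, which is manifestly symplectic; I would likely present whichever of these is cleanest.) That $f_H$ is a diffeomorphism is clear since it has the explicit inverse $(s,t,q)\mapsto(s-H(t,q),t,(\varphi_H^t)^{-1}(q))$, and $\partial$-admissibility of $H$ guarantees everything is smooth and well defined up to the boundary.

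For the second assertion, I would just chase the defining inequalities through the map. A point $(s,t,p)$ lies in $D(F,G)$ iff $F(t,p)\le s\le G(t,p)$. Its image is $(s',t,q)$ with $q=\varphi_H^t(p)$ and $s'=s+H(t,q)=s+H(t,\varphi_H^t(p))$. Since $p=(\varphi_H^t)^{-1}(q)$, the condition $F(t,p)\le s$ becomes $F(t,(\varphi_H^t)^{-1}(q))\le s'-H(t,q)$, i.e. $H(t,q)+F(t,(\varphi_H^t)^{-1}(q))\le s'$; by the definition \eqref{eq:hamiltonian_of_composition} of the composed Hamiltonian, $(H\#F)_t = H_t + F_t\circ(\varphi_F^t)^{-1}$, so one must be slightly careful here — the relevant reparametrization is by $\varphi_H^t$, not $\varphi_F^t$. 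Concretely, $f_H$ maps $D(F,G)$ onto the set $\{(s',t,q) : H_t(q)+F_t((\varphi_H^t)^{-1}q)\le s'\le H_t(q)+G_t((\varphi_H^t)^{-1}q)\}$, and $H_t + F_t\circ(\varphi_H^t)^{-1}$ is exactly $(H\#F)_t$ after one unwinds that $\varphi_{H\#F}^t=\varphi_H^t\circ\varphi_F^t$ so that the ``outer'' reparametrization appearing in $H\#F$ is indeed by $\varphi_H^t$; similarly for $G$. Hence the image is precisely $D(H\#F,H\#G)$.

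The main obstacle is purely bookkeeping: keeping straight which Hamiltonian flow reparametrizes which term, so that the composed Hamiltonian $H\#F$ matches the set-theoretic image. The symplecticity computation itself is routine (Cartan's formula plus $\mathcal L_{X_{H_t}}\omega=0$), and no subtlety arises from the boundary because $\partial$-admissibility was designed precisely so that $\varphi_H^t\in\operatorname{Diff}_c(\Sigma,\omega)$ and $H_t$ is smooth on all of $\Sigma$. I would therefore spend most of the written proof on the clean factorization $f_H=\Psi\circ\Phi$ (or the ``graph of an $s$-independent Hamiltonian'' viewpoint) and then dispatch the $D(F,G)\mapsto D(H\#F,H\#G)$ claim with the short inequality chase above.
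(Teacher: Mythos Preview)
Your proposal is correct and matches the paper's approach: the paper's proof is literally the single sentence ``Both assertions follow from a straightforward direct computation,'' and you have correctly fleshed out what that computation is, including the factorization $f_H=\Psi\circ\Phi$ and the inequality chase for the image of $D(F,G)$. Your observation about the reparametrization in $(H\#F)_t$ being by $\varphi_H^t$ rather than $\varphi_F^t$ is well taken---the paper's display \eqref{eq:hamiltonian_of_composition} appears to contain a typo (it should read $G_t\circ(\varphi_H^t)^{-1}$ for $H\#G$ to generate $\varphi_H^t\circ\varphi_G^t$), and your reading is the one that makes the lemma true.
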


\begin{proof}
Both assertions follow from a straightforward direct computation.
\end{proof}

\begin{lemma}
\label{lem:isotopies_of_subgraphs}
Let $\Sigma$ be a compact connected surface, possibly with boundary. Let $\omega$ be an area form on $\Sigma$. Let $H_{\pm}^\lambda:[0,1]\times \Sigma\rightarrow\BR$ be two smooth families of $\partial$-admissible Hamiltonians on $\Sigma$ parametrized by $\lambda\in [0,1]$. We assume that the strict inequality $H_+^\lambda > H_-^\lambda$ holds for all $\lambda\in [0,1]$. Then the following two statements are equivalent:
\begin{enumerate}
\item \label{item:isotopies_of_subgraphs_symplectomorphic_domains} There exists a smooth family $(\psi^\lambda)_{\lambda\in [0,1]}$ of symplectomorphisms of $(M,\Omega)$ starting at the identity such that $\psi^\lambda(D(H_-^0,H_+^0))=D(H_-^\lambda,H_+^\lambda)$.
\item \label{item:isotopies_of_subgraphs_constant_quantities} The following three assertions hold:
\begin{enumerate}[(i)]
\item \label{item:isotopies_of_subgraphs_return_map} There exists a smooth isotopy $(\chi^\lambda)_{\lambda\in [0,1]}$ of area preserving diffeomorphisms of $\Sigma$ starting at the identity such that
\begin{equation*}
(\varphi_{H_-^\lambda}^1)^{-1}\circ \varphi_{H_+^\lambda}^1 = \chi^\lambda \circ ((\varphi_{H_-^0}^1)^{-1}\circ \varphi_{H_+^0}^1) \circ (\chi^\lambda)^{-1}.
\end{equation*}
\item \label{item:isotopies_of_subgraphs_area} For all boundary components $S$ of $\Sigma$, the quantity
\begin{equation*}
\int_{[0,1]} (H_+^\lambda(t,S) - H_-^\lambda(t,S)) dt
\end{equation*}
is independent of $\lambda\in [0,1]$.
\item \label{item:isotopies_of_subgraphs_volume} The volume
\begin{equation*}
\operatorname{vol}(D(H_-^\lambda,H_+^\lambda)) = \int_{[0,1]\times \Sigma} (H_+^\lambda - H_-^\lambda) dt\wedge\omega
\end{equation*}
is independent of $\lambda\in[0,1]$.
\end{enumerate}
\end{enumerate}
\end{lemma}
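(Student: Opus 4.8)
The plan is to prove the two implications separately, with the forward direction $(\ref{item:isotopies_of_subgraphs_symplectomorphic_domains}) \Rightarrow (\ref{item:isotopies_of_subgraphs_constant_quantities})$ being the easy bookkeeping part and the reverse direction $(\ref{item:isotopies_of_subgraphs_constant_quantities}) \Rightarrow (\ref{item:isotopies_of_subgraphs_symplectomorphic_domains})$ containing the real content. For the forward direction: if $\psi^\lambda$ is a smooth family of symplectomorphisms carrying $D(H_-^0,H_+^0)$ to $D(H_-^\lambda,H_+^\lambda)$, then $(\ref{item:isotopies_of_subgraphs_volume})$ is immediate since symplectomorphisms preserve volume, and the volume formula itself follows from Fubini (integrating $ds$ over the fibre $[H_-^\lambda(t,p),H_+^\lambda(t,p)]$). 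For $(\ref{item:isotopies_of_subgraphs_area})$, the domain $D(H_-^\lambda,H_+^\lambda)$ meets the region over a boundary component $S$ in a piece whose symplectic area in the $(s,t)$ directions equals $\int_{[0,1]}(H_+^\lambda(t,S)-H_-^\lambda(t,S))\,dt$; this is a symplectic invariant of the characteristic data along the boundary face and hence constant in $\lambda$. For $(\ref{item:isotopies_of_subgraphs_return_map})$: the characteristic foliation on the graph-type boundary hypersurfaces of $D(H_-^\lambda,H_+^\lambda)$ realizes the return maps $\varphi_{H_\pm^\lambda}^1$ up to conjugation (via Lemma \ref{lem:symplectomorphisms_of_subgraphs}, normalizing the top boundary to a graph over $H_-^\lambda$ turns the return map into $(\varphi_{H_-^\lambda}^1)^{-1}\circ\varphi_{H_+^\lambda}^1$), and $\psi^\lambda$ restricted to the appropriate boundary face, composed with these normalizing symplectomorphisms, descends to the desired isotopy $\chi^\lambda$ of $\Sigma$.

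For the reverse direction, the key is to use Lemma \ref{lem:symplectomorphisms_of_subgraphs} to reduce to a normal form. First I would use the symplectomorphism $f_{\overline{H_-^\lambda}}$ (the inverse-type map generated by $H_-^\lambda$) to replace the pair $(H_-^\lambda,H_+^\lambda)$ by $(0, G^\lambda)$ where $G^\lambda := \overline{H_-^\lambda}\#H_+^\lambda$; note $\varphi_{G^\lambda}^1 = (\varphi_{H_-^\lambda}^1)^{-1}\circ\varphi_{H_+^\lambda}^1$, so by hypothesis $(\ref{item:isotopies_of_subgraphs_return_map})$ this family of time-one maps is conjugate, via a smooth isotopy $\chi^\lambda$, to the fixed map $\varphi_{G^0}^1$. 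Conjugating further by (the symplectomorphism of $M$ covering) $\chi^\lambda$, we may assume all the $G^\lambda$ have the \emph{same} time-one map $\varphi := \varphi_{G^0}^1$. Next I would bring the $G^\lambda$ to a canonical Hamiltonian generating $\varphi$: since $\varphi_{G^\lambda}^1=\varphi$ for all $\lambda$ and $\Sigma$ is connected, the Hamiltonians $G^\lambda$ differ from a fixed choice (say one that vanishes on $\partial\Sigma$ and has prescribed Calabi invariant) only by a Hamiltonian $L^\lambda$ with $\varphi_{L^\lambda}^1 = \mathrm{id}$ — and conditions $(\ref{item:isotopies_of_subgraphs_area})$ and $(\ref{item:isotopies_of_subgraphs_volume})$ are exactly what guarantee that the $G^\lambda$ agree on the boundary circles and have constant total volume, so the reduction to a $\lambda$-independent Hamiltonian can be carried out. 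Once all the subgraphs $D(0,G^\lambda)$ are literally equal, tracking back through the (smoothly $\lambda$-dependent) symplectomorphisms of $M$ that we composed — namely $f_{\overline{H_-^\lambda}}$, the lift of $\chi^\lambda$, and the isotopy absorbing $L^\lambda$ — assembles into the desired family $\psi^\lambda$ with $\psi^0 = \mathrm{id}$ and $\psi^\lambda(D(H_-^0,H_+^0)) = D(H_-^\lambda,H_+^\lambda)$.

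The main obstacle, I expect, is the last reduction step: constructing the smooth family of symplectomorphisms of $M$ that deforms $D(0,G^\lambda)$ with a $\lambda$-dependent but fixed-time-one Hamiltonian to a subgraph with a $\lambda$-independent Hamiltonian. This is where conditions $(\ref{item:isotopies_of_subgraphs_area})$ and $(\ref{item:isotopies_of_subgraphs_volume})$ must be used in an essential way: the Hamiltonian $L^\lambda$ with $\varphi_{L^\lambda}^1 = \mathrm{id}$ need not itself be $\lambda$-independent, and one must exhibit a smooth family of symplectomorphisms of $M$ (built from the flow of a suitable time-dependent, $\lambda$-parametrized Hamiltonian on $M$, or directly from a fibre-preserving shearing construction) that absorbs it; the area and volume constraints are precisely the obstructions to doing so, analogous to the ``flux''-type conditions one meets when comparing subgraphs. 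Care is also needed to ensure all intermediate symplectomorphisms are genuinely smooth in $\lambda$ and equal to the identity at $\lambda = 0$, and to handle the boundary circles of $\Sigma$ (where $H_\pm^\lambda$ need only be locally constant) consistently; the device of first passing to Hamiltonians vanishing on $\partial\Sigma$, exactly as in the proof of Corollary \ref{cor:rot_decomp}, should make this manageable.
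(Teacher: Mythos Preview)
Your forward direction and the first normalization steps of the reverse direction are fine; the paper in fact performs the conjugation by $\chi^\lambda$ as its Step~1 (though it does not bother to normalize $H_-^\lambda$ to zero). The genuine gap is precisely where you place it: after reducing to a family $G^\lambda$ with constant time-one map, constant boundary integrals, and constant volume, you still need a smooth family of symplectomorphisms of $M$ carrying $D(0,G^0)$ to $D(0,G^\lambda)$, and you do not supply one. The symplectomorphisms $f_H$ from Lemma~\ref{lem:symplectomorphisms_of_subgraphs} cannot do this job: they act by $D(F,G)\mapsto D(H\#F,H\#G)$, i.e.\ by simultaneous left translation of both Hamiltonians, so they preserve the ``difference'' $\overline{F}\#G$ and cannot change the top of a subgraph while fixing the bottom. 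Writing $G^\lambda=G^0\#L^\lambda$ with $\varphi_{L^\lambda}^1=\mathrm{id}$ does not help for the same reason---there is no evident ``shearing'' symplectomorphism of $M$ that absorbs $L^\lambda$ into the top boundary alone.

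The paper bypasses this algebraic impasse with a geometric construction that you have not anticipated. After Step~1 it builds $\psi^\lambda$ from the boundary inward: Step~2 uses the characteristic flow on the graph-type boundary pieces to define $\psi^\lambda$ on the ``mapping-torus'' part $T^\lambda=\partial_+A^\lambda\cup\partial_1A^\lambda\cup\partial_-A^\lambda\cup\partial_0A^\lambda$ (this is where the constancy of the return map enters); Step~3 extends over the strata $\partial_S A^\lambda$ above each boundary component $S$ of $\Sigma$, where condition~(\ref{item:isotopies_of_subgraphs_area}) is exactly the area constraint needed to extend over the strips $Q_S^\lambda$; and Step~4 extends to the interior by a Moser argument. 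In Step~4 the generating symplectic vector field is dual to a closed $1$-form $\alpha^\lambda$ defined near $\partial A^\lambda$, and the obstruction to extending it is a class in $H^2_c(\operatorname{int}A^\lambda)$; the heart of the proof is a computation, integrating $\alpha^\lambda$ along the characteristic flow and invoking condition~(\ref{item:isotopies_of_subgraphs_volume}) via $\frac{d}{d\lambda}\operatorname{vol}(A^\lambda)=0$, showing this class vanishes. Any completion of your outline would have to confront the same cohomological obstruction, and nothing in your sketch supplies the mechanism for it.
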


\begin{proof}
Let us abbreviate $A^\lambda \coloneqq D(H_-^\lambda,H_+^\lambda)$. Note that $A^\lambda$ is a manifold with boundary and corners. The boundary $\partial A^\lambda$ has the following top dimensional strata:
\begin{itemize}
\item $\partial_\pm A^\lambda \coloneqq \operatorname{graph}(H_\pm^\lambda)$
\item $\partial_j A^\lambda \coloneqq (\BR\times \left\{ j \right\}\times \Sigma)\cap A^\lambda$ for $j\in \left\{ 0,1 \right\}$
\item $\partial_S A^\lambda \coloneqq (\BR\times [0,1]\times S)\cap A^\lambda$ where $S$ is a component of $\partial\Sigma$
\end{itemize}
Restricting the natural projection $M\rightarrow [0,1]$ to $\partial_\pm A^\lambda$ yields a fibration of $\partial_\pm A^\lambda$ over the interval $[0,1]$ with fiber $\Sigma$. The strata $\partial_j A^\lambda$ for $j\in \left\{ 0,1 \right\}$ also fiber over the interval with fiber $\Sigma$. We can choose the fibration such that the fibers are transverse to the vertical vector field $\partial_s$. Since the Hamiltonians $H_\pm^\lambda$ are assumed to be $\partial$-admissible, there exists, for every boundary component $S$ of $\partial\Sigma$, a subset $Q_S^\lambda\subset \BR\times [0,1]$ such that $\partial_SA^\lambda = Q_S^\lambda\times S$. The set $Q_S^\lambda$ is a smoothly embedded strip in $\BR\times [0,1]$ connecting $\BR\times \left\{ 0 \right\}$ and $\BR \times \left\{ 1 \right\}$. The area of $Q_S^\lambda$ is given by the integral
\begin{equation*}
\operatorname{area}(Q_S^\lambda) = \int_{[0,1]} (H_+^\lambda(t,S)-H_-^\lambda(t,S))dt.
\end{equation*}
The symplectic form $\Omega$ on $M$ induces a characteristic foliation on $\partial A^\lambda$. Let us orient this foliation such that the orientation of the foliation followed by the coorientation induced by the restriction of $\Omega$ yields the boundary orientation on $\partial A^\lambda$. On the strata $\partial_\pm A^\lambda$, the characteristic foliation is positively tangent to the vector field
\begin{equation}
\label{eq:isotopies_of_subgraphs_proof_characteristic_foliation}
\pm (\partial_t H_\pm^\lambda \cdot \partial_s + \partial_t + X_{H_\pm^\lambda}).
\end{equation}
In particular, it is transverse to the fibers of $\partial_\pm A^\lambda\rightarrow [0,1]$. The boundary of the boundary strata $\partial_\pm A^\lambda$ consists of two types of strata: the fibers over $0$ and $1$ of the projection $\partial_\pm A^\lambda\rightarrow [0,1]$ and the annuli of the form $\partial_\pm A^\lambda \cap \partial_S A^\lambda$ where $S$ is a component of $\partial\Sigma$. The characteristic foliation is transverse to the former type. Since the Hamiltonians $H_\pm^\lambda$ are assumed to be $\partial$-admissible, the characteristic foliation is tangent to the latter type. On the strata $\partial_j A^\lambda$, the characteristic foliation is positively tangent to $(-1)^j \partial_s$. For our choice of fibration $\partial_j A^\lambda\rightarrow [0,1]$, it is therefore transverse to the fibers. The union
\begin{equation}
\label{eq:isotopies_of_subgraphs_proof_union}
T^\lambda\coloneqq \partial_+A^\lambda\cup \partial_1A^\lambda\cup \partial_-A^\lambda \cup \partial_0 A^\lambda
\end{equation}
is not smooth at the corners $\partial_\pm A^\lambda \cap \partial_j A^\lambda$. By the above discussion, the characteristic foliation is transverse to these corners. Moreover, it is tangent to the boundary of $T^\lambda$. Therefore, the characteristic foliation induces a flow on $T^\lambda$ which is well-defined up positive reparametrization. This flow is transverse to the fibers of the fibrations of the strata $\partial_\pm A^\lambda$ and $\partial_jA^\lambda$. In fact, each fiber is a surface of section of the flow. Consider the fiber over $0$ of the projection $\partial_+ A^\lambda\rightarrow [0,1]$. Let $\iota^\lambda:\Sigma\hookrightarrow M$ denote the parametrization of this fiber which has the property the composition with the natural projection $M\rightarrow \Sigma$ agrees with the identity map of $\Sigma$. With respect to the parametrization $\iota^\lambda$, the first return map of the flow is given by $(\varphi_{H_-^\lambda}^1)^{-1} \circ \varphi_{H_+^\lambda}^1$. In other words, we can regard $T^\lambda$ as a (non-smooth) mapping torus of the diffeomorphism $(\varphi_{H_-^\lambda}^1)^{-1} \circ \varphi_{H_+^\lambda}^1$. On the strata $\partial_S A^\lambda$, the characteristic foliation is tangent to the circles $\left\{ s \right\}\times \left\{ t \right\}\times S$. Thus we can regard $\partial_SA^\lambda$ as the mapping torus of the identity map on the strip $Q_S^\lambda$.\\

It is an easy consequence of the above discussion that statement (\ref{item:isotopies_of_subgraphs_symplectomorphic_domains}) in Lemma \ref{lem:isotopies_of_subgraphs} implies statement (\ref{item:isotopies_of_subgraphs_constant_quantities}): A family of symplectomorphisms $\psi^\lambda$ as in \eqref{item:isotopies_of_subgraphs_symplectomorphic_domains} clearly has to preserve $\operatorname{vol}(A^\lambda)$ and $\operatorname{area}(Q_S^\lambda)$. Moreover, it maps the characteristic foliation on the boundary of $A^0$ to the characteristic foliation on the boundary of $A^\lambda$. Thus it induces a familiy of conjugating diffeomorphisms for the first return maps of the mapping tori $T^0$ and $T^\lambda$.\\

Conversely, assume that statement \eqref{item:isotopies_of_subgraphs_constant_quantities} in Lemma \ref{lem:isotopies_of_subgraphs} holds. Our construction of the family of symplectomorphisms $\psi^\lambda$ proceeds in several steps.\\

\emph{Step 1:} We reduce ourselves to the case that $\chi^\lambda = \operatorname{id}$ for all $\lambda\in [0,1]$. To this end, we define Hamiltonians $\widetilde{H}_\pm^\lambda \coloneqq H_\pm^\lambda \circ (\chi^\lambda)^{-1}$. It follows from assumption \eqref{item:isotopies_of_subgraphs_return_map} that
\begin{equation*}
(\varphi_{\widetilde{H}_-^\lambda}^1)^{-1}\circ \varphi_{\widetilde{H}_+^\lambda}^1 = (\varphi_{\widetilde{H}_-^0}^1)^{-1}\circ \varphi_{\widetilde{H}_+^0}^1.
\end{equation*}
In other words, the Hamiltonians $\widetilde{H}_\pm^\lambda$ satisfy assumption \eqref{item:isotopies_of_subgraphs_return_map} with $\widetilde{\chi}^\lambda = \operatorname{id}$. Moreover, the Hamiltonians $\widetilde{H}_\pm^\lambda$ satisfy assumptions \eqref{item:isotopies_of_subgraphs_area} and \eqref{item:isotopies_of_subgraphs_volume}. Let $\widetilde{A}^\lambda\subset M$ denote the region between the graphs of $\widetilde{H}_\pm^\lambda$. It is not hard to see that if statement \eqref{item:isotopies_of_subgraphs_symplectomorphic_domains} holds for the family of sets $\widetilde{A}^\lambda$, then it also holds for $A^\lambda$. Indeed, given a family of symplectomorphisms $\widetilde{\psi}^\lambda$ of $M$ starting at the identity and mapping $\widetilde{A}^0$ to $\widetilde{A}^\lambda$, simply define
\begin{equation*}
\psi^\lambda \coloneqq (\operatorname{id}_{\BR\times [0,1]}\times \chi^\lambda)^{-1} \circ \widetilde{\psi}^\lambda.
\end{equation*}
The symplectomorphism $\psi^\lambda$ maps $A^0 = \widetilde{A}^0$ to $A^\lambda$. After replacing $H_\pm^\lambda$ by $\widetilde{H}_\pm^\lambda$, we can therefore assume that $\chi^\lambda = \operatorname{id}$.\\

\emph{Step 2:} We define a family of homeomorphisms
\begin{equation*}
\psi^\lambda : T^0 \rightarrow T^\lambda
\end{equation*}
with the following properties:
\begin{enumerate}[(a)]
\item \label{item:isotopies_of_subgraphs_proof_psi_on_T_start} $\psi^0=\operatorname{id}$.
\item \label{item:isotopies_of_subgraphs_proof_psi_on_T_base_surface} $\psi^\lambda$ maps the fiber over $0$ of the fibration $\partial_+ A^0\rightarrow [0,1]$ to the fiber over $0$ of $\partial_+A^\lambda\rightarrow [0,1]$. Consider the parametrizations $\iota^0$ and $\iota^\lambda$ of these fibers defined above. With respect to these parametrizations, the restriction of $\psi^\lambda$ to these fibers is given by the identity map $\operatorname{id}$.
\item \label{item:isotopies_of_subgraphs_proof_psi_on_T_circle_fibers} For every $\lambda$, the boundary of $T^\lambda$ fibers over $\bigcup_S \partial Q_S^\lambda$ with circle fibers of the form $\left\{ s \right\}\times \left\{ t \right\} \times S$. The map $\psi^\lambda$ maps circle fibers of $\partial T^0$ to circle fibers of $\partial T^\lambda$.
\item \label{item:isotopies_of_subgraphs_proof_psi_on_T_diff_pm} The restriction of $\psi^\lambda$ to $\partial_\pm A^0$ is a diffeomorphism onto $\partial_\pm A^\lambda$.
\item \label{item:isotopies_of_subgraphs_proof_psi_on_T_diff_j} For $j\in \left\{ 0,1 \right\}$, the restriction of $\psi^\lambda$ to $\partial_jA^0$ is a diffeomorphism onto $\partial_jA^\lambda$.
\item \label{item:isotopies_of_subgraphs_proof_psi_on_T_Omega} $\psi^\lambda$ is compatible with $\Omega$, i.e. $(\psi^\lambda)^* \Omega|_{T^\lambda} = \Omega|_{T^0}$. In fact, $\psi^\lambda$ extends to a family of symplectomorphisms between open neighbourhoods of $T^0$ and $T^\lambda$ in $(M,\Omega)$.
\newcounter{counter_strat_subgraphs}
\setcounter{counter_strat_subgraphs}{\value{enumi}}
\end{enumerate}
The construction uses assumption \eqref{item:isotopies_of_subgraphs_return_map}. Here are the details. Let $(\varphi^\lambda_t)_t$ be a parametrization of the flow on $T^\lambda$ induced by the characteristic foliation. We can choose $\varphi^\lambda_t$ such that the time it takes a flow line to traverse each of the strata $\partial_\pm A^\lambda$ and $\partial_jA^\lambda$ is exactly $1$. Moreover, we can choose $\varphi_t^\lambda$ such that, for each fixed $t$, circle fibers on the boundary of $T^\lambda$ are mapped to circle fibers. It follows from assumption \eqref{item:isotopies_of_subgraphs_return_map} and the reduction in Step 1 that there exists a unique family of homeomorphisms $\psi^\lambda:T^0\rightarrow T^\lambda$ such that $\psi^\lambda$ is a conjugacy between the flows $(\varphi^0_t)_t$ and $(\varphi^\lambda_t)_t$, i.e. such that
\begin{equation*}
(\psi^\lambda)^{-1} \circ \varphi^\lambda_t \circ \psi^\lambda = \varphi^0_t,
\end{equation*}
and such that property \eqref{item:isotopies_of_subgraphs_proof_psi_on_T_base_surface} holds. The family of homeomorphisms $\psi^\lambda$ clearly satisfies properties \eqref{item:isotopies_of_subgraphs_proof_psi_on_T_start} - \eqref{item:isotopies_of_subgraphs_proof_psi_on_T_diff_j}. Moreover, we have $(\psi^\lambda)^* \Omega|_{T^\lambda} = \Omega|_{T^0}$. If $\varphi_t^\lambda$ is chosen with some care near the non-smooth locus of $T^\lambda$, one can extend $\psi^\lambda$ to a symplectomorphism between open neighbourhoods of $T^0$ and $T^\lambda$ and therefore also guarantee property \eqref{item:isotopies_of_subgraphs_proof_psi_on_T_Omega}.\\

\emph{Step 3:} We extend the family of homeomorphism $\psi^\lambda$ from Step 2 to a family of homeomorphisms
\begin{equation*}
\psi^\lambda : \partial A^0 \rightarrow \partial A^\lambda
\end{equation*}
such that:
\begin{enumerate}[(a)]
\setcounter{enumi}{\value{counter_strat_subgraphs}}
\item $\psi^0=\operatorname{id}$
\item $\psi^\lambda$ restricts to a diffeomorphism between $\partial_S A^0$ and $\partial_S A^\lambda$ for all $S$.
\item $\psi^\lambda$ is compatible with $\Omega$ and extends to a symplectomorphism between open neighbourhoods of $\partial A^0$ and $\partial A^\lambda$ in $(M,\Omega)$.
\end{enumerate}
The construction makes use of assumption \eqref{item:isotopies_of_subgraphs_area}. Recall that $\partial_S A^\lambda = Q_S^\lambda\times S$. The intersection of $T^\lambda$ with $\partial_S A^\lambda$ is equal to
\begin{equation*}
\partial ( \partial_S A^\lambda) = \partial Q_S^\lambda\times S.
\end{equation*}
By property \eqref{item:isotopies_of_subgraphs_proof_psi_on_T_circle_fibers}, the restriction of $\psi^\lambda$ to $\partial Q_S^0\times S$ maps circle fibers of $\partial Q_S^0\times S$ to circle fibers of $\partial Q_S^\lambda\times S$. In particular, it descends to a homeomorphism $\overline{\psi}^\lambda : \partial Q_S^0\rightarrow \partial Q_S^\lambda$ which maps smooth strata diffeomorphically to smooth strata. Since $Q_S^0$ and $Q_S^\lambda$ have the same total area by assumption \eqref{item:isotopies_of_subgraphs_area}, we can extend to an area preserving diffeomorphism $\overline{\psi}^\lambda : Q_S^0 \rightarrow Q_S^\lambda$. We define the extension of $\psi^\lambda$ to $Q_S^0\times S$ to be a lift $Q_S^0\times S\rightarrow Q_S^\lambda\times S$ of $\overline{\psi}^\lambda$ which agrees with the map $\psi^\lambda$ already defined on the boundary $\partial Q_S^0\times S$. By construction, this extension restricts to a diffeomorphism between $\partial_S A^0$ and $\partial_S A^\lambda$. Moreover, it is compatible with $\Omega$ since it lifts the area preserving diffeomorphism $\overline{\psi}^\lambda$. By property \eqref{item:isotopies_of_subgraphs_proof_psi_on_T_start} in Step 2 we can arrange $\psi^0$ to be the identity. For a careful choice of extension and lift, the family $\psi^\lambda$ extends to a family of symplectomorphisms between open neighbourhoods of $A^0$ and $A^\lambda$.\\

\emph{Step 4:} Using assumption \eqref{item:isotopies_of_subgraphs_volume}, we extend the family of homeomorphisms $\psi^\lambda$ form Step 3 to the desired family of symplectomorphisms of $(M,\Omega)$. First, extend $\psi^\lambda$ to a family of symplectomorphisms between open neighbourhoods of $\partial A^0$ and $\partial A^\lambda$. It is not hard to further extend to a family of symplectic embeddings
\begin{equation*}
\psi^\lambda : \operatorname{nb}(M\setminus A^0) \rightarrow M
\end{equation*}
defined on a neighbourhood of the complement of $A^0$ and starting at the inclusion $\psi^0$. The subtle part is to extend $\psi^\lambda$ over the interior of $A^0$. This makes use of assumption \eqref{item:isotopies_of_subgraphs_volume}. Let $X^\lambda$ denote the family of symplectic vector fields generating the family of symplectic embeddings $\psi^\lambda: \operatorname{nb}(M\setminus A^0)\hookrightarrow M$. Here the vector field $X^\lambda$ is defined on a neighbourhood of $M\setminus A^\lambda$. Our goal is to extend $X^\lambda$ to a family of symplectic vector fields defined on all of $M$. The desired extension of $\psi^\lambda$ to all of $M$ will simply be the flow generated by $X^\lambda$. Let $\alpha^\lambda$ be the unique family of closed $1$-forms characterized by the identity $\iota_{X^\lambda}\Omega = \alpha^\lambda$. The closed $1$-form $\alpha^\lambda$ is defined on $\operatorname{nb}(M\setminus A^\lambda)$ and we need to extend to a closed $1$-form on all of $M$. Let us first choose an arbitrary extension of $\alpha^\lambda$ to a family of not necessarily closed $1$-forms $\alpha_0^\lambda$ defined on $M$. The differential $d\alpha_0^\lambda$ is a closed $2$-form with compact support contained in the interior $\operatorname{int}(A^\lambda)$. We need to show that there exists a $1$-form $\beta^\lambda$ compactly supported in $\operatorname{int}(A^\lambda)$ such that $d\beta^\lambda = d\alpha_0^\lambda$. Once we have such $\beta^\lambda$, we can take $\alpha^\lambda\coloneqq \alpha_0^\lambda - \beta^\lambda$ to be the desired closed extension of $\alpha^\lambda$. Showing the existence of $\beta^\lambda$ is equivalent to showing that the cohomology class $[d\alpha_0^\lambda]$ vanishes in $H^2_c(\operatorname{int}(A^\lambda);\BR)$. This in turn is equivalent to showing that the homomorphism
\begin{equation*}
\langle \cdot, d\alpha_0^\lambda  \rangle : H_2(A^\lambda, \partial A^\lambda) \rightarrow \BR
\end{equation*}
vanishes. The homomorphism $\langle \cdot , d\alpha_0^\lambda\rangle$ is given by the composition of the boundary homomorphism $\partial:H_2(A^\lambda,\partial A^\lambda)\rightarrow H_1(\partial A^\lambda)$ with the homomorphism
\begin{equation*}
\langle \cdot, \alpha^\lambda \rangle : H_1(\partial A^\lambda) \rightarrow \BR.
\end{equation*}
Thus we need to verify that $\langle \cdot ,\alpha^\lambda\rangle$ vanishes on the image of $\partial$. Let $\gamma$ be any loop in $\partial A^\lambda$. After homotoping and possibly inverting the loop $\gamma$, we can assume that it is given by a finite concatenation of path segments of one of the following two types:
\begin{enumerate}[(A)]
\item \label{item:isotopies_of_subgraphs_proof_path_type1} Path segments contained in $\iota^\lambda(\Sigma)$, i.e. the fiber over $0$ of the projection $\partial_+ A^\lambda\rightarrow [0,1]$.
\item \label{item:isotopies_of_subgraphs_proof_path_type2} Flow line segments of $\varphi_t^\lambda$ starting at a point contained in the interior of $\iota^\lambda(\Sigma)$ and ending at the next intersection point with $\iota^\lambda(\Sigma)$.
\end{enumerate}
Our goal is to show that the integral of $\alpha^\lambda$ over paths of type \eqref{item:isotopies_of_subgraphs_proof_path_type1} or \eqref{item:isotopies_of_subgraphs_proof_path_type2} vanishes. It follows from this assertion that $\langle \cdot, \alpha^\lambda\rangle$ vanishes on all loops $\gamma$ in $\partial A^\lambda$. Let us begin with paths of type \eqref{item:isotopies_of_subgraphs_proof_path_type1}. It follows from property \eqref{item:isotopies_of_subgraphs_proof_psi_on_T_base_surface} that the restriction of the vector field $X^\lambda$ to $\iota^\lambda(\Sigma)$ is parallel to $\partial_s$. Therefore, the pull-back of $\alpha^\lambda$ to $\iota^\lambda(\Sigma)$ vanishes. In particular, the integral of $\alpha^\lambda$ over type \eqref{item:isotopies_of_subgraphs_proof_path_type1} paths is zero. Let us now turn to path segments of type \eqref{item:isotopies_of_subgraphs_proof_path_type2}. We define
\begin{equation*}
u^\lambda:[0,4]\times \Sigma \rightarrow T^\lambda \qquad u^\lambda(t,p) \coloneqq \varphi^\lambda_t(\iota^\lambda(p)).
\end{equation*}
Clearly, $u_p^\lambda\coloneqq u^\lambda(\cdot,p)$ is a path of type \eqref{item:isotopies_of_subgraphs_proof_path_type2} for every $p\in \Sigma$. Since $\alpha^\lambda$ is closed in a neighbourhood of $\partial A^\lambda$ and the restriction of $\alpha^\lambda$ to $\iota^\lambda(\Sigma)$ vanishes, the integral $\int_{[0,4]} (u_p^\lambda)^* \alpha^\lambda$ is independent of $p\in \Sigma$. This implies that, for all $p\in \Sigma$, we have
\begin{equation}
\label{eq:isotopies_of_subgraphs_proof_type2a}
\operatorname{area}(\Sigma) \int_{[0,4]} (u_p^\lambda)^* \alpha^\lambda = \int_{[0,4]\times\Sigma} (u^\lambda)^*\alpha^\lambda\wedge \omega.
\end{equation}
Using that $(u^\lambda)^* \Omega = \omega$, we obtain
\begin{eqnarray}
(u^\lambda)^*\alpha^\lambda\wedge \omega &=& (u^\lambda)^* (\alpha^\lambda\wedge \Omega)\label{eq:isotopies_of_subgraphs_proof_type2b}\\
&=& (u^\lambda)^* (\iota_{X^\lambda}\Omega \wedge \Omega)\nonumber\\
&=& (u^\lambda)^* \iota_{X^\lambda}\left( \frac{1}{2}\Omega\wedge\Omega \right).\nonumber
\end{eqnarray}
We compute
\begin{eqnarray}
0 &=& \frac{d}{d\lambda} \operatorname{vol}(A^\lambda) \label{eq:isotopies_of_subgraphs_proof_type2c}\\
&=& \int_{\partial A^\lambda} \iota_{X^\lambda}\left( \frac{1}{2}\Omega\wedge\Omega \right)\nonumber\\
&=& \int_{T^\lambda} \iota_{X^\lambda}\left( \frac{1}{2}\Omega\wedge\Omega \right)\nonumber\\
&=& \int_{[0,4]\times \Sigma} (u^\lambda)^* \iota_{X^\lambda}\left( \frac{1}{2}\Omega\wedge\Omega \right)\nonumber\\
&=& \int_{[0,4]\times \Sigma} (u^\lambda)^*\alpha^\lambda\wedge \omega\nonumber\\
&=& \operatorname{area}(\Sigma) \int_{[0,4]} (u_p^\lambda)^* \alpha^\lambda.\nonumber
\end{eqnarray}
Here the first equality follows from assumption \eqref{item:isotopies_of_subgraphs_volume}. The second equality is a consequence of Stokes' theorem. The vector field $X^\lambda$ is tangent to $\partial_S A^\lambda$. Thus the integral of $\iota_{X^\lambda}\left( \frac{1}{2}\Omega\wedge\Omega \right)$ over $\partial_S A^\lambda$ vanishes, which implies the third equality. The forth equality follows because $u^\lambda$ is a parametrization of $T^\lambda$. The fifth equality is immediate from equation \eqref{eq:isotopies_of_subgraphs_proof_type2b}. Finally, the sixth equality follows from identity \eqref{eq:isotopies_of_subgraphs_proof_type2a}. Since equation \eqref{eq:isotopies_of_subgraphs_proof_type2c} holds for all $p\in \Sigma$, we conclude that the integral of $\alpha^\lambda$ over all paths of type $\eqref{item:isotopies_of_subgraphs_proof_path_type2}$ vanishes. This concludes Step 4 and therefore the proof of the lemma.
\end{proof}

\begin{proof}[Proof of Theorem \ref{thm:stratification_perturbed_affine_subgraph}]
It will be useful to introduce the following notation. For every $\partial$-admissible Hamiltonian $G:[0,1]\times \BA \rightarrow \BR$, we set
\begin{equation*}
V(G) \coloneqq \int\limits_{[0,1]\times \BA} G dt\wedge \omega \quad \text{and} \quad A_\pm(G) \coloneqq \int_0^1 G(t,\partial_\pm\BA) dt
\end{equation*}
where $\partial_+\BA = \left\{ 1 \right\}\times \BT$ and $\partial_-\BA = \left\{ 0 \right\}\times \BT$.

\emph{Step 1:} Let us begin with the case that $h$ is constant and that the Hamiltonian $H$ satisfies $V(H) = A_\pm(H)$. Fix an integer $\ell>0$ such that the conclusion of Theorem \ref{thm:rot_decomp} holds. Pick a positive rational number $\alpha > 0$ not contained in $\frac{1}{2}\BZ$ such that $\ell \alpha < \frac{1}{2}\min h$. Note that if $H$ is sufficiently $C^\infty$ close to $h$, then the Hamiltonian $G\coloneqq H-V(H)$ satisfies the hypotheses of Corollary \ref{cor:rot_decomp}. Therefore, there exists a family of Hamiltonians $(G^\lambda)_{\lambda\in [0,1]}$ starting at $G^0 = G$ and satisfying all the assertions of Corollary \ref{cor:rot_decomp}. Set $H^\lambda \coloneqq G^\lambda + V(H)$. The constant family of Hamiltonians equal to zero and the family $(H^\lambda)_{\lambda\in [0,1]}$ satisfy the hypotheses of Lemma \ref{lem:isotopies_of_subgraphs}. We conclude that $D(H) = D(H^0)$ is symplectomorphic to $D(H^1)$. Thus it suffices to construct a stratification of the latter subgraph.

It follows from Corollary \ref{cor:rot_decomp} that the Hamiltonian $H^1$ is given by
\begin{equation*}
H^1 = V(H) + \#_{i=1}^\ell \varepsilon_i \psi_i^*H_\alpha
\end{equation*}
for signs $\varepsilon_i \in \left\{ \pm 1 \right\}$ and Hamiltonian diffeomorphisms $\psi_i\in \operatorname{Ham}(\BA)$. Let us define
\begin{equation*}
H_i \coloneqq \varepsilon_i\psi_i^*H_\alpha + \operatorname{min}(\varepsilon_i\psi_i^*H_\alpha).
\end{equation*}
Note that each $H_i$ is non-negative and vanishes exactly on one of the boundary components of $\BA$. It is straightforward to see that $D(H_i)$ is symplectomorphic to $D(H_\alpha)$. We have
\begin{equation*}
H^1 = V(H) - \ell \alpha /2 + \#_{i=1}^\ell H_i.
\end{equation*}
We inductively define
\begin{equation*}
F_0 \coloneqq V(H) -\ell \alpha/2 \quad \text{and}\quad F_i \coloneqq F_{i-1} \# H_i \enspace\text{for $1\leq i\leq \ell$}.
\end{equation*}
This induces a stratification of $D(H^1) = D(F_\ell)$ with $\ell +1$ top-dimensional strata
\begin{equation*}
D(F_0) \quad \text{and} \quad D(F_{i-1},F_i) \enspace\text{for $1\leq i\leq \ell$}.
\end{equation*}
By Lemma \ref{lem:symplectomorphisms_of_subgraphs}, the stratum $D(F_{i-1},F_i)$ is symplectomorphic to $D(H_i)$, which in turn is symplectomorphic to $D(H_\alpha)$. Thus all of these strata are of the desired form. Since $F_0$ is constant, the stratum $D(F_0)$ is of the desired form as well. Note that the rational number $\alpha =p/q$ was chosen independently of $H$. Hence the uniform upper bound on the denominator $q$ is automatic. Moreover, the parameter $c>0$ in the statement of the theorem is just equal to $1$ for all strata. Finally, the constant $C$ of the stratum $D(F_0)$ clearly admits a positive lower bound independent of $H$. This concludes the proof of Theorem \ref{thm:stratification_perturbed_affine_subgraph} in the case $V(H) = A_\pm(H)$.

\emph{Step 2:} Next, let us assume that
\begin{equation}
\label{eq:stratification_perturbed_affine_subgraph_proof_a}
V(H) = \frac{1}{2} (A_-(H) + A_+(H)) \qquad \text{and} \qquad A_+(H) - A_-(H) \in \BQ.
\end{equation}
If $A_+(H) = A_-(H)$, this is precisely the case treated in Step 1, so we assume that $A_+(H) \neq A_-(H)$. We may in addition assume that $\alpha \coloneqq A_+(H) - A_-(H) > 0$. We stratify $D(H)$ into the following two pieces:
\begin{equation*}
D(H_\alpha) \qquad \text{and}\qquad D(H_\alpha,H).
\end{equation*}
The first of the two pieces is already of the desired form. By Lemma \ref{lem:symplectomorphisms_of_subgraphs}, the second piece is symplectomorphic to $D(\overline{H}_\alpha \# H)$. It follows from assumption \eqref{eq:stratification_perturbed_affine_subgraph_proof_a} and the definition of $\alpha$ that the Hamiltonian $\overline{H}_\alpha\# H$ is precisely of the form considered in Step 1. Thus we may further stratify $D(\overline{H}_\alpha \# H)$ as in Step 1. Note that while the parameters of the substrata of $D(\overline{H}_\alpha \# H)$ admit uniform bounds independent of $H$, this is not true for the stratum $D(H_\alpha)$ because $\alpha = A_+(H) - A_-(H)$ is determined by $H$ and may have arbitrarily large denominator. This will be rectified in the next step of the proof.

\emph{Step 3:} Let us now turn to the general case. Consider numbers $0=x_0 < x_1 < x_2 < x_3=1$ and decompose the annulus $\BA = [0,1]\times \BT$ into three smaller annuli $\BA_i\coloneqq [x_{i-1},x_i]\times \BT$ where $1\leq i \leq 3$. The area of $\BA_i$ is given by $a_i\coloneqq x_i-x_{i-1}$. Or goal is to adjust the numbers $x_i$ and to construct a Hamiltonian $G(t,x,y) = G(x)$ only depending on $x$ such that the following properties hold. Let $G_i\coloneqq G|_{[0,1]\times \BA_i}$ denote the restriction.
\begin{enumerate}
\item $G$ is $C^\infty$ close to $h - \frac{1}{2}\min h$
\item $A_-(H) - A_-(G) = A_+(H) - A_+(G) = V(H) - V(G)$
\item We have
\begin{equation*}
V(G_i) = \frac{1}{2}a_i (A_-(G_i) + A_+(G_i)) \qquad \text{and} \qquad \alpha_i\coloneqq\frac{A_+(G_i) - A_-(G_i)}{a_i} \in \BQ.
\end{equation*}
\end{enumerate}
Moreover, we would like to have an upper bound on the minimal denominators of the rational numbers $\alpha_i$ and a lower bound on the areas $a_i$, both of which are uniform in $H$.

Before turning to the construction of such a Hamiltonian $G$, let us first describe how we use it to stratify $D(H)$. First, we stratify $D(H)$ into the following four pieces:
\begin{equation*}
D(G,H) \qquad \text{and} \qquad D(G_i) \enspace\text{for $1\leq i \leq 3$}
\end{equation*}
By Lemma \ref{lem:symplectomorphisms_of_subgraphs}, the first piece is symplectomorphic to $D(\overline{G}\# H)$. The Hamiltonian $\overline{G}\# H$ satisfies the assumption in Step 1 and we can therefore further stratify $D(G,H)$ into pieces of the desired form. Each of the pieces $D(G_i)$ is, up to scaling of the symplectic form, of the form considered in Step 2 and can thus be further stratified as well. The result is a stratification of $D(H)$ into pieces of the desired form. The parameters of the resulting strata admit uniform bounds in $H$. For the substrata of $D(G,H)$ this follows from Step 1. For the substrata of $D(G_i)$ this follows from the uniform upper bound on the minimal denominators of $\alpha_i$ and the uniform lower bound on $a_i$.

It remains to construct $G$. Recall that the Hamiltonian $h$ is given by $h(t,x,y) = a + bx$. Let is pick a rational number $\alpha_+$ slightly bigger than $b$ and a rational number $\alpha_-$ slightly smaller than $b$. Note that if $H$ is sufficiently $C^\infty$ close to $h$, then $\alpha\coloneqq A_+(H) - A_-(H)$ will be contained in the interval $(\alpha_-,\alpha_+)$ and be uniformly bounded away from its entpoints. Let $0<w<\frac{\alpha_+-\alpha}{\alpha_+-\alpha_-}$ be a parameter which is to be determined. Define
\begin{equation*}
x_1 \coloneqq w \quad \text{and} \quad x_2 \coloneqq w + \frac{\alpha-\alpha_-}{\alpha_+-\alpha_-}
\end{equation*}
and let $g:[0,1]\rightarrow \BR$ be the unique continuous piecewise linear function such that
\begin{enumerate}
\item $g(0) = A_-(H)$
\item $g(1) = A_+(H)$
\item $g$ has slope $\alpha_-$ on the intervals $[x_0,x_1]$ and $[x_2,x_3]$
\item $g$ has slope $\alpha_+$ on the interval $[x_1,x_2]$.
\end{enumerate}
By taking $H$ sufficiently $C^\infty$ close to $h$, we can make the difference between $V(H)$ and $\frac{1}{2}(A_-(H) + A_+(H))$ arbitrarily small. In particular, the integral $\int_0^1 g(x) dx$ will be bigger than $V(H)$ for $w$ near $0$ and smaller than $V(H)$ for $w$ near $\frac{\alpha_+-\alpha}{\alpha_+-\alpha_-}$. Thus there exists a unique value $w$ such that $\int_0^1 g(x) dx = V(H)$. For $H$ sufficiently $C^\infty$ close to $h$, this value $w$ is uniformly bounded away from the endpoints of the interval $(0,\frac{\alpha_+-\alpha}{\alpha_+-\alpha_-})$. Let us pick a smoothing $\tilde{g}$ of $g$ which has the following properties:
\begin{enumerate}
\item $\tilde{g}$ agrees with $g$ at the points $x_0$, $x_1$, $x_2$ and $x_3$
\item The integrals of $\tilde{g}$ and $g$ agree on each of the intervals $[x_0,x_1]$, $[x_1,x_2]$ and $[x_2,x_3]$.
\item $\tilde{g}$ is $C^\infty$ close to $h$. Note that we can control this $C^\infty$ distance purely in terms of the inital pick of the rational numbers $\alpha_\pm$ close to $b$ and uniformly among all $H$ sufficiently close to $h$.
\end{enumerate}
Now define $G(t,x,y) \coloneqq \tilde{g}(x) - \frac{1}{2}\min h$. It is straightforward to check that this Hamiltonian has all desired properties.
\end{proof}

\section{Symplectic ribbon complexes}
\label{sec:symplectic_ribbons}

We introduce the notion of a \textit{symplectic ribbon complex}. The main result of this section, Theorem \ref{thm:ellipsoid_packing_of_ribbon_complex}, states that ellipsoid embedding stability holds for all connected symplectic ribbon complexes consisting of balls and polydisks. This result plays a crucial role in our proof of Theorem \ref{thm:ellipsoid_embedding_stability}, as it allows us to deduce ellipsoid embedding stability of a symplectic $4$-manifold from the existence of certain decompositions into balls and polydisks; see Section \ref{sec:tame_packings}.\\

Recall that the $4$-dimensional symplectic ellipsoid $E(a,b)$ of widths $a,b >0$ is defined by
\begin{equation*}
E(a,b)\coloneqq \left\{ z\in \BC^2 \enspace\Big|\enspace \frac{\pi |z_1|^2}{a} + \frac{\pi|z_2|^2}{b} \leq 1 \right\}.
\end{equation*}
The $4$-dimensional ball of width $a>0$ is the ellipsoid $B^4(a) \coloneqq E(a,a)$. The polydisk of widths $a,b >0$ is given by
\begin{equation*}
P(a,b) \coloneqq \left\{ z\in \BC^2 \mid \pi |z_1|^2 \leq a \enspace\text{and}\enspace \pi|z_2|^2\leq b \right\}.
\end{equation*}

We define the \textit{symplectic ribbon} $R(a)$ of width $a>0$ to be
\begin{equation*}
R(a) \coloneqq [0,1] \times B^2(a)
\end{equation*}
where $B^2(a)$ denotes the $2$-dimensional disk of area $a$. The area form on $B^2(a)$ induces a closed $2$-form $\omega$ on $R(a)$ which is characterized by the requirements that the restriction of $\omega$ to the fibers $\left\{ * \right\}\times B^2(a)$ agrees with the area form on $B^2(a)$ and that the contraction $\iota_{\partial_t}\omega$ vanishes, where $t$ denotes the coordinate of the $[0,1]$ factor. The leaves of the characteristic foliation on $R(a)$ induced by $\omega$ are therefore simply the fibers $[0,1]\times \left\{ * \right\}$. The ribbon $R(a)$ has two ends $\left\{ j \right\}\times B^2(a)$, where $j\in \left\{ 0,1 \right\}$. We think of $R(a)$ as not carrying a preferred orientation, i.e. we ignore the obvious orientation of the interval $[0,1]$.

Now consider a smooth $4$-dimensional star-shaped domain $X$ and a symplectic ribbon $R$. Let $B$ be one of its ends. An \textit{attaching map} from this end to $X$ is simply a smooth embedding $\iota:B \rightarrow \partial X$ such that the pull-back of the symplectic form on $X$ via $\iota$ agrees with the standard area form on $B$. We would also like to allow certain non-smooth star-shaped domains $X$ such as polydisks. In this case, we always assume that the image of an attaching map is disjoint from the non-smooth locus of the boundary of the domain.

Consider a finite collection of star-shaped domains and ribbons of variable widths. For some ribbon ends, fix an attaching map to one of the star-shaped domains. We require that the images of these attaching maps are pairwise disjoint. Moreover, we require that, for each ribbon, we have an attaching map for at least one of its ends. Take the disjoint union of all the star-shaped domains and ribbons and then take the quotient by gluing ribbon ends to the star-shaped domains via the chosen attaching maps. We call the resulting object a \textit{symplectic ribbon complex}. Note that not all ribbon ends are required to be attached to some star-shaped domain. We call such an end a \textit{free end}. The corresponding ribbon is called a \textit{free ribbon}.

We can alternatively think of a symplectic ribbon complex as the following datum:
\begin{itemize}
\item a collection $\MX$ of finitely many star-shaped domains;
\item a collection $\MD$ of finitely many disjoint symplectic disks in the boundaries of domains contained in $\MX$;
\item a partition of $\MD$ into individual disks and pairs of disks;
\item for each pair of disks, a diffeomorphism between them compatible with the restriction of the symplectic form to the disks.
\end{itemize}
Here the individual disks in the partition correspond to ribbons with a free end. The area-preserving diffeomorphism between a pair of disks corresponds to the map obtained by traversing a ribbon following the characteristic foliation. A \textit{symplectomorphism} between two symplectic ribbon complexes is a symplectomorphism between the disjoint unions of the underlying shar-shaped domains which is compatible with the collections of disks, their partitions, and the transition maps between pairs of disks.

\begin{example}
\label{ex:simple_ribbon_complex}
Consider an ellipsoid $E(a,b)$. For every $0<c<b$ and for every angle $\theta\in \BR/2\pi\BZ$, there exists a unique map
\begin{equation}
\label{eq:special_attaching_map}
\iota(\theta): B^2(c)\rightarrow \partial E(a,b)
\end{equation}
satisfying the following properties: For $j\in \left\{ 1,2 \right\}$, let $\operatorname{pr}_j:\BC^2\rightarrow \BC$ be the projection onto the $j$-th factor. Then the composition $\operatorname{pr}_2\circ \iota(\theta)$ is simply the inclusion of $B^2(c)$ into $B^2(b)$ and the image of the composition $\operatorname{pr}_1\circ \iota(\theta)$ is contained in the ray $\BR_{\geq 0}\cdot e^{i\theta}$. An easy computation shows that the pull-back of the standard symplectic form on $\BC^{2} $ via $\iota(\theta)$ agrees with the standard area form on $B^2(c)$. For each positive integer $n$, let $E(a,b;c,n)$ be the symplectic ribbon complex built from one copy of the ellipsoid $E(a,b)$ and $n$ copies of the ribbon $R(c)$ as follows: For each $0\leq j < n$, attach a copy of $R(c)$ to $\partial E(a,b)$ on one of its ends via the attaching map $\iota(2\pi j/n)$. The complex $E(a,b;c,n)$ has $n$ free ends. By construction, each of these free ends comes with a preferred identification with $B^2(c)$. This preferred identification is not part of the datum of a symplectic ribbon complex, but we will occasionally make use of it below.

If we replace the collection of attaching angles $2\pi j/n$ by any other collection of $n$ distinct attaching angles $\theta_0<\dots<\theta_{n-1}$, the resulting symplectic ribbon complex is symplectomorphic to $E(a,b;c,n)$. Indeed, there exists a symplectomorphism of $E(a,b)$ which, for every $0\leq j < n$, maps the image of $\iota(\theta_j)$ to the image of $\iota(2\pi j/n)$. Note, however, that this isomorphism is not compatible with the preferred identifications of the free ribbon ends with $B^2(c)$ mentioned above.

If $E(a,a)=B^4(a)$ is a ball, let us abbreviate the resulting complex by $B^4(a;c,n)$.
\end{example}

\begin{example}
\label{ex:complex_from_ribbon_graph}
Consider a ribbon graph $G$, i.e. a graph such that the incident edges of each vertex are cyclically ordered. Assume that the vertices of $G$, except possibly for some of the leaves of $G$, are labelled by positive real numbers. Let $A$ denote the labelling of $G$. Fix real numbers $0<c<b$. Given these data, we can construct a symplectic ribbon complex $K(G,A,b,c)$ as follows: For each labelled vertex $v$ of $G$, let $a_v$ denote its label, $n_v$ the number of edges incident to $v$, and take a copy of $E(a_v,b;c,n_v)$. Note that the free ends of $E(a_v,b;c,n_v)$ are naturally cyclically ordered. To each incident edge of $v$, assign a free end of $E(a_v,b;c,n_v)$ respecting the cyclic order. Now glue together any pair of free ends assigned to the same edge via the identity map of $B^2(c)$ to obtain a ribbon complex $K(G,A,b,c)$. The free ends of this ribbon complex correspond precisely to the unlabelled vertices of $G$.

We observe that in contrast to Example \ref{ex:simple_ribbon_complex}, the angles $\theta_j$ at which ribbons are attached to the ellipsoid $E(a_v,b)$ can no longer be changed freely if the graph $G$ has cycles: Indeed, consider the simplest example of just one ribbon $R(c)$ attached to an ellipsoid $E(a,b)$ at both of its ends via $\iota(\theta_0)$ and $\iota(\theta_1)$ for two angles $\theta_0 < \theta_1$. The disk $\operatorname{im}\iota(\theta_1)$ can be obtained from $\operatorname{im}\iota(\theta_0)$ by flowing forward via the Reeb flow on $\partial E(a,b)$ for some amount of time. This induces a transition map $\operatorname{im}\iota(\theta_0)\rightarrow \operatorname{im}\iota(\theta_1)$ which assigns to each point in $\operatorname{im}\iota(\theta_0)$ the first intersection point of the trajectory starting at that point with $\operatorname{im}\iota(\theta_1)$. Traversing the ribbon following the characteristic foliation induces a map $\operatorname{im}\iota(\theta_1) \rightarrow \operatorname{im}\iota(\theta_0)$. Composing these two maps yields an area preserving diffeomorphism of $\operatorname{im}\iota(\theta_0)$ which is a rotation by angle $\frac{a(\theta_1-\theta_0)}{b}$. Clearly, this number is invariant under symplectomorphisms of ribbon complexes. On the other hand, if $G$ is a tree, then only the cyclic order of the ribbons attached to an ellipsoid matters and the angles can be varied freely.
\end{example}

Let $(M^4,\omega)$ be a $4$-dimensional symplectic manifold. A {\it symplectic embedding} of a symplectic ribbon complex $K$ into $M$ is an injective continuous map $\varphi: K\overset{s}{\hookrightarrow} M$ satisfying the following properties:
\begin{itemize}
\item The restriction $\varphi_X$ of $\varphi$ to each domain $X$ of $K$ is a smooth symplectic embedding.
\item The restriction $\varphi_R$ of $\varphi$ to each ribbon $R$ of $K$ is a smooth embedding which pulls back the symplectic form on $M$ to the closed $2$-from $\omega$ on $R$.
\item Suppose that $R$ is attached to $\partial X$ at one of its ends. Then $\varphi_R$ meets $\varphi_X(\partial X)$ transversely, i.e. $\partial_t \varphi_R$ is no-where tangent to $\varphi_X(\partial X)$ along the image of the gluing region of $X$ and $R$.
\end{itemize}

\begin{remark}
\label{rem:neighbourhood_theorem_for_ellipsoid_ribbon_complexes}
Every symplectic ribbon complex $K$ admits a symplectic embedding into some symplectic $4$-manifold. There is a neighbourhood theorem for symplectic embeddings of symplectic ribbon complexes: Let $K$ be a symplectic ribbon complex and let $\varphi:K\overset{s}{\hookrightarrow} M$ and $\varphi':K\overset{s}{\hookrightarrow} M'$ be two symplectic embeddings into symplectic $4$-manifolds $M$ and $M'$. If the manifolds have bounary, we assume that the images of the embeddings are contained in the interior. Then there exist open neighbourhoods $U$ of $\varphi(K)$ in $M$ and $U'$ of $\varphi'(K)$ in $M'$ and a symplectomorphism $\psi: U\rightarrow U'$ such that $\varphi' = \psi\circ \varphi$. This can be proved using Gotay's neighbourhood theorem for coisotropic submanifolds \cite{got82}.
\end{remark}

Let $K$ be a non-empty symplectic ribbon complex. The \textit{volume} $\operatorname{vol}(K)$ of $K$ is defined to be the sum of the volumes of its domains. Let $(a_i)_i$ be the collection of all the Gromov widths of the domains of $K$ and let $(b_j)_j$ be the collection of all widths of ribbons of $K$. We define the \textit{width} of $K$ to be the quantity
\begin{equation*}
w(K) \coloneqq \min\left\{ \min_i a_i, \min_j b_j \right\}.
\end{equation*}
Moreover, we define the \textit{volume normalized width} by
\begin{equation*}
\overline{w}(K) \coloneqq\frac{w(K)}{\sqrt{\operatorname{vol}(K)}}. 
\end{equation*}
Note that this quantity is invariant under scalings of the symplectic form on $K$. For every $a\geq 1$, we define the ellipsoid embedding number $p^E_a(K)$ of $K$ to be
\begin{equation*}
p^E_a(K) \coloneqq \sup_\lambda \frac{\operatorname{vol}(E(\lambda,\lambda a))}{\operatorname{vol}(K)} \in [0,1],
\end{equation*}
where the supremum is taken over all $\lambda >0$ such that, for every symplectic embedding $\varphi : K \overset{s}{\hookrightarrow} M$ into the interior of a symplectic $4$-manifold $M$ and for every open neighbourhood $U$ of $\varphi(K)$ in $M$, the ellipsoid $E(\lambda,\lambda a)$ symplectically embeds into $U$.

\begin{theorem}
\label{thm:ellipsoid_packing_of_ribbon_complex}
Let $K$ be a non-empty connected symplectic ribbon complex built using only balls and polydisks. Then ellipsoid embedding stability holds for $K$, i.e. there exists a finite number $a_0$ such that $p^E_a(K) = 1$ for all $a\geq a_0$. Moreover, for every $\delta >0$, the number $a_0$ can be chosen uniformly among all $K$ with $\overline{w}(K) \geq \delta$.
\end{theorem}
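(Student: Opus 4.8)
The plan is to reduce the statement — which asserts that $p^E_a(K)=1$, i.e. that for every $\epsilon>0$ some ellipsoid $E(\lambda,\lambda a)$ of volume at least $(1-\epsilon)\operatorname{vol}(K)$ embeds into every neighbourhood of every symplectic embedding of $K$, once $a$ exceeds a threshold $a_0$ depending only on $\delta$ — to two inputs: (i) ellipsoid embedding stability for a single ball or polydisk, already available from \cite{bh13} and \cite{bho16}; and (ii) a \emph{concatenation lemma} fusing two skinny ellipsoids of equal smaller width joined by a single ribbon, at the cost of an arbitrarily small shrinking of that width, into one skinny ellipsoid whose larger width is the sum of the two. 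The hypothesis $\overline w(K)\geq\delta$ is used twice at the outset. Since every ball $B^4(r)$ and polydisk $P(r,s)$ occurring in $K$ has Gromov width $\geq w(K)\geq\delta\sqrt{\operatorname{vol}(K)}$, each piece of $K$ has volume $\geq\tfrac{\delta^2}{2}\operatorname{vol}(K)$, so the number $N$ of pieces satisfies $N\leq 2/\delta^2$; and every polydisk of $K$ has aspect ratio at most $1/\delta^2$. Passing to a spanning tree of the graph whose vertices are the pieces and whose edges are the ribbons — the discarded ribbons are $3$-dimensional and contribute no volume, and by Remark~\ref{rem:neighbourhood_theorem_for_ellipsoid_ribbon_complexes} it suffices to build an embedding into one fixed neighbourhood — we may assume that graph is a tree.

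\textbf{The two inputs.} For (i), by \cite{bh13} there is a universal $a_0^{\mathrm b}$ with $p^E_{a'}(B^4(r))=1$ for $a'\geq a_0^{\mathrm b}$, and by \cite{bho16} (inspecting the construction, or by a compactness argument in the aspect parameter) there is an $a_0^{\mathrm p}(\Lambda)$ with $p^E_{a'}(P(r,s))=1$ whenever $s/r\leq\Lambda$ and $a'\geq a_0^{\mathrm p}(\Lambda)$; set $a_0'=a_0'(\delta):=\max(a_0^{\mathrm b},a_0^{\mathrm p}(1/\delta^2))$. Thus any piece $X$ and any $a'\geq a_0'$ allow us to fill $X$ up to arbitrarily small error by $E(\lambda,\lambda a')$ with $\lambda:=\sqrt{2\operatorname{vol}(X)/a'}$, whose volume equals $\operatorname{vol}(X)$. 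Input (ii) is the lemma I would isolate: \emph{if the ribbon complex built from two ellipsoids $E(\lambda,\lambda p)$ and $E(\lambda,\lambda q)$, together with auxiliary free ribbons of some width $c<\lambda$, joined by a single ribbon of width $c$, admits a symplectic embedding $\psi$ into a $4$-manifold, then for every $\lambda'<\lambda$ and every neighbourhood of $\operatorname{im}\psi$ there is a symplectic embedding into that neighbourhood of the ellipsoid $E(\lambda',\lambda'(p+q))$ carrying the same free ribbon ends, each mapped into a neighbourhood of the image of the corresponding original free end.} Granting this, the rest is bookkeeping.

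\textbf{The induction.} The embedding of $E(\lambda,\lambda a)$ into a neighbourhood of $K$ is produced by induction on $N$, with the statement strengthened: for a connected ribbon complex built from $\leq N$ balls and polydisks with $\overline w\geq\delta$ and one marked free ribbon end (attach a thin free ribbon if there is none), and for $a\geq\tilde a_0(N):=(2/\delta^2)^{\max(N-1,0)}a_0'$, the ribbon complex $E(\lambda,\lambda a;c,1)$ with $\lambda=\sqrt{2\operatorname{vol}(K)/a}$ and $c$ small embeds into every neighbourhood, with its free end near the marked one and filling a fraction $\geq 1-\epsilon$ of the volume. For $N=1$ this is input (i) followed by routing one thin ribbon through $X$ from the embedded ellipsoid to the marked end. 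For $N\geq 2$ pick a leaf piece $X$ of the spanning tree, joined to the rest $K_0$ (at most $N-1$ pieces, still $\overline w\geq\delta$) by one ribbon; split the budget by $a_X:=a\operatorname{vol}(X)/\operatorname{vol}(K)$ and $a_{K_0}:=a\operatorname{vol}(K_0)/\operatorname{vol}(K)$, both $\geq\tfrac{\delta^2}{2}a\geq(2/\delta^2)^{N-2}a_0'=\max(a_0',\tilde a_0(N-1))$ by the choice of $\tilde a_0(N)$; fill $X$ by $E(\lambda,\lambda a_X;c,m)$ with the $m\leq 2$ free ends routed to the attaching disks of the ribbon toward $K_0$ and, if needed, toward the marked end, and fill $K_0$ by $E(\lambda,\lambda a_{K_0};c,1)$ using the inductive hypothesis; since $\lambda=\sqrt{2\operatorname{vol}(X)/a_X}=\sqrt{2\operatorname{vol}(K_0)/a_{K_0}}$ these widths agree, so the union through the connecting ribbon is a two-ellipsoid ribbon complex of common width, and the concatenation lemma fuses it into $E(\lambda',\lambda' a;c,1)$ with $\lambda'<\lambda$ arbitrarily close to $\lambda$ and the remaining free end near the marked one; its volume $\lambda'^2 a/2$ tends to $\lambda^2 a/2=\operatorname{vol}(K)$. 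Since $N\leq 2/\delta^2$ and each level contributes a bounded number of filling and concatenation steps, distributing the total error $\epsilon$ equally among the $O(1/\delta^2)$ steps keeps every loss uniformly controlled, and $a_0(\delta):=\tilde a_0(\lceil 2/\delta^2\rceil)$ depends only on $\delta$. Forgetting the marked free end in the output gives Theorem~\ref{thm:ellipsoid_packing_of_ribbon_complex}.

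\textbf{Main obstacle.} The heart of the proof is the concatenation lemma (ii): realizing the ``tunnel at the interface'' as an explicit symplectic embedding fusing two skinny ellipsoids joined by a ribbon into a single one. In suitable coordinates a neighbourhood of a ribbon is a thin slab of the form $D(H)$ for a $C^\infty$-small Hamiltonian on $[0,1]\times B^2(c)$, and a skinny ellipsoid $E(\lambda,\lambda p)$ is, up to arbitrarily small loss, a thin ``wedge'' over an interval of length $\lambda p$; the task is to glue two such wedges along a slab and recognize the union, after smoothing and the standard passage between wedges, ellipsoids and polydisks, as $E(\lambda',\lambda'(p+q))$, while carrying along the auxiliary free ribbons. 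A secondary but genuinely fiddly point, hidden in the phrase ``routed to the attaching disks'', is that the ellipsoid embeddings inside a ball or polydisk must be chosen so that thin ribbons can be run through the complement from the boundary of the embedded ellipsoid to prescribed disks in the boundary of the piece; this is arranged by exploiting the freedom in the embedding, the smallness of the ribbons, and Remark~\ref{rem:neighbourhood_theorem_for_ellipsoid_ribbon_complexes}, but it requires care. The bookkeeping of the uniform constants $a_0'(\delta)$, $\tilde a_0(N)$ and the per-step error budget is then routine.
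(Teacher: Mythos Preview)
Your high-level strategy is the same as the paper's: bound the number of pieces by $2/\delta^2$, pass to a spanning tree, fill each ball or polydisk by a skinny ellipsoid of common small width with free ribbon ends at the attaching disks, and then concatenate through the ribbons into a single ellipsoid. The paper carries this out all at once rather than by induction: it embeds $E(a_X,b;c,n_X)$ into each piece $X$ with $n_X$ free ends simultaneously (Propositions~\ref{prop:special_embedding_into_ball} and~\ref{prop:special_embedding_into_polydisk}), glues to a tree-shaped complex of the form $K(G,A,f,b)$, and applies one concatenation step (Lemma~\ref{lem:special_ribbon_complex_neighbourhood_packing}).

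There is a genuine bookkeeping gap in your induction. Your hypothesis produces an ellipsoid in $K_0$ with \emph{one} free end, placed near the marked end of $K_0$. But in the step $K\to K_0$ you need the ellipsoid in $K_0$ to have a free end near the ribbon $R$ to $X$ (to concatenate) \emph{and}, when the original marked end lies in $K_0$ rather than on the leaf $X$, a second free end near that original marked end (to propagate the inductive statement). With only one free end you cannot do both, and the marked end need not lie on a leaf of the tree. The fix is to strengthen the hypothesis to allow an arbitrary (bounded) number of marked free ends, i.e.\ embed $E(\lambda,\lambda a;c,k)$ for all $k$ up to the valence bound; this is precisely why the paper proves Propositions~\ref{prop:special_embedding_into_ball} and~\ref{prop:special_embedding_into_polydisk} for general $n$.

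More importantly, you have inverted the difficulty. The concatenation step you call the ``main obstacle'' is the paper's Lemma~\ref{lem:special_ribbon_complex_neighbourhood_packing}, proved in half a page by viewing each ellipsoid as a disk bundle over a base disk, embedding the bases into a common planar region, and reading off a single ellipsoid from the resulting profile. What you call ``secondary but genuinely fiddly'' --- embedding a skinny ellipsoid into a ball or polydisk together with ribbons landing on \emph{prescribed} disks in the boundary --- is the real work: Propositions~\ref{prop:special_embedding_into_ball} and~\ref{prop:special_embedding_into_polydisk} occupy most of the section and rely on positioning ellipsoid embeddings relative to a divisor (Lemmas~\ref{lem:special_embedding_into_CP2} and~\ref{lem:special_embedding_into_S2timesS2}, which in turn use the singular inflation of \cite{mo15}), an explicit Lagrangian-product model (Lemma~\ref{lem:embedding_into_lagrangian_product}), and a transitivity statement for free disk collections (Lemmas~\ref{lem:free_attaching_maps_ball}, \ref{lem:free_collections_polydisk}). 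The sentence ``routing one thin ribbon through $X$'' hides all of this: once the ellipsoid fills nearly the full volume, the complement is a thin shell, and producing a \emph{symplectic} ribbon through it to a prescribed boundary disk is not a soft argument.
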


The remainder of this section is concerned with the proof of Theorem \ref{thm:ellipsoid_packing_of_ribbon_complex}.

\begin{lemma}
\label{lem:skinny_ellipsoid_embeddings}
For every $e_0\geq 1$, there exists $a_0$ such that
\begin{equation*}
p^E_a(E(1,e)) = p^E_a(P(1,e)) = 1
\end{equation*}
for all $1\leq e \leq e_0$ and all $a \geq a_0$.
\end{lemma}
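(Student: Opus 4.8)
The plan is to combine ellipsoid embedding stability for a single ellipsoid \cite[Theorem 1.1]{bh13} and for a single polydisk \cite{bho16} with a compactness argument over the parameter interval $e\in[1,e_0]$.

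First I would handle a fixed $e$. Using that the linear rescaling $z\mapsto\mu z$ of $\BC^2$ conjugates a symplectic embedding $E(\lambda,\lambda a)\overset{s}{\hookrightarrow}\operatorname{int}E(1,e)$ to one of the form $E(1,a)\overset{s}{\hookrightarrow}\operatorname{int}E(\lambda^{-1},\lambda^{-1}e)$ (take $\mu^2=\lambda^{-1}$), the assertion $p^E_a(E(1,e))=1$ becomes equivalent to the statement that $E(1,a)$ embeds into the interior of every ellipsoid of aspect ratio $e$ and strictly larger volume. By \cite[Theorem 1.1]{bh13} this holds, for each fixed $e$, once $a$ is at least some finite $\alpha(e)$; letting the filled volume fraction tend to $1$ gives $p^E_a(E(1,e))=1$ for all $a\ge\alpha(e)$. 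Exactly the same reduction, now with polydisk embedding stability \cite{bho16} in place of \cite[Theorem 1.1]{bh13} and with the volume normalisation $\operatorname{vol}(P(1,e))=e$, produces a finite $\beta(e)$ with $p^E_a(P(1,e))=1$ for all $a\ge\beta(e)$.

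It then remains to bound $\alpha$ and $\beta$ uniformly on the compact interval $[1,e_0]$. The mechanism I would use is semicontinuity of the stabilisation threshold: the ellipsoid and polydisk embedding functions $c_{E(1,e)}(a)$ and $c_{P(1,e)}(a)$ are continuous in both arguments — continuity in $a$ is standard, and continuity in $e$ follows from the elementary nested-scaling bounds describing how these functions transform when the target is rescaled — and one wants to feed this into the \emph{exact} stabilisation at a fixed $e_*$ to deduce stabilisation for all $e$ in a neighbourhood of $e_*$ and all $a\ge\max(\alpha(e_*),\beta(e_*))$. A finite subcover of $[1,e_0]$ by such neighbourhoods then yields the uniform threshold $a_0$.

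I expect this last uniformity to be the genuine difficulty. A purely soft argument does not close the gap, because the strict inequality $c_{E(1,e_n)}(a_n)>\sqrt{a_n/e_n}$ witnessing $p^E_{a_n}(E(1,e_n))<1$ need not survive passage to a limit; to rule out blow-up of the threshold one must appeal to the structure of the embedding constructions of \cite{bh13} and \cite{bho16}, or to a quantitative form of those stability results — for instance a bound on the stabilisation threshold depending only on an upper bound for the aspect ratio of the target — which is what makes the threshold manifestly uniform over a compact family. Everything else in the argument — the rescaling identity, the translation between the embedding numbers $p^E_a$ and the embedding functions, and the volume bookkeeping for ellipsoids versus polydisks — is routine.
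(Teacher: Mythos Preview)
Your reduction for fixed $e$ is correct and matches the paper in spirit. The difficulty you flag---uniformity of the threshold over $e\in[1,e_0]$---is real, and you are right that the soft compactness/semicontinuity argument does not close: continuity of $c_{E(1,e)}(a)$ in $(e,a)$ only gives that the set $\{(e,a):c_{E(1,e)}(a)=\sqrt{a/e}\}$ is closed, not that its projection to the $e$-axis has open fibres bounded below. So as written the proof has a gap precisely where you say it does.

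The paper sidesteps this entirely. For the ellipsoid target it cites \cite[Theorem~1.4]{bh13} rather than Theorem~1.1; Theorem~1.4 is the quantitative version and already yields a threshold depending only on an upper bound for the aspect ratio of the target, so uniformity over $[1,e_0]$ is immediate with no compactness argument needed. For the polydisk target the paper notes that $E(1,a)\overset{s}{\hookrightarrow}P(1,e)$ is equivalent to the ECH capacity inequalities $c_k(E(1,a))\le c_k(P(1,e))$ by \cite{cri19}, and then either analyses the explicit capacity formulas or invokes \cite[Theorem~3]{bho16} together with the weight-sequence reduction of the ellipsoid-into-polydisk problem to a ball packing problem; either route produces the uniform threshold directly.

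In short: your diagnosis is correct, and your proposed remedy---appeal to a quantitative form of the stability results---is exactly what the paper does. What is missing is that you have not identified which quantitative statement to cite. Replace your compactness argument with a direct appeal to \cite[Theorem~1.4]{bh13} for ellipsoids, and for polydisks either the ECH-capacity criterion from \cite{cri19} or \cite[Theorem~3]{bho16} via weight sequences, and the proof is complete.
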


\begin{proof}
The statement about $p^E_a(E(1,e))$ is an immediate consequence of \cite[Theorem 1.4]{bh13}. The statement about $p^E_a(P(1,e))$ can be proved via the same method. Indeed, a $4$-dimensional ellipsoid $E$ symplectically embeds into a $4$-dimensional polydisk $P$ if and only if $c_k(E) \leq c_k(P)$ for all $k>0$ where $c_k$ denote the ECH capacities, see \cite{cri19}. One can therefore prove the assertion by analysing the formulas for ECH capacities of ellipsoids and polydisks. Alternatively, the statement also follows from \cite[Theorem 3]{bho16} in combination with the fact that the problem of embedding an ellipsoid into a polydisk is equivalent to embedding a certain collection of balls associated to the weight sequence of the ellipsoid, see \cite{cri19}. Note that \cite[Theorem 3]{bho16} applies to target manifolds which are pseudoballs, see \cite[Definition 1.3]{bho16}. While polydisks are not pseudoballs, they are contained in the closure of the space of pseudoballs. The proof of \cite[Theorem 3]{bho16} also applies to polydisks without changes.
\end{proof}

\begin{lemma}
\label{lem:special_ribbon_complex_neighbourhood_packing}
Let $(G,A)$ be a connected labelled ribbon graph and let $0<c<b$ be two positive numbers. Let $K = K(G,A,b,c)$ be the ribbon complex constructed in Example \ref{ex:complex_from_ribbon_graph}. Define $\alpha\coloneqq \sum_v a_v$ where the sum runs over all labelled vertices of $G$. Let $n$ denote the number of free ends of $K$ and let $K'$ denote the ribbon complex obtained from $K$ by deleting all ribbons with a free end. Then for every symplectic embedding $\varphi: K\overset{s}{\hookrightarrow}M$ into a symplectic $4$-manifold $M$, possibly with boundary, such that $\varphi(K')\subset \operatorname{int}(M)$, every open neighbourhood $U$ of $\varphi(K')$ in $M$, and every $0<\gamma<\beta\leq c$, there exists a symplectic embedding $\psi: E(\alpha,\beta;\gamma,n) \overset{s}{\hookrightarrow} M$ such that the following is true:
\begin{enumerate}
\item The image $\psi(E(\alpha,\beta))$ is contained in $U$.
\item \label{item:special_ribbon_complex_neighbourhood_packing_free_ribbons} For each free ribbon $R$ of $K$, there exists exactly one free ribbon $S$ of $E(\alpha,\beta;\gamma,n)$ such that $\psi(S)\subset \varphi(R)$. Moreover, the image of the free end of $S$ under $\psi$ is contained in the image of the free end of $R$ under $\varphi$.
\end{enumerate}
\end{lemma}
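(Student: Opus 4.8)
The plan is as follows. First I would reduce, via the neighbourhood theorem for symplectic embeddings of ribbon complexes (Remark~\ref{rem:neighbourhood_theorem_for_ellipsoid_ribbon_complexes}), to producing $\psi$ with $\psi(E(\alpha,\beta))$ contained in an arbitrarily small neighbourhood of $\varphi(K')$, together with the routing of free ribbons required in~(\ref{item:special_ribbon_complex_neighbourhood_packing_free_ribbons}). Next I would reduce to the case that $G$ is a tree: since every unlabelled vertex of $G$ is a leaf, no path between two labelled vertices meets an unlabelled vertex, so the full subgraph $G_0\subset G$ spanned by the labelled vertices is connected; fixing a spanning tree $T\subset G_0$ and re-attaching to it every unlabelled leaf of $G$ together with its edge yields a tree $T_+$ with the same labelled vertices (hence the same $\alpha$ and $n$) as $G$, and the componentwise inclusion of ellipsoid--ribbon pieces gives a symplectic embedding $K(T_+,A,b,c)\hookrightarrow K$ that carries the core of $K(T_+,A,b,c)$ into $K'$ and its free ribbons into those of $K$. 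Composing $\psi$ with this inclusion, it suffices to prove the lemma for $K(T_+,A,b,c)$; so from now on $G$ is a tree.

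The geometric heart of the argument would be a \emph{merging lemma}: an ellipsoid joined by a single ribbon to a neighbouring ellipsoid can be absorbed into it. Precisely, if two ellipsoid--ribbon pieces $E(a',b;c,m)$ and $E(a'',b;c,k)$ are glued along one ribbon $R(c)$, then for every $\delta>0$ the glued complex contains, inside an arbitrarily small neighbourhood of the image of any of its symplectic embeddings, a symplectically embedded $E(a'+a'',\,b-\delta;\,c-\delta,\,m+k-2)$ whose $m+k-2$ free ribbons are routed into the $m+k-2$ free ribbons of the glued complex and whose main ellipsoid is attached near the cores of the two old ellipsoids. I would prove this through the subgraph formalism of Section~\ref{sec:strat_of_subgraphs}: realize $E(a',b)$ and $E(a'',b)$ as truncated subgraphs of the affine Hamiltonians $H_{a'/b}$ and $H_{a''/b}$ on $[0,1]\times\BA(b)$ (Remark~\ref{rem:stratification_perturbed_affine_subgraph_ellipsoids_polydisks}), observe that gluing along the ribbon amounts, after a smooth reparametrisation, to concatenating these two Hamiltonians in the $t$-variable, and note that the time-$1$ map of the concatenation is $R_{a'/b}\circ R_{a''/b}=R_{(a'+a'')/b}$; Lemmas~\ref{lem:symplectomorphisms_of_subgraphs} and~\ref{lem:isotopies_of_subgraphs} would then identify the concatenated subgraph with the subgraph of $H_{(a'+a'')/b}$, which is $E(a'+a'',b)$, while keeping track of the ribbon ends. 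Equivalently one can argue by an explicit toric cut-and-reassemble: split the moment triangle of $E(a'+a'',b)$ by a segment from its apex to an interior point of the base into a triangle affinely equivalent to that of $E(a',b)$ and one affinely equivalent to that of $E(a'',b)$, and absorb the non-integral shear relating the two pieces by inserting a thin ribbon and invoking Remark~\ref{rem:neighbourhood_theorem_for_ellipsoid_ribbon_complexes}.

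With the merging lemma available I would induct on the number $N$ of labelled vertices of the tree $G$. If $N=1$, then $K=E(\alpha,b;c,n)$ and $K'=E(\alpha,b)$; since $\beta\le c<b$ we have $E(\alpha,\beta)\subset E(\alpha,b)$, and I would place $E(\alpha,\beta)$ inside $\varphi(E(\alpha,b))$ with $n$ small fingers protruding through the attaching disks $\varphi(D_j)$ a tiny way into $\varphi(R_j)$ (so that $\psi(E(\alpha,\beta))$ still lies in $U$), and then extend each finger to a width-$\gamma$ stub $S_j$ running along $\varphi(R_j)$ to near its free end; the stubs are disjoint because the $D_j$ lie near distinct attaching angles, and $\psi$ sends the free end of $S_j$ into $\varphi(\text{free end of }R_j)$ by construction. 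If $N\ge2$, then $G_0$ is a tree with at least two vertices, hence has a leaf $w$; letting $w'$ be its unique labelled neighbour and $R$ the ribbon joining the two corresponding ellipsoids, the merging lemma yields a symplectic embedding of $K(G',A',b',c')$ inside an arbitrarily small neighbourhood of $\varphi(K)$, where $G'=G/ww'$ is a tree with $N-1$ labelled vertices, $A'$ carries the label $a_w+a_{w'}$ on the contracted vertex and agrees with $A$ elsewhere, the total label is still $\alpha$, the numbers $b'<b$ and $c'<c$ are as close to $b$ and $c$ as we wish (so still $\beta\le c'<b'$), the core of $K(G',A',b',c')$ lands in a neighbourhood of $\varphi(K')$, and the free ribbons of $K(G',A',b',c')$ are routed into those of $K$. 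Applying the inductive hypothesis to this embedding produces the desired $\psi$.

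The hard part will be the merging lemma: making the reassembly precise, in particular handling the non-integral shear --- equivalently, constructing the smooth reparametrisation and the connecting family of Hamiltonians needed to apply Lemma~\ref{lem:isotopies_of_subgraphs} to the subgraphs of the affine Hamiltonians $H_{a/b}$, which degenerate along the boundary component $\{x=0\}$ --- all while bookkeeping volumes, the cyclic orders of the surviving free ends, and transversality at the attaching disks. The neighbourhood reductions, the leaf peeling, and the routing of the thin stubs along the free ribbons should be routine by comparison.
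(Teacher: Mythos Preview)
Your reductions --- the neighbourhood theorem and passing to a spanning tree --- match the paper's opening moves. From there, however, the paper does \emph{not} induct on the number of vertices via a merging lemma; it gives a one-shot global construction, and I think your merging route, while plausible in outline, has a real gap at exactly the point you flag as ``the hard part.''

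The difficulty is that neither of your two proposed proofs of the merging lemma goes through as stated. For the subgraph route: gluing two ellipsoids along a ribbon $R(c)$ is \emph{not} the same as concatenating the subgraphs $D(H_{a'/b})$ and $D(H_{a''/b})$ in the $t$-variable. Concatenation in $t$ glues along the full $3$-dimensional stratum $\partial_1 D(H_{a'/b})\cong\partial_0 D(H_{a''/b})$, whereas a ribbon attaches along a $2$-disk of area $c<b$ sitting inside $\partial E(a',b)$ at a specific $\iota(\theta)$; these are genuinely different gluings, and no ``smooth reparametrisation'' bridges them. Even setting this aside, Lemma~\ref{lem:isotopies_of_subgraphs} requires $H_+^\lambda>H_-^\lambda$ strictly, which fails along $\{x=0\}$ for the affine Hamiltonians --- you note this degeneracy but do not resolve it. For the toric route: the shear taking the second sub-triangle of the moment polygon to the standard one for $E(a'',b)$ lies in $SL(2,\BZ)$ only when $a'/b\in\BZ$, and ``absorbing the non-integral shear by inserting a thin ribbon'' does not make sense as stated --- the toric domain over the second triangle is simply not symplectomorphic to $E(a'',b)$ in general, and a ribbon, being $3$-dimensional with zero volume, cannot correct this.

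The paper instead builds a single explicit embedding $\varphi:K\hookrightarrow\BC^2$ with the ellipsoids $E(a_v,b)$ centred over the vertices of a planar straight-line drawing of the tree and the ribbons lying over the edges, so that $\operatorname{pr}_1(\varphi(K'))$ is a configuration of round disks joined by segments in $\BC$. One then chooses a topological disk $D\subset\BC$ containing this configuration and a function $f:D\to\BR_{\ge0}$ with a single local maximum so that the ``graph region'' $A(D,f)=\{(z_1,z_2):z_1\in D,\ \pi|z_2|^2\le f(z_1)\}$ satisfies $K'_\varepsilon\subset A(D,f)\subset U$. An area-preserving diffeomorphism of $\BC$ takes $D$ to a round disk and $f$ to a rotation-invariant function $\tilde f$; the superlevel-set area inequality
\[
\operatorname{area}\{\tilde f\ge s\}\ \ge\ \sum_v \frac{a_v(c+\varepsilon-s)}{c+\varepsilon}\ =\ \frac{\alpha(c+\varepsilon-s)}{c+\varepsilon}\ \ge\ \frac{\alpha(\beta-s)}{\beta}
\]
then forces $E(\alpha,\beta)\subset A(\tilde D,\tilde f)$, and pulling back gives $E(\alpha,\beta)\subset U$. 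Attaching the $n$ stubs into the free ribbons is routine. This avoids any merging or induction: the ``sum of widths'' $\alpha=\sum_v a_v$ appears directly from summing the superlevel-set areas of the individual ellipsoids, which is exactly the content your merging lemma is trying to capture one edge at a time.
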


\begin{proof}
After possibly deleting some ribbons of $K$, we can assume that $G$ is a tree. Let us begin by constructing a special symplectic embedding $\varphi: K \overset{s}{\hookrightarrow} \BC^2$. By Remark \ref{rem:neighbourhood_theorem_for_ellipsoid_ribbon_complexes}, it will be sufficient to prove the lemma for this specific embedding. Let us begin by embedding the ribbon tree $G$ into the complex plane $\BC$ such that all edges are straight line segments. For each labelled vertex $v$, take a copy of $B^2(a_v)$ centered at $v$. After dilating the embedding of the tree $G$ if necessary, we can assume that the disks centered at two distinct vertices are disjoint and that no unlabelled vertex is contained in any of the disks. Moreover, we can assume that each edge only meets the disks centered at the vertices it is incident to. It is straightforward to construct a symplectic embedding $\varphi:K \overset{s}{\hookrightarrow}\BC^2$ such that the image $\varphi(K)$ is the union of one copy of the ellipsoid $E(a_v,b)$ for each labelled vertex $v$, translated such that it is centered at $\left\{ v \right\}\times \left\{ 0 \right\}\in \BC^2$, and all the sets of the form $e\times \left\{ B^2(c) \right\}$ where $e$ is an edge. Note that the projection $\operatorname{pr}_1(\varphi(K))$ to the first factor of $\BC^2$ is precisely given by the above configuration of disks and line segments. In order to simplify notation, let us identify $K$ with its image under $\varphi$.

Fix an arbitrary open neighbourhood $U$ of $K'$ inside $\BC^2$ and numbers $0<\gamma<\beta\leq c$. Let $D\subset \BC \cong \BC \times \left\{ 0 \right\}$ be a topological disk which contains $\operatorname{pr}_1(K')$ in its interior and which is contained in $U\cap (\BC\times \left\{ 0 \right\})$. For $\varepsilon>0$, Let $K'_\varepsilon$ denote the ribbon complex obtained from $K'$ by replacing each domain $E(a_v,b)$ by the smaller domain $E(a_v,c+\varepsilon)$ while leaving the widths of all ribbons unchanged. If $\varepsilon>0$ is sufficiently small, it is possible to find a smooth non-negative function $f:D\rightarrow \BR_{\geq 0}$ with the following properties:
\begin{enumerate}
\item $f$ vanishes on the boundary $\partial D$.
\item $f$ has precisely one local maximum and no other critical points.
\item The set
\begin{equation*}
A(D,f) \coloneqq \left\{ (z_1,z_2)\in \BC^2 \mid z_1 \in D, \enspace \pi |z_2|^2 \leq f(z_1) \right\}
\end{equation*}
is contained in $U$ and contains $K'_\varepsilon$ in its interior.
\end{enumerate}
Choose an area preserving diffeomorphism $\psi$ of $\BC$ such that $\tilde{D}\coloneqq \psi(D)$ is a round disk cetered at the origin and such that $\tilde{f}\coloneqq f\circ \psi^{-1}$ is rotation invariant. Clearly, $\psi\times \operatorname{id}_\BC$ is a symplectomorphism mapping $A(D,f)$ onto $A(\tilde{D},\tilde{f})$. Our goal is to show that $A(\tilde{D},\tilde{f})$ contains the ellipsoid $E(\alpha,\beta)$. Once we know this, $(\psi\times \operatorname{id}_\BC)^{-1}$ gives a symplectic embedding of $E(\alpha,\beta)$ into $U$.

Consider a general ellipsoid $E(p,q)$. Then we have
\begin{equation}
\label{eq:special_ribbon_complex_neighbourhood_packing_proof_ellipsoid_area}
\operatorname{area} (\left\{ z\in B^2(p) \mid \operatorname{area}(E(p,q) \cap \left\{ z \right\}\times \BC) \geq s \right\}) = \frac{p(q-s)}{q}.
\end{equation}
It follows from this identity, the fact that $K'_\varepsilon$ is contained in $A(D,f)$, and the fact that $\psi$ is area preserving that
\begin{equation*}
\operatorname{area}\left(\left\{ z \in \tilde{D} \mid \tilde{f}(z) \geq s \right\}\right) = \operatorname{area}(\left\{ z \in D \mid f(z)\geq s \right\}) \geq \sum_{v} \frac{a_v(c+\varepsilon-s)}{c+\varepsilon} = \frac{\alpha(c+\varepsilon-s)}{c+\varepsilon} \geq \frac{a(\beta-s)}{\beta}.
\end{equation*}
Again using identity \eqref{eq:special_ribbon_complex_neighbourhood_packing_proof_ellipsoid_area} and the fact that $\tilde{f}$ is rotation invariant and non-increasing along the radial direction, we see that $E(\alpha,\beta)$ is contained in $A(\tilde{D},\tilde{f})$.

Finally, we observe that it is straightforward to attach $n$ copies of the ribbon $R(\gamma)$ to $(\psi\times \operatorname{id}_\BC)^{-1}(E(\alpha,\beta))$ in such a way that property \eqref{item:special_ribbon_complex_neighbourhood_packing_free_ribbons} in the statement of the lemma holds. The existence of the desired embedding of $E(\alpha,\beta;\gamma,n)$ is an immediate consequence.
\end{proof}

Let $0<c<a$. Consider the ball $B^4(a)\subset \BC^2$ and let $\rho:\partial B^4(a)\rightarrow \BC P^1(a)$ be the Hopf map. Here $\BC P^1(a)$ denotes the $2$-sphere of area $a$. We call a symplectic embedding $\psi: B^2(c) \hookrightarrow \partial B^4(a)$ \textit{free} if the composition $\rho\circ \psi: B^2(c)\rightarrow \BC P^1(a)$ is an embedding. We call a finite collection $(\psi_j)_j$ of symplectic embeddings $\psi_j:B^2(c)\hookrightarrow \partial B^4(a)$ \textit{free} if the compositions $\rho\circ \psi_j: B^2(c)\rightarrow \BC P^1(a)$ are pairwise disjoint embeddings.

\begin{lemma}
\label{lem:free_attaching_maps_ball}
Let $0<c<a$ and $n>0$. Then the symplectomorphism group of $B^4(a)$ acts transitively on free $n$-tuples $(\psi_j)_j$ of symplectic embeddings $\psi_j:B^2(c)\hookrightarrow \partial B^4(a)$.
\end{lemma}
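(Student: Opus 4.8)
The plan is to establish transitivity by the standard two-step scheme: first join any two free $n$-tuples by a smooth path of free $n$-tuples, and then upgrade such a path to a symplectic isotopy of $B^4(a)$ whose time-$1$ map realizes the desired symplectomorphism. Throughout I use the dictionary that, since $\omega_0|_{\partial B^4(a)} = d\lambda = \rho^*\omega_{\mathrm{FS}}$ with $\int_{\BC P^1(a)}\omega_{\mathrm{FS}} = a$, a free embedding $\psi_j\colon B^2(c)\hookrightarrow\partial B^4(a)$ amounts to: an embedded closed disk $D_j := \rho(\psi_j(B^2(c)))\subset\BC P^1(a)$ of area $c$, a section of the (trivial) Hopf bundle $\rho^{-1}(D_j)\to D_j$, and an area-preserving parametrization $B^2(c)\to D_j$; freeness of the tuple is the condition that the $D_j$ be pairwise disjoint. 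In particular $\psi_j(B^2(c))$ is an embedded symplectic $2$-disk in $\partial B^4(a)$ transverse to the characteristic (Hopf) foliation, and the Hopf saturations $\rho^{-1}(D_j)$ are pairwise disjoint closed solid tori in $\partial B^4(a)$.

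For the first step I would argue that the space of free $n$-tuples is path-connected. The space of configurations of $n$ pairwise disjoint embedded closed area-$c$ disks in $\BC P^1(a)\cong S^2$ is path-connected: $\operatorname{Symp}(\BC P^1(a)) = \operatorname{Ham}(S^2,\omega_{\mathrm{FS}})$ acts transitively on such configurations — placing the disks into a fixed standard configuration one at a time, using Moser's theorem to make each disk round and the connectedness of the complement of the already-placed disks to slide the next one into position — and $\operatorname{Symp}(S^2)$ is connected. Over a fixed configuration $(D_j)$, a free $n$-tuple is the data of an $n$-tuple of sections of the trivial circle bundles $\rho^{-1}(D_j)\to D_j$, which form an affine space, together with an $n$-tuple of area-preserving parametrizations $B^2(c)\to D_j$, a torsor under the connected group $\operatorname{Symp}(B^2(c))$ (classically homotopy equivalent to $SO(2)$). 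Combining these, I obtain a smooth path $(\psi_j^s)_{s\in[0,1]}$ of free $n$-tuples with $\psi_j^0 = \psi_j$ and $\psi_j^1 = \psi_j'$; by construction the disks $D_j^s$, hence the solid tori $\rho^{-1}(D_j^s)$, remain pairwise disjoint for all $s$.

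For the second step I would apply a relative and parametric version of the coisotropic neighborhood theorem — the parametric form of the neighborhood statement underlying Remark \ref{rem:neighbourhood_theorem_for_ellipsoid_ribbon_complexes}, proved as there via Gotay's theorem \cite{got82}. Since each $\psi_j^s(B^2(c))$ is an embedded symplectic $2$-disk in the contact-type boundary $\partial B^4(a)$ transverse to the Hopf foliation, with pairwise disjoint Hopf saturations, this theorem yields a smooth family of symplectic embeddings $\theta^s$ of a fixed open neighborhood $W$ of $\bigsqcup_j\psi_j^0(B^2(c))$ in $B^4(a)$ onto neighborhoods of $\bigsqcup_j\psi_j^s(B^2(c))$, with $\theta^0 = \mathrm{incl}_W$ and $\theta^s\circ\psi_j^0 = \psi_j^s$ as parametrized embeddings; moreover the image of $\theta^s$ can be arranged to lie in a prescribed, smoothly varying disjoint union of neighborhoods of the solid tori $\rho^{-1}(D_j^s)$, so that $\theta^s$ preserves $\partial B^4(a)$. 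I then globalize in the customary way: the time-dependent symplectic vector field $Y^s$ generating $\theta^s$ satisfies $\iota_{Y^s}\omega_0 = dh^s$ for a smooth family of Hamiltonians $h^s$ (as $B^4(a)$ is contractible), which extends to a smooth family of functions on all of $B^4(a)$ that vanishes near $\partial B^4(a)$ outside the solid tori; the Hamiltonian flow $\Phi^s$ of $h^s$ exists for all $s\in[0,1]$ by compactness and tangency to $\partial B^4(a)$, starts at $\Phi^0 = \mathrm{id}$, and satisfies $\Phi^s\circ\psi_j^0 = \psi_j^s$. Setting $\Phi := \Phi^1\in\operatorname{Symp}(B^4(a))$ gives $\Phi\circ\psi_j = \psi_j'$ for all $j$, which is the statement of the lemma.

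I expect the second step to be the main obstacle: one must prove the relative, parametric neighborhood theorem for a configuration of disjoint symplectic disks-with-boundary lying inside the contact-type boundary hypersurface of $B^4(a)$, and carry out the globalization while keeping the generating flow tangent to $\partial B^4(a)$, so that it is a genuine symplectomorphism of the manifold-with-boundary $B^4(a)$. The absolute, non-parametric case is essentially the content of Remark \ref{rem:neighbourhood_theorem_for_ellipsoid_ribbon_complexes}; promoting it to a parametric, boundary-relative statement is a routine but slightly delicate Moser-type argument. The connectedness inputs in the first step are elementary, using only Moser's theorem together with standard facts about the area-preserving diffeomorphism groups of the disk and the sphere.
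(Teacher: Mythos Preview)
Your proposal is correct, but the paper takes a more direct route that avoids the parametric machinery entirely. The paper observes that every symplectomorphism of $\BC P^1(a)$ lifts to a symplectomorphism of $B^4(a)$ (and conversely every symplectomorphism of $B^4(a)$ descends, since it preserves Hopf fibers on the boundary). Combined with the transitivity of $\operatorname{Symp}(\BC P^1(a))$ on $n$-tuples of disjoint symplectic disk embeddings --- which you also invoke --- this immediately produces a symplectomorphism $\chi'$ of $B^4(a)$ with $\rho\circ\psi_j' = \rho\circ\chi'\circ\psi_j$ for all $j$. The remaining discrepancy is then purely fiberwise over each $D_j$, and the paper fixes it by a local symplectomorphism supported near each solid torus $\rho^{-1}(D_j)$; since these solid tori are disjoint, the corrections can be applied independently.

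Your two-step scheme (connect by a path, then isotopy-extend) is the standard general-purpose argument and would work, but it trades the single clean structural fact ``symplectomorphisms of the base lift'' for a relative parametric coisotropic neighborhood theorem plus a boundary-preserving Hamiltonian extension. The latter is where most of your anticipated work sits, and while it can be carried out, it is strictly more involved than the paper's two-line lift-then-correct. The paper's approach also makes the local nature of the second correction (support in arbitrarily small neighborhoods of the solid tori) transparent, which is used later in the analogous polydisk Lemma~\ref{lem:free_collections_polydisk}.
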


\begin{proof}
Consider two free $n$-tuples of symplectic embeddings $(\psi_j)_j$ and $(\psi_j')_j$. First note that the symplectomorphism group of $\BC P^1(a)$ acts transitively on $n$-tuples of pairwise disjoint symplectic embeddings of $B^2(c)$. Any symplectomorphism of $B^4(a)$ maps Hopf fibers on the boundary $\partial B^4(a)$ to Hopf fibers and therefore induces a symplectomorphism of $\BC P^1(a)$. Conversely, any symplectomorphism of $\BC P^1(a)$ lifts to a symplectomorphism of $B^4(a)$. Thus we can find a symplectomorphism $\chi'$ of $B^4(a)$ such that $\rho\circ \psi_j' = \rho\circ \chi'\circ \psi_j$ for all $j$. Consider two symplectic embeddings $\psi,\psi':B^2(c)\hookrightarrow \partial B^4(a)$ which are both free. If $\rho\circ \psi = \rho\circ \psi'$, then $\psi$ and $\psi'$ are related by a symplectomorphism of $B^4(a)$ whose support can be taken to be contained in an arbitrarily small neighbourhood of the preimage $\rho^{-1}(\operatorname{im}(\rho\circ \psi))$. Using this observation, we can correct $\chi'$ to a symplectomorphism $\chi$ of $B^4(a)$ which satisfies $\psi_j' = \chi \circ \psi_j$ for all $j$.
\end{proof}

\begin{proposition}
\label{prop:special_embedding_into_ball}
Let $B\subset \BC^2$ be a ball and let $n$ be a non-negative integer. Then the following is true for every sufficiently small $b>0$: Let $0<a$ such that $\operatorname{vol}(E(a,b)) < \operatorname{vol}(B)$. Let $0<c<b$. For $0\leq j < n$, let $\psi_j:B^2(c)\hookrightarrow \partial B$ be a symplectic embedding and assume that the collection $(\psi_j)_j$ is free. Then there exists a symplectic embedding $\varphi: E(a,b;c,n) \overset{s}{\hookrightarrow} B$ whose restriction to the $j$-th free ribbon end of $E(a,b;c,n)$ agrees with the embedding $\psi_j$. Here we use the preferred identification of the free ribbon ends with $B^2(c)$, see Example \ref{ex:simple_ribbon_complex}. Moreover, the image $\operatorname{im}(\varphi)$ intersects $\partial B$ precisely at the free ends and the intersection is transverse.
\end{proposition}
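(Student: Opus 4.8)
\emph{Plan.} The idea is to first normalise the attaching maps, then build the embedding in two stages: embed the ellipsoid $E(a,b)$ deep inside $B$ using skinny ellipsoid embeddings, and then grow the $n$ ribbons out of it by thickening suitable arcs that run to $\partial B$.

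\emph{Reduction.} Identify $B$ symplectically with the standard ball $B^4(R)$, where $R$ is the Gromov width of $B$. By Lemma~\ref{lem:free_attaching_maps_ball} the symplectomorphism group of $B^4(R)$ acts transitively on free $n$-tuples of symplectic embeddings $B^2(c)\hookrightarrow\partial B^4(R)$. Hence it suffices to construct, for \emph{one} free $n$-tuple $(\psi_j^0)_{0\le j<n}$, a symplectic embedding $\varphi_0\colon E(a,b;c,n)\overset{s}{\hookrightarrow}B^4(R)$ whose restriction to the $j$-th free ribbon end equals $\psi_j^0$ and whose image meets $\partial B^4(R)$ transversally exactly at the free ends: for a prescribed free $n$-tuple $(\psi_j)$ one then chooses $\chi\in\operatorname{Symp}(B^4(R))$ with $\chi\circ\psi_j^0=\psi_j$, and $\chi\circ\varphi_0$ has all the required properties. (For $n=0$ there is nothing to attach and the statement reduces to Step~1 below.)

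\emph{Step 1: embedding the ellipsoid.} Take $b_0>0$ small, with smallness conditions on $B$ and $n$ to be collected along the way. Assume $b\le b_0$ and $\operatorname{vol}(E(a,b))<\operatorname{vol}(B^4(R))$, i.e.\ $ab<R^2$. Writing $E(a,b)=E(\min(a,b),\max(a,b))$, let $a_0=a_0(1)$ be the threshold of Lemma~\ref{lem:skinny_ellipsoid_embeddings} applied with $e_0=1$, so $p^E_e(B^4(1))=1$ for $e\ge a_0$. If the aspect ratio $\max(a,b)/\min(a,b)$ is $\ge a_0$, rescaling the relation $p^E_e(B^4(1))=1$ yields a symplectic embedding of $E(a,b)$ into $\operatorname{int}(B^4(R))$ directly from $ab<R^2$; if the aspect ratio is $<a_0$, then $E(a,b)\subset E(\min(a,b),a_0\min(a,b))$, which has volume strictly less than $\operatorname{vol}(B^4(R))$ provided $b_0<R/\sqrt{a_0}$, and Lemma~\ref{lem:skinny_ellipsoid_embeddings} again applies. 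In either case, since the volume inequality is strict, we may place the image inside a strictly smaller concentric ball, obtaining $\iota_0\colon E(a,b)\hookrightarrow\operatorname{int}(B^4(R'))$ with $R'<R$.

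\emph{Step 2: attaching the ribbons.} Set $D_j:=\iota_0\big(\iota(2\pi j/n)(B^2(c))\big)\subset\partial\iota_0(E(a,b))$ for $0\le j<n$; by the computation in Example~\ref{ex:simple_ribbon_complex} these are pairwise disjoint symplectic $2$-disks of area $c$, each transverse to the characteristic foliation of $\partial\iota_0(E(a,b))$. Choose also pairwise disjoint symplectic $2$-disks $D_j'\subset\partial B^4(R)$ of area $c$, transverse to the Reeb foliation, whose images under the Hopf map $\rho\colon\partial B^4(R)\to\BC P^1(R)$ are pairwise disjoint; this is possible because $\rho$ restricts to an area-preserving map on transverse disks and $nc<nb_0<R$ once $b_0<R/(n+1)$. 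The parametrizations of the $D_j'$ then form a free $n$-tuple, which we declare to be $(\psi_j^0)_j$. It remains to join $D_j$ to $D_j'$ by pairwise disjoint embedded symplectic ribbons $R(c)$ inside $\overline{B^4(R)\setminus\iota_0(E(a,b))}$, meeting $\iota_0(E(a,b))$ and $\partial B^4(R)$ only along the $D_j$ and $D_j'$ and transversally there. I would do this one index at a time. After the first $j$ ribbons have been attached, removing $\iota_0(E(a,b))$ and thin tubular neighbourhoods of the previous arcs from $\operatorname{int}(B^4(R))$ leaves a connected open set (a ball minus a ball and finitely many thin tubes along nonseparating arcs stays connected); pick in it a smooth embedded arc $\gamma_j$ from a point of $D_j$ to a point of $D_j'$, meeting $\partial\iota_0(E(a,b))$ and $\partial B^4(R)$ only at its endpoints. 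Bending $\gamma_j$ near the endpoints we may assume $\gamma_j'$ lies in the symplectic orthogonal complement of $TD_j$ (resp.\ $TD_j'$) there while remaining transverse to $\partial\iota_0(E(a,b))$ (resp.\ $\partial B^4(R)$)—these orthogonal complements are two-dimensional and not contained in the relevant hypersurface, so such a direction exists. A normal-form (relative Moser) argument along the isotropic arc $\gamma_j$, equivalently Gotay's coisotropic neighbourhood theorem (cf.\ Remark~\ref{rem:neighbourhood_theorem_for_ellipsoid_ribbon_complexes}), produces a neighbourhood of $\gamma_j$ identified with $[0,1]\times B^2(c+\varepsilon)\times(-\varepsilon,\varepsilon)$ carrying the split symplectic form, which can be arranged to restrict on the two end slices to the prescribed parametrizations of $D_j$ and $D_j'$. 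Inside this chart, $[0,1]\times B^2(c)\times\{0\}$ is an embedded symplectic ribbon from $D_j$ to $D_j'$; we let $\varphi_0$ restricted to the $j$-th ribbon be this map, so that its restriction to the free end is exactly $\psi_j^0$, and since $\gamma_j'$ is transverse to both hypersurfaces at the endpoints, the attachments to $\partial\iota_0(E(a,b))$ and to $\partial B^4(R)$ are transverse. Shrinking $\varepsilon$ keeps the ribbons pairwise disjoint as we proceed. Together with $\iota_0$ this defines $\varphi_0$ for $(\psi_j^0)_j$, and the reduction step finishes the proof.

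\emph{Main obstacle.} The substantive part is Step~2. Making the free ribbon ends into a genuinely \emph{free} $n$-tuple forces $c$, and hence $b$, to be small relative to $R$ and $n$, which is exactly where the hypotheses ``$b$ sufficiently small'' and the dependence on $n$ enter; and thickening an arc into an honest symplectic ribbon matching prescribed disk parametrizations on both ends is a standard but delicate normal-form computation along an isotropic curve. Bookkeeping the disjointness and transversality of the successively added ribbons, while routine, also needs care. Step~1, by contrast, is an immediate consequence of Lemma~\ref{lem:skinny_ellipsoid_embeddings}.
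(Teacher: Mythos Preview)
Your reduction via Lemma~\ref{lem:free_attaching_maps_ball} and your Step~1 are fine and correspond to what the paper does. The gap is in Step~2, and it is not a bookkeeping issue but a genuine missing ingredient.

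The isotropic neighbourhood theorem applied to the arc $\gamma_j$ gives you a symplectic chart of the form $[0,1]\times(-\delta_1,\delta_1)\times B^2(\delta_2)$ for \emph{some} $\delta_1,\delta_2>0$ determined by the geometry of the complement along $\gamma_j$; it does not let you prescribe $\delta_2\ge c$. Your claim that the chart has the shape $[0,1]\times B^2(c+\varepsilon)\times(-\varepsilon,\varepsilon)$ silently assumes exactly this. The boundary data $D_j,D_j'$ do guarantee width $c$ at the endpoints, but nothing forces the tubular neighbourhood to stay that wide in the middle of the arc. The difficulty is real: since the proposition requires the threshold on $b$ to depend only on $B$ and $n$, while $a$ can range up to nearly $R^2/b$, the ellipsoid $E(a,b)$ can have volume arbitrarily close to $\operatorname{vol}(B)$, and the abstract embedding $\iota_0$ from Lemma~\ref{lem:skinny_ellipsoid_embeddings} can be an arbitrarily folded, space-filling map whose complement is a thin tortuous region. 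There is no uniform lower bound on $\delta_2$ along arcs through such a complement, and no symplectic rescaling trades the $(-\delta_1,\delta_1)$ and $B^2(\delta_2)$ factors against one another.

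This is why the paper does not take your route. For $n=1$ it instead uses Lemma~\ref{lem:special_embedding_into_CP2} to place $E(a,b)$ in $\BC P^2$ in a specific position relative to a complex line, so that the single ribbon can be built explicitly via Lemma~\ref{lem:connecting_ribbon_to_core_plane} and then pulled back to $B$. For $n\ge2$ it goes further: it decomposes $B\cong T\times_LQ$ into $n^2$ small balls, places a controlled ribbon complex $L$ (via Lemma~\ref{lem:embedding_into_lagrangian_product}) and then a finer complex $K$ inside $L$ (using the $n=1$ case and Lemma~\ref{lem:special_embedding_into_ball_dependent_attaching_maps}), and finally applies Lemma~\ref{lem:special_ribbon_complex_neighbourhood_packing} to consolidate into a single $E(a,b;c,n)$. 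All of this machinery exists precisely to maintain explicit control over where the ribbons run, which your generic-arc-plus-normal-form argument cannot provide.
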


Before turning to the proof of Proposition \ref{prop:special_embedding_into_ball}, we need to establish some preliminary results.

\begin{lemma}
\label{lem:special_embedding_into_CP2}
Let $a,b,\beta>0$ and suppose that the ellipsoid $E=E(a,b) \subset \BC^2$ symplectically embeds into $M=\BC P^2(\beta)$. Let $C\subset M$ be a complex line. If $b<\beta$, then there exists a symplectic embedding $\varphi: E\overset{s}{\hookrightarrow} M$ such that $\varphi^{-1}(C) = E \cap (\left\{ 0 \right\}\times \BC)$.
\end{lemma}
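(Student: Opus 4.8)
The plan is to reduce the lemma to producing a \emph{single} well-positioned embedding of $E(a,b)$, to carry this out inside the Biran normal form of a neighbourhood of the line, and to use the embedding hypothesis only to guarantee enough room.

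First I would normalise the picture. Since $\operatorname{Symp}(\BC P^2(\beta))$ acts transitively on complex lines, we may take $C$ to be any convenient line. Write $D := E\cap(\{0\}\times\BC)$; this is a symplectic disk of area $b$, and $\partial D\subset\partial E$ is one of the two distinguished closed characteristics of $\partial E(a,b)$. The desired conclusion $\varphi^{-1}(C)=D$ is equivalent to the two conditions $\varphi(D)\subset C$ and $\varphi(E\setminus D)\cap C=\emptyset$, so the lemma is equivalent to exhibiting one symplectic embedding $\varphi:E\hookrightarrow\BC P^2(\beta)$ satisfying these. In the \emph{toric case} $a<\beta$ this is immediate: $E(a,b)$ sits as a standard sub-ellipsoid of $E(a,\beta)$ with $D$ contained in the area-$\beta$ axis disk of $E(a,\beta)$, and the latter disk is a coordinate line of $\BC P^2(\beta)$; one checks directly that $E(a,b)$ meets this line exactly along $D$. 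The work is therefore in the range $a\ge\beta$, which genuinely occurs (for $b$ small one can have $a$ up to roughly $\beta^2/b$).

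Next I would set up the model. By the Biran decomposition \cite{bir01}, for $0<\lambda<\beta$ there is a symplectic identification of $\BC P^2(\beta)$ with $\overline{B^4(\lambda)}$ glued along its boundary to a standard disk-bundle neighbourhood $N_{\beta-\lambda}(C)$ of $C$ (fibre area $\beta-\lambda$, Euler number $1$); in particular $\BC P^2(\beta)\setminus C$ is symplectomorphic to $\operatorname{int}B^4(\beta)$, with $C$ playing the role of the "sphere at infinity" and the characteristic circle fibration of $\partial N(C)$ over $C$ being the Hopf fibration of $\partial B^4(\beta)$. In this picture, an embedding $\varphi$ with $\varphi(D)\subset C$ and $\varphi(E\setminus D)\cap C=\emptyset$ is the same thing as a symplectic embedding of $E\setminus D$ into $\operatorname{int}B^4(\beta)$ that, as one approaches $D$, runs out to the boundary across a fixed family of Hopf disks and caps off onto a sub-disk $D'\subset C$ of area $b$; this is exactly where $b<\beta$ is used, since there must be room for a disk of area $b$ inside $C$ and for a standard product neighbourhood $\nu\cong B^2(b)\times B^2(\epsilon)$ of $D'$ with $\nu\cap C=D'$. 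By the neighbourhood theorem for symplectic embeddings of coisotropic-type configurations (cf. Remark~\ref{rem:neighbourhood_theorem_for_ellipsoid_ribbon_complexes}, which rests on Gotay's theorem \cite{got82}), once some embedding of $E$ agrees near $D$ with a fixed model embedding carrying $D$ onto $D'$ inside $\nu$, any other such embedding differs from it by an ambient symplectomorphism fixing $C$; hence it suffices to produce one embedded copy of $E$ in $\BC P^2(\beta)$ whose intersection with $C$ is precisely a disk $D'$ of area $b$, with the axis circle $\partial D$ matched up correctly. I would then produce this copy by starting from the embedding given by hypothesis, perturbing it to meet $C$ transversally, and isotoping it into position relative to $C$: using the positivity of intersection of the symplectic sphere $C$ (note $C\cdot C=+1$), a symplectic inflation/deflation argument along $C$, and the connectedness of the space of symplectic embeddings of a slightly shrunk ellipsoid into $\BC P^2$ (McDuff), one transports the toric model of the previous paragraph along such an isotopy; a final small ambient isotopy supported near $\partial D$ makes the embedded $E$ agree with the fixed model near $D$, and the reduction above concludes the proof.

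The hard part will be the last step: forcing the pre-existing embedding to become \emph{compatible} with $C$, i.e. isotoping so that $C\cap\varphi(E)$ is exactly an axis disk of the right area rather than an a priori uncontrolled intersection (or the empty set, in the regime where $E$ does not fit in $\operatorname{int}B^4(\beta)$). This is precisely where the special role of the line enters — its positive self-intersection and the rigidity/uniqueness results for ellipsoid embeddings into $\BC P^2$ — whereas the remaining steps are formal consequences of the Biran normal form and standard neighbourhood theorems.
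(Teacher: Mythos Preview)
The paper's own proof is a one-line citation: ``This is a special case of \cite[Lemma 1.6]{bho16}.'' So the relevant comparison is with the actual argument from \cite{bho16}, whose strategy the paper conveniently sketches in the proof of the analogous Lemma~\ref{lem:special_embedding_into_S2timesS2}. That strategy is \emph{not} the one you outline: one starts with a \emph{small} rescaling $\tau E$ placed in the desired special position relative to $C$, blows up this ellipsoid embedding in the sense of McDuff \cite{mcd09} to obtain a nodal symplectic sphere configuration $S\subset\hat M$ together with the proper transform $\hat C$ of $C$, and then uses the singular inflation technique of McDuff--Opshtein \cite{mo15} \emph{relative to the singular surface $S\cup\hat C$} to adjust the areas of the components of $S$ so as to grow $\tau E$ back to $E$ while preserving the special position with respect to $C$. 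The hypothesis that $E$ embeds in $\BC P^2(\beta)$ enters only as the numerical condition allowing the inflation to reach the required areas.

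Your proposal diverges from this at exactly the step you flag as hard, and there the sketch does not close. You propose to start from the \emph{given} embedding, perturb it to meet $C$ transversally, and then ``transport the toric model along an isotopy'' using connectedness of the space of embeddings of a slightly shrunk ellipsoid. But transporting a model along an isotopy of embeddings does not by itself control the intersection with $C$: a generic path of embeddings will cross $C$ in a complicated, time-varying way, and there is no mechanism in your outline that forces $\varphi_t^{-1}(C)$ to remain a single axis disk. The phrase ``a symplectic inflation/deflation argument along $C$'' gestures toward the right ingredient, but the actual content is inflation \emph{relative to the blown-up nodal configuration}, not just along $C$; without the blow-up step and the appeal to \cite{mo15} (or an equivalent relative inflation result), the argument is incomplete. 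Your reduction via the Biran decomposition and the neighbourhood theorem is fine as a normalisation, but it does not eliminate the need for the blow-up/singular-inflation machinery.
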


\begin{proof}
This is a special case of \cite[Lemma 1.6]{bho16}.
\end{proof}

\begin{lemma}
\label{lem:connecting_ribbon_to_core_plane}
Let $a,b>0$ and consider the ellipsoid $E(a,b)$. Let $0<c<b$ and $r>1$. Then there exists a symplectic embedding $\varphi:E(a,b;c,1)\overset{s}{\hookrightarrow} E(ra,rb)$ such that the restriction of $\varphi$ maps the free end of $E(a,b;c,1)$ into $\left\{ 0 \right\}\times \BC$ and such that the image of $\varphi$ does not intersect the set $\left\{ 0 \right\}\times \BC$ in any other point.
\end{lemma}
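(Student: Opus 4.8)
The goal is to embed $E(a,b;c,1)$ — the ellipsoid $E(a,b)$ with a single ribbon $R(c)$ attached to its boundary — into the slightly larger ellipsoid $E(ra,rb)$, in such a way that the free end of the ribbon lands on the core disk $E(ra,rb)\cap(\{0\}\times\BC)$ and the rest of the image avoids $\{0\}\times\BC$. The strategy is to first realize the attachment disk and the ribbon abstractly, and then use the ECH/Donaldson-divisor style normal form (Lemma \ref{lem:special_embedding_into_CP2}) to reposition an embedded ellipsoid so that it meets the relevant complex line exactly along its own core disk. Since $r>1$, there is room to spare, and the whole ribbon — which has arbitrarily small volume once we also allow $c$ to shrink inside its existing bound $c<b$ — can be threaded through that room.

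Concretely, the steps I would carry out are: (1) Pick $\beta$ with $rb>\beta>b$ and note $E(ra,rb)$ contains a copy of $\BC P^2(\beta)$ minus a neighborhood of a line, via the Biran-type decomposition; equivalently, work directly inside $E(ra,rb)$ and use that $E(a,b)$ with $b<rb$ embeds into it by inclusion. (2) Apply Lemma \ref{lem:special_embedding_into_CP2} (or its proof, \cite[Lemma 1.6]{bho16}), with the complex line taken to be the closure of $\{0\}\times\BC$ inside the ambient projective completion, to obtain a symplectic embedding $\iota:E(a,b)\overset{s}{\hookrightarrow}E(ra,rb)$ whose preimage of $\{0\}\times\BC$ is exactly $E(a,b)\cap(\{0\}\times\BC)$. (3) The image $\iota(E(a,b))$ now meets $\{0\}\times\BC$ transversely along a disk; the complement $E(ra,rb)\setminus\iota(E(a,b))$ is an open neighborhood of $\partial E(ra,rb)$ together with a neighborhood of the line $\{0\}\times\BC$. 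Inside this complement I attach the ribbon: starting from a free symplectic disk on $\partial\iota(E(a,b))$, flow along the characteristic foliation (equivalently, push the disk through a thin product region) until it reaches a small disk in $\{0\}\times\BC$; because the complement contains a full neighborhood of the line minus one point, and the ribbon $R(c)$ is a thin coisotropic handle of width $c<b$, Gotay's neighborhood theorem (as invoked in Remark \ref{rem:neighbourhood_theorem_for_ellipsoid_ribbon_complexes}) lets me realize this handle as an honest symplectic embedding of $R(c)$. (4) Arrange that the terminal end of the ribbon lies in $\{0\}\times\BC$ and that the handle interior is disjoint from $\{0\}\times\BC$ — this is possible since the flow can be taken to approach the line only at the very end, transversally.

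The main obstacle is step (3)–(4): controlling the ribbon so that it both reaches $\{0\}\times\BC$ and stays off it everywhere except at the free end. The point is that after the normal-form embedding, $\iota(E(a,b))$ is tangent in a controlled way to the line and its boundary is disjoint from the line, so there is an annular region between $\partial\iota(E(a,b))$ and $\{0\}\times\BC$ of positive ``width''; one must check this region is large enough to accommodate a width-$c$ ribbon, which is where the slack provided by $r>1$ enters. A clean way to do this is to first shrink $E(a,b)$ slightly to $E((1-\epsilon)ra,(1-\epsilon)rb)$ inside $E(ra,rb)$ (still containing $E(a,b)$ for small $\epsilon$ since $r>1$), apply Lemma \ref{lem:special_embedding_into_CP2} to the shrunk copy, and then use the leftover collar to route the ribbon. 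With that reduction the construction is routine, and the transversality and disjointness claims follow by a generic perturbation of the characteristic foliation near the line.
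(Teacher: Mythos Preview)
Your proposal has a genuine gap stemming from a misreading of what the lemma requires. The lemma asks that the image of the \emph{entire} ribbon complex $E(a,b;c,1)$ --- including the ellipsoid $E(a,b)$ itself --- intersect $\{0\}\times\BC$ only at the free ribbon end. But in your step (2) you invoke Lemma~\ref{lem:special_embedding_into_CP2} to produce an embedding $\iota$ of $E(a,b)$ whose image \emph{does} meet $\{0\}\times\BC$, precisely along the image of its core disk $E(a,b)\cap(\{0\}\times\BC)$. Once you then attach a ribbon ending on $\{0\}\times\BC$, the image of the full complex meets the line in two places: the core disk of $\iota(E(a,b))$ and the free ribbon end. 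This violates the conclusion you are trying to establish. Relatedly, your description in step (3) of the complement $E(ra,rb)\setminus\iota(E(a,b))$ as containing ``a neighborhood of the line $\{0\}\times\BC$'' is incorrect: the line passes through $\iota(E(a,b))$, so the complement cannot contain a neighborhood of it. Lemma~\ref{lem:special_embedding_into_CP2} is not relevant here; in the paper it is combined with the present lemma only \emph{afterwards}, when embedding the larger ellipsoid $E(ra,rb)$ into $\BC P^2$ so that its core disk lands on a line.

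The paper's argument is considerably more elementary and avoids this issue entirely. One keeps $E(a,b)\subset E(ra,rb)$ in standard position and instead moves the line: choose a Hamiltonian diffeomorphism $\psi$ compactly supported in the interior of $E(ra,rb)$ which pushes $\{0\}\times\BC$ off of $E(a,b)$ while keeping it inside the coisotropic half-space $H=\BR_{\geq 0}\times\BC$, as the graph $\{(f(z),z)\}$ of a compactly supported nonnegative function $f$. Now both $E(a,b)$ and the displaced line $\psi(\{0\}\times\BC)$ sit in $H$, where the characteristic foliation is simply by rays $\BR_{\geq 0}\times\{*\}$; a ribbon $R(c)$ connecting $\partial E(a,b)$ to $\psi(\{0\}\times\BC)$ is then trivial to build explicitly inside $H$. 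Applying $\psi^{-1}$ to this configuration gives the desired embedding of $E(a,b;c,1)$, with the ellipsoid automatically disjoint from $\{0\}\times\BC$ and the free ribbon end landing on it.
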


\begin{proof}
Consider the $3$-dimensional half space $H\coloneqq \BR_{\geq 0}\times \BC$. One can find an explicit Hamiltonian diffeomorphism $\psi$ compactly supported in the interior of $E(ra,rb)$ such that the image of $\left\{ 0 \right\}\times \BC$ is disjoint from $E(a,b)$, contained in $H$, and of the form
\begin{equation*}
\psi(\left\{ 0 \right\}\times \BC) = \left\{ (f(z),z) \mid z\in B^2(rb)\right\}
\end{equation*}
for some compactly supported non-negative function $f:B^2(rb)\rightarrow \BR_{\geq 0}$. We can connect the boundary of $E(a,b)$ to the image $\psi(\left\{ 0 \right\}\times \BC)$ via a ribbon $R(c)$ contained in the half space $H$. Finally, apply $\psi^{-1}$ to obtain the desired embedding of the ribbon complex $E(a,b;c,1)$.
\end{proof}

\begin{proof}[Proof of Proposition \ref{prop:special_embedding_into_ball} in the case $n\leq 1$]
After scaling, we may assume that $B = B^4(1)$ has symplectic width equal to $1$.

The case $n=0$ follows immediately from the well-known statement that a sufficiently skinny ellipsoid symplectically embeds into a ball if and only if the volume constraint is satisfied, see Lemma \ref{lem:skinny_ellipsoid_embeddings}.

The case $n=1$ of Proposition \ref{prop:special_embedding_into_ball} follows from Lemmas \ref{lem:special_embedding_into_CP2} and \ref{lem:connecting_ribbon_to_core_plane}. Indeed, let $b>0$ be small and consider $0<c<b$ and $0<a$ such that $\operatorname{vol}(E(a,b)) < \operatorname{vol}(B)$. If $b$ is sufficiently small, then $E(a,b)$ symplectically embeds into $\BC P^2 = \BC P^2(1)$ by the case $n=0$. We can pick $r>1$ sufficiently close to $1$ such that $E(ra,rb)$ still embeds into $\BC P^2$. Fix a complex line $S\subset \BC P^2$. Combining Lemmas \ref{lem:special_embedding_into_CP2} and \ref{lem:connecting_ribbon_to_core_plane}, we see that there exists a symplectic embedding $E(a,b;c,1) \overset{s}{\hookrightarrow} \BC P^2$ which maps the free ribbon end into $S$ and is otherwise disjoint from $S$. Consider a map $B\rightarrow \BC P^2$ which maps the interior of $B$ symplectomorphically onto the complement of $S$ and whose restriction to the boundary of $B$ is simply the Hopf map $\rho: \partial B\rightarrow S$. Via this map, we can pull back the symplectic embedding $E(a,b;c,1) \overset{s}{\hookrightarrow} \BC P^2$ to a symplectic embedding $\varphi:E(a,b;c,1)\overset{s}{\hookrightarrow} B$. Clearly, $\varphi$ embeds the free ribbon end of $E(a,b;c,1)$ into $\partial B$ and the composition with the Hopf map $\rho$ is still an embedding. Since any other free embedding of $B^2(c)$ into $\partial B$ is related to this embedding via a symplectomorphism of $B$ by Lemma \ref{lem:free_attaching_maps_ball}, this concludes the proof of Proposition \ref{prop:special_embedding_into_ball} in the case $n=1$.
\end{proof}

\begin{lemma}
\label{lem:special_embedding_into_ball_dependent_attaching_maps}
Let $B\subset \BC^2$ be a ball and let $n$ be a non-negative integer. Then for all sufficiently small $b>0$ the following is true: Consider $0<c<b$ and $0<a$ such that $\operatorname{vol}(E(a,b))<\operatorname{vol}(B)$. Let $\psi:B^2(c)\hookrightarrow \partial B$ be a free embedding. Let $D_1, \dots, D_{n-1} \subset \partial B$ be embedded disks such that, for each $j$, the restriction of the Hopf map $\rho$ to $D_j$ is an embedding with the same image as $\rho\circ \psi$. Assume that the disks $\operatorname{im}(\psi), D_1,\dots, D_{n-1}$ are pairwise disjoint. Then there exists a symplectic embedding $\varphi: E(a,b;c,n)\overset{s}{\hookrightarrow} B$ such that the restriction of $\varphi$ to one free ribbon end of $E(a,b;c,n)$ is given by $\psi$ and each $D_j$ for $1\leq j\leq n-1$ is the image of a free ribbon end of $E(a,b;c,n)$.
\end{lemma}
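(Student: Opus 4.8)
The plan is to reduce the assertion to the already established case $n\le 1$ of Proposition \ref{prop:special_embedding_into_ball} by an explicit construction: I place an embedded copy of $E(a,b;c,1)$ whose single free end lands on a normal form of $\psi$, and then grow $n-1$ further ribbons that run parallel to the first one and are re-routed, inside a collar of $\partial B$, so as to terminate on $D_1,\dots,D_{n-1}$.

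\emph{Step 1 (normal form).} The disks $\psi(B^2(c)),D_1,\dots,D_{n-1}$ are all graphical over $\Delta:=\rho(\operatorname{im}\psi)$, meaning that $\rho$ restricts to a diffeomorphism of each of them onto $\overline\Delta$. Trivializing the Hopf bundle over the disk $\overline\Delta$, each of these disks is the graph of a map $\overline\Delta\to S^1$, and these graphs are pairwise disjoint; since $\overline\Delta$ is simply connected I can lift the maps to $\overline\Delta\to\BR$ and linearly straighten them, through pairwise disjoint graphs, to $n$ horizontal sections at distinct constant heights $h_0,\dots,h_{n-1}$. By a parametric Moser argument (using that each fiber of $\rho$ over $\overline\Delta$ carries a symplectic disk with a standard neighbourhood) this isotopy is induced by a symplectomorphism $\chi$ of $\overline B$. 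After applying $\chi$ I may assume $\psi(B^2(c))=\overline\Delta\times\{h_0\}$ and $D_i=\overline\Delta\times\{h_i\}$, all lying in a fixed collar $N\cong\overline\Delta\times\Pi$ of the solid torus $\rho^{-1}(\overline\Delta)$, where $\Pi$ is a half-open annulus with $\partial B\cap N=\overline\Delta\times\partial\Pi$ and $h_0,\dots,h_{n-1}\in\partial\Pi$ pairwise distinct; such a product collar on which the symplectic form splits exists by the standard neighbourhood theorem for coisotropic submanifolds, cf.\ Remark \ref{rem:neighbourhood_theorem_for_ellipsoid_ribbon_complexes}. The precise parametrization of the first free end is temporarily lost, but it will be restored at the end, exactly as in the proof of Lemma \ref{lem:free_attaching_maps_ball}, by a symplectomorphism of $B$ supported in a thin collar of $\overline\Delta\times\{h_0\}$ (disjoint from the other disks) realizing the required area-preserving reparametrization of $B^2(c)$.

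\emph{Step 2 (construction).} Taking $b$ small enough, apply the case $n=1$ of Proposition \ref{prop:special_embedding_into_ball} to obtain a symplectic embedding $\varphi_1:E(a,b;c,1)\overset{s}{\hookrightarrow}B$ whose free end is the horizontal parametrization of $\overline\Delta\times\{h_0\}$ and whose image meets $\partial B$ only there, transversally. Let $R_0$ be the image ribbon, attached to $\partial E(a,b)$ at an angle $\theta_0$; pick angles $\theta_1,\dots,\theta_{n-1}$ close to $\theta_0$. Using that a neighbourhood of the attaching disk in $\overline{E(a,b)}$ is standard, attach ribbons of width $c$ to $\partial E(a,b)$ at the angles $\theta_i$ and route their initial portions to run parallel to $R_0$ inside a tubular neighbourhood $R_0\times(-\delta,\delta)$ of $R_0$ in $B$, at pairwise distinct heights, so that $R_0$ and the new ribbons are pairwise disjoint. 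Inside $N\cong\overline\Delta\times\Pi$ the new ribbons then enter as disks $\overline\Delta\times\{q_i\}$ with $q_i$ near $h_0$, and I complete the $i$-th one to the ribbon $\overline\Delta\times\eta_i$, where $\eta_i\subset\Pi$ is an embedded curve from $q_i$ to $h_i$ meeting $\partial\Pi$ transversally only at its endpoint; the curves $\eta_i$ together with the tail of $R_0$ are chosen pairwise disjoint, which is possible by taking them nested and letting them wind around the annulus $\Pi$ as needed. Any such ribbon automatically pulls the symplectic form back to the ribbon $2$-form, since the pullback of an area form of $\Pi$ along a curve vanishes. The resulting subset of $B$ is a symplectic embedding of the ribbon complex built from one copy of $E(a,b)$ with $n$ ribbons attached at the distinct angles $\theta_0,\dots,\theta_{n-1}$; as a single vertex is a tree, this complex is symplectomorphic to $E(a,b;c,n)$ by Example \ref{ex:simple_ribbon_complex}, its image meets $\partial B$ precisely at the free ends and transversally, and its free ends are the horizontal parametrization of $\overline\Delta\times\{h_0\}$ and the disks $\overline\Delta\times\{h_i\}=D_i$. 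Composing with $\chi^{-1}$ and with the parametrization correction of Step 1 gives the embedding asserted in the lemma.

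The main obstacle is the geometry near the Hopf solid torus $\rho^{-1}(\overline\Delta)$: one must simultaneously straighten the given disks to horizontal position, identify a product collar $N$ on which the symplectic form splits, and thread the $n$ ribbons through $N$ disjointly while matching them to the parallel-to-$R_0$ segments coming from the ellipsoid. Some additional bookkeeping is needed to verify that attaching the new ribbons at angles near $\theta_0$ and routing them as above produces a genuinely embedded ribbon complex — disjointness of the ribbons and transversality at the domain boundaries — and that its combinatorial type is that of $E(a,b;c,n)$ rather than some other tree.
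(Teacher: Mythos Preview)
Your argument is correct and shares the paper's key idea: start from the $n=1$ case of Proposition~\ref{prop:special_embedding_into_ball} and produce the remaining ribbons as parallel copies of the single ribbon $R_0$. The difference is mainly one of packaging. The paper thickens the ribbon directly to a $4$-dimensional symplectic rectangle $R_\varepsilon = [0,\varepsilon]\times[0,1]\times B^2(c)$ with form $ds\wedge dt+\omega$, embeds it symplectically so that $\{0\}\times R$ is the original ribbon and the end strip $[0,\varepsilon]\times\{1\}\times B^2(c)$ lands in $\partial B$, and then takes the slices $\{s\}\times R$ as the new ribbons; a single symplectomorphism of $B$ (using that all disks project to the same Hopf image) moves the $D_j$ onto the slice endpoints. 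This bypasses both your Step~1 normal form and the re-routing through the product collar $N$: the thickened ribbon already delivers $n$ disjoint ribbons with endpoints on $\partial B$, and the matching with the $D_j$ is absorbed into one ambient symplectomorphism rather than built by hand via curves $\eta_i$ in $\Pi$. Your approach is more explicit (no unspecified symplectomorphism at the end), at the cost of having to justify the splitting of $\omega$ on $N$ and the disjoint routing of the $\eta_i$; the paper's approach is shorter but leaves the existence of the matching symplectomorphism as a one-line appeal to the common Hopf projection.
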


\begin{proof}
By the special case $n=1$ of Proposition \ref{prop:special_embedding_into_ball} we just proved, we can find a symplectic embedding $\varphi: E(a,b;c,1)\overset{s}{\hookrightarrow} B$ such that the restriction of $\varphi$ to the free ribbon end of $E(a,b;c,1)$ is given by $\psi$. Let $R = R(c)$ denote the ribbon of $E(a,b;c,1)$. For $\varepsilon>0$, we define the thickened ribbon 
\begin{equation*}
R_\varepsilon \coloneqq [0,\varepsilon] \times R = [0,\varepsilon]\times [0,1] \times B^2(c)
\end{equation*}
and equip it with the symplectic form $ds\wedge dt + \omega$, where $(s,t)$ are the coordinates on $[0,\varepsilon]\times [0,1]$ and $\omega$ is the standard area form on $B^2(c)$. If $\varepsilon>0$ is sufficiently small, we can find a symplectic embedding $\alpha : R_\varepsilon \overset{s}{\hookrightarrow} B$ such that the restriction of $\alpha$ to $\left\{ 0 \right\}\times R \cong R$ agrees with the restriction of $\varphi$ to $R$ and $\alpha$ maps the two boundary strata $[0,\varepsilon]\times \left\{ j \right\} \times B^2(c)$ for $j\in \left\{ 0,1 \right\}$ into $\partial \varphi(E(a,b))$ and $\partial B$, respectively. By the assumption that $\rho(D_j) = \operatorname{im}(\rho\circ \psi)$, up to applying a symplectomorphism of $B$ and relabelling the disks $D_j$, we can assume that $D_j = \alpha(\left\{ j/(n-1) \right\}\times \left\{ 1 \right\} \times B^2(c))$ for $1 \leq j \leq n-1$. For each $j$, let us attach the ribbon $\alpha(\left\{ j/(n-1) \right\}\times R)$ to $\varphi(E(a,b))$. As observed in Example \ref{ex:simple_ribbon_complex}, the resulting ribbon complex is symplectomorphic to $E(a,b;c,n)$. It is the image of a symplectic embedding $E(a,b;c,n)\overset{s}{\hookrightarrow} B$ satisfying all the desired properties at the free ribbon ends.
\end{proof}

Consider the triangle $T\subset \BR^2$ with vertices $(0,0)$, $(1,0)$ and $(0,1)$ and the square $Q\subset \BR^2$ with vertices $(0,0)$, $(1,0)$, $(1,1)$ and $(0,1)$. It is well-known that the interior of the Lagrangian product $T\times_L Q$ is symplectomorphic to the interior of the ball $B^4(1)$ (see \cite[Proposition 5.2]{tra95}). Even though $T\times_L Q$ is not smooth everywhere, one can still make sense of a generalized Reeb flow on its boundary. The dynamics of this generalized Reeb flow is equivalent to the Minkowski billard dynamics of $T$ and $Q$ (see \cite{gt02} and \cite[\S 2.4]{ao14}). In the following, we give a detailed description of the generalized Reeb flow. For this purpose, let us abbreviate the edges of $T$ by $t_l$, $t_b$ and $t_d$, as indicated in Figure \ref{fig:triangle_and_square}. Moreover, let $q_l$, $q_b$, $q_r$ and $q_t$ denote the edges of $Q$.
\begin{figure}[htpb]
\centering
\def\svgwidth{0.6\textwidth}
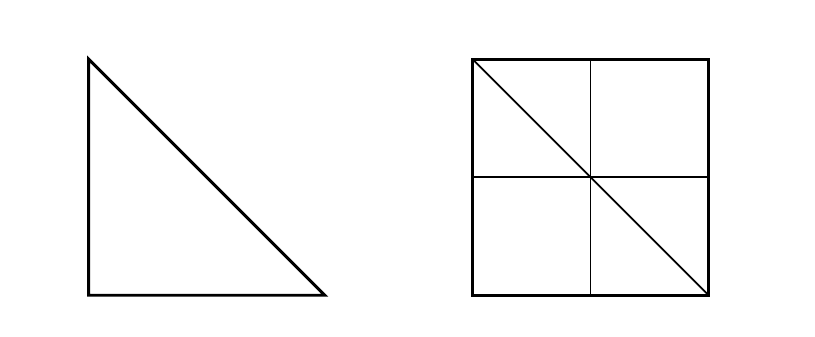
\caption{The triangle $T$ and the square $Q$.}
\label{fig:triangle_and_square}
\end{figure}

The boundary of $T\times_L Q$ has seven $3$-dimensional faces, three of the form $t_*\times Q$ and four of the form $T\times q_*$. On each $3$-dimensional face, the characteristic foliation is very simple: It is parallel to $J_0\nu$, where $\nu$ is the unit outer normal vector of the face. There are twelve $2$-dimensional faces $t_*\times q_*$. Four of these $2$-dimensional faces, $t_l\times q_l$, $t_l\times q_r$, $t_b\times q_b$ and $t_b\times q_t$, are Lagrangian, i.e. the restriction of the symplectic form vanishes on them. The remaining eight $2$-dimensional faces are symplectic. At each of the symplectic $2$-dimensional faces, the Reeb flow simply transitions between the two $3$-dimensional faces meeting there. At the Lagrangian $2$-dimensional faces, the generalized Reeb flow is discontinuous. The Reeb flow admits a very simple description, see Figure \ref{fig:generalized_reeb_dynamics}.
\begin{figure}[htpb]
\centering
\def\svgwidth{1.0\textwidth}
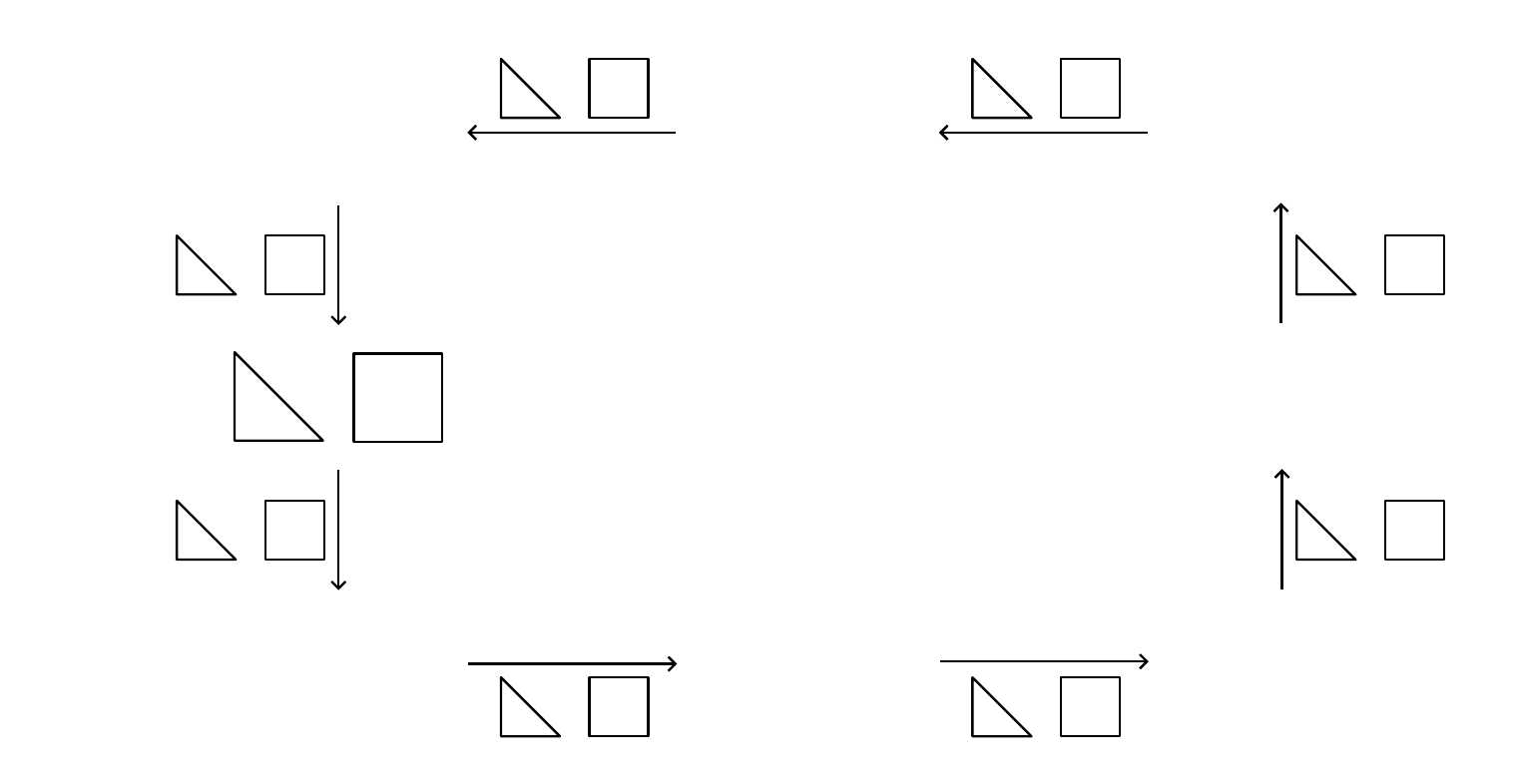
\caption{Reeb dynamics on the boundary of $T\times_L Q$.}
\label{fig:generalized_reeb_dynamics}
\end{figure}
Let us for example start at the symplectic $2$-face $t_l\times q_l$. At this $2$-face, the Reeb flow enters the $3$-face $t_l\times Q$. It traverses $t_l\times Q$ horizontally from left to right, until it meets the $2$-face $t_l\times q_r$, where it leaves $t_l\times Q$ and enters $T\times q_r$. Clearly, the transition map from $t_l\times q_l$ to $t_l\times q_r$ is affine linear. As indicated in Figure \ref{fig:generalized_reeb_dynamics}, the Reeb flow passes through all the symplectic $2$-faces and then ends up back at $t_l\times q_l$. The return map is simply given by the identity.

Let $0<c<1$. Let $D\subset t_l\times q_l$ be a smooth disk of area $c$ obtained by first taking a square inside $t_l\times q_l$ of area $c$ and with the same center as $t_l\times q_l$ and then slightly smoothing its corners. Using the Reeb flow, we produce copies of $D$ contained in the interiors of $t_*\times Q$ as follows. Let $v$ and $h$ denote the vertical and horizontal line segments dividing the square $Q$ into two rectangles of equal areas. Moreover, let $d$ denote the diagonal of $Q$ connecting the vertices $(0,1)$ and $(1,0)$. If we start at $t_l\times q_l$, the Reeb flow meets each of the cross sections $t_l\times v$ and $t_b\times h$ exactly once before it returns to $t_l\times q_l$. Similarly, it meets the cross section $t_d\times d$ exactly twice, each time at a different half of $t_d\times d$. We define $\tilde{D}$ to be the intersection of all Reeb trajectories starting at $D$ with the union of the cross sections $t_l\times v$, $t_b\times h$ and $t_d\times d$. The set $\tilde{D}$ consists of four copies of $D$.

\begin{lemma}
\label{lem:embedding_into_lagrangian_product}
Let $c<a<1$. Then there exists a symplectic embedding $B^4(a;c,4) \overset{s}{\hookrightarrow} T\times_L Q$ which maps the four free ribbon ends of $B^4(a;c,4)$ exactly to the four components of $\tilde{D}$.
\end{lemma}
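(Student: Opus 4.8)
The plan is to build the embedding of $B^4(a;c,4)$ into $T\times_L Q$ by first embedding the ball $B^4(a)$ near the \enquote{core} of the Lagrangian product, and then running four ribbons out along the generalized Reeb trajectories to the cross sections $t_l\times v$, $t_b\times h$, and the two halves of $t_d\times d$. Recall from the discussion preceding the lemma that the interior of $T\times_L Q$ is symplectomorphic to the interior of $B^4(1)$, via an explicit symplectomorphism (the one coming from \cite[Proposition 5.2]{tra95}). First I would transport the problem through this symplectomorphism: it suffices to find a symplectic embedding of $B^4(a;c,4)$ into the interior of $B^4(1)$ whose four free ribbon ends land on a specified free $4$-tuple of disks in the boundary. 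The point is that the four components of $\tilde D$, being transverse cross sections of four distinct Reeb arcs emanating from the single disk $D\subset t_l\times q_l$, are carried by the symplectomorphism to four embedded disks in $\partial B^4(1)$ which are disjoint and on which the Hopf map $\rho$ restricts to embeddings. In fact all four project under $\rho$ to the \emph{same} image circle-bounded region, namely $\rho$ of the image of $D$, because they lie on a single Reeb flow line family and the Reeb flow on $\partial B^4(1)$ is (up to reparametrization) the Hopf flow; so they are exactly of the form appearing as the disks $\operatorname{im}(\psi), D_1,\dots,D_{n-1}$ in Lemma \ref{lem:special_embedding_into_ball_dependent_attaching_maps} with $n=4$.

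With that reduction in hand, the key step is to apply Lemma \ref{lem:special_embedding_into_ball_dependent_attaching_maps} with $B = B^4(1)$, $n=4$, and widths $a$ and $c$: since $c<a<1$ we have $\operatorname{vol}(E(a,b))<\operatorname{vol}(B^4(1))$ for the relevant small $b$ (and here our target complex is $B^4(a;c,4) = E(a,a;c,4)$, so we may take $b$ arbitrarily close to $a$ — strictly speaking one first invokes ellipsoid embedding stability / the $n=0$ case to replace the ball factor $B^4(a)$ by a sufficiently skinny $E(a,b;c,4)$ with the same free-end data, or one simply notes that the cited proposition and lemma are proved for $B^4(a)$ as target by rescaling). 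Choosing the free embedding $\psi\colon B^2(c)\hookrightarrow\partial B^4(1)$ and the disks $D_1,D_2,D_3$ to be the (images under the Traynor symplectomorphism of the) four components of $\tilde D$, Lemma \ref{lem:special_embedding_into_ball_dependent_attaching_maps} produces a symplectic embedding $\varphi\colon B^4(a;c,4)\overset{s}{\hookrightarrow} B^4(1)$ whose four free ribbon ends are exactly these four disks. Pulling $\varphi$ back through the symplectomorphism $\operatorname{int}(B^4(1))\to\operatorname{int}(T\times_L Q)$ gives the desired embedding $B^4(a;c,4)\overset{s}{\hookrightarrow} T\times_L Q$ with the four free ribbon ends landing on the four components of $\tilde D$.

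The main obstacle I anticipate is the bookkeeping at the boundary: one must check that the four cross-sectional copies of $D$ in $\tilde D$ really do satisfy the hypotheses of Lemma \ref{lem:special_embedding_into_ball_dependent_attaching_maps} after passing to $\partial B^4(1)$ — i.e. that they are pairwise disjoint, that $\rho$ restricts to an embedding on each with a \emph{common} image, and that the attaching is done transversely and compatibly with the preferred identifications of free ribbon ends with $B^2(c)$. Disjointness is clear from the Reeb dynamics (the four cross sections $t_l\times v$, $t_b\times h$, and the two halves of $t_d\times d$ are disjoint subsets of $\partial(T\times_L Q)$, and $D$ has area $c<1$ so its four forward images stay small). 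The common-image condition is the heart of the matter: it holds because all four copies are obtained from the single disk $D$ by flowing along the generalized Reeb flow, and the symplectomorphism to $B^4(1)$ intertwines (up to reparametrization) the generalized Reeb flow with the Hopf flow, so the four disks all project to $\rho(\psi(B^2(c)))$. The transversality of the attaching is automatic from the construction of $\tilde D$ as a union of \emph{transverse} cross sections of the flow, and near the non-smooth locus of $T\times_L Q$ one simply keeps $D$ (and hence all of $\tilde D$) away from it, which is possible since $D$ was taken with the same center as $t_l\times q_l$ and $c<1$. The remaining verifications are routine.
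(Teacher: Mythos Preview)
Your approach has two genuine gaps.

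First, the reduction to $B^4(1)$ does not work as stated. The interior symplectomorphism $\operatorname{int}(T\times_L Q)\to\operatorname{int}(B^4(1))$ (for instance the toric map $(\mu_1,\mu_2,\theta_1,\theta_2)\mapsto(\sqrt{\mu_1/\pi}\,e^{2\pi i\theta_1},\sqrt{\mu_2/\pi}\,e^{2\pi i\theta_2})$ used later in the paper) does \emph{not} carry all four components of $\tilde D$ to disks in $\partial B^4(1)$. The two components lying in $t_d\times d$ do land on $\partial B^4(1)$, but the components in $t_l\times v$ and $t_b\times h$ are sent into the coordinate disks $\{z_1=0\}$ and $\{z_2=0\}$ in the \emph{interior} of $B^4(1)$, because the faces $t_l\times Q$ and $t_b\times Q$ are collapsed by the toric map. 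More generally, no symplectomorphism of the interiors extends to a bijection of the boundaries, since $\partial(T\times_L Q)$ and $\partial B^4(1)=S^3$ have different stratified types. So you cannot transport the four target disks to a free $4$-tuple on $\partial B^4(1)$ with common Hopf image.

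Second, even granting that reduction, Lemma~\ref{lem:special_embedding_into_ball_dependent_attaching_maps} only produces embeddings of $E(a',b;c,n)$ for \emph{sufficiently small} $b$, not of $B^4(a;c,4)=E(a,a;c,4)$. Your parenthetical fixes do not work: you cannot take $b$ close to $a$, because the smallness of $b$ is essential (it comes from the $n=1$ case of Proposition~\ref{prop:special_embedding_into_ball}, which needs the ellipsoid skinny enough to embed into $\BC P^2$); ellipsoid embedding stability goes the wrong direction (it fills $B^4(a)$ by a skinny ellipsoid, not the reverse); and there is no rescaling that promotes a skinny ellipsoid with ribbons to a ball with ribbons. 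The whole point of Lemma~\ref{lem:embedding_into_lagrangian_product}, and the reason it is needed as a separate ingredient for the $n\ge 2$ case of Proposition~\ref{prop:special_embedding_into_ball}, is precisely that it embeds a \emph{ball} with ribbons by an explicit construction: Schlenk's admissible-loop embedding of $B^4(a)$ approximating a translate of $\sqrt a\,T\times_L\sqrt a\,Q$ inside $T\times_L Q$, with the family of loops arranged so that Hopf circles on $\partial B^4(a)$ approximate the generalized Reeb orbits, after which the four ribbons are inserted by hand in the complement, connecting the analogous disk set $\tilde E\subset\partial(\sqrt a\,T\times_L\sqrt a\,Q)$ to $\tilde D$.
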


\begin{proof}
It is well known that $B^4(a)$ admits a symplectic embedding into the interior of $T\times_L Q$, see \cite{tra95}. What we need to show is that there exists an embedding whose restriction to the boundary of $B^4(a)$ is sufficiently tame such that we can insert four ribbons of width $c$ starting at the boundary of $B^4(a)$ and ending at the four components of $\tilde{D}$. In order to see that this is possible, let us recall the following symplectic embedding construction from \cite[\S 2]{sch03} and \cite[\S 4.1]{sch05}. Let $U\subset \BR^2$ be the open ball of area $1$ centered at $0$ and let $V\subset \BR^2$ be an open axis parallel square or area $1$. A family $\ML$ of loops in $V$ is called \textit{admissible} if there exists a diffeomorphism $\beta : U\setminus \left\{ 0 \right\} \rightarrow V\setminus \left\{ p \right\}$ for some point $p\in V$ such that circles in $U$ centered at $0$ are mapped to members of $\ML$ and the restriction of $\beta$ to a neighbourhood of $0$ is a translation. Schlenk proves that given an admissible family $\ML$, there exists a symplectomorphism $\varphi:U \rightarrow V$ mapping circles centered at $0$ to members of $\ML$, see \cite[Lemma 2.5]{sch03} or \cite[Lemma 4.2]{sch05}. One can arrange the family $\ML$ such that the image of the restriction of the symplectomorphism $\varphi\times \varphi : U\times U \rightarrow V\times V$ to $B^4(a)$ is an arbitrarily close approximation of $aT\times_L Q$, see Figure \ref{fig:admissible_curve_family}. Here $aT$ is the scaling of $T$ by a factor $a$.
\begin{figure}[htpb]
\centering
\def\svgwidth{0.3\textwidth}
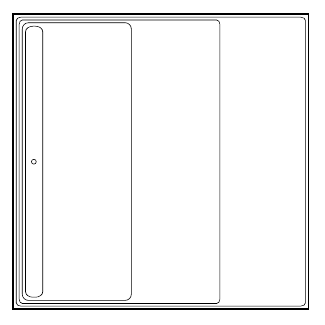
\caption{A special admissible family of loops $\ML$.}
\label{fig:admissible_curve_family}
\end{figure}

For a careful choice of the family of loops $\ML$, the generalized Reeb orbits on the boundary of $aT\times_L Q$ away from the Lagrangian $2$-faces are approximated arbitrarily well by images of Hopf circles on the boundary of $B^4(a)$. Note that $aT\times_L Q$ is symplectomorphic to $\sqrt{a}T \times_L \sqrt{a}Q$. Let us translate $\sqrt{a}T \times_L \sqrt{a}Q$ such that it is contained in the interior of $T\times_L Q$. Then the above embedding construction yields an embedding of $B^4(a)$ into the interior of $T\times_L Q$ which approximates $\sqrt{a}T\times_L \sqrt{a}Q$ arbitrarily well and has the property that generalized Reeb orbits on $\sqrt{a}T\times_L \sqrt{a}Q$ away from the Lagrangian $2$-strata are approximated by images of Hopf circles.

Inside the boundary of $\sqrt{a}T\times_L\sqrt{a}Q$ we have a collection $\tilde{E}$ of four disks of area $c$ each which is constructed in the same way as $\tilde{D}$. The disks $\tilde{E}$ can be constructed in such a way that it is possible to insert four copies of the ribbon $R(c)$ in the complement of $\sqrt{a}T\times_L \sqrt{a}Q$ connecting the components of $\tilde{E}$ to the corresponding components of $\tilde{D}$. Since the image of $B^4(a)$ under our symplectic embedding approximates $\sqrt{a}T\times_L\sqrt{a}Q$ arbitrarily well and generalized Reeb orbits on the boundary of $\sqrt{a}T\times_L\sqrt{a}Q$ which intersect $\tilde{E}$ are approximated by images of Hopf circles, we can modify the four ribbons in such a way that they connect the components of $\tilde{D}$ to the boundary of the image of $B^4(a)$ and give rise to the desired embedding of $B(a;c,4)$.
\end{proof}

\begin{proof}[Proof of Proposition \ref{prop:special_embedding_into_ball} in the case $n\geq 2$]
We assume that $B=B^4(1)$. For simplicity, let us first treat the case $n=2$. The ball $B$ admits a full packing by four copies of $B^4(1/2)$. Such a packing can be seen explicitly as follows. As mentioned above, the interior of $B$ is symplectomorphic to the interior of the Lagrangian product $T\times_L Q$. We divide $T$ into four triangles $T_0, T_1, T_2, T_3$ as indicated in Figure \ref{fig:four_packing_moment_image}. This induces a decomposition of $T\times_L Q$ into four pieces $T_j\times_L Q$, the interior of each of which is symplectomorphic to the interior of $B^4(1/2)$.
\begin{figure}[htpb]
\centering
\def\svgwidth{0.3\textwidth}
%% Creator: Inkscape 1.4.2 (ebf0e940, 2025-05-08), www.inkscape.org
%% PDF/EPS/PS + LaTeX output extension by Johan Engelen, 2010
%% Accompanies image file '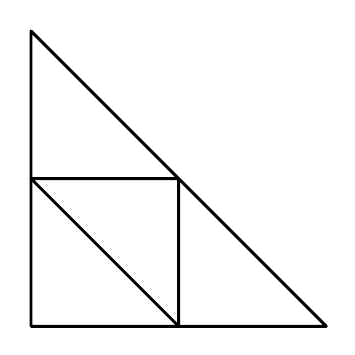' (pdf, eps, ps)
%%
%% To include the image in your LaTeX document, write
%%   \input{<filename>.pdf_tex}
%%  instead of
%%   \includegraphics{<filename>.pdf}
%% To scale the image, write
%%   \def\svgwidth{<desired width>}
%%   \input{<filename>.pdf_tex}
%%  instead of
%%   \includegraphics[width=<desired width>]{<filename>.pdf}
%%
%% Images with a different path to the parent latex file can
%% be accessed with the `import' package (which may need to be
%% installed) using
%%   \usepackage{import}
%% in the preamble, and then including the image with
%%   \import{<path to file>}{<filename>.pdf_tex}
%% Alternatively, one can specify
%%   \graphicspath{{<path to file>/}}
%% 
%% For more information, please see info/svg-inkscape on CTAN:
%%   http://tug.ctan.org/tex-archive/info/svg-inkscape
%%
\begingroup%
  \makeatletter%
  \providecommand\color[2][]{%
    \errmessage{(Inkscape) Color is used for the text in Inkscape, but the package 'color.sty' is not loaded}%
    \renewcommand\color[2][]{}%
  }%
  \providecommand\transparent[1]{%
    \errmessage{(Inkscape) Transparency is used (non-zero) for the text in Inkscape, but the package 'transparent.sty' is not loaded}%
    \renewcommand\transparent[1]{}%
  }%
  \providecommand\rotatebox[2]{#2}%
  \newcommand*\fsize{\dimexpr\f@size pt\relax}%
  \newcommand*\lineheight[1]{\fontsize{\fsize}{#1\fsize}\selectfont}%
  \ifx\svgwidth\undefined%
    \setlength{\unitlength}{170.07874016bp}%
    \ifx\svgscale\undefined%
      \relax%
    \else%
      \setlength{\unitlength}{\unitlength * \real{\svgscale}}%
    \fi%
  \else%
    \setlength{\unitlength}{\svgwidth}%
  \fi%
  \global\let\svgwidth\undefined%
  \global\let\svgscale\undefined%
  \makeatother%
  \begin{picture}(1,1)%
    \lineheight{1}%
    \setlength\tabcolsep{0pt}%
    \put(0,0){\includegraphics[width=\unitlength,page=1]{four_packing_moment_image.pdf}}%
    \put(0.37901738,0.3568813){\color[rgb]{0,0,0}\makebox(0,0)[t]{\lineheight{1.25}\smash{\begin{tabular}[t]{c}$T_0$\end{tabular}}}}%
    \put(0.62901738,0.19021457){\color[rgb]{0,0,0}\makebox(0,0)[lt]{\lineheight{1.25}\smash{\begin{tabular}[t]{l}$T_1$\end{tabular}}}}%
    \put(0.21235072,0.19021457){\color[rgb]{0,0,0}\makebox(0,0)[lt]{\lineheight{1.25}\smash{\begin{tabular}[t]{l}$T_3$\end{tabular}}}}%
    \put(0.21235072,0.60688124){\color[rgb]{0,0,0}\makebox(0,0)[lt]{\lineheight{1.25}\smash{\begin{tabular}[t]{l}$T_2$\end{tabular}}}}%
    \put(0,0){\includegraphics[width=\unitlength,page=2]{four_packing_moment_image.pdf}}%
  \end{picture}%
\endgroup%

\caption{Division of $T$ into four triangles $T_j$.}
\label{fig:four_packing_moment_image}
\end{figure}

Recall that we introduced, for every $0<\gamma<1$, a subset $\tilde{D}$ of the boundary of $T\times_L Q$ consisting of four disks of symplectic area $\gamma$ each. The same construction yields, for every $j$ and every $0<\gamma<1/2$, a subset $\tilde{D}_j$ of the boundary of $T_j\times_L Q$ consisting of four disks of area $\gamma$ each. These disks can be arranged such that, for $1\leq j \leq 3$, we have
\begin{equation*}
\tilde{D}_0 \cap ((T_0\cap T_j)\times_L Q) = \tilde{D}_j \cap ((T_0\cap T_j)\times_L Q),
\end{equation*}
i.e. the components of $\tilde{D}_0$ and $\tilde{D}_j$ match at the interface between $T_0\times_L Q$ and $T_j\times_L Q$.

Let $b>0$ be a small positive number. If necessary, we will further shrink $b$ over the course of the proof. Let $0<c<b$ be arbitrary and let $a>0$ be such that $\operatorname{vol}(E(a,b)) < \operatorname{vol}(B)$. Our goal is to construct a symplectic embedding $E(a,b;c,2) \overset{s}{\hookrightarrow} B$ mapping the two free ends into the boundary $\partial B$ such that the resulting collection of two disk embeddings is free.

If $b$ is sufficiently small, we may pick positive numbers $0<\gamma<\alpha$ such that $\operatorname{vol}(E(a,b)) < 4 \operatorname{vol}(B^4(\alpha)) < 1$ and $\gamma > b$. For $0\leq j \leq 3$, let $\tilde{D}_j$ be the subset of the boundary of $T_j\times_L Q$ mentioned above consisting of four disks of area $\gamma$ each. By Lemma \ref{lem:embedding_into_lagrangian_product}, we may embed, for each $0\leq j \leq 3$, a copy $L_j$ of $B^4(\alpha;\gamma,4)$ into $T_j\times_L Q$ such that the four free ends of $L_j$ are mapped to the four disk components of $\tilde{D}_j$. Note that $L_0$ and $L_j$ have one matching pair of free ends if $j\in \left\{ 1,2 \right\}$ and two matching pairs of free ends if $j=3$. We build a ribbon complex $L$ embedded in $T\times_L Q$ by taking the union $\bigcup_j L_j$ and gluing all matching pairs of free ends.

By Lemma \ref{lem:special_embedding_into_ball_dependent_attaching_maps}, after possibly shrinking $b$, we can find $f>b$ and a symplectic embedding $E(a/4,f;b,4)\overset{s}{\hookrightarrow} B^4(\alpha)$ which maps the $j$-th free end of $E(a/4,f;b,4)$ into the image of the map $\iota(2\pi j/4): B^2(\gamma) \rightarrow \partial B^4(\alpha)$ for $0\leq j \leq 3$. This implies that we can also embed $E(a/4,f;b,4)$ into $B^4(\alpha;\gamma,4)$ in such a way that the four free ends of $E(a/4,f;b,4)$ are contained in the four free ends of $B^4(\alpha;\gamma,4)$: simply elongate each ribbon of $E(a/4,f;b,4)$ by attaching an appropriate ribbon which is contained in the corresponding ribbon of $B^4(\alpha;\gamma,4)$. Take a copy $K_0$ of $E(a/4,f;b,4)$. By the above discussion, we can embed it into the complex $L$ in such a way that the ellipsoid part of $K_0$ is mapped into the ball component of $L$ contained in $T_0\times_L Q$ and the four free ribbon ends are mapped to the boundaries of the three remaining ball components of $L$. Two free ends of $K_0$ are mapped to the boundry of the ball component of $L$ inside $T_3\times_L Q$ and one free end is mapped to the boundary of the ball component inside $T_j\times_L Q$ for $j\in \left\{ 1,2 \right\}$. We pick one of the two ribbons of $K_0$ whose end is contained in the ball component of $L$ inside $T_3\times_L Q$ and delete it. The resulting ribbon complex is still denoted by the same symbol $K_0$.

By the case $n=1$ of Proposition \ref{prop:special_embedding_into_ball}, we may take a copy $K_3$ of $E(a/4,f;b,1)$ and embed it into the ball component of $L$ contained in $T_3\times_L Q$ in such a way that the free end of $K_3$ is mapped to the boundary of this ball component via exactly the same map as one the the three free ends of $K_0$. Similarly, by Lemma \ref{lem:special_embedding_into_ball_dependent_attaching_maps}, we may take a copy $K_j$ of $E(a/4,f;b,2)$ for $j\in \left\{ 1,2 \right\}$ and embed it into the ball component of $L$ contained in $T_j\times_L Q$ in such a way that one of the two free ends of $K_j$ is mapped to the boundary of the ball component via exactly the same map as one of the free ends of $K_0$ and the image of the other free end of $K_j$ is contained in the attaching region of one of the the ribbons of $L$ connecting to the diagonal part of the boundary of $T_j\times_L Q$. We elongate the latter ribbon of $K_j$ inside the ribbon of $L$ such that its free end is mapped to the boundary of $T_j\times_L Q$.

We form a ribbon complex $K$ by taking the union $\bigcup_j K_j$ and gluing all matching pairs of free ends. See Figure \ref{fig:ribbon_complexes_L_K} for a schematic of $L$ and $K$. The resulting complex $K$ is symplectomorphic to the complex $K(G,A,f,b)$ constructed in Example \ref{ex:complex_from_ribbon_graph} where $(G,A)$ is the labelled ribbon graph displayed in Figure \ref{fig:labelled_graph}.

\begin{figure}[htpb]
\centering
\def\svgwidth{0.4\textwidth}
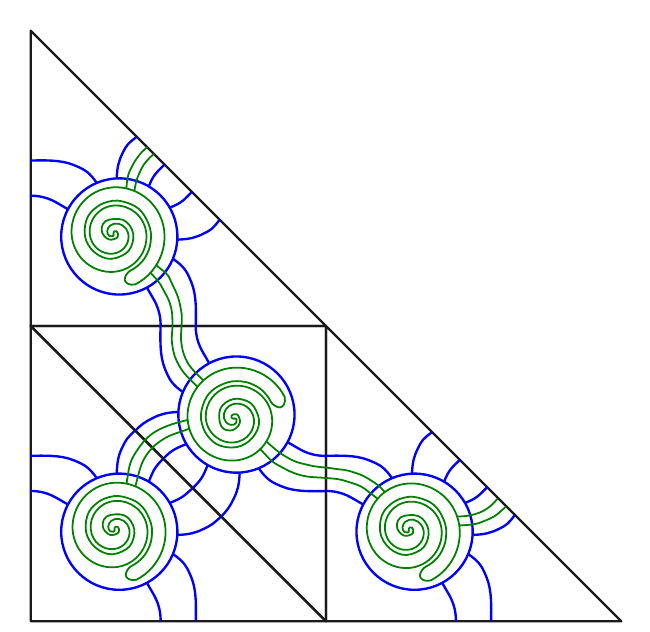
\caption{A schematic of the ribbon complexes $L$ (blue) and $K$ (green) embedded in $T\times_L Q$.}
\label{fig:ribbon_complexes_L_K}
\end{figure}

\begin{figure}[htpb]
\centering
\def\svgwidth{0.2\textwidth}
%% Creator: Inkscape 1.4.2 (ebf0e940, 2025-05-08), www.inkscape.org
%% PDF/EPS/PS + LaTeX output extension by Johan Engelen, 2010
%% Accompanies image file '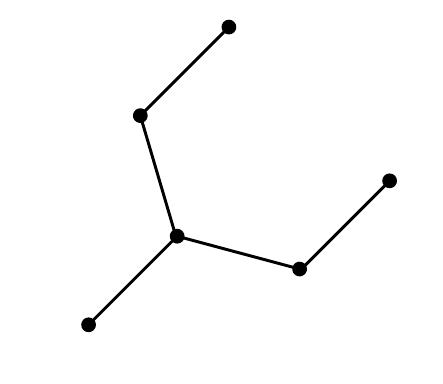' (pdf, eps, ps)
%%
%% To include the image in your LaTeX document, write
%%   \input{<filename>.pdf_tex}
%%  instead of
%%   \includegraphics{<filename>.pdf}
%% To scale the image, write
%%   \def\svgwidth{<desired width>}
%%   \input{<filename>.pdf_tex}
%%  instead of
%%   \includegraphics[width=<desired width>]{<filename>.pdf}
%%
%% Images with a different path to the parent latex file can
%% be accessed with the `import' package (which may need to be
%% installed) using
%%   \usepackage{import}
%% in the preamble, and then including the image with
%%   \import{<path to file>}{<filename>.pdf_tex}
%% Alternatively, one can specify
%%   \graphicspath{{<path to file>/}}
%% 
%% For more information, please see info/svg-inkscape on CTAN:
%%   http://tug.ctan.org/tex-archive/info/svg-inkscape
%%
\begingroup%
  \makeatletter%
  \providecommand\color[2][]{%
    \errmessage{(Inkscape) Color is used for the text in Inkscape, but the package 'color.sty' is not loaded}%
    \renewcommand\color[2][]{}%
  }%
  \providecommand\transparent[1]{%
    \errmessage{(Inkscape) Transparency is used (non-zero) for the text in Inkscape, but the package 'transparent.sty' is not loaded}%
    \renewcommand\transparent[1]{}%
  }%
  \providecommand\rotatebox[2]{#2}%
  \newcommand*\fsize{\dimexpr\f@size pt\relax}%
  \newcommand*\lineheight[1]{\fontsize{\fsize}{#1\fsize}\selectfont}%
  \ifx\svgwidth\undefined%
    \setlength{\unitlength}{212.5984252bp}%
    \ifx\svgscale\undefined%
      \relax%
    \else%
      \setlength{\unitlength}{\unitlength * \real{\svgscale}}%
    \fi%
  \else%
    \setlength{\unitlength}{\svgwidth}%
  \fi%
  \global\let\svgwidth\undefined%
  \global\let\svgscale\undefined%
  \makeatother%
  \begin{picture}(1,0.86666667)%
    \lineheight{1}%
    \setlength\tabcolsep{0pt}%
    \put(0,0){\includegraphics[width=\unitlength,page=1]{labelled_graph.pdf}}%
    \put(0.68458175,0.15702712){\color[rgb]{0,0,0}\makebox(0,0)[t]{\lineheight{1.25}\smash{\begin{tabular}[t]{c}$\frac{a}{4}$\end{tabular}}}}%
    \put(0.20165166,0.03642893){\color[rgb]{0,0,0}\makebox(0,0)[lt]{\lineheight{1.25}\smash{\begin{tabular}[t]{l}$\frac{a}{4}$\end{tabular}}}}%
    \put(0.21712924,0.59099864){\color[rgb]{0,0,0}\makebox(0,0)[lt]{\lineheight{1.25}\smash{\begin{tabular}[t]{l}$\frac{a}{4}$\end{tabular}}}}%
    \put(0.43044104,0.38438877){\color[rgb]{0,0,0}\makebox(0,0)[lt]{\lineheight{1.25}\smash{\begin{tabular}[t]{l}$\frac{a}{4}$\end{tabular}}}}%
  \end{picture}%
\endgroup%

\caption{The labelled ribbon graph $(G,A)$.}
\label{fig:labelled_graph}
\end{figure}

By construction, $K$ is embedded in $T\times_L Q$ and its two free ends are mapped into the diagonal part $t_d\times Q$ of the boundary of $T\times_L Q$. We compose the embedding of $K$ into $T\times_L Q$ with the symplectic embedding
\begin{equation*}
T\times_L Q \rightarrow B \qquad (\mu_1,\mu_2,\theta_1,\theta_2) \mapsto (\sqrt{\frac{\mu_1}{\pi}} e^{2\pi i \theta_1}, \sqrt{\frac{\mu_2}{\pi}} e^{2\pi i \theta_2}).
\end{equation*}
This yields a symplectic embedding of $K$ into $B$. It is straightforward to check that the two free ends of $K$ in $\partial B$ form a collection of two free disks. By Lemma \ref{lem:special_ribbon_complex_neighbourhood_packing}, there exists a symplectic embedding of $E(a,b;c,2)$ into $B$ such that images of the two free ends are contained in the images of the two free ends of $K$. This implies that the two free ends of $E(a,b;c,n)$ form a collection of two free disks in $\partial B$. Using Lemma \ref{lem:free_attaching_maps_ball}, the case $n=2$ of Proposition \ref{prop:special_embedding_into_ball} is an immediate consequence.

The method used to deal with the case $n=2$ can be easily adapted to also treat the case $n>2$. Instead of decomposing $T\times_L Q$ into four balls, we decompose into $n^2$ balls, see Figure \ref{fig:fine_decomp}. We omit the details.
\begin{figure}[htpb]
\centering
\def\svgwidth{0.3\textwidth}
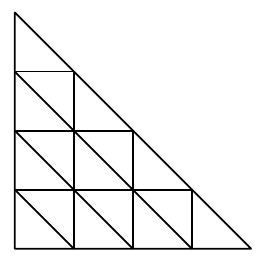
\caption{Decomposition of $B$ into $n^2$ balls for $n=4$.}
\label{fig:fine_decomp}
\end{figure}
\end{proof}

Before turning to the proof of Theorem \ref{thm:ellipsoid_packing_of_ribbon_complex}, we need to state and prove a version of Proposition \ref{prop:special_embedding_into_ball} for polydisks. Consider a polydisk $P = P(a_1,a_2)$. Then the smooth locus of the boundary of $P$ has two components
\begin{equation*}
\partial_1P = \operatorname{int}(B^2(a_1))\times \partial B^2(a_2) \quad \text{and} \quad \partial_2P = \partial B^2(a_1) \times \operatorname{int}(B^2(a_2)).
\end{equation*}
Let $i\in \left\{ 1,2 \right\}$. Note that we have a projection $\operatorname{pr}_i:\partial_iP\rightarrow \operatorname{int}(B^2(a_i))$. Let $c>0$ be a positive real number. A symplectic embedding $\psi: B^2(c) \hookrightarrow \partial_i P$ is called \textit{free} if the composition $\operatorname{pr}_i\circ \psi : B^2(c)\rightarrow \operatorname{int}(B^2(a_i))$ is an embedding. Let $n$ be a non-negative integer. Consider symplectic embeddings $\psi_j:B^2(c)\hookrightarrow \partial_iP$ for $1\leq j \leq n$. We call the tuple $(\psi_j)_j$ \textit{free} if the compositions $\operatorname{pr}_i\circ \psi_j$ are pairwise disjoint symplectic embeddings. If $(\psi_j)_j$ is a tuple of symplectic embeddings $\psi_j:B^2(c)\hookrightarrow \partial_1P\cup \partial_2P$, we call it \textit{free} if and only if the two subcollections of embeddings into $\partial_1P$ and $\partial_2P$ are free.

\begin{lemma}
\label{lem:free_collections_polydisk}
Let $(\psi_j)_j$ and $(\psi'_j)_j$ be two free collection of symplectic embeddings $\psi_j:B^2(c)\hookrightarrow \partial_1P \cup \partial_2P$. Then there exists a symplectomorphism $\chi$ of $P$ preserving $\partial_1P$ and $\partial_2P$ and satisfying $\psi'_j = \chi\circ \psi_j$ for all $j$ if and only if, for every $j$, the images of $\psi_j$ and $\psi'_j$ are contained in the same component of $\partial_1P\cup \partial_2P$.
\end{lemma}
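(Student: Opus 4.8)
The plan is to establish the non-trivial (``if'') implication by first using a product symplectomorphism to align the ``shadows'' of the two tuples in the two leaf spaces of the characteristic foliations on $\partial_1P$ and $\partial_2P$, and then correcting the remaining angular coordinates by fibrewise twist symplectomorphisms. The converse implication is immediate: if $\chi$ is a symplectomorphism of $P$ preserving the sets $\partial_1P$ and $\partial_2P$, then $\operatorname{im}(\psi'_j)=\chi(\operatorname{im}(\psi_j))$ lies in the same component of $\partial_1P\cup\partial_2P$ as $\operatorname{im}(\psi_j)$. To set up the forward direction, write $P=P(a_1,a_2)=B^2(a_1)\times B^2(a_2)$ and use action--angle coordinates $(\mu_1,\theta_1,\mu_2,\theta_2)$ with $\mu_i\in[0,a_i]$, $\theta_i\in\BR/\BZ$ and standard form $d\mu_1\wedge d\theta_1+d\mu_2\wedge d\theta_2$. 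On $\partial_1P=\operatorname{int}(B^2(a_1))\times\partial B^2(a_2)$ the coordinate $\mu_2$ is constant, so the symplectic form restricts to $\operatorname{pr}_1^*(d\mu_1\wedge d\theta_1)$; hence a free embedding $\psi\colon B^2(c)\hookrightarrow\partial_1P$ is precisely the datum of an area-preserving embedding $g=\operatorname{pr}_1\circ\psi\colon B^2(c)\hookrightarrow\operatorname{int}(B^2(a_1))$ together with an \emph{arbitrary} smooth map $\beta=\operatorname{pr}_2\circ\psi\colon B^2(c)\to\partial B^2(a_2)$, and symmetrically on $\partial_2P$. Let $J_i$ be the set of indices $j$ with $\operatorname{im}(\psi_j)\subset\partial_iP$; by hypothesis $J_i$ is also the set of $j$ with $\operatorname{im}(\psi'_j)\subset\partial_iP$.

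\emph{Step 1 (aligning the shadows).} The tuple $(\operatorname{pr}_1\circ\psi_j)_{j\in J_1}$ consists of pairwise disjoint symplectic embeddings of $B^2(c)$ into the open disk $\operatorname{int}(B^2(a_1))$, and likewise for $(\operatorname{pr}_1\circ\psi'_j)_{j\in J_1}$. By the same argument as in the proof of Lemma \ref{lem:free_attaching_maps_ball}, with $\operatorname{int}(B^2(a_1))$ in place of $\BC P^1(a)$, the group of compactly supported area-preserving diffeomorphisms of $\operatorname{int}(B^2(a_1))$ acts transitively on such tuples of embeddings; in real dimension two this is a standard consequence of the connectedness of the corresponding space of tuples together with isotopy extension. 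Hence there is a compactly supported area-preserving diffeomorphism $\phi_1$ of $\operatorname{int}(B^2(a_1))$, extended by the identity to $B^2(a_1)$, with $\phi_1\circ\operatorname{pr}_1\circ\psi_j=\operatorname{pr}_1\circ\psi'_j$ for all $j\in J_1$; choose $\phi_2$ on $B^2(a_2)$ analogously for $J_2$. Then $\chi_0\coloneqq\phi_1\times\phi_2$ is a symplectomorphism of $P$ preserving $\partial_1P$ and $\partial_2P$, and after replacing $\psi_j$ by $\chi_0\circ\psi_j$ (which affects neither the free nor the component structure) we may assume $\operatorname{pr}_1\circ\psi_j=\operatorname{pr}_1\circ\psi'_j$ for $j\in J_1$ and $\operatorname{pr}_2\circ\psi_j=\operatorname{pr}_2\circ\psi'_j$ for $j\in J_2$.

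\emph{Step 2 (correcting the angles).} For $j\in J_1$ write $\psi_j=(g_j,\beta_j)$, $\psi'_j=(g_j,\beta'_j)$ with $g_j=\operatorname{pr}_1\circ\psi_j$. Since $B^2(c)$ is simply connected, $\beta'_j-\beta_j\colon B^2(c)\to\partial B^2(a_2)=\BR/\BZ$ lifts to a smooth function $\delta_j\colon B^2(c)\to\BR$; as the $\operatorname{im}(g_j)$ are pairwise disjoint compacta in $\operatorname{int}(B^2(a_1))$, one may pick a smooth $S\colon B^2(a_1)\to\BR$ compactly supported in the open disk with $S\circ g_j=\delta_j$ for all $j\in J_1$. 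Fix a cutoff $\rho\colon[0,a_2]\to\BR$ supported near $\mu_2=a_2$ with $\rho(a_2)=0$, $\rho'(a_2)=1$, and set $H(z_1,z_2)\coloneqq S(z_1)\,\rho(\pi|z_2|^2)$, a smooth function on $P$ vanishing near $\{z_2=0\}$ and near $\partial B^2(a_1)\times B^2(a_2)$. Its time-one flow $\Xi_S$ is a symplectomorphism of $P$ with the following properties, all read off from $X_H$: since $H$ is independent of $\theta_2$, the flow preserves every level set of $\mu_2$, in particular $\partial_1P$; since $H\equiv0$ near $\partial B^2(a_1)\times B^2(a_2)$, $\Xi_S$ is the identity on a neighbourhood of $\partial_2P$; and on $\{\mu_2=a_2\}$ one has $\rho\equiv0$, whence $X_H=S(z_1)\,\partial_{\theta_2}$ there and $\Xi_S|_{\partial_1P}$ is $(z_1,\theta_2)\mapsto(z_1,\theta_2+S(z_1))$. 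Therefore $\Xi_S\circ\psi_j=\psi'_j$ for $j\in J_1$ and $\Xi_S\circ\psi_j=\psi_j$ for $j\in J_2$. The symmetric construction, interchanging the two $B^2$-factors, yields a symplectomorphism $\Xi'$ of $P$ equal to the identity near $\partial_1P$ with $\Xi'\circ\psi_j=\psi'_j$ for $j\in J_2$. Then $\chi\coloneqq\Xi'\circ\Xi_S\circ\chi_0$ preserves $\partial_1P$ and $\partial_2P$ and satisfies $\chi\circ\psi_j=\psi'_j$ for all $j$, as required.

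The one genuinely non-formal point is the construction and verification of the fibre-twist symplectomorphism $\Xi_S$: one must confirm, with the correct sign conventions, that $H=S(z_1)\rho(\pi|z_2|^2)$ is smooth on all of $P$ (including along the two coordinate axes), that its flow remains inside $P$ and preserves both boundary strata, that it restricts to the identity near the \emph{other} stratum, and that it realizes exactly the prescribed angular shift over each $\operatorname{im}(g_j)$. Everything in Step 1 is the two-dimensional case of facts already invoked for the sphere in Lemma \ref{lem:free_attaching_maps_ball}, and the lifting step in Step 2 uses nothing beyond the simple connectivity of $B^2(c)$.
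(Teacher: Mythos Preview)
Your proof is correct and follows essentially the same two-step strategy as the paper: first align the projections (shadows) via a product of compactly supported area-preserving diffeomorphisms of the two disk factors, then correct the remaining circle-fibre data by symplectomorphisms supported near the relevant preimage cylinders. The paper states the second step more tersely (``$\psi$ and $\psi'$ are related by a symplectomorphism of $P$ supported near $\operatorname{pr}_i^{-1}(\operatorname{im}(\operatorname{pr}_i\circ\psi))$''), whereas you write down an explicit Hamiltonian $H=S(z_1)\rho(\pi|z_2|^2)$ generating the required fibre twist; this is a helpful elaboration, not a different approach.
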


\begin{proof}
If there exists such a symplectomorphism $\chi$, then clearly the images of $\psi_j$ and $\psi_j'$ must be contained in the same component of $\partial_1P\cup \partial_2P$ for all $j$.

In order to show the converse direction, first note that the compactly supported symplectomorphism group of $\operatorname{int}(B^2(a_i))$ acts transitively on $n$-typles of pairwise disjoint symplectic embedings of $B^2(c)$ into $\operatorname{int}(B^2(a_i))$ for every $n>0$. Since any compactly supported symplectomorphism of $\operatorname{int}(B^2(a_i))$ lifts to a symplectomorphism of $P$ preserving $\partial_1 P$ and $\partial_2 P$, we can find such a symplectomorphism $\chi'$ with the property that $\operatorname{pr}_i\circ\psi_j' = \operatorname{pr}_i\circ \chi'\circ \psi_j$ for every $j$ and $i$ such that the images of $\psi_j$ and $\psi_j'$ are contained in $\partial_i P$. Finally, observe that if $\psi,\psi':B^2(c)\rightarrow \partial_iP$ are to free symplectic embeddings such that $\operatorname{pr}_i\circ \psi = \operatorname{pr}_i\circ \psi'$, then $\psi$ and $\psi'$ are related by a symplectomorphism of $P$ which is supported near $\operatorname{pr}_i^{-1}(\operatorname{im}(\operatorname{pr}_i\circ \psi))$. This shows the existence of the desired symplectomorphism $\chi$ with the property that $\psi_j' = \chi\circ \psi_j$.
\end{proof}

\begin{proposition}
\label{prop:special_embedding_into_polydisk}
Let $P$ be a $4$-dimensional symplectic polydisk and let $n$ be a non-negative integer. Then there exists $b_0>0$ such that the following is true for every $0<b\leq b_0$: Let $0<a$ such that $\operatorname{vol}(E(a,b)) < \operatorname{vol}(P)$. Let $0<c<b$. For $0\leq j < n$, let $\psi_j:B^2(c)\hookrightarrow \partial_1P\cup \partial_2P$ be a symplectic embedding into the smooth locus of $\partial P$ and assume that the collection $(\psi_j)_j$ is free. After possibly permuting the indices of $\psi_j$, there exists a symplectic embedding $\varphi:E(a,b;c,n) \overset{s}{\hookrightarrow} P$ whose restriction to the $j$-th free ribbon end of $E(a,b;c,n)$ agrees with the embedding $\psi_j$. Here we use the preferred identification of the free ribbon ends with $B^2(c)$, see Example \ref{ex:simple_ribbon_complex}. Moreover, the image $\operatorname{im}(\varphi)$ intersects $\partial P$ precisely at the free ends and the intersection is transverse.

For fixed $n>0$ and a compact subset $C \subset \BR_{>0}^2$, one can choose $b_0$ uniformly for all polydisks $P(x,y)$ with $(x,y)\in C$.
\end{proposition}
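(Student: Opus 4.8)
The plan is to follow the proof of Proposition~\ref{prop:special_embedding_into_ball} as closely as possible, replacing the ball $B$ by the polydisk $P$ and the model $T\times_L Q$ by the Lagrangian product of a rectangle with the unit square. The case $n=0$ asserts that a skinny ellipsoid of volume less than $\operatorname{vol}(P)$ embeds into $P$; for $b\le b_0$ small this follows from Lemma~\ref{lem:skinny_ellipsoid_embeddings} (when the aspect ratio $a/b$ is bounded, $E(a,b)$ lies in a small ball that plainly embeds into $P$). For $n\ge 1$, Lemma~\ref{lem:free_collections_polydisk} reduces the claim to producing, for each $0\le k\le n$, one symplectic embedding $E(a,b;c,n)\overset{s}{\hookrightarrow}P$ that meets $\partial P$ transversely exactly along its free ends, with $k$ free ribbon ends landing freely on $\partial_1 P$ and $n-k$ on $\partial_2 P$; composing with the symplectomorphism of $P$ from Lemma~\ref{lem:free_collections_polydisk} then matches the prescribed disks $\psi_j$ after permuting indices so the face assignments agree. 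Since the volume constraint leaves room, I would also pass to a slightly smaller rational polydisk $P'=P(a_1',a_2')\subset P$, with $a_2'/a_1'\in\BQ$, $\operatorname{vol}(E(a,b))<\operatorname{vol}(P')$, and $\partial_i P'$ a parallel inward copy of $\partial_i P$; after building the embedding into $P'$, each of the $n$ free ribbons is elongated radially outward through the thin shell $P\setminus\operatorname{int}(P')$ until it exits transversely and freely at $\partial_1 P$ or $\partial_2 P$, which is unobstructed because the Reeb flow there is the rotation of one $\BC$-factor and the $n$ starting disks are pairwise disjoint.

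So assume $a_2/a_1\in\BQ$. The interior of $P(a_1,a_2)$ is symplectomorphic to the interior of the Lagrangian product $R\times_L Q$, where $R\subset\BR^2$ is an axis-parallel rectangle of area $a_1 a_2$ and $Q$ is the unit square, and $\partial_1 P$, $\partial_2 P$ correspond to explicit unions of faces of this model. Because $a_2/a_1\in\BQ$, the rectangle $R$ can be tiled by finitely many congruent right isosceles triangles, and Traynor's symplectomorphism \cite{tra95} between the interior of (a triangle)$\times_L Q$ and the interior of a ball turns this into a decomposition of $P$ into finitely many equal balls whose pairwise interfaces are symplectic $2$-faces of the form $(\mathrm{edge})\times_L Q$ --- exactly as in the decomposition of $T\times_L Q$ into $n^2$ balls in the proof of Proposition~\ref{prop:special_embedding_into_ball} (Figure~\ref{fig:fine_decomp}). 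From here the argument is the same: inside each ball of the decomposition embed a ball-with-ribbons by Lemma~\ref{lem:embedding_into_lagrangian_product} (iterating it with Lemma~\ref{lem:special_embedding_into_ball_dependent_attaching_maps} at cells that need more than four free ends), with the free ends placed on transverse cross-sectional disks of the generalized Reeb flow chosen to coincide on both sides of every interface and, at cells abutting $\partial_1 P$ or $\partial_2 P$, to be cross-sections of the circle-rotation Reeb flow on those faces; gluing along matching ends gives a connected ribbon complex $L$ of balls and ribbons in $P$, its ball parts in $\operatorname{int}(P)$, with $n$ free ends, $k$ on $\partial_1 P$ and $n-k$ on $\partial_2 P$. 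Inside each of the $N$ balls of $L$ embed a copy of $E(a/N,f;b,\cdot)$ by the already-proven cases of Proposition~\ref{prop:special_embedding_into_ball} with fewer free ends, choosing $c<b<f<2\operatorname{vol}(P')/a$ so all volume constraints hold, and elongate the new ribbons inside those of $L$; the result is a ribbon complex $K$ in $P$ symplectomorphic to some $K(G,A,f,b)$ of Example~\ref{ex:complex_from_ribbon_graph} with vertex labels summing to $a$ and $n$ free ends.

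Applying Lemma~\ref{lem:special_ribbon_complex_neighbourhood_packing} to $K$ with $\beta=b$ and $\gamma=c$ --- legitimate since $c<b$ equals the ribbon width of $K$ --- yields a symplectic embedding $E(a,b;c,n)\overset{s}{\hookrightarrow}P$ whose free ribbon ends lie inside those of $K$, hence form a free $n$-tuple with the desired face assignments; a final application of Lemma~\ref{lem:free_collections_polydisk} adjusts it to match the given $\psi_j$. Uniformity of $b_0$ over $(x,y)$ in a compact subset of $\BR_{>0}^2$ is automatic, since all constants depend only continuously on $(a_1,a_2)$.

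The main obstacle is the polydisk analogue of Lemma~\ref{lem:embedding_into_lagrangian_product}: one must describe the generalized Reeb dynamics on $\partial(R\times_L Q)$ --- the Minkowski billiard of two rectangles, which is a product of two one-dimensional billiards and hence completely explicit --- pin down the families of cross-sectional disks on the interface and boundary faces that are simultaneously compatible with the ball decomposition and with the trivial Reeb dynamics on $\partial_1 P$ and $\partial_2 P$, and, via Schlenk's admissible loop families, realize each cell by a ball whose boundary approximates the cell boundary well enough that Hopf circles map to generalized Reeb orbits. Conceptually this is no harder than the ball case; the genuinely new bookkeeping is keeping track of which cells abut which of the two boundary faces, so as to distribute the $n$ free ends as $k$ and $n-k$, and dealing with the rectangle's two inequivalent pairs of edges. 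Everything else --- Proposition~\ref{prop:special_embedding_into_ball}, Lemmas~\ref{lem:skinny_ellipsoid_embeddings}, \ref{lem:special_ribbon_complex_neighbourhood_packing}, \ref{lem:free_collections_polydisk}, \ref{lem:embedding_into_lagrangian_product}, \ref{lem:special_embedding_into_ball_dependent_attaching_maps}, and the collar routing for irrational polydisks --- is already available or routine.
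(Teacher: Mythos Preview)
Your overall architecture matches the paper's: view $P$ as a Lagrangian product $S\times_L Q$, decompose $S$ into cells, build a ribbon complex $L$ from balls-with-ribbons in the cells via Lemma~\ref{lem:embedding_into_lagrangian_product}, thread a complex $K\cong K(G,A,f,b)$ through $L$ using Proposition~\ref{prop:special_embedding_into_ball} on each cell, and finish with Lemmas~\ref{lem:special_ribbon_complex_neighbourhood_packing} and~\ref{lem:free_collections_polydisk}. The difference is that you insist on tiling $S$ by \emph{congruent right isosceles triangles only}, which forces the rationality reduction, whereas the paper tiles $S$ by squares together with one leftover rectangle in each row and handles the rectangular (polydisk) cells via a separately proven $n=1$ case.

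The rationality reduction is where your argument breaks. The threshold $b_0$ must be fixed \emph{before} $a$ and $b$ are chosen, and must work for every $a$ with $\operatorname{vol}(E(a,b))<\operatorname{vol}(P)$. Your rational $P'\subset P$ with $\operatorname{vol}(E(a,b))<\operatorname{vol}(P')$ is chosen \emph{after} $a$, so the triangle tiling of $P'$---and hence the cell sizes, and hence the $b_0$ delivered by Proposition~\ref{prop:special_embedding_into_ball} on those cells---depends on $a$. As $\operatorname{vol}(E(a,b))\nearrow\operatorname{vol}(P)$ you need $\operatorname{vol}(P')\nearrow\operatorname{vol}(P)$, forcing ever finer rational approximations $a_2'/a_1'$ and ever smaller congruent triangles, so no single $b_0$ works. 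Fixing $P'$ independently of $a$ does not help either: you then only cover $\operatorname{vol}(E(a,b))<\operatorname{vol}(P')<\operatorname{vol}(P)$. By Dehn's theorem an irrational rectangle admits no finite tiling by squares, so there is no way to tile $S$ by finitely many right isosceles triangles of size bounded below without a rationality hypothesis; the irrational remainder is unavoidable.

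The paper absorbs this remainder by allowing one genuinely rectangular cell $S_k\times_L Q$ per row (width in $[y/(n{+}1),2y/(n{+}1))$, so bounded uniformly on compact sets of $(x,y)$), into which it must embed a copy of $E(a',f;b,\cdot)$. That is precisely the $n=1$ case of the proposition for a polydisk target, and it is proved independently---not via any decomposition---from Lemma~\ref{lem:connecting_ribbon_to_core_plane} together with a $\BC P^1\times\BC P^1$ analogue of Lemma~\ref{lem:special_embedding_into_CP2} (Lemma~\ref{lem:special_embedding_into_S2timesS2}): if $E(a,b)$ embeds in $\BC P^1(\alpha)\times\BC P^1(\beta)$ and $b<\beta$, then it embeds so that its core disk lands on a factor sphere. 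This is the missing ingredient; once you have it, the rest of your outline goes through with the mixed square/rectangle decomposition and the uniformity in $(x,y)\in C$ is immediate because all cell parameters are bounded in terms of $n$ and $C$.
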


\begin{lemma}
\label{lem:special_embedding_into_S2timesS2}
Let $a,b,\alpha,\beta>0$ and suppose that the ellipsoid $E = E(a,b) \subset \BC^2$ symplectically embeds into $M = \BC P^1(\alpha)\times \BC P^1(\beta)$. Define $C\coloneqq \left\{ * \right\} \times \BC P^1(\beta)$. If $b<\beta$, then there exists a symplectic embedding $\varphi: E\overset{s}{\hookrightarrow} M$ such that $\varphi^{-1}(C) = E \cap (\left\{ 0 \right\}\times \BC)$.
\end{lemma}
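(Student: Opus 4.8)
The plan is to run the argument behind Lemma~\ref{lem:special_embedding_into_CP2}, i.e.\ the proof of \cite[Lemma~1.6]{bho16}, with $\BC P^2$ replaced by $M=\BC P^1(\alpha)\times\BC P^1(\beta)$ and the complex line replaced by $C=\{*\}\times\BC P^1(\beta)$. The only features of the target used in that argument are that $C$ is a symplectically embedded $2$-sphere of area $\beta$, that $E(a,b)$ admits \emph{some} symplectic embedding into $M$, and that $b<\beta$; the value of the self-intersection number $C\cdot C$ (here equal to $0$, rather than $+1$ as for a line) is irrelevant. First I would realize $C$ as the fiber over a point $p_0\in\BC P^1(\alpha)$ of the projection $M\to\BC P^1(\alpha)$, so that $C$ has trivial symplectic normal bundle $N_C$.

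The substantive ingredient is Opshtein's degeneration to the normal cone of $C$ (cf.\ \cite{ops13} and the proof of \cite[Lemma~1.6]{bho16}). Since $N_C$ is trivial, this degeneration inserts an arbitrarily long symplectic neck along $C$, beyond which a neighbourhood of (the proper transform of) $C$ becomes, after an inflation along $C$, symplectomorphic to the trivial disc bundle $\BC P^1(\beta)\times B^2(A)$ for any prescribed $A>0$, with zero section identified with $C$. Starting from a given embedding $E(a,b)\overset{s}{\hookrightarrow}M$, a Gromov compactness argument for the degenerating family---exactly as in \cite{bho16}---shows that, because $E(a,b)$ is connected and $b<\beta$, the ellipsoid can be isotoped through symplectic embeddings so as to lie entirely inside this disc-bundle region, for an $A$ which we are free to take larger than $a$. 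Inside $\BC P^1(\beta)\times B^2(A)$ with $A>a$ the desired embedding can then be written down explicitly: identifying $E(a,b)\cong E(b,a)$ by the coordinate swap and using $E(b,a)\subset B^2(b)\times B^2(a)$, take the product of a symplectic embedding $B^2(b)\hookrightarrow\BC P^1(\beta)$ (which exists since $b<\beta$) with the inclusion $B^2(a)\hookrightarrow B^2(A)$. This product embedding carries the disc $E\cap(\{0\}\times\BC)$, which has area $b$, onto a subset of the zero section $C$ and is disjoint from $C$ at every other point. Reversing the degeneration---again using the connectedness of the relevant spaces of symplectic embeddings, as in \cite{bho16, ms12}---then transports this back to an embedding $\varphi:E(a,b)\overset{s}{\hookrightarrow}M$ with $\varphi^{-1}(C)=E\cap(\{0\}\times\BC)$.

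The crux---and the only genuinely delicate point---is the pair of passages between $M$ and its normal-cone degeneration: first, one must verify that the ellipsoid migrates to the $C$-side of the neck rather than into $M\setminus C$, which is exactly where the hypothesis $b<\beta=\operatorname{area}(C)$ is used, via a count of the $J$-holomorphic spheres in the fiber class meeting the image of the core disc; and second, one must check that undoing the degeneration yields an honest symplectic embedding of the \emph{closed} ellipsoid whose preimage of $C$ is \emph{exactly} the core disc, with no spurious intersections created in the process. Since none of this depends on $C\cdot C$, the proof of \cite[Lemma~1.6]{bho16} applies verbatim; indeed, depending on how that lemma is phrased, the present statement may simply be a special case of it, just as in the proof of Lemma~\ref{lem:special_embedding_into_CP2}.
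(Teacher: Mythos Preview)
Your overall thesis—that the argument behind \cite[Lemma~1.6]{bho16} adapts to $M=\BC P^1(\alpha)\times\BC P^1(\beta)$—matches the paper, but your account of that argument is not correct, and the sketch you give has a real gap. The method of \cite[Lemma~1.6]{bho16} is not a degeneration to the normal cone followed by Gromov compactness; it is McDuff's ellipsoid blow-up \cite{mcd09} combined with the singular inflation of \cite{mo15}. One embeds a small rescaling $\tau E$ in the desired special position relative to $C$, blows it up to a nodal chain $S$ of exceptional spheres in $\hat M$ together with the proper transform $\hat C$, and then inflates relative to the singular configuration $S\cup\hat C$ to enlarge the areas of the exceptional spheres (hence $\tau E$ to $E$) while keeping $\hat C$ symplectic. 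The only target-dependent step is producing the embedded symplectic surface needed for inflation: for blow-ups of $\BC P^2$ this is \cite[Corollary~1.2.17]{mo15}, whereas here one must invoke the more general \cite[Theorem~1.2.16]{mo15} for blow-ups of rational or ruled manifolds. So the proof does not apply verbatim—as the paper notes, the lemma is stated only for $\BC P^2$—and the ambient topology does enter, though only through this substitution.

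Your alternative route breaks at ``reversing the degeneration''. Inflating along $C$ genuinely changes the symplectic form: it increases the area of the $\BC P^1(\alpha)$ factor, which is exactly why a neighbourhood of $C$ can then absorb $E(a,b)$ via your product embedding (note that nothing in the hypotheses forces $a<\alpha$, so your explicit construction does require an inflated form). Having embedded $E$ into $(M,\omega')$ in special position, there is no mechanism that makes this embedding survive a deflation back to $\omega$; Gromov compactness governs $J$-holomorphic curves, not isotopies of symplectic embeddings, and the deflation strictly shrinks the region containing the image. The blow-up/inflation scheme avoids this precisely because all the deformation happens in the blow-up $\hat M$, where singular inflation guarantees that the nodal configuration—and with it the special intersection pattern with $\hat C$—persists throughout; blowing down then returns an embedding into the original $(M,\omega)$.
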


\begin{proof}
In contrast to Lemma \ref{lem:special_embedding_into_CP2}, which is an analogous statement with target manifold $\BC P^2$ instead of $\BC P^1\times \BC P^1$, we cannot directly apply \cite[Lemma 1.6]{bho16} because this result is only stated for $\BC P^2$. However, as we sketch below, the proof of \cite[Lemma 1.6]{bho16}, which relies on the symplectic blow up construction for ellipsoid embeddings introduced in \cite{mcd09} and the singular inflation techniques of \cite{mo15}, readily adapts to show Lemma \ref{lem:special_embedding_into_S2timesS2}.

The strategy of the proof of \cite[Lemma 1.6]{bho16} is the following. For $\tau>0$ sufficiently small, there clearly exists an embedding $\tau E\overset{s}{\hookrightarrow} M$ intersecting the symplectic sphere $C$ in the desired way. Blowing up this ellipsoid embedding yields a nodal symplectic sphere configuration $S$ inside a suitable blowup $\hat{M}$ of $M$. Since $\tau E$ is in special position with respect to $C$, one also obtains a sympelctic surface $\hat{C}$ inside $\hat{M}$ which is a proper transform of $C$ and intersects $S$ positively and transverely. The embedding of the full ellipsoid $E$ is obtained by changing the symplectic areas of the sphere components of $S$ via symplectic inflation relative to the singular symplectic surface $S\cup \hat{C}$. In order to carry out this inflation, one needs an embedded symplectic surface intersecting $S\cup \hat{C}$ positively and transversely and representing a suitable homology class with positive self intersection number. In the proof of \cite[Lemma 1.6]{bho16}, the existence of such a symplectic surface is deduced from \cite[Corollary 1.2.17]{mo15}, which deals with blow ups of $\BC P^2$. In the present setting of $M = \BC P^1(\alpha) \times \BC P^1(\beta)$, one can obtain the desired symplectic surface from \cite[Theorem 1.2.16]{mo15}, which more generally applies to blow ups of arbitrary rational or ruled symplectic manifolds.
\end{proof}

\begin{proof}[Proof of Proposition \ref{prop:special_embedding_into_polydisk}]
The proof is similar to the proof of Proposition \ref{prop:special_embedding_into_ball}. The case $n=0$ is immediate from Lemma \ref{lem:skinny_ellipsoid_embeddings}. The case $n=1$ is deduced from Lemma \ref{lem:connecting_ribbon_to_core_plane} in combination with Lemma \ref{lem:special_embedding_into_S2timesS2}.

Note that the interior of the polydisk $P(x,y)$ is symplectomorphic to the interior of the Lagrangian product $S \times_L Q$ where $S \subset\BR^2$ is a rectangle of side lengths $x$ and $y$ parallel to the axes and $Q$, as before, is a square of side lenth $1$. Again, the generalized Reeb flow on the boundary of $S\times_L Q$ is just given by the Minkowski billiard dynamics of $S$ and $Q$. Let us label the edges of $S$ and $Q$ as in Figure \ref{fig:rectangle_and_square}. The generalized Reeb flow on the boundary of $S\times_L Q$ is depicted in Figure \ref{fig:generalized_reeb_flow_polydisk}.

\begin{figure}[htpb]
\centering
\def\svgwidth{0.6\textwidth}
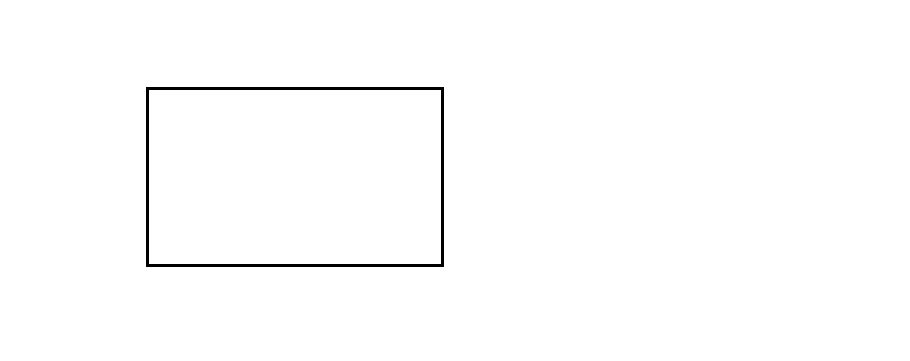
\caption{The rectangle $S$ and the square $Q$.}
\label{fig:rectangle_and_square}
\end{figure}

\begin{figure}[htpb]
\centering
\def\svgwidth{1.0\textwidth}
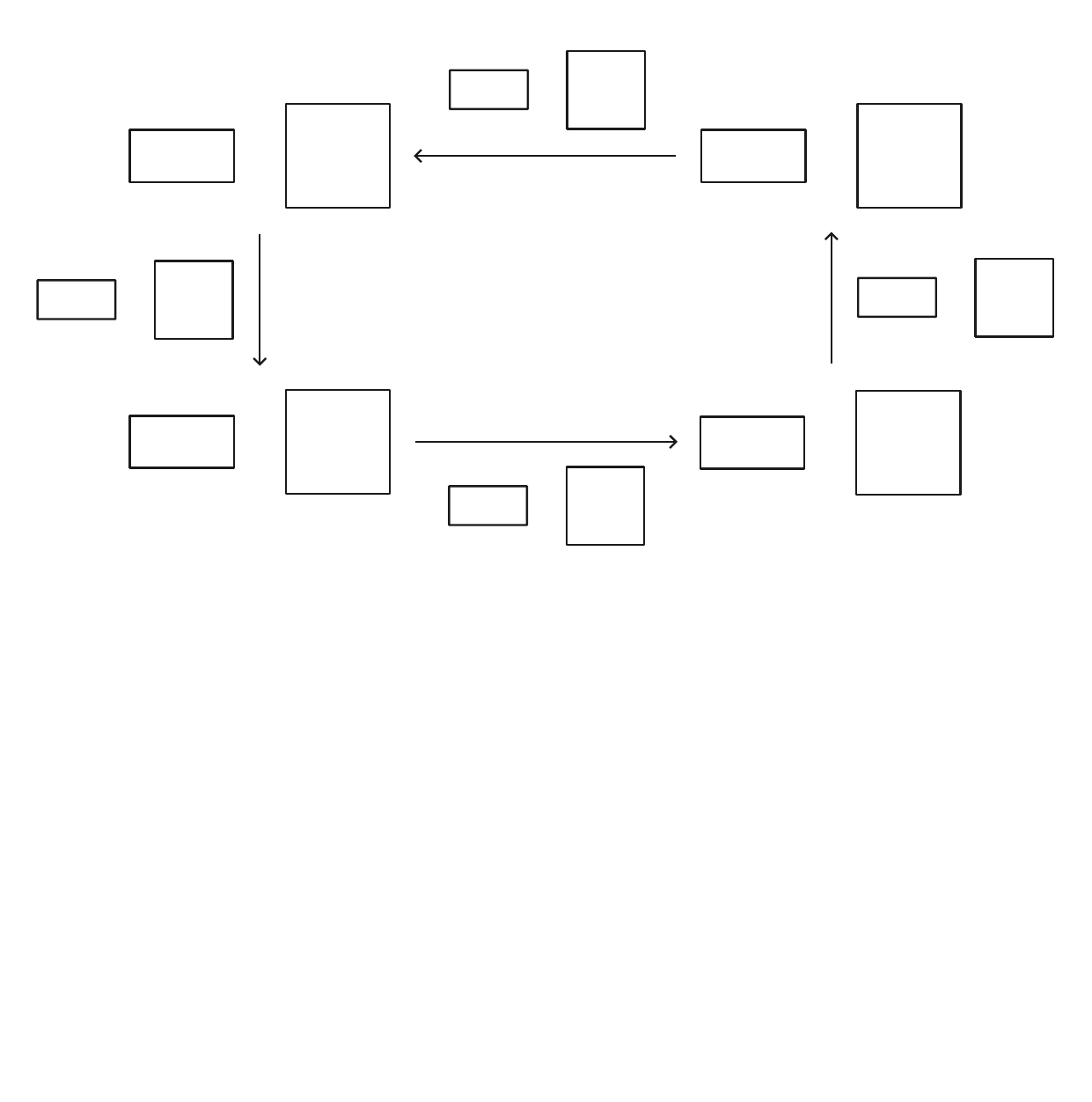
\caption{Reeb dynamics on the boundary of $S\times_L Q$.}
\label{fig:generalized_reeb_flow_polydisk}
\end{figure}

Let $0<\xi<x$. Let $D\subset s_b\times q_b$ be a disk of area $\xi$ obtained by first taking a rectangle of area $\xi$ and then smoothing its corners. Using the Reeb flow, we produce copies of $D$ inside $s_b\times Q$ and $s_t\times Q$. As before, let $h$ (resp. $v$) denote the horizantal (resp. vertical) line segment dividing the square $Q$ into two rectangles of equal area, see Figure \ref{fig:rectangle_and_square}. If we start at $s_b\times q_b$, then the Reeb flow meets both cross sections $s_b \times h$ and $r_t\times h$ exactly once before returning to $r_b\times q_b$. We define $\tilde{D}_h \subset (r_b\cup r_t)\times h$ to be the intersection of all characteristics starting at $D$ with $(r_b\cup r_t)\times h$. Note that we can arrange $\tilde{D}_h$ in such a way that if we place a triangle at the bottom or top edge of $S$ as in Figure \ref{fig:triangle_on_top_rectangle} and if $\tilde{D}\subset\partial(T\times_LQ)$ is the collection  of four disks of area $\xi$ used in the proof of Proposition \ref{prop:special_embedding_into_ball}, then the component of $\tilde{D}_h$ contained in the interface between $S\times_L Q$ and $T\times_LQ$ exactly matches the component of $\tilde{D}$ contained in this interface.
\begin{figure}[htpb]
\centering
\def\svgwidth{0.2\textwidth}
%% Creator: Inkscape 1.4.2 (ebf0e940, 2025-05-08), www.inkscape.org
%% PDF/EPS/PS + LaTeX output extension by Johan Engelen, 2010
%% Accompanies image file '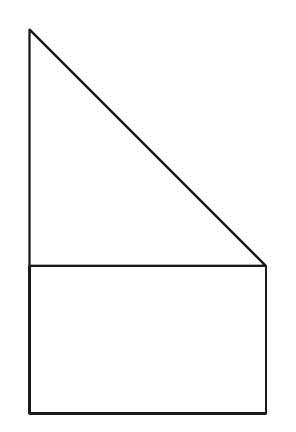' (pdf, eps, ps)
%%
%% To include the image in your LaTeX document, write
%%   \input{<filename>.pdf_tex}
%%  instead of
%%   \includegraphics{<filename>.pdf}
%% To scale the image, write
%%   \def\svgwidth{<desired width>}
%%   \input{<filename>.pdf_tex}
%%  instead of
%%   \includegraphics[width=<desired width>]{<filename>.pdf}
%%
%% Images with a different path to the parent latex file can
%% be accessed with the `import' package (which may need to be
%% installed) using
%%   \usepackage{import}
%% in the preamble, and then including the image with
%%   \import{<path to file>}{<filename>.pdf_tex}
%% Alternatively, one can specify
%%   \graphicspath{{<path to file>/}}
%% 
%% For more information, please see info/svg-inkscape on CTAN:
%%   http://tug.ctan.org/tex-archive/info/svg-inkscape
%%
\begingroup%
  \makeatletter%
  \providecommand\color[2][]{%
    \errmessage{(Inkscape) Color is used for the text in Inkscape, but the package 'color.sty' is not loaded}%
    \renewcommand\color[2][]{}%
  }%
  \providecommand\transparent[1]{%
    \errmessage{(Inkscape) Transparency is used (non-zero) for the text in Inkscape, but the package 'transparent.sty' is not loaded}%
    \renewcommand\transparent[1]{}%
  }%
  \providecommand\rotatebox[2]{#2}%
  \newcommand*\fsize{\dimexpr\f@size pt\relax}%
  \newcommand*\lineheight[1]{\fontsize{\fsize}{#1\fsize}\selectfont}%
  \ifx\svgwidth\undefined%
    \setlength{\unitlength}{141.73228346bp}%
    \ifx\svgscale\undefined%
      \relax%
    \else%
      \setlength{\unitlength}{\unitlength * \real{\svgscale}}%
    \fi%
  \else%
    \setlength{\unitlength}{\svgwidth}%
  \fi%
  \global\let\svgwidth\undefined%
  \global\let\svgscale\undefined%
  \makeatother%
  \begin{picture}(1,1.46)%
    \lineheight{1}%
    \setlength\tabcolsep{0pt}%
    \put(0,0){\includegraphics[width=\unitlength,page=1]{triangle_on_top_rectangle.pdf}}%
    \put(0.30251112,0.84631527){\color[rgb]{0.10196078,0.10196078,0.10196078}\makebox(0,0)[t]{\lineheight{1.25}\smash{\begin{tabular}[t]{c}$T$\end{tabular}}}}%
    \put(0.50596576,0.28948466){\color[rgb]{0.10196078,0.10196078,0.10196078}\makebox(0,0)[t]{\lineheight{1.25}\smash{\begin{tabular}[t]{c}$S$\end{tabular}}}}%
    \put(0.48722988,0.60002875){\color[rgb]{0.10196078,0.10196078,0.10196078}\makebox(0,0)[t]{\lineheight{1.25}\smash{\begin{tabular}[t]{c}$t_b=s_t$\end{tabular}}}}%
  \end{picture}%
\endgroup%

\caption{A triangle $T$ on top of a rectangle $S$ meeting it at the common edge $t_b = s_t$.}
\label{fig:triangle_on_top_rectangle}
\end{figure}
Similarly, for $0<\eta<y$ we can construct $\tilde{D}_v\subset (r_l\cup r_r)\times v$ consisting of two disk components of area $\eta$.

Now consider a polydisk $P = P(x,y)$ and an integer $n\geq 2$. By symmetry, we may reduce ourselves to the case $x\geq y$. Let $S\subset \BR^2$ denote the axis parallel rectangle of width $x$ and height $y$. We divide $S$ into rectangles and triangles as follows: First, we divide $S$ into $n+1$ rectangles of width $x$ and height $y/(n+1)$. Each of these $n+1$ rectangles, we further divide into squares of side length $y/(n+1)$ and a single rectangle of height $y/(n+1)$ whose width is contained in the interval $[y/(n+1), 2y/(n+1))$. Note that since $x\geq y$ by assumption, each of the $n+1$ rectangles from the first division step gives rise to at least $n$ squares. Each of the squares, we further divide into two triangles along one of the diagonals. See Figure \ref{fig:concatenating_ribbon_complexes_polydisk} for an illustration of the resulting decomposition of $S$. Let $S_k$ and $T_j$ denote the rectangles and triangles, respectively, in the decomposition. We obtain an induced decomposition of $S\times_LQ$ into polydisks $S_k\times_L Q$ and balls $T_j\times_L Q$.

\begin{figure}[htpb]
\centering
\def\svgwidth{0.6\textwidth}
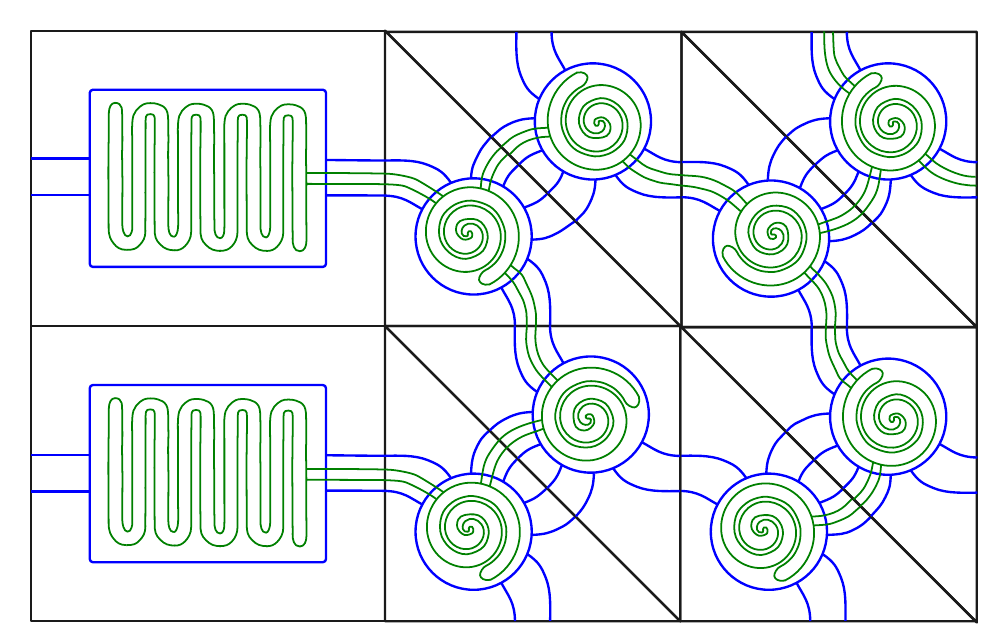
\caption{A decomposition of $S$ into rectangles and triangles and a schematic of the ribbon complexes $L$ (blue) and $K$ (green).}
\label{fig:concatenating_ribbon_complexes_polydisk}
\end{figure}

Let $b>0$ be sufficiently small and $a>0$ such that $\operatorname{vol}(E(a,b)) < \operatorname{vol}(P)$. Moreover, let $0<c<b$ and let $\psi_i:B^2(c)\hookrightarrow \partial_1P\cup \partial_2P$ be a free collection of $n$ symplectic embeddings. We construct an embedding of $E(a,b;c,n)$ into $P$ following the same strategy used in the proof of Proposition \ref{prop:special_embedding_into_ball}.

First, we embed a copy of $B(\beta;\gamma,4)$ for suitable parameters $\beta$ and $\gamma$ into $T_j\times_LQ$ such that the free ends land in the disks $\tilde{D} \subset \partial(T_j\times_L Q)$ introduced in the proof of Proposition \ref{prop:special_embedding_into_ball}. Moreover, we embed a copy of $P(\alpha,\beta;\gamma,2)$ for suitable $\alpha$ into $S_k\times_L Q$ such that the two free ends land in the disks $\tilde{D}_v$ introduced above. Here we use Lemma \ref{lem:embedding_into_lagrangian_product} and its polydisk analogue. Then we connect matching free ribbons at the interfaces between neighbouring cells. A schematic of the resulting ribbon complex $L$ is depicted in Figure \ref{fig:concatenating_ribbon_complexes_polydisk}.

Next, we embed a copy of $K = K(G,A,f,b)$ into $L$ for suitable $f>b$ and a suitable labelled ribbon graph $(G,A)$, see Figure \ref{fig:concatenating_ribbon_complexes_polydisk}. This uses the case $n=1$ of Proposition \ref{prop:special_embedding_into_polydisk} and Lemma \ref{lem:special_embedding_into_ball_dependent_attaching_maps}. Composition with an appropriate embedding $S\times_L Q\overset{s}{\hookrightarrow}P$ yields an embedding of $K$ into $P$ mapping the free ribbon ends into $\partial P$ via a free collection of $n$ embeddings $B^2(c)\hookrightarrow \partial_1P\cup \partial_2P$. We can build $K$ such that the number of free ribbon ends of $K$ which land in $\partial_1P$ (resp. $\partial_2P$) matches the number of embeddings $\psi_i$ with image in $\partial_1P$ (resp. $\partial_2P$). Moreover, we can arrange the parameters of $K$ such that Lemma \ref{lem:special_ribbon_complex_neighbourhood_packing} yields an embedding $E(a,b;c,n)\overset{s}{\hookrightarrow} P$, again mapping the free ribbon ends into $\partial_1P\cup \partial_2P$ via a free collection of maps. By Lemma \ref{lem:free_collections_polydisk} we can assume that these maps precisely agree with the $\psi_i$, after possibly permuting the indices of $\psi_i$.

Finally, observe that if $n>0$ is fixed and $(x,y)$ range over a compact subset $C \subset \BR_{>0}^2$, then the parameters of all polydisks and balls in our division of $S\times_L Q$ are uniformly bounded away from zero. Therefore, it ultimately follows from Lemma \ref{lem:skinny_ellipsoid_embeddings} that the threshold $b_0$ can be chosen uniformly.
\end{proof}

\begin{proof}[Proof of Theorem \ref{thm:ellipsoid_packing_of_ribbon_complex}]
Let us normalized the volume of $K$ to be equal to $1$. In this situation, we have $w(K) = \overline{w}(K)$. Let $\delta >0$ and assume that $w(K) \geq \delta$.

Observe that we can bound the total number of domains of $K$ from above in terms of $\delta$. After possibly removing some ribbons, we can assume that the graph underlying the ribbon complex $K$ is a tree. In this situation, we can in addition bound the total number of ribbons of $K$ from above in terms of $\delta$. After possibly shrinking the widths of the ribbons of $K$, we can assume that, for every domain $X$ of $K$, the collection of all attaching maps of ribbons attached to $X$ is free. Note that we can carry out this simplification in such a way that the new width $w(K)$ can be bounded from below just in terms of $\delta$.

For each domain $X$ of $K$, let $n_X$ denote the number of ribbons attached to $X$. Let us choose $b>0$ sufficiently small such that, for every domain $X$ of $K$, the assertion of Proposition \ref{prop:special_embedding_into_ball} holds for the pair $(X,n_X)$ if $X$ is a ball and the assertion of Proposition \ref{prop:special_embedding_into_polydisk} holds if $X$ is a polydisk. Moreover, we choose $b$ smaller than any width of a ribbon of $K$. Note that this choice of $b$ can be made to depend only on $\delta$. The reason is that the parameters of the balls, polydisks, and ribbons of $K$ are bounded away from zero in terms of $\delta$ and that all $n_X$ are bounded from above in terms of $\delta$. Let $\varepsilon>0$ be arbitrary. Pick a positive number $c$ such that $b-\varepsilon < c < b$. For every domain $X$ of $K$, pick $a_X>0$ such that $\operatorname{vol}(X) - \varepsilon < \operatorname{vol}(E(a_X,b)) < \operatorname{vol}(X)$.

Consider a domain $X$ of $K$. Let $(\psi_j)_{1\leq j\leq n_X}$ denote the collection of attaching maps of all the ribbons attached to $X$. Pick a symplectic embedding $\varphi_X:E(a_X,b;c,n_X)\overset{s}{\hookrightarrow} X$ which maps the free ribbon ends into $\partial X$ via the restricted attaching maps $\psi_j|_{B^2(c)}$. Such a symplectic embedding exists by Proposition \ref{prop:special_embedding_into_ball} if $X$ is a ball and by Proposition \ref{prop:special_embedding_into_polydisk} if $X$ is a polydisk. We connect the complexes $E(a_X,b;c,n_X)$ to a single complex $L$ as follows. Let $R=[0,1]\times B^2(d)$ be a ribbon of $K$. Consider one of its ends $\left\{i \right\}\times B^2(d)$. Let $X$ be the domain of $K$ this end is attached to. The image of one of the free ends of $E(a_X,b;c,n_X)$ is contained in the end $\left\{ i \right\}\times B^2(d)$, which we identify with its image in $\partial X$. The restriction of $\varphi_X$ to the free end $B^2(c)$ of $E(a_X,b;c,n_X)$ is simply given by inclusion $B^2(c)\hookrightarrow \left\{ i \right\}\times B^2(d)$. Let us glue the free end to the end $\left\{ i \right\}\times B^2(c)$ of the ribbon $[0,1]\times B^2(c)\subset R$. We perform this gluing at all the free ends of all the complexes $E(a_X,b;c,n_X)$ and let $L$ denote the resulting complex. By construction, this complex $L$ symplectically embeds into $K$. Moreover, it is a complex of the form considered in Lemma \ref{lem:special_ribbon_complex_neighbourhood_packing}.

Now suppose that $\varphi:K\overset{s}{\hookrightarrow} M$ is a symplectic embedding and let $U$ be an open neighbourhood of $\varphi(K)$. By Lemma \ref{lem:special_ribbon_complex_neighbourhood_packing}, the ellipsoid $E(a,c)$ symplectically embeds into $U$. Letting $\varepsilon$ tend to zero, we see that $p^E_{a/b}(K) = 1$. Note that $a/b$ can be bounded from above just in terms of $\delta$. It therefore follows from Lemma \ref{lem:skinny_ellipsoid_embeddings} that there exists a threshold $a_0$ only depending on $\delta$ such that $p^E_e(K)=1$ for all $e\geq a_0$.
\end{proof}

\section{Tame packings by balls and polydisks}
\label{sec:tame_packings}

In this section, we introduce the notion of a \textit{tame packing by balls and polydisks}. The motivation behind this definition is that the existence of a tame packing by balls and polydisks implies ellipsoid embedding stability. This is an easy corollary of Theorem \ref{thm:ellipsoid_packing_of_ribbon_complex}. We also introduce the stronger notion of a \textit{$\partial$-tame packing by balls and polydisks}. The advantage of this stronger notion is that if a symplectic manifold $X$ can be decomposed into pieces which admit $\partial$-tame packings by balls and polydisks, then so does $X$. Using Theorem \ref{thm:stratification_perturbed_affine_subgraph}, we show that, for every $\partial$-admissible Hamiltonian $H$ on the annulus $\BA$ which is a $C^\infty$ small perturbation of a strictly positive affine Hamiltonian, the subgraph $D(H)$ admits a $\partial$-tame packing by balls and polydisks and therefore satisfies ellipsoid embedding stability; see Corollary \ref{cor:tame_packing_perturbed_affine_subgraph}. We extend this result to Hamiltonians $H$ which are allowed to vanish on one boundary component of $\BA$. This is the content of Theorem \ref{thm:tame_packing_perturbed_linear_subgraph}.\\

Let $M$ be a symplectic $4$-manifold of finite volume. We say that $M$ admits a \textit{tame packing by balls and polydisks} (or simply a \textit{tame packing}) if there exists $\delta>0$ such that, for every $\varepsilon>0$, there exists a connected symplectic ribbon complex $K$, all of whose domains are balls or polydisks, which admits a symplectic embedding into $\operatorname{int}(M)$, and which satisfies $w(K)\geq \delta$ and $\operatorname{vol}(K) \geq \operatorname{vol}(M) - \varepsilon$.

The following is an immediate corollary of Theorem \ref{thm:ellipsoid_packing_of_ribbon_complex}.

\begin{corollary}
\label{cor:from_tame_packing_to_ellipsoid_packing}
Let $M$ be a symplectic $4$-manifold of finite volume. If $M$ admits a tame packing by balls and polydisks, then ellipsoid embedding stability holds for $M$, i.e. $p_a^E(M) = 1$ for all sufficiently large $a$.
\end{corollary}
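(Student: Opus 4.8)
The plan is to feed the ribbon complexes supplied by the tame packing hypothesis directly into Theorem \ref{thm:ellipsoid_packing_of_ribbon_complex}, and then to transfer the resulting ellipsoid embeddings from a neighbourhood of the complex to $\operatorname{int}(M)$ itself. First I would fix $\delta>0$ as in the definition of tame packing and set $\delta'\coloneqq \delta/\sqrt{\operatorname{vol}(M)}$. For any complex $K$ furnished by the tame packing, $K$ embeds symplectically into $\operatorname{int}(M)$; since the restriction of a symplectic embedding to each domain of $K$ is a symplectic embedding and the domain images are disjoint, we get $\operatorname{vol}(K)\leq \operatorname{vol}(M)$. Combined with $w(K)\geq \delta$, this yields the uniform lower bound $\overline{w}(K) = w(K)/\sqrt{\operatorname{vol}(K)} \geq \delta'$. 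Hence Theorem \ref{thm:ellipsoid_packing_of_ribbon_complex} applies with a single threshold $a_0 = a_0(\delta')$, depending only on $M$ and $\delta$, such that $p^E_a(K) = 1$ for every $a\geq a_0$ and every complex $K$ arising from the tame packing.

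Next, fix $a\geq a_0$ and $\varepsilon>0$, and let $K$ be a connected symplectic ribbon complex whose domains are all balls or polydisks, equipped with a symplectic embedding $\varphi\colon K \overset{s}{\hookrightarrow}\operatorname{int}(M)$ and satisfying $\operatorname{vol}(K) \geq \operatorname{vol}(M) - \varepsilon$. Since $p^E_a(K) = 1$, for every $\eta>0$ there exists $\lambda>0$ with $\operatorname{vol}(E(\lambda,\lambda a)) > (1-\eta)\operatorname{vol}(K)$ such that $E(\lambda,\lambda a)$ symplectically embeds into every open neighbourhood of the image of any symplectic embedding of $K$ into the interior of a symplectic $4$-manifold. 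Applying this with the embedding $\varphi$ and the neighbourhood $U = \operatorname{int}(M)$ of $\varphi(K)$, I obtain a symplectic embedding $E(\lambda,\lambda a) \overset{s}{\hookrightarrow}\operatorname{int}(M)$. Consequently
\begin{equation*}
p^E_a(M) \geq \frac{\operatorname{vol}(E(\lambda,\lambda a))}{\operatorname{vol}(M)} > (1-\eta)\,\frac{\operatorname{vol}(M)-\varepsilon}{\operatorname{vol}(M)}.
\end{equation*}

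Finally, letting $\eta\to 0$ and $\varepsilon\to 0$—which is legitimate precisely because the threshold $a_0$ is independent of $\varepsilon$, so a single $a_0$ governs the whole family of complexes exhausting the volume of $M$—gives $p^E_a(M) = 1$ for all $a\geq a_0$, which is exactly ellipsoid embedding stability for $M$. I do not anticipate any genuine obstacle here: essentially all of the content is packaged in Theorem \ref{thm:ellipsoid_packing_of_ribbon_complex}, and the only point that requires a moment's care is the uniformity of the threshold, which is ensured by the volume-normalized width bound $\overline{w}(K)\geq \delta'$ derived in the first step.
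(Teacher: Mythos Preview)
Your argument is correct and is exactly the intended one: the paper states this as ``an immediate corollary of Theorem~\ref{thm:ellipsoid_packing_of_ribbon_complex}'' without further proof, and what you have written is simply a careful unpacking of that immediacy. The only nontrivial point---the uniformity of the threshold $a_0$ across all $K$ arising from the tame packing, secured via the bound $\overline{w}(K)\geq \delta/\sqrt{\operatorname{vol}(M)}$---is precisely the reason the uniformity clause appears in Theorem~\ref{thm:ellipsoid_packing_of_ribbon_complex}, and you have identified and used it correctly.
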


Let $M$ be a compact symplectic $4$-manifold with boundary and corners. We say that $M$ admits a \textit{$\partial$-tame packing by balls and polydisks} (or simply a \textit{$\partial$-tame packing}) if there exists an open and dense subset $U\subset \partial M$ contained in the smooth locus of the boundary of $M$ with the following property: For every finite collection $\MD$ of pairwise disjoint closed symplectic disks contained $U$, there exists $\delta>0$ such that, for every $\varepsilon>0$, there exists a connected symplectic ribbon complex $K$, all of whose domains are balls or polydisks, which admits a symplectic embedding into $M$ such that the images of the free ribbon ends of $K$ are precisely given by the disks $\MD$, and which satisfies $w(K)\geq \delta$ and $\operatorname{vol}(K)\geq \operatorname{vol}(M) - \varepsilon$.

\begin{lemma}
\label{lem:boundary_tame_packing_stratification}
Let $M$ be a compact connected symplectic $4$-manifold with boundary and corners. Suppose that $M$ admits a stratification $\MT$ such that each top-dimensional stratum of $\MT$ admits a $\partial$-tame packing by balls and polydisks. Then $M$ admits a $\partial$-tame packing by balls and polydisks.
\end{lemma}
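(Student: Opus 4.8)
The plan is to pack the closure of each top-dimensional stratum of $\MT$ using its own $\partial$-tame packing, and then to glue these packings together along the interfaces of $\MT$ into a single connected symplectic ribbon complex.

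Denote by $\overline{S_1},\dots,\overline{S_N}$ the closures of the top-dimensional strata of $\MT$, and let $U_i\subset\partial\overline{S_i}$ be the open dense subset of the smooth locus provided by the $\partial$-tame packing of $\overline{S_i}$. First I would define the open dense set $U\subset\partial M$ needed in the definition of a $\partial$-tame packing of $M$: take $U$ to be the union, over all $i$, of the intersection of $U_i$ with the smooth locus of $\partial M$ minus the strata of $\MT$ of codimension $\geq 2$; this is open and dense in $\partial M$. Now fix a finite collection $\MD$ of pairwise disjoint closed symplectic disks contained in $U$, and split it as $\MD=\bigsqcup_i\MD_i$ according to which $\partial\overline{S_i}$ each disk lies in. For every pair $(i,j)$ such that the interface $\Gamma_{ij}\coloneqq\overline{S_i}\cap\overline{S_j}$ is a codimension-one stratum of $\MT$, I would pick a nonempty finite collection $\MD_{ij}=\MD_{ji}$ of pairwise disjoint closed symplectic disks lying in the set $U_i\cap U_j$, which is open and dense in $\Gamma_{ij}$ and hence nonempty; doing this for all interfaces, and arranging all the chosen disks to be pairwise disjoint, is elementary. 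Put $\MD_i^+\coloneqq\MD_i\cup\bigcup_j\MD_{ij}$, a finite collection of pairwise disjoint closed symplectic disks contained in $U_i$.

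Applying the $\partial$-tame packing property of $\overline{S_i}$ to the collection $\MD_i^+$ yields a constant $\delta_i>0$ such that for every $\varepsilon_i>0$ there is a connected symplectic ribbon complex $K_i$ built from balls and polydisks, admitting a symplectic embedding into $\overline{S_i}$ whose free ribbon ends are precisely the disks $\MD_i^+$, with $w(K_i)\geq\delta_i$ and $\operatorname{vol}(K_i)\geq\operatorname{vol}(\overline{S_i})-\varepsilon_i$. Set $\delta\coloneqq\min_i\delta_i$. Given $\varepsilon>0$, apply this with $\varepsilon_i\coloneqq\varepsilon/N$ to obtain the complexes $K_1,\dots,K_N$. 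Composing their embeddings with the inclusions $\overline{S_i}\hookrightarrow M$ gives an embedding of $\bigsqcup_i K_i$ into $M$ whose only overlaps between distinct pieces are the shared interface disks $\MD_{ij}$ (the images of the $K_i$ meet only along common faces of the $\overline{S_i}$, and there only along their free ends). For each $D\in\MD_{ij}$, the unique free ribbon of $K_i$ ending at $D$ (approaching $D$ from inside $\overline{S_i}$) and the unique free ribbon of $K_j$ ending at $D$ (approaching from inside $\overline{S_j}$) have the same width, namely the area of $D$, and meet exactly along $D$; I would glue these two ribbons into one ribbon crossing the interface $\Gamma_{ij}$. Carrying this out at all interface disks produces a symplectic ribbon complex $K$, still built only from balls and polydisks, embedded into $M$, whose free ribbon ends are precisely $\bigsqcup_i\MD_i=\MD$, with $w(K)=\min_i w(K_i)\geq\delta$ and $\operatorname{vol}(K)=\sum_i\operatorname{vol}(K_i)\geq\sum_i\bigl(\operatorname{vol}(\overline{S_i})-\varepsilon/N\bigr)=\operatorname{vol}(M)-\varepsilon$, the last equality because the strata of codimension $\geq 1$ have zero volume. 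That $K$ is connected follows from the connectedness of each $K_i$ together with the fact that the dual graph of $\MT$ --- with vertices the top-dimensional strata and an edge for each codimension-one interface --- is connected, which in turn holds because $M$ is connected (removing the strata of codimension $\geq 2$ leaves $M$ connected, and in the complement distinct top strata are joined precisely along codimension-one interfaces). Letting $\varepsilon$ range over all positive numbers, this establishes the $\partial$-tame packing property of $M$ for the set $U$.

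The step I expect to be the main obstacle is making the glued ribbons genuinely smooth and transverse to $\Gamma_{ij}$ where they cross it: a priori the two ribbons approaching $D$ from the two sides meet only continuously along $D$, and their $\partial_t$-directions need not be compatible. I would deal with this by strengthening the $\partial$-tame packing constructions so that near each free ribbon end landing on an interface hypersurface the ribbon is \emph{straight}: in a collar $\Gamma_{ij}\times(-\rho,\rho)$ of the interface it has the product form $D\times(-\rho',0]$ with $\partial_t$ pointing along the collar direction, so that two such ribbons from opposite sides glue together tautologically into a smooth embedded ribbon. (Alternatively, one truncates each ribbon a little before $D$ and reconnects the two truncated ends --- which are nearly parallel copies of $D$ of equal area --- by a fresh short ribbon running transversally through the collar.) Checking that the resulting object is an honest symplectic embedding of a ribbon complex in the sense used for $\partial$-tame packings, with the free ends allowed on $\partial M$, is then routine. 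A secondary routine point is to confirm that each codimension-one stratum of $\MT$ either is an interface between two top-dimensional strata or lies in $\partial M$, so that the dual-graph connectivity argument applies; for the stratifications arising in this paper this is immediate from their explicit construction.
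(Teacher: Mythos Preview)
Your proof is correct and follows essentially the same approach as the paper's: define the dense open $U\subset\partial M$ from the $U_i$, pick one (or several) interface disks in $U_i\cap U_j$ at each codimension-one stratum, pack each top stratum by a connected ribbon complex with free ends at the prescribed disks, and glue along the interface disks. The paper's proof is considerably terser and simply asserts the gluing step; your added discussion of the dual-graph connectivity argument and of straightening the ribbon ends near the interface to ensure the glued ribbon is genuinely smooth and transverse is a legitimate elaboration of points the paper leaves implicit.
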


\begin{proof}
For each top-dimensional stratum $T$ of $\MT$, pick an open and dense subset $U_T \subset \partial T$ as in the definition of a $\partial$-tame packing. Let $U\subset \partial M$ be the intersection with $\partial M$ of the union of all the sets $U_T$. The set $U$ is open and dense in $\partial M$. Let $\MD \subset U$ be an arbitrary collection of pairwise disjoint symplectic disks. Our goal is to fill the volume of $M$ up to arbitrarily small error by a connected symplectic ribbon complex with free ribbon ends at the disks $\MD$ and with width bounded away from zero. For any pair of distinct top-dimensional strata $T$ and $T'$ of $\MT$ which meet at a stratum $S$ of codimension $1$, pick a closed symplectic disk contained in $S \cap U_T \cap U_{T'}$. Let $\ME$ denote the resulting collection of symplectic disks. For each top-dimensional stratum $T$ of $\MT$, we can fill arbitrarily much of the volume of $T$ by a connected symplectic ribbon complex $K_T$ with free ends at the disks $\partial T \cap (\MD \cup \ME)$ and with width bounded away from zero. Now connect all the ribbon complexes $K_T$ to a single connected ribbon complex $K$ by gluing all pairs of free ribbon ends meeting at disks in $\ME$. The free ends of $K$ are precisely the disks $\MD$. Moreover, we can arrange $K$ to fill the volume of $M$ up to arbitrarily small error while keeping the width $w(K)$ bounded away from zero.
\end{proof}

\begin{example}
\label{ex:boundary_tame_packing_ball}
For $a>0$, let $\Delta(a)\subset \BR^2$ denote the triangle with vertices $(0,0)$, $(a,0)$ and $(0,a)$. Let $Q\coloneqq [0,1]^2$ denote the square. Then the Lagrangian product $\Delta(a)\times_L Q$, whose interior is symplectomorphic to the interior of the ball $B^4(a)$, admits a $\partial$-tame packing. Indeed, recall from the proof of Lemma \ref{lem:embedding_into_lagrangian_product} that the ball $B^4(ra)$ symplectically embeds into $\Delta(a)\times_L Q$ for every $r \in (0,1)$. Moreover, the image of the embedding can be arranged to approximate $\Delta(\sqrt{r}a) \times_L \sqrt{r}Q$, translated into the interior of $\Delta(a)\times_L Q$, arbitrarily well. Let $\MD$ be a collection of symplectic disks in the smooth locus of the boundary of $\Delta(a)\times_L Q$. For every $r\in (0,1)$ sufficiently close to $1$, we can connect the disks $\MD$ to the boundary of the symplectically embedded ball $B^4(ra)$ via a disjoint collection of symplectic ribbons. We can view the result as an embedding of a symplectic ribbon complex $K$ consisting of a single ball and ribbons whose free ends are mapped to the disks $\MD$. As $r$ approaches $1$, the volume of $K$ tends to the volume of $\Delta(a)\times_L Q$, while the width $w(K)$ remains bounded away from $0$, showing that $\Delta(a)\times_L Q$ admits a $\partial$-tame packing.
\end{example}

\begin{example}
\label{ex:boundary_tame_packing_polydisk}
For $a,b>0$, let $\square(a,b)\subset \BR^2$ denote the rectangle with vertices $(0,0)$, $(a,0)$, $(a,b)$ and $(0,b)$. Again, let $Q = [0,1]^2$ denote the square. Then the Lagrangian product $\square(a,b)\times_L Q$, whose interior is symplectomorphic to the interior of the polydisk $P(a,b)$, admits a $\partial$-tame packing. In order to see this, note that we can alternatively view $\square(a,b)\times_L Q$ as the symplectic product $\square(a,1)\times \square(b,1)$. For $r\in (0,1)$, pick a symplectic embedding $B^2(ra) \overset{s}{\hookrightarrow} \square(a,1)$ whose image is a rectangle with slightly rounded corners contained in the interior of $\square(a,1)$. Pick an analogous embedding $B^2(rb)\overset{s}{\hookrightarrow} \square(b,1)$. The product of these two embeddings yields a symplectic embedding of $P(ra,rb)$ into $\square(a,1) \times \square(b,1)$. As in Example \ref{ex:boundary_tame_packing_ball}, we can connect the image of this symplectic embedding to any given collection of symplectic disks in the smooth locus of the boundary of $\square(a,b)\times_L Q$ via disjoint symplectic ribbons. Letting $r$ tend to $1$, we see that $\square(a,b)\times_L Q$ admits a $\partial$-tame packing.
\end{example}

\begin{example}
\label{ex:boundary_tame_packing_rational_ellipsoid}
For $a,b>0$, let $\Delta(a,b)\subset \BR^2$ denote the triangle with vertices $(0,0)$, $(a,0)$ and $(0,b)$. We claim that if $a/b$ is rational, then the Lagrangian product $\Delta(a,b)\times_L Q$, whose interior is symplectomorphic to the interior of the ellipsoid $E(a,b)$, admits a $\partial$-tame packing. First, we argue that $\Delta(a,b)\times_L \BT^2$ has a $\partial$-tame packing, where $\BT^2$ is the $2$-torus obtained from the square $Q$ by identifying opposite sides. Recall that for $a/b$ rational, one can decompose $\Delta(a,b)$ into finitely many triangles $T_i$, each of which can be obtained from a triangle of the form $\Delta(a_i)$ by translations and the action of $\operatorname{SL}(2,\BZ)$ (see e.g. \cite[\S 3.1]{mcd09}). This yields a stratification of $\Delta(a,b)\times_L \BT^2$ with top-dimensional strata given by $T_i\times_L \BT^2$. Each of these strata is symplectomorphic to $\Delta(a_i)\times_L \BT^2$. Recall from Example \ref{ex:boundary_tame_packing_ball} that $\Delta(a_i)\times_L Q$ admits a $\partial$-tame packing. In view of the natural inclusion $\operatorname{int}(\Delta(a_i)\times_L Q) \hookrightarrow \Delta(a_i)\times_L \BT^2$, this implies that $\Delta(a_i)\times_L \BT^2$ and hence $T_i\times_L \BT^2$ has a $\partial$-tame packing. Lemma \ref{lem:boundary_tame_packing_stratification} thus implies that that $\Delta(a,b)\times \BT^2$ admits a $\partial$-tame packing as well.

It is well-known that $\Delta(ra,rb)\times_L\BT^2$ symplectically embeds into $\Delta(a,b)\times_L Q$ for all $r\in (0,1)$. Explicit embeddings can be obtained via a variant of the symplectic embedding construction from \cite[\S 2]{sch03} and \cite[\S 4.1]{sch05} which we reviewed in the proof of Lemma \ref{lem:embedding_into_lagrangian_product}. For $s>0$, let $\BA(s) \coloneqq [0,s]\times \BT$ denote the annulus of area $s$. Note that we can regard the Lagrangian product $\Delta(a,b)\times_L \BT^2$ as a subset of the symplectic product $\BA(a)\times \BA(b)$. Similarly, we can regard the Lagrangian product $\Delta(a,b)\times_L Q$ as a subset of the symplectic product $\square(a,1)\times \square(b,1)$. Consider a family of loops $\ML_a$ in the complement of a line segment $L_a$ in the rectangle $\operatorname{int}(\square(a,1))$ as depicted in Figure \ref{fig:admissible_curve_family_rectangle}. Then there exists a symplectomorphism $\varphi_a : \operatorname{int}(\BA(a)) \rightarrow \operatorname{int}(\square(a,1))\setminus L_a$ which maps circles of the form $\left\{ x \right\}\times \BT$ in $\BA(a)$ to members of $\ML_a$. Let $\varphi_b : \operatorname{int}(\BA(b)) \rightarrow \operatorname{int}(\square(b,1)) \setminus L_b$ be the analogous symplectomorphism arising from a family of loops $\ML_b$ in the rectangle $\operatorname{int}(\square(b,1))$. Now consider the triangle $\Delta(ra,rb)$ and slightly translate it off the axes into the interior of $\Delta(a,b)$. Let $\Delta'(ra,rb)$ denote the resulting triangle. We can arrange the families of loops $\ML_a$ and $\ML_b$ such that the image of $\Delta'(ra,rb)\times_L\BT^2$ under $\varphi_a\times\varphi_b$ approximates $\Delta(ra,rb)\times_L Q$ arbitrarily well. This yields the desired embedding of $\Delta(ra,rb)\times_L \BT^2$ into $\Delta(a,b)\times Q$. Given a collection of symplectic disks $\MD$ in the smooth locus of the boundary of $\Delta(a,b)\times_L Q$, we can connect them to symplectic disks in the boundary of the image of the symplectic embedding of $\Delta(ra,rb)\times_L \BT^2$ via symplectic ribbons. Since we already know that $\Delta(ra,rb)\times_L \BT^2$ admits a $\partial$-tame packing, we can conclude that the same is true for $\Delta(a,b)\times_L Q$.
\end{example}

\begin{figure}[htpb]
\centering
\def\svgwidth{0.4\textwidth}
%% Creator: Inkscape 1.4.2 (ebf0e940, 2025-05-08), www.inkscape.org
%% PDF/EPS/PS + LaTeX output extension by Johan Engelen, 2010
%% Accompanies image file '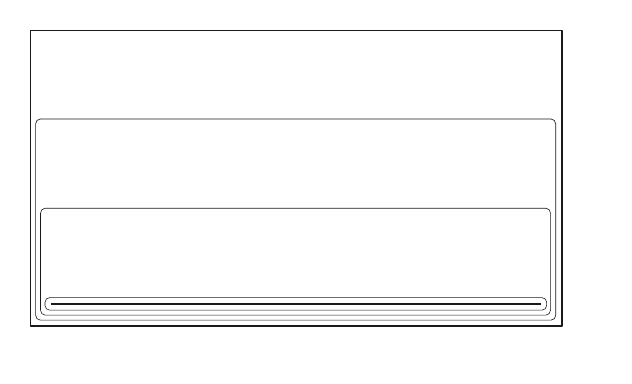' (pdf, eps, ps)
%%
%% To include the image in your LaTeX document, write
%%   \input{<filename>.pdf_tex}
%%  instead of
%%   \includegraphics{<filename>.pdf}
%% To scale the image, write
%%   \def\svgwidth{<desired width>}
%%   \input{<filename>.pdf_tex}
%%  instead of
%%   \includegraphics[width=<desired width>]{<filename>.pdf}
%%
%% Images with a different path to the parent latex file can
%% be accessed with the `import' package (which may need to be
%% installed) using
%%   \usepackage{import}
%% in the preamble, and then including the image with
%%   \import{<path to file>}{<filename>.pdf_tex}
%% Alternatively, one can specify
%%   \graphicspath{{<path to file>/}}
%% 
%% For more information, please see info/svg-inkscape on CTAN:
%%   http://tug.ctan.org/tex-archive/info/svg-inkscape
%%
\begingroup%
  \makeatletter%
  \providecommand\color[2][]{%
    \errmessage{(Inkscape) Color is used for the text in Inkscape, but the package 'color.sty' is not loaded}%
    \renewcommand\color[2][]{}%
  }%
  \providecommand\transparent[1]{%
    \errmessage{(Inkscape) Transparency is used (non-zero) for the text in Inkscape, but the package 'transparent.sty' is not loaded}%
    \renewcommand\transparent[1]{}%
  }%
  \providecommand\rotatebox[2]{#2}%
  \newcommand*\fsize{\dimexpr\f@size pt\relax}%
  \newcommand*\lineheight[1]{\fontsize{\fsize}{#1\fsize}\selectfont}%
  \ifx\svgwidth\undefined%
    \setlength{\unitlength}{303.30708661bp}%
    \ifx\svgscale\undefined%
      \relax%
    \else%
      \setlength{\unitlength}{\unitlength * \real{\svgscale}}%
    \fi%
  \else%
    \setlength{\unitlength}{\svgwidth}%
  \fi%
  \global\let\svgwidth\undefined%
  \global\let\svgscale\undefined%
  \makeatother%
  \begin{picture}(1,0.60747664)%
    \lineheight{1}%
    \setlength\tabcolsep{0pt}%
    \put(0,0){\includegraphics[width=\unitlength,page=1]{admissible_curve_family_rectangle.pdf}}%
    \put(0.48477071,0.0147833){\color[rgb]{0.10196078,0.10196078,0.10196078}\makebox(0,0)[lt]{\lineheight{1.25}\smash{\begin{tabular}[t]{l}$1$\end{tabular}}}}%
    \put(0.93540806,0.31747553){\color[rgb]{0.10196078,0.10196078,0.10196078}\makebox(0,0)[lt]{\lineheight{1.25}\smash{\begin{tabular}[t]{l}$a$\end{tabular}}}}%
    \put(0.62931707,0.1610261){\color[rgb]{0.10196078,0.10196078,0.10196078}\makebox(0,0)[lt]{\lineheight{1.25}\smash{\begin{tabular}[t]{l}$L_a$\end{tabular}}}}%
  \end{picture}%
\endgroup%

\caption{A special admissible family of loops $\ML_a$ in $\square(a,1)\setminus L_a$.}
\label{fig:admissible_curve_family_rectangle}
\end{figure}

\begin{corollary}
\label{cor:tame_packing_perturbed_affine_subgraph}
In the setting of Theorem \ref{thm:stratification_perturbed_affine_subgraph}, the subgraph $D(H)$ admits a $\partial$-tame packing by balls and polydisks. Therefore, ellipsoid embedding stability holds for $D(H)$.
\end{corollary}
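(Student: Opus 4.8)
The plan is to feed Theorem~\ref{thm:stratification_perturbed_affine_subgraph} into Lemma~\ref{lem:boundary_tame_packing_stratification}. By the former, $D(H)$ carries a stratification whose top-dimensional strata are symplectomorphic either to a model of the form $D(H_\alpha|_{[0,1]\times\BA(c)})$ with $\alpha=p/q>0$ rational, or to a model of the form $D(C|_{[0,1]\times\BA(c)})$ with $C>0$ a positive constant. By the latter, it therefore suffices to prove that each of these two model domains admits a $\partial$-tame packing by balls and polydisks. Once this is done, a $\partial$-tame packing of $D(H)$ restricts to an honest tame packing --- for instance by applying the definition with $\MD$ a single disk and then deleting the resulting free ribbon, which leaves the complex inside $\operatorname{int}(D(H))$ and does not change its volume, since the volume of a ribbon complex only counts its domains --- so ellipsoid embedding stability for $D(H)$ follows from Corollary~\ref{cor:from_tame_packing_to_ellipsoid_packing}.

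For the polydisk-type model I would group the symplectic coordinates $(s,t,x,y)$ on $\BR\times[0,1]\times\BA$ into the conjugate pairs $(s,t)$ and $(x,y)$ and observe that $D(C|_{[0,1]\times\BA(c)})$ is then the symplectic product $\square(C,1)\times\BA(c)$ of the rectangle $[0,C]\times[0,1]$ of area $C$ with the annulus $\BA(c)$. Since the interior of a rectangle of area $C$ is symplectomorphic to the interior of $B^2(C)$, and a round disk of area slightly below $c$ embeds symplectically into the interior of $\BA(c)$, the interior of $D(C|_{[0,1]\times\BA(c)})$ contains symplectically embedded polydisks $P(rC,rc')$ with $r<1$ and $c'<c$, of volume tending to $cC=\operatorname{vol}(D(C|_{[0,1]\times\BA(c)}))$. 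Exactly as in Examples~\ref{ex:boundary_tame_packing_ball} and~\ref{ex:boundary_tame_packing_polydisk}, one can arrange these embeddings so that, for any finite collection of pairwise disjoint symplectic disks in a fixed open dense subset of the smooth boundary locus, the image of the polydisk can be joined to those disks by a disjoint family of symplectic ribbons of a fixed positive width; the resulting ribbon complex is a single polydisk together with ribbons, its width stays bounded away from $0$ as $r\to1$, and its volume tends to the full volume. This yields a $\partial$-tame packing of $D(C|_{[0,1]\times\BA(c)})$.

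For the ellipsoid-type model, the key point is that in the other grouping of coordinates $D(H_\alpha|_{[0,1]\times\BA(c)})$ is the Lagrangian product $\Delta(c,\alpha c)\times_L([0,1]\times\BT)$ of the right triangle $\Delta(c,\alpha c)$ with the cylinder $[0,1]\times\BT$ of area $1$, which is, embedding-theoretically, wedged between the square $Q$ and the torus $\BT^2$. I would then adapt the construction of Example~\ref{ex:boundary_tame_packing_rational_ellipsoid}: regarding $\Delta(c,\alpha c)\times_L([0,1]\times\BT)$ as a subset of $\square(c,1)\times\BA(\alpha c)$ and applying to the first factor a Schlenk admissible-curve-family symplectomorphism $\operatorname{int}\BA(c)\to\operatorname{int}\square(c,1)\setminus L$ while keeping the identity on the second factor --- i.e.\ unrolling only the single circle direction instead of both, as in Example~\ref{ex:boundary_tame_packing_rational_ellipsoid} --- one embeds, for each $r<1$, a slightly translated copy of $\Delta(rc,r\alpha c)\times_L\BT^2$ into $\operatorname{int}(D(H_\alpha|_{[0,1]\times\BA(c)}))$ with volume tending to $\frac{1}{2}\alpha c^2$. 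Since $\alpha=p/q$ is rational, the ratio of the legs of $\Delta(rc,r\alpha c)$ is rational, so by Example~\ref{ex:boundary_tame_packing_rational_ellipsoid} this copy carries a $\partial$-tame packing; connecting the free ends of its packing complex to prescribed boundary disks of $D(H_\alpha|_{[0,1]\times\BA(c)})$ by a disjoint family of ribbons of fixed width and letting $r\to1$ then produces a $\partial$-tame packing of $D(H_\alpha|_{[0,1]\times\BA(c)})$. The width stays bounded below because the underlying $SL(2,\BZ)$--triangle decomposition of $\Delta(rc,r\alpha c)$ has combinatorial complexity depending only on $q$ and scales linearly in $r$.

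The main obstacle I anticipate is the ellipsoid-type model, and specifically the bookkeeping in this Schlenk-type embedding: one must check that a copy of $\Delta(rc,r\alpha c)\times_L\BT^2$ of volume approaching the full volume really does fit inside $\Delta(c,\alpha c)\times_L([0,1]\times\BT)$, and is positioned so that its boundary remains reachable from any prescribed disks in the smooth boundary of the model through pairwise disjoint ribbons of a width bounded away from $0$ independently of the error $\varepsilon$. Conceptually nothing new is needed beyond what is already carried out in Example~\ref{ex:boundary_tame_packing_rational_ellipsoid}, but this is where the care lies. Finally, the uniformity asserted in Theorem~\ref{thm:stratification_perturbed_affine_subgraph} (a uniform bound on $q$ and uniform positive lower bounds on $c$ and $C$) propagates through all of these steps, so the $\partial$-tame packing --- and hence the ellipsoid embedding stability threshold --- can in addition be chosen uniformly over all $H$ sufficiently $C^\infty$ close to a fixed $h$.
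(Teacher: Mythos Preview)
Your overall strategy—apply Theorem~\ref{thm:stratification_perturbed_affine_subgraph}, verify a $\partial$-tame packing for each of the two model strata, then invoke Lemma~\ref{lem:boundary_tame_packing_stratification} and Corollary~\ref{cor:from_tame_packing_to_ellipsoid_packing}—is exactly the paper's. The difference is in how the two models are handled, and here the paper is considerably more direct.

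The paper's single trick, which you did not use, is to cut the annulus $\BA(c)=[0,c]\times\BT$ along the segment $[0,c]\times\{0\}$. This replaces the circle factor $\BT$ by $[0,1]$ and turns $D(H_\alpha|_{[0,1]\times\BA(c)})$ literally into $\Delta(c,\alpha c)\times_L Q$ and $D(C|_{[0,1]\times\BA(c)})$ into $\square(c,C)\times_L Q$. Examples~\ref{ex:boundary_tame_packing_rational_ellipsoid} and~\ref{ex:boundary_tame_packing_polydisk} then apply off the shelf. The transfer back is immediate: the interior of the cut domain is a full-measure open subset of the original, and any boundary disks of the original chosen away from the cut locus are already boundary disks of the cut domain.

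Your route for the ellipsoid-type model—embed $\Delta(rc,r\alpha c)\times_L\BT^2$ via a \emph{one-factor} Schlenk map and then invoke the $\BT^2$ part of Example~\ref{ex:boundary_tame_packing_rational_ellipsoid}—works in principle, but it essentially re-derives half of that example instead of citing it. There is also a coordinate slip: with $\omega = ds\wedge dt + dx\wedge dy$ the symplectic-product decomposition of the model is $\square(\alpha c,1)\times\BA(c)$ (the $(s,t)$-factor is the rectangle and the $(x,y)$-factor is the annulus), not $\square(c,1)\times\BA(\alpha c)$ as you wrote; the Schlenk map must act on the $(s,t)$-factor, so it is $\operatorname{int}\BA(\alpha c)\to\operatorname{int}\square(\alpha c,1)\setminus L$. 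This is precisely the ``bookkeeping'' you flagged as the obstacle, and the paper's cut sidesteps it entirely. Your polydisk-type argument is fine; it is just a slight variant of Example~\ref{ex:boundary_tame_packing_polydisk}, which the paper again reaches directly via the cut.
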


\begin{proof}
Theorem \ref{thm:stratification_perturbed_affine_subgraph} yields a stratification of $D(H)$ into subgraphs of the form $D(H_\alpha|_{[0,1]\times \BA(c)})$ for rational $\alpha$ and $D(C|_{[0,1]\times \BA(c)})$. Cutting the annulus $\BA(c) = [0,c]\times \BT$ along the line segment $[0,c]\times \left\{ 0 \right\}$ turns the former type of stratum into $\Delta(c,\alpha c)\times_L Q$ and the latter type into $\square(c,C)\times_L Q$. By Examples \ref{ex:boundary_tame_packing_rational_ellipsoid} and \ref{ex:boundary_tame_packing_polydisk}, both Lagrangian products admit $\partial$-tame packings. Thus by Lemma \ref{lem:boundary_tame_packing_stratification} the same is true for $D(H)$.
\end{proof}

\begin{theorem}
\label{thm:tame_packing_perturbed_linear_subgraph}
Let $h:[0,1]\times \BA \rightarrow \BR$ be a Hamiltonian of the form $h(t,x,y) = ax$ for a real number $a>0$. Let $H:[0,1]\times \BA \rightarrow \BR$ be a $\partial$-admissible Hamiltonian which vanishes on $[0,1]\times \left\{ 0 \right\}\times \BT$. If $H$ is sufficiently $C^\infty$ close to $h$, then $D(H)$ admits a $\partial$-tame packing by balls and polydisks.
\end{theorem}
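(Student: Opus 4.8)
\emph{Plan.} The strategy is to reduce $D(H)$ to a normal form to which the stratification technology of Section~\ref{sec:strat_of_subgraphs} applies, and then to $\partial$-tame pack the resulting pieces. The one genuinely new feature is that $h(t,x,y)=ax$ vanishes on $\partial_-\BA=\{0\}\times\BT$, so $D(H)$ \emph{pinches} along $\{x=0\}$ and one cannot stratify off a strictly positive constant ``floor'' as in the proof of Theorem~\ref{thm:stratification_perturbed_affine_subgraph}; instead the floor must itself be a linear Hamiltonian vanishing on $\partial_-\BA$, which forces the affine pieces near $x=0$ to carry a rational slope.

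First I would establish a normal-form statement. Since $H$ is $C^\infty$ close to $h$ we have $\varphi_H^1$ close to the rotation $R_a$, $A_-(H)=0$, $A_+(H)$ close to $a$, and $V(H)$ close to $a/2$. Choose a piecewise-linear function $\tilde g\colon [0,1]\to [0,\infty)$ with only rational slopes, all close to $a$, satisfying $\tilde g(0)=0$, $\tilde g(1)=A_+(H)$ and $\int_0^1\tilde g=V(H)$ (allowing irrational breakpoints in order to realize the possibly irrational endpoint value $A_+(H)$; the two scalar constraints are met by adjusting breakpoint positions), and let $K_0(x):=\tilde g(x)$ be the associated smoothed autonomous $\partial$-admissible Hamiltonian, so that $A_\pm(K_0)=A_\pm(H)$ and $V(K_0)=V(H)$. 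Then $\Xi^0:=\overline{K_0}\#H$ is $C^\infty$ small, $\partial$-admissible, and satisfies $\int_{[0,1]\times\BA}\Xi^0\,dt\wedge\omega=0$ and $\int_{[0,1]}\Xi^0(t,\partial_\pm\BA)\,dt=0$. Corollary~\ref{cor:rot_decomp} (applied with a small rational $\gamma$, say $\gamma<1/\ell$, and suitably small neighbourhoods $\MU,\MV$) produces a family $(\Xi^\lambda)_{\lambda\in[0,1]}$ of $C^\infty$ small $\partial$-admissible Hamiltonians with vanishing volume and boundary integrals and with $\varphi_{\Xi^\lambda}^1$ independent of $\lambda$, such that $\Xi^1=\#_{i=1}^\ell\varepsilon_i\psi_i^*H_\gamma$ with $\psi_i$ close to the identity. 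Put $H':=K_0\#\Xi^1$ and $H_+^\lambda:=K_0\#\Xi^\lambda$; this runs from $H_+^0=K_0\#\Xi^0=H$ to $H_+^1=H'$, with $\varphi_{H_+^\lambda}^1$, $A_\pm(H_+^\lambda)$ and $V(H_+^\lambda)$ all independent of $\lambda$, with each $H_+^\lambda$ vanishing on $\partial_-\BA$ and strictly positive on the interior (since $H_+^\lambda=\tilde g(x)+(\text{something }C^\infty\text{ small vanishing on }\partial_-\BA)$ and $\tilde g'(0)$ is close to $a>0$). By Lemma~\ref{lem:isotopies_of_subgraphs} applied with $H_-^\lambda\equiv0$ we conclude $D(H)$ is symplectomorphic to $D(H')$. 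Here one must note that Lemma~\ref{lem:isotopies_of_subgraphs} is invoked in the degenerate case $H_+=H_-$ on the component $\partial_-\BA$: the corresponding boundary stratum of $D(H_-^\lambda,H_+^\lambda)$ collapses to a curve and the area-preserving matching in Step~3 of its proof becomes trivial, so the argument goes through (alternatively one applies the lemma on $[\delta,1]\times\BT$ and absorbs the thin pinch $[0,\delta]\times\BT$, which is itself of the type treated below).

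It then remains to $\partial$-tame pack $D(H')$. Unwinding $H'=K_0\#\bigl(\#_{i=1}^\ell\varepsilon_i\psi_i^*H_\gamma\bigr)$ and applying Lemma~\ref{lem:symplectomorphisms_of_subgraphs} repeatedly, exactly as in Steps~1--3 of the proof of Theorem~\ref{thm:stratification_perturbed_affine_subgraph}, one obtains a stratification of $D(H')$ whose top strata are, up to symplectomorphism, of the form $D(H_\gamma|_{[0,1]\times\BA(c)})$ with $\gamma$ rational (coming from the rotation factors; the $\varepsilon_i=-1$ factors are brought into this form via the identity $\gamma-H_\gamma=H_\gamma\circ\iota$ for an area-preserving involution $\iota$ of $\BA$), $D(\gamma_1 x|_{[0,1]\times\BA(c)})$ with $\gamma_1$ a rational slope of $\tilde g$ near $x=0$ (the pinch stratum), and $D(C|_{[0,1]\times\BA(c)})$ with $C$ a positive constant (coming from the affine pieces of $\tilde g$ away from $x=0$ and from the constant shifts $\min(\varepsilon_i\psi_i^*H_\gamma)$). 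The delicate bookkeeping point is that these constant shifts cannot be subtracted beneath the pinch floor without making it negative near $x=0$; instead, after reparametrizing the concatenated isotopy so the relevant spatial-constant ambiguity is time-independent, they are absorbed into the constant parts of $\tilde g$ and into the $\varepsilon_i=-1$ strata, which near $x=0$ have height bounded below by roughly $\gamma$ and hence do not pinch, and choosing $\gamma$ small keeps all parameters close to their model values. Every pinch stratum and every rotation stratum is of the form $\Delta(c,\gamma c)\times_L Q$ with rational aspect ratio and hence $\partial$-tame packs by Example~\ref{ex:boundary_tame_packing_rational_ellipsoid}; every constant stratum is $\square(c,C)\times_L Q$ and $\partial$-tame packs by Example~\ref{ex:boundary_tame_packing_polydisk}. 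Lemma~\ref{lem:boundary_tame_packing_stratification} then yields a $\partial$-tame packing of $D(H')$, hence of $D(H)$.

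The main obstacle is precisely the pinch at $\partial_-\BA$: it prevents the use of a positive constant floor, forcing a rational-slope linear floor, and this in turn requires (i) a version of the normal-form/isotopy argument that tolerates $H_+=H_-$ on one boundary component, and (ii) care in redistributing the constant shifts produced by the rotation factorization so that no stratum acquires negative height near $x=0$. Everything else is a routine adaptation of Section~\ref{sec:strat_of_subgraphs}.
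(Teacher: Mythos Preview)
Your overall strategy—reduce to a normal form $H'$ via Corollary~\ref{cor:rot_decomp} and then stratify—is reasonable, but the stratification step near the pinch at $\partial_-\BA$ does not work as described, and the gap is essential. The rotation factorization produces pieces $\varepsilon_i\psi_i^*H_\gamma$ which, when shifted to be non-negative (as is required for a stacking stratification via Lemma~\ref{lem:symplectomorphisms_of_subgraphs}), acquire value $\gamma$ at $x=0$ for each $\varepsilon_i=-1$. The total of these shifts is $\ell\gamma/2$, and it must be compensated by the floor; but $K_0(0)=0$, so $K_0-\ell\gamma/2<0$ near $x=0$ and the floor stratum is ill-defined there. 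Your proposed ``absorption into the $\varepsilon_i=-1$ strata'' does not resolve this: if you instead stack the non-negative rotation pieces directly from $0$, their sum at $x=0$ is $\ell\gamma/2>0$, which \emph{overshoots} $H'(0)=0$, so the stack is not contained in $D(H')$. Reordering is blocked because $\#$ is not commutative. And if you try to isolate a pinch stratum $D(H'|_{[0,1]\times[0,c]\times\BT})$, this is a \emph{perturbation} of $D(\gamma_1 x)$—exactly the object the theorem is about—so the argument becomes circular; Example~\ref{ex:boundary_tame_packing_rational_ellipsoid} only handles the exact model $\Delta(c,\gamma_1 c)\times_L Q$.

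The paper sidesteps the pinch entirely by a coordinate swap $\sigma:(s,t,x,y)\mapsto(x,y,s,t)$. After first splitting off a rational bottom layer $D(H_\alpha)$ and applying a $\min$-truncation so that the remaining piece $D(\tilde F)$ is $1$-periodic in $t$, the swap $\sigma$ sends $D(\tilde F|_{\BT\times\BA})$ to an ordinary subgraph $D(E)$ of a Hamiltonian $E$ on $\BT\times\BA(2\alpha/3)$ close to the \emph{strictly positive} affine function $1-x/(2\alpha)$. The pinch at $x=0$ becomes the benign floor $s=0$ after the swap, and Corollary~\ref{cor:tame_packing_perturbed_affine_subgraph} applies directly. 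This swap is the key idea you are missing. (Your degenerate application of Lemma~\ref{lem:isotopies_of_subgraphs} with $H_+^\lambda=H_-^\lambda$ on $\partial_-\BA$ is a separate, lesser issue; the paper's route avoids it as well.)
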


\begin{proof}
We divide the annulus $\BA$ into three smaller annuli
\begin{equation*}
\BA_0 \coloneqq [0,\frac{1}{3}]\times \BT \qquad \BA_1 \coloneqq [\frac{1}{3},\frac{2}{3}]\times \BT \qquad \BA_2 \coloneqq [\frac{2}{3},1]\times \BT.
\end{equation*}
Pick a rational number $\alpha$ close to $\frac{a}{2}$. Note that both $H_\alpha$ and $H\# \overline{H}_\alpha$ are $C^\infty$ close to $h/2$. We may therefore chose a Hamiltonian $G:[0,1]\times \BA \rightarrow \BR$ such that
\begin{itemize}
\item $G = H \# \overline{H}_\alpha$ in a neighbourhood of $[0,1]\times \BA_0$
\item $G = H_\alpha$ in a neighbourhood of $[0,1]\times \BA_2$
\item $G$ is $C^\infty$ close to $h/2$.
\end{itemize}
We divide $D(H)$ into two pieces $D(G)$ and $D(G,H)$. By Lemma \ref{lem:boundary_tame_packing_stratification}, it suffices to show that each of these two pieces $D(G)$ and $D(G,H)$ has a $\partial$-tame packing.

By Lemma \ref{lem:symplectomorphisms_of_subgraphs}, $D(G,H)$ is symplectomorphic to $D(\overline{G}\# H)$, so it suffices to find a $\partial$-tame packing for $D(\overline{G} \# H)$. On the annulus $\BA_0$, we have
\begin{equation*}
\overline{G} \# H = H_\alpha \# \overline{H} \# H = H_\alpha.
\end{equation*}
We stratify $D(\overline{G} \# H)$ into two pieces $D(\overline{G} \# H|_{[0,1]\times \BA_0}) = D(H_\alpha|_{[0,1]\times \BA_0})$ and $D(\overline{G} \# H|_{[0,1]\times (\BA_1\cup \BA_2)})$. The former piece admits a $\partial$-tame packing by Example \ref{ex:boundary_tame_packing_rational_ellipsoid}. The latter piece is precisely of the form considered in Theorem \ref{thm:stratification_perturbed_affine_subgraph} and therefore admits a $\partial$-tame packing by Corollary \ref{cor:tame_packing_perturbed_affine_subgraph}. Applying again Lemma \ref{lem:boundary_tame_packing_stratification}, we see that $D(\overline{G} \# H)$ admits a $\partial$-tame packing.

Let us now turn to $D(G)$. Define $F$ by
\begin{equation*}
F\coloneqq \min \left\{ G, \frac{2\alpha}{3} \right\}.
\end{equation*}
We stratify $D(G)$ into $D(F)$ and $\operatorname{clos}(\operatorname{int}D(F,G))$. Note that the latter stratum is symplectomorphic to $D(H_\alpha|_{[0,1]\times \BA_0})$ and hence admits a $\partial$-tame packing. It remains to deal with $D(F)$. On a neighbourhood of $[0,1]\times \BA_2$, the Hamiltonian $F$ is autonomous. This is not necessarily the case outside of this neighbourhood, but it is possible to construct a Hamiltonian $\tilde{F}$ which is $C^\infty$ close to $F$, agrees with $F$ on a neighbourhood of $[0,1]\times \BA_2$, satisfies $\tilde{F}_t = \tilde{F}_0$ for all $t\in [0,1]$ sufficiently close to the endpoints of $[0,1]$, and has the property that $D(F)$ and $D(\tilde{F})$ are symplectomorphic. Since $\tilde{F}_t = \tilde{F}_0$ for $t$ close to $0$ and $1$, the Hamiltonian $\tilde{F}$ descends to a $1$-periodic Hamiltonian defined on $\BT\times \BA$. Let us write $\tilde{F}|_{\BT\times \BA}$ to refer to this $1$-periodic Hamiltonian.

We claim that $D(\tilde{F}|_{\BT \times \BA})$ admits a $\partial$-tame packing. Indeed, consider the symplectomorphism
\begin{equation*}
\sigma : (\BR \times \BT)^2  \rightarrow (\BR\times \BT)^2 \qquad (s,t,x,y) \mapsto (x,y,s,t)
\end{equation*}
swapping the two factors. The image of $D(\tilde{F}|_{\BT\times \BA})$ under $\sigma$ is of the form $D(E)$ for a $\partial$-admissible Hamiltonian $E:\BT\times \BA(2\alpha/3) \rightarrow \BR$ which is a $C^\infty$ small perturbation of
\begin{equation*}
e : \BT\times \BA(2\alpha/3) \rightarrow \BR \qquad e(t,x,y) = 1 - \frac{x}{2\alpha}.
\end{equation*}
By Corollary \ref{cor:tame_packing_perturbed_affine_subgraph}, $D(E|_{[0,1]\times \BA(2\alpha/3)})$ admits a $\partial$-tame packing. In view of the natural inclusion
\begin{equation*}
\operatorname{int}(D(E|_{[0,1]\times \BA(2\alpha/3)})) \subset D(E),
\end{equation*}
the same is true for $D(E)$ and hence for $D(\tilde{F}|_{\BT \times \BA})$.

It remains to deduce that $D(\tilde{F})$ admits a $\partial$-tame packing. This follows from an argument similar to the one explained in Example \ref{ex:boundary_tame_packing_rational_ellipsoid}. Set $r \coloneqq \frac{2\alpha}{3}$. Then $D(\tilde{F})$ is a subset of the symplectic product $\BA(r) \times \BA$ and $D(\tilde{F}|_{[0,1]\times \BA})$ is a subset of $\square(r,1)\times \BA$. As in Example \ref{ex:boundary_tame_packing_rational_ellipsoid}, we obtain a symplectomorphism $\varphi$ between $\operatorname{int}(\BA(r))$ and the complement of a line segment in $\square(r,1)$, see Figure \ref{fig:admissible_curve_family_rectangle}. The image of $\operatorname{int}(D(\tilde{F}))$ under $\varphi\times \operatorname{id}_\BA$ is an approximation of $D(\tilde{F}|_{[0,1]\times \BA})$. Let $Z \subset \operatorname{int}(D(\tilde{F}))$ be a domain conformally symplectomorphic to $D(\tilde{F})$ obtained by slightly pushing the boundary of $D(\tilde{F})$ inwards. Given a collection of disks in a suitable open and dense subset of the boundary of $D(\tilde{F}|_{[0,1]\times \BA})$, we can connect them to the image of $Z$ under $\varphi\times \operatorname{id}_\BA$ via symplectic ribbons. Since we already know that $Z$ admits a $\partial$-tame packing, we can conclude that the same is true for $D(\tilde{F}|_{[0,1]\times \BA})$.
\end{proof}

\section{Symplectic frusta}
\label{sec:symplectic_frusta}

In view of Lemma \ref{lem:boundary_tame_packing_stratification} and Corollary \ref{cor:from_tame_packing_to_ellipsoid_packing}, in order to show that ellipsoid embedding stability holds for a symplectic $4$-manifold $M$, we need to decompose $M$ into pieces which admit $\partial$-tame packings by balls and polydisks. We introduce the notion of a \textit{perturbed symplectic frustum with cuts}, which will be technically convenient for this purpose. The main  result of this section, Theorem \ref{thm:packing_perturbed_frusta_with_cuts}, states that each perturbed symplectic frustum with cuts admits a $\partial$-tame packing by balls and polydisks. This is a relatively straightforward consequence of Corollary \ref{cor:tame_packing_perturbed_affine_subgraph} and Theorem \ref{thm:tame_packing_perturbed_linear_subgraph}.\\

Let $a,b,c>0$ be positive real numbers and suppose that $b<c$. Define the Hamiltonian
\begin{equation*}
H_{a,b,c} : [0,1]\times B^2(c) \rightarrow \BR \qquad H_{a,b,c}(t,p) \coloneqq a \cdot \min \left\{ 1, \frac{c-\pi|p|^2}{c-b} \right\}.
\end{equation*}
The \textit{symplectic frustum} $F(a,b,c)$ is defined to be
\begin{equation*}
F(a,b,c) \coloneqq D(H_{a,b,c}) \subset \BR\times [0,1] \times B^2(c) \subset \BR^4.
\end{equation*}

Any symplectic frustum $F = F(a,b,c)$ is a manifold with boundary and corners. We abbreviate the five top-dimensional strata of the boundary $\partial F$ by
\begin{eqnarray*}
\partial_0 F &\coloneqq & \partial F \cap (\BR\times \left\{ 0 \right\} \times \BR^2)\\
\partial_1 F &\coloneqq &\partial F \cap (\BR\times \left\{ 1 \right\} \times \BR^2)\\
\partial_+ F &\coloneqq &\partial F \cap (\left\{ a \right\}\times \BR \times \BR^2)\\
\partial_- F &\coloneqq &\partial F \cap (\left\{ 0 \right\}\times \BR \times \BR^2)\\
\partial_\bullet F &\coloneqq &\operatorname{clos} (\partial F \cap (0,a)\times (0,1) \times \BR^2).
\end{eqnarray*}
See Figure \ref{fig:frustum} for an illustration. The $2$-dimensional strata of $\partial F$ are either symplectic, meaning that the ambient symplectic form on $\BR^4$ restricts to an area form, or Lagrangian, meaning that the symplectic form vanishes. The symplectic strata are given by $\partial_j F \cap \partial_\pm F$ and $\partial_j F \cap \partial_\bullet F$ for $j \in \left\{ 0,1 \right\}$. The Lagrangian strata are $\partial_\pm F \cap \partial_\bullet F$.

\begin{figure}[htpb]
\centering
\def\svgwidth{\textwidth}
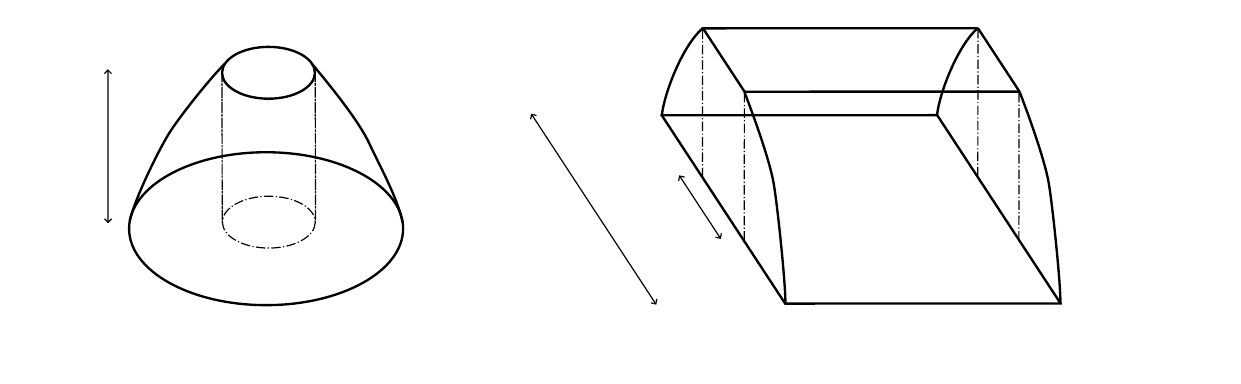
\caption{On the left the intersection $F \cap (\BR \times\left\{ 0 \right\}\times B^2(c))$. On the right a schematic of the boundary strata of $F$.}
\label{fig:frustum}
\end{figure}

A domain $X\subset \BR^4$ is called a \textit{perturbed symplectic frustum} if there exist a symplectic frustum $F\subset \BR^4$ and a diffeomorphism $\varphi$ of $\BR^4$ such that the following properties are satisfied:
\begin{itemize}
\item $\varphi(F) = X$
\item All $2$-dimensional boundary strata of $X$ are either symplectic or Lagrangian.
\item $\varphi$ maps symplectic $2$-dimensional boundary strata of $\partial F$ to symplectic $2$-dimensional boundary strata of $\partial X$. It preserves the orientations of these strata induced by the symplectic form, but not necessarily the symplectic form itself.
\item $\varphi$ maps Lagrangian $2$-dimensional boundary strata of $\partial F$ to Lagrangian $2$-dimensional boundary strata of $\partial X$.
\end{itemize}

If $X = \varphi(F)$ is a perturbed symplectic frustum, we abbreviate its top-dimensional boundary strata by $\partial_*X \coloneqq \varphi(\partial_* F)$ for $*\in \left\{ 0,1,+,-,\bullet \right\}$. We endow the space of perturbed symplectic frusta with the $C^\infty$ topology. A sequence of perturbed symplectic frusta $(X_k)_k$ converges to a perturbed symplectic frustum $X$ with respect to this topology if and only if there exists a sequence of diffeomorphisms $(\varphi_k)_k$ of $\BR^4$ converging to the identity in the $C^\infty$ topology such that $X_k = \varphi_k(X)$ for all sufficiently large $k$.

\begin{theorem}
\label{thm:packing_perturbed_frusta}
Let $F$ be a symplectic frustum. Then each perturbed symplectic frustum $X$ which is sufficiently $C^\infty$ close to $F$ admits a $\partial$-tame packing by balls and polydisks.
\end{theorem}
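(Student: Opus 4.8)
The plan is to reduce $X$ to a standard subgraph normal form, decompose that normal form into an ``inner'' perturbed polydisk over a disk and an ``outer'' perturbed affine subgraph over an annulus, and then invoke Corollary~\ref{cor:tame_packing_perturbed_affine_subgraph}, Theorem~\ref{thm:tame_packing_perturbed_linear_subgraph}, and Lemma~\ref{lem:boundary_tame_packing_stratification}.

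\emph{Step 1 (normal form).} I would first show that a perturbed symplectic frustum $X$ which is sufficiently $C^\infty$ close to $F = F(a,b,c) = D(H_{a,b,c})$ is symplectomorphic to $D(K)$ for a $\partial$-admissible Hamiltonian $K\colon [0,1]\times B^2(c)\to\BR$ which is $C^\infty$ close to $H_{a,b,c}$; after a further harmless modification of $K$ supported near $\partial B^2(c)$ we may also assume $K$ vanishes on the outer circle $\{\pi|p|^2 = c\}$. This is proved by a relative Moser/Weinstein-neighbourhood argument: straighten $\partial_0X,\partial_1X$ to $\{t=0\},\{t=1\}$, straighten $\partial_- X$ to a graph and then to $\{s=0\}$ (using the translation $(s,t,p)\mapsto(s-g(t),t,p)$ and the maps $f_H$ of Lemma~\ref{lem:symplectomorphisms_of_subgraphs}), and straighten $\partial_+X\cup\partial_\bullet X$ to $\operatorname{graph}(K)$; one matches the return maps of the characteristic foliations, the areas of the Lagrangian strips along $\partial_\pm X\cap\partial_\bullet X$, and the total volume, exactly as in the proof of Lemma~\ref{lem:isotopies_of_subgraphs}. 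Smallness of the perturbation $\varphi$ forces the resulting $K$ to be $C^\infty$ close to $H_{a,b,c}$, and all constants produced below will vary continuously so that the eventual width bound $\delta$ can be taken uniform in a neighbourhood of $F$. I expect this step to be the main obstacle: dealing with the corners and with the mixed symplectic/Lagrangian $2$-strata of $\partial X$ requires care, although it follows the blueprint of Lemma~\ref{lem:isotopies_of_subgraphs} closely.

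\emph{Step 2 (decomposition of the normal form).} Write $Y \coloneqq D(K)$. Since $H_{a,b,c}\equiv a$ on $\{\pi|p|^2\le b\}$ and $H_{a,b,c} = \tfrac{a}{c-b}\,\xi$ on $\{b\le\pi|p|^2\le c\}$ in the area coordinate $\xi \coloneqq c - \pi|p|^2$, the restriction of $K$ to $[0,1]\times\{\pi|p|^2\le b\}$ is $C^\infty$ close to the strictly positive constant $a$, and its restriction to the annular region $\{b\le\pi|p|^2\le c\}\cong\BA(c-b)$ is $C^\infty$ close to the linear Hamiltonian $\xi\mapsto\tfrac{a}{c-b}\xi$, which vanishes on the boundary component $\{\xi=0\} = \{\pi|p|^2 = c\}$. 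After a routine modification of $K$ near $\{\pi|p|^2=b\}$ by a symplectomorphism of $Y$ (a Hamiltonian isotopy depending only on $\pi|p|^2$ and $t$), so that the restrictions of $K$ to the two pieces are $\partial$-admissible, this realizes $Y$ as a stratification with top strata $Y_{\mathrm{out}} \coloneqq D\bigl(K|_{[0,1]\times\{b\le\pi|p|^2\le c\}}\bigr)$ and $Y_{\mathrm{in}}\coloneqq D\bigl(K|_{[0,1]\times B^2(b)}\bigr)$. The piece $Y_{\mathrm{out}}$ is precisely of the form treated in Theorem~\ref{thm:tame_packing_perturbed_linear_subgraph} (after the flip $\xi$), so it admits a $\partial$-tame packing by balls and polydisks. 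By Lemma~\ref{lem:boundary_tame_packing_stratification} it remains to handle the ``perturbed polydisk'' $Y_{\mathrm{in}}$.

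\emph{Step 3 (the perturbed polydisk, by a limiting argument).} For $Y_{\mathrm{in}} = D(K)$ with $K$ $C^\infty$ close to a strictly positive constant on $[0,1]\times B^2(b)$, choose the open dense subset $U\subset\partial Y_{\mathrm{in}}$ required in the definition of a $\partial$-tame packing to lie in the smooth locus and to have closure disjoint from the central axis $\{p=0\}$. Given a finite collection $\MD\subset U$ of disjoint symplectic disks and $\varepsilon>0$, pick $\varepsilon'>0$ with $a\varepsilon'<\varepsilon$ small enough that $\{\pi|p|^2\le\varepsilon'\}$ is disjoint from $\MD$, and stratify $Y_{\mathrm{in}}$ into the part over $\{\pi|p|^2\le\varepsilon'\}$, of volume at most $a\varepsilon'<\varepsilon$, and the part over the annulus $\{\varepsilon'\le\pi|p|^2\le b\}$, on which $K$ is $C^\infty$ close to a strictly positive constant, hence to a strictly positive affine Hamiltonian; Corollary~\ref{cor:tame_packing_perturbed_affine_subgraph} then provides a $\partial$-tame packing of the annular piece with free ribbon ends exactly $\MD$. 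Filling the annular piece up to error $\varepsilon$ and discarding the negligible central piece yields a connected ball/polydisk ribbon complex in $Y_{\mathrm{in}}$ of width bounded below, with free ends $\MD$ and volume at least $\operatorname{vol}(Y_{\mathrm{in}})-2\varepsilon$; letting $\varepsilon\to 0$ shows $Y_{\mathrm{in}}$ admits a $\partial$-tame packing. Assembling $Y_{\mathrm{in}}$ and $Y_{\mathrm{out}}$ via Lemma~\ref{lem:boundary_tame_packing_stratification} gives a $\partial$-tame packing of $Y$, and transporting it through the symplectomorphism of Step~1 gives one for $X$.
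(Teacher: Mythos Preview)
Your overall plan---normal form, then decompose into pieces handled by Corollary~\ref{cor:tame_packing_perturbed_affine_subgraph} and Theorem~\ref{thm:tame_packing_perturbed_linear_subgraph}, then glue via Lemma~\ref{lem:boundary_tame_packing_stratification}---is the same as the paper's, and your Step~1 is essentially Lemma~\ref{lem:bringing_perturbed_frusta_into_good_position} (the paper produces $D(G,H)$ with $G$ near $0$, which after one application of Lemma~\ref{lem:symplectomorphisms_of_subgraphs} is your $D(K)$). The decomposition, however, differs: you split \emph{radially} at $\pi|p|^2=b$ and then exhaust the inner disk piece by annuli; the paper instead splits \emph{vertically} along the graph of a cone Hamiltonian $E(t,p)=d(1-\pi|p|^2/c')$ with $d/c'\notin\BZ$, so that $\varphi_E^1$ is an irrational rotation with a nondegenerate fixed point at the origin. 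This lets the paper convert the disk-based subgraph $D(\overline{G}\#E)$ into an annulus-based one in one stroke (via the fixed point and the polar map $\psi$), landing directly in the setting of Theorem~\ref{thm:tame_packing_perturbed_linear_subgraph}, with no limiting argument.

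Your Step~3 has a genuine gap precisely where the paper's fixed-point trick pays off. The definition of a $\partial$-tame packing requires a \emph{single} width bound $\delta>0$ working for all $\varepsilon>0$; but your ribbon complex for $Y_{\mathrm{in}}$ comes from Corollary~\ref{cor:tame_packing_perturbed_affine_subgraph} applied to the annulus $\{\varepsilon'\le\pi|p|^2\le b\}$, and $\varepsilon'$ must tend to $0$ with $\varepsilon$. The width bound furnished by that corollary (ultimately by Theorem~\ref{thm:stratification_perturbed_affine_subgraph}) is stated uniformly only ``among all $H$ sufficiently $C^\infty$ close to a fixed $h$'' on a \emph{fixed} annulus; as $\varepsilon'$ varies you are changing the domain (equivalently, after rescaling to the standard $\BA$, changing $h$), so you cannot quote the uniformity as stated. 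The gap is fixable---tracing through the proof of Theorem~\ref{thm:stratification_perturbed_affine_subgraph} one sees the relevant constants depend continuously on the annulus width and on $h$, and these converge as $\varepsilon'\to 0$---but you have not supplied this argument, and it is not a one-liner. The paper sidesteps the issue entirely by using a fixed decomposition.
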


\begin{lemma}
\label{lem:bringing_perturbed_frusta_into_good_position}
Let $F = F(a,b,c)$ be a symplectic frustum and let $X$ be a perturbed symplectic frustum which is $C^\infty$ close to $F$. Then there exist positive real numbers $a',b',c'$ close to $a,b,c$, respectively, and Hamiltonians $G,H:[0,1]\times B^2(c')\rightarrow \BR$ such that
\begin{itemize}
\item both $G$ and $H$ vanish on $[0,1]\times \partial B^2(c')$.
\item $G$ is $C^\infty$ close to $0$.
\item $H$ agrees with the Hamiltonian $H_{a',b',c'}$ on the set $[0,1]\times B^2(b')$ and is $C^\infty$ close to $H_{a',b',c'}$ on the complement of this set.
\item $X$ is symplectomorphic to $D(G,H)$.
\end{itemize}
\end{lemma}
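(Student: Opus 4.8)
Since $X$ is $C^\infty$ close to $F=F(a,b,c)$, fix a diffeomorphism $\psi$ of $\BR^4$ that is $C^\infty$ close to the identity with $\psi(F)=X$; by the definition of a perturbed symplectic frustum, $\psi$ carries symplectic (resp.\ Lagrangian) $2$-dimensional boundary strata of $F$ to symplectic (resp.\ Lagrangian) $2$-dimensional boundary strata of $X$, preserving symplectic orientations. The plan is to construct a symplectomorphism between an open neighbourhood of $X$ in $\BR^4$ and an open neighbourhood of a domain $D(G,H)$ inside $(\BR\times[0,1]\times B^2(c'),\,\Omega)$, where $\Omega=ds\wedge dt+\omega$, carrying $X$ onto $D(G,H)$, and then to read off $G$, $H$ and the parameters $a',b',c'$ from the resulting picture. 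All maps in the construction will be kept $C^\infty$ close to the identity, so that the symplectic invariants extracted from $X$ — its volume, the areas of the symplectic $2$-strata $\partial_jX\cap\partial_\pm X$ for $j\in\{0,1\}$, and the reduced areas of the coisotropic $3$-strata — stay close to the corresponding invariants of $F$, which is what forces $a',b',c'$ to be close to $a,b,c$.

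The normalisation proceeds from the boundary inwards. First, after an ambient symplectomorphism close to the identity, arrange $\partial_jX=X\cap\{t=j\}$ for $j\in\{0,1\}$, with $\partial_s$ transverse to $\{t=0\}$ and $\{t=1\}$ along $X$. Then apply a coisotropic neighbourhood theorem (Gotay's theorem, already used in Remark~\ref{rem:neighbourhood_theorem_for_ellipsoid_ribbon_complexes}) along the $3$-dimensional faces $\partial_-X$, $\partial_+X$, $\partial_\bullet X$, a Weinstein-type normal form near the two Lagrangian edges $\partial_\pm X\cap\partial_\bullet X$, and a Moser deformation supported in the interior of $X$, so as to bring the ambient symplectic form into the product form $\Omega$ on a neighbourhood of $X$, with $X$ appearing as the region between the graphs of two $\partial$-admissible Hamiltonians $G_0,H_0$ over a base $[0,1]\times\Sigma$. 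Here $\Sigma$ is the symplectic reduction of the coisotropic face $\partial_0X$, a disk whose area $c'$ is close to $c$, which a symplectic reparametrisation identifies with $B^2(c')$. The Gotay normal form for the flat face $\partial_+X$ exhibits it as a level set $\{s=a'\}$ over its reduced disk $B^2(b')$, with $a'$ close to $a$ and $b'$ close to $b$, so that $H_0$ is constant equal to $a'$ over $[0,1]\times B^2(b')$; the collapse of $X$ along the outer Lagrangian edge (the image of $\{0\}\times[0,1]\times\partial B^2(c)$) forces $G_0$ and $H_0$ to vanish on $[0,1]\times\partial B^2(c')$; and the closeness of $\psi$ to the identity forces $G_0$ to be $C^\infty$ close to $0$ and $H_0$ to be $C^\infty$ close to $H_{a,b,c}$, hence to $H_{a',b',c'}$. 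Thus $X$ is symplectomorphic to $D(G_0,H_0)$ and one may take $G\coloneqq G_0$, $H\coloneqq H_0$. Should the normal form only make $H_0$ $C^\infty$ close to $a'$ over $[0,1]\times B^2(b')$ rather than exactly equal there, this is repaired by one further subgraph symplectomorphism $f_K$ of Lemma~\ref{lem:symplectomorphisms_of_subgraphs}, which replaces $(G_0,H_0)$ by $(K\#G_0,K\#H_0)$ for a $C^\infty$ small $\partial$-admissible $K$ on $B^2(c')$ vanishing on $\partial B^2(c')$; since $(K\#H_0)_t=K_t+(H_0)_t\circ(\varphi_{H_0}^t)^{-1}$ depends on $K$ only through the additive term $K_t$, one can choose $K$ localised near $[0,1]\times B^2(b')$ that makes $K\#H_0$ identically equal to $a'$ there while changing it only $C^\infty$ little elsewhere, turning $G_0$ into a still $C^\infty$ small $K\#G_0$. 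Such a $K$ can be found over the central disk because the local model there is rigid — the characteristic foliation is by straight $\partial_t$-segments — whereas over the annulus no exact normalisation is available, since there the characteristic foliation of $\partial_\bullet X$ is only a $C^\infty$ small perturbation of the integrable circle-twist generated by $H_{a,b,c}$; this is precisely the asymmetry between the two clauses about $H$ in the statement.

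I expect the main obstacle to be the normalisation step itself: patching the Gotay and Weinstein-type charts coherently across the Lagrangian corners $\partial_\pm X\cap\partial_\bullet X$, and matching the characteristic foliation on the lateral face $\partial_\bullet X$ — whose leaves are perturbations of the spirals traced out by $\partial_t+X_{H_{a,b,c}}$, degenerating to straight $\partial_t$-segments over $B^2(b)$ and collapsing onto the Lagrangian edges over $\partial B^2(b)$ and $\partial B^2(c)$ — with the characteristic foliation of the graph of a Hamiltonian $C^\infty$ close to $H_{a,b,c}$. A good deal of the bookkeeping consists in checking that the $2$-strata hypotheses built into the definition of a perturbed frustum are exactly what make this patching possible: $\psi$ preserving the symplectic versus Lagrangian type of the $2$-strata keeps the relevant reduced spaces disks and makes the corners Lagrangian of the expected kind, and the Moser deformation can be localised in the interior so as not to disturb the boundary normal forms already in place.
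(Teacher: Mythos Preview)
Your outline shares the paper's first move---normalising $\partial_jX$ into the hyperplanes $\{t=j\}$---but then diverges substantially, and the divergence creates a real gap.

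The paper's Step~2 is much more elementary than your Gotay/Weinstein patching. Once $\partial_0X\subset\BR\times\{0\}\times\BR^2$ and $\partial_1X\subset\BR\times\{1\}\times\BR^2$, the key observation (which you do not make) is that \emph{the ambient symplectic form on $\BR^4$ is already the product form $\Omega=ds\wedge dt+\omega$}. There is nothing to normalise about the form; the only task is to reposition $X$. Since $X$ is $C^\infty$ close to $F$, the faces $\partial_\pm X$ are automatically contained in graphs of $C^\infty$ small Hamiltonians $E_\pm:[0,1]\times\BR^2\to\BR$ (shifted by $0$ and $a$ respectively). The paper then chooses a single interpolating Hamiltonian $E$ agreeing with $E_-$ near the outer annulus and with $E_+$ near the central disk, and applies the explicit symplectomorphism $f_E$ of Lemma~\ref{lem:symplectomorphisms_of_subgraphs}. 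This makes $\overline{E}\#E_-$ vanish on the outer annulus and $\overline{E}\#(a+E_+)$ equal $a$ on the central disk---in one stroke, without any corner-patching. A final area-preserving diffeomorphism $\alpha$ of $\BR^2$ rounds off the resulting curves to $\partial B^2(c')$ and $\partial B^2(b')$.

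Your route instead invokes coisotropic and Lagrangian neighbourhood theorems on each face and then asks to glue the resulting charts across the Lagrangian corners; you yourself flag this as ``the main obstacle'', and it is not addressed. This is a genuine gap: Gotay's theorem gives a local model for a single coisotropic, but says nothing about making models for three adjacent faces agree along their Lagrangian intersections, and the Moser argument in the interior only fires once the boundary is already in place. The paper sidesteps all of this by exploiting the fact that, after Step~1, the faces are already graphs over $[0,1]\times\BR^2$, so a single $f_E$ plus a $2$-dimensional area-preserving map suffices. Your final $f_K$ correction is in the right spirit---and is essentially what the paper's $f_E$ accomplishes---but it is applied too late, after an unresolved patching step, rather than being the main mechanism.
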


\begin{proof}
We prove the lemma in two steps.

\emph{Step 1:} We show that it suffices to prove Lemma \ref{lem:bringing_perturbed_frusta_into_good_position} under the additional assumption
\begin{equation}
\label{eq:bringing_perturbed_frusta_into_good_position_proof_a}
\partial_0 X \subset \BR \times \left\{ 0 \right\} \times \BR^2 \quad \text{and} \quad \partial_1 X \subset \BR \times \left\{ 1 \right\} \times \BR^2.
\end{equation}
Define the submanifold $V_1\coloneqq \BR \times \left\{ 0 \right\} \times \BR^2\subset \BR^4$. Let $V_0 \subset \BR^4$ be a $C^\infty$ small perturbation of $V_1$ which is supported in the region $(-1,a+1) \times \left\{ 0 \right\}\times B^2(c+1)$ and contains $\partial_0 X$. Let $(V_t)_{t\in [0,1]}$ be a smooth family of $C^\infty$ small perturbations of $V_1$ supported in this region and connecting $V_0$ to $V_1$. We define a family of diffeomorphisms $\alpha_t : V_0 \rightarrow V_t$ as follows. On the set $(-\infty,-1)\times \left\{ 0 \right\}\times \BR^2$, we define $\alpha_t$ to be the identity. Outside of this set, $\alpha_t$ is uniquely determined by the requirements that it maps leaves of the characteristic foliation on $V_0$ to leaves of the characteristic foliation on $V_t$ and that it preserves the coordinate of the first $\BR$ factor of $\BR^4$, i.e. that it is of the form $\alpha_t(s,t,p) = (s,*,*)$. Note that we have $\alpha_t^*\omega = \alpha_0^*\omega$ for all $t\in [0,1]$. This identity holds trivially in the region $(-\infty,-1)\times \left\{ 0 \right\}\times \BR^2$. On the complement of this region, it follows easily from the fact that $\alpha_t$ respects the characteristic foliations on $V_0$ and $V_t$. We define the vector field $X_t$ along $V_t$ by $X_t \coloneqq \partial_t\alpha_t \circ \alpha_t^{-1}$. Clearly, the family $(V_t)_{t\in[0,1]}$ of $C^\infty$ small perturbations of $V_1$ can be chosen such that the family of vector fields $(X_t)_{t\in [0,1]}$ is $C^\infty$ small. We will assume in the following that this is the case. It follows from the identity $\alpha_t^*\omega = \alpha_0^*\omega$ and Cartan's magic formula that $\iota_{X_t}\omega$ defines a closed $1$-form on $V_t$. It is therefore possible to find a $C^\infty$ small Hamiltonian $E:[0,1]\times \BR^4\rightarrow \BR$ with compact support contained in $[0,1]\times\BR\times (-1/2,1/2)\times \BR^2$ such that the restriction of the Hamiltonian vector field $X_{E_t}$ to $V_t \cap (-1,a+1)\times \BR\times \BR^2$ agrees with the restriction of $X_t$ to this set. Note that this implies that $\varphi_E^t|_{\partial_0 X} = \alpha_t|_{\partial_0X}$. After replacing $X$ by the symplectomorphic domain $\varphi_E^1(X)$, we can therefore assume that $\partial_0 X \subset \BR\times \left\{ 0 \right\}\times \BR^2$. Note that this replacement does not affect the boundary stratum $\partial_1 X$. An analogous construction therefore allows us to further reduce to the case $\partial_1 X\subset \BR\times \left\{ 1 \right\}\times \BR^2$.

\emph{Step 2:} Let us now assume that $X$ satisfies \eqref{eq:bringing_perturbed_frusta_into_good_position_proof_a}. If $X$ is sufficiently $C^\infty$ close to $F$, we may choose compactly supported and $C^\infty$ small Hamiltonians $E, E_\pm : [0,1]\times \BR^2\rightarrow \BR$ satisfying the following properties:
\begin{itemize}
\item $\partial_- X$ is contained in the graph of $E_-$.
\item $\partial_+ X$ is contained in the graph of $a+ E_+$.
\item $E$ agrees with $E_-$ on the set $[0,1] \times (\BR^2 \setminus B^2(b + 3(c-b)/5))$.
\item $E$ agrees with $E_+$ on the set $[0,1] \times B^2(b + 2(c-b)/5)$.
\end{itemize}
By Lemma \ref{lem:symplectomorphisms_of_subgraphs}, the domain $D(E_-,a+E_+)$ is symplectomorphic to $D(\overline{E}\# E_-,\overline{E}\# (a+E_+))$. Since $E$ is $C^\infty$ small, the proof of Lemma \ref{lem:symplectomorphisms_of_subgraphs} in fact yields a symplectomorphism $\psi$ of $\BR^4$ which is $C^\infty$ close to the identity and maps $D(E_-,a+E_+)$ to $D(\overline{E}\# E_-,\overline{E}\# (a+E_+))$. If $X$ is sufficiently $C^\infty$ close to $F$, then $\overline{E}\# E_-$ vanishes on the set $[0,1]\times (\BR^2 \setminus B^2(b + 4(c-b)/5))$ and $\overline{E} \# (a + E_+)$ is equal to $a$ on the set $[0,1]\times B^2(b + (c-b)/5)$. This implies that the Lagrangian annulus $\psi(\partial_-X\cap\partial_\bullet X)$ is of the form $\left\{ 0 \right\}\times [0,1]\times C$ where $C\subset \BR^2$ is a circle $C^\infty$ close to $\partial B^2(c)$. Moreover, it implies that the solid cylinder $\psi(\partial_+ X)$ is of the form $\left\{ a \right\} \times [0,1] \times B$ where $B\subset \BR^2$ is a disk $C^\infty$ close to $B^2(b)$. We may pick an area preserving diffeomorphism $\alpha$ of $\BR^2$ which is $C^\infty$ close to the identity and satisfies $\alpha(C) = \partial B^2(c')$ and $\alpha(B) = \partial B^2(b')$ with $b', c'$ close to $b,c$, respectively. The resulting domain $(\operatorname{id}_{\BR^2} \times \alpha) \circ \psi(X)$ is then of the form $D(G,H)$ where $G,H:[0,1]\times B^2(c')\rightarrow \BR$ satisfy all the properties listed in Lemma \ref{lem:bringing_perturbed_frusta_into_good_position}.
\end{proof}

\begin{proof}[Proof of Theorem \ref{thm:packing_perturbed_frusta}]
Fix a symplectic frustum $F=F(a,b,c)$ and let $X$ be a perturbed symplectic frustum $C^\infty$ close to $F$. Let the positive numbers $a',b',c'$ and the Hamiltonians $G,H$ satisfy all assertions in Lemma \ref{lem:bringing_perturbed_frusta_into_good_position}. Since $X$ is symplectomorphic to $D(G,H)$, it suffices to construct a $\partial$-tame packing of the latter domain.

Pick a number $d>0$ close to $a'/2$ such that $d/c'$ is not an integer. Define the Hamitonian
\begin{equation*}
E: [0,1]\times B^2(c') \rightarrow \BR \qquad E(t,p) \coloneqq d(1-\pi |p|^2/c')
\end{equation*}
We divide $D(G,H)$ into $D(G,E)$ and $D(E,H)$. By Lemma \ref{lem:boundary_tame_packing_stratification}, it suffices to show that both pieces admit $\partial$-tame packings.

The piece $D(G,E)$ is symplectomorphic to $D(\overline{G}\#E)$. Note that $\overline{G}\# E$ is a $C^\infty$ small perturbation of $E$. The time $1$-map $\varphi_E^1$ is a rotation by $2\pi d/c'$. Since $d/c'$ is not an integer, the center of the disk $B^2(c')$ is a non-degenerate fixed point. Therefore, the time-$1$-map $\varphi_{\overline{G}\# E}^1$ has a non-degenerate fixed point $p$ near the center. After conjugation by a area preserving diffeomorphism of $B^2(c')$ which is $C^\infty$-close to the identity, we can assume that $p = 0$. Moreover, it is not hard to construct a Hamiltonian $\tilde{E}$ which vanishes on $[0,1]\times \partial B^2(c')$, is $C^\infty$ close to $E$, has the property that the center is the unique maximizer of $\tilde{E}_t$ for all $t\in [0,1]$, and such that $D(\tilde{E})$ is symplectomorphic to $D(\overline{G}\# E)$. Consider the area preserving map
\begin{equation*}
\psi:\BA(c') \rightarrow B^2(c') \qquad (x,y) \mapsto \sqrt{\frac{x}{\pi}} e^{2\pi i \theta}.
\end{equation*}
Note that the pullback $\psi^* \tilde{E}$ is a Hamiltonian as in Theorem \ref{thm:tame_packing_perturbed_linear_subgraph} and therefore admits a $\partial$-tame packing. Since every embedding of a symplectic ribbon complex into $D(\psi^*\tilde{E})$ induces an embedding into $D(\tilde{E})$ via $\operatorname{id}_{\BR^2}\times \psi$, we deduce that $D(\tilde{E})$ admits a $\partial$-tame packing as well.

Next, consider the piece $D(E,H)$, which is symplectomorphic to $D(\overline{E}\# H)$. Again, let us consider the pullback $\psi^* (\overline{E}\# H)$ to a Hamiltonian on $\BA(a')$. It suffices to construct a $\partial$-tame packing of $D(\psi^*(\overline{E} \# H))$. Let us divide $\BA(c')$ into two smaller annuli
\begin{equation*}
\BA_1 \coloneqq [0,b']\times \BT \quad \text{and} \quad \BA_2 \coloneqq [b',c']\times \BT.
\end{equation*}
We stratify $D(\psi^*(\overline{E} \# H))$ into two pieces $D(\psi^*(\overline{E}\# H)|_{[0,1]\times \BA_1})$ and $D(\psi^*(\overline{E}\# H)_{[0,1]\times \BA_2})$. On the annulus $\BA_1$, the Hamiltonian $\psi^*(\overline{E}\#H)$ is strictly positive and affine. Therefore, the first stratum admits a $\partial$-tame packing as a special case of Corollary \ref{cor:tame_packing_perturbed_affine_subgraph}. On the annulus $\BA_2$, the Hamiltonian $\psi^*(\overline{E}\# H)$ is of the form considered in Theorem \ref{thm:tame_packing_perturbed_linear_subgraph}. Thus, the second stratum also admits a $\partial$-tame packing. We conclude that the same is true for $D(E,H)$.
\end{proof}

Let $X\subset \BR^4$ be a perturbed symplectic frustum. In the following, we introduce a cut operation which produces a new domain $\tilde{X}\subset \BR^4$. Let $\varepsilon>0$ and $a>0$ be positive real numbers. Suppose that $\varphi$ is a symplectomorphism of $\BR^4$ such that
\begin{equation}
\label{eq:cut_special_position}
\varphi(X) \enspace \cap \enspace \BR^2 \times (-\varepsilon,+\infty) \times (-\varepsilon,\varepsilon) = \left\{ 0\leq s \leq \min\left\{ a,a-ax \right\}, 0 \leq t \leq 1, -\varepsilon<x\leq 1, -\varepsilon<y<\varepsilon \right\}.
\end{equation}
Let $\psi: \BR^2\setminus ([0,+\infty)\times \left\{ 0 \right\}) \rightarrow \BR^2$ be an area preserving embedding. Let us assume that the restriction of $\psi$ to each of the open quadrants $\left\{ x>0, \pm y>0 \right\}$ extends to a smooth area preserving embedding of the closed quadrant $\psi_\pm : \left\{ x\geq 0, \pm y\geq 0 \right\} \rightarrow \BR^2$. Moreover, assume that the boundary of the image of $\psi$ is piecewise smooth; see Figure \ref{fig:cutting_map_examples}.

\begin{figure}[htpb]
\centering
\def\svgwidth{0.6\textwidth}
%% Creator: Inkscape 1.4.2 (ebf0e940, 2025-05-08), www.inkscape.org
%% PDF/EPS/PS + LaTeX output extension by Johan Engelen, 2010
%% Accompanies image file '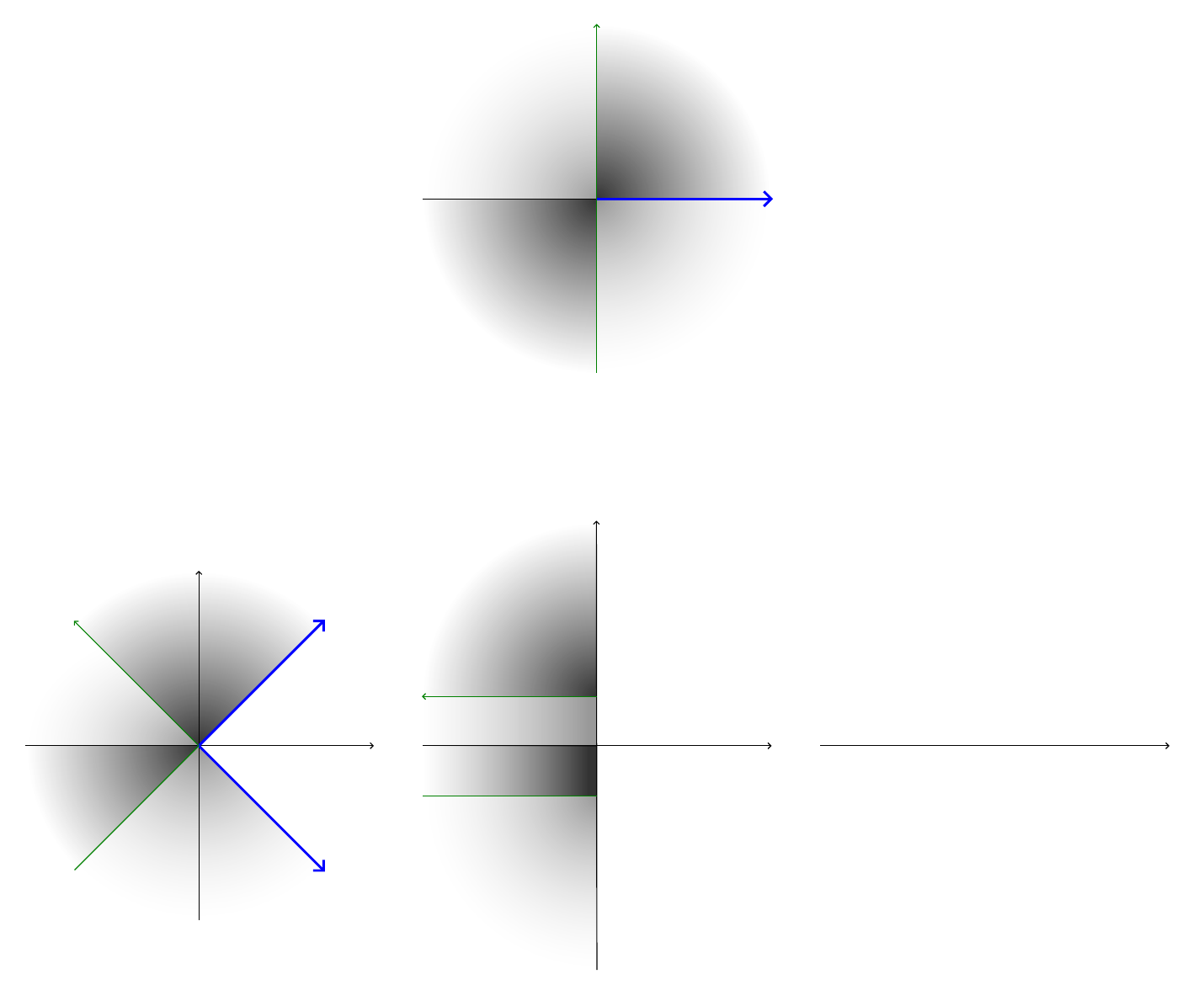' (pdf, eps, ps)
%%
%% To include the image in your LaTeX document, write
%%   \input{<filename>.pdf_tex}
%%  instead of
%%   \includegraphics{<filename>.pdf}
%% To scale the image, write
%%   \def\svgwidth{<desired width>}
%%   \input{<filename>.pdf_tex}
%%  instead of
%%   \includegraphics[width=<desired width>]{<filename>.pdf}
%%
%% Images with a different path to the parent latex file can
%% be accessed with the `import' package (which may need to be
%% installed) using
%%   \usepackage{import}
%% in the preamble, and then including the image with
%%   \import{<path to file>}{<filename>.pdf_tex}
%% Alternatively, one can specify
%%   \graphicspath{{<path to file>/}}
%% 
%% For more information, please see info/svg-inkscape on CTAN:
%%   http://tug.ctan.org/tex-archive/info/svg-inkscape
%%
\begingroup%
  \makeatletter%
  \providecommand\color[2][]{%
    \errmessage{(Inkscape) Color is used for the text in Inkscape, but the package 'color.sty' is not loaded}%
    \renewcommand\color[2][]{}%
  }%
  \providecommand\transparent[1]{%
    \errmessage{(Inkscape) Transparency is used (non-zero) for the text in Inkscape, but the package 'transparent.sty' is not loaded}%
    \renewcommand\transparent[1]{}%
  }%
  \providecommand\rotatebox[2]{#2}%
  \newcommand*\fsize{\dimexpr\f@size pt\relax}%
  \newcommand*\lineheight[1]{\fontsize{\fsize}{#1\fsize}\selectfont}%
  \ifx\svgwidth\undefined%
    \setlength{\unitlength}{685.98425197bp}%
    \ifx\svgscale\undefined%
      \relax%
    \else%
      \setlength{\unitlength}{\unitlength * \real{\svgscale}}%
    \fi%
  \else%
    \setlength{\unitlength}{\svgwidth}%
  \fi%
  \global\let\svgwidth\undefined%
  \global\let\svgscale\undefined%
  \makeatother%
  \begin{picture}(1,0.81818182)%
    \lineheight{1}%
    \setlength\tabcolsep{0pt}%
    \put(0,0){\includegraphics[width=\unitlength,page=1]{cutting_map_examples.pdf}}%
    \put(0.23140497,0.47520662){\color[rgb]{0,0,0}\makebox(0,0)[lt]{\lineheight{1.25}\smash{\begin{tabular}[t]{l}$\psi$\end{tabular}}}}%
    \put(0.4586777,0.44214877){\color[rgb]{0,0,0}\makebox(0,0)[lt]{\lineheight{1.25}\smash{\begin{tabular}[t]{l}$\psi$\end{tabular}}}}%
    \put(0.74380167,0.47520662){\color[rgb]{0,0,0}\makebox(0,0)[lt]{\lineheight{1.25}\smash{\begin{tabular}[t]{l}$\psi$\end{tabular}}}}%
    \put(0,0){\includegraphics[width=\unitlength,page=2]{cutting_map_examples.pdf}}%
    \put(0.36025394,0.37307509){\color[rgb]{0,0,0}\makebox(0,0)[lt]{\lineheight{1.25}\smash{\begin{tabular}[t]{l}(B)\end{tabular}}}}%
    \put(0.69277078,0.3739163){\color[rgb]{0,0,0}\makebox(0,0)[lt]{\lineheight{1.25}\smash{\begin{tabular}[t]{l}(C)\end{tabular}}}}%
    \put(0,0){\includegraphics[width=\unitlength,page=3]{cutting_map_examples.pdf}}%
    \put(0.03199984,0.37269889){\color[rgb]{0,0,0}\makebox(0,0)[lt]{\lineheight{1.25}\smash{\begin{tabular}[t]{l}(A)\end{tabular}}}}%
  \end{picture}%
\endgroup%

\caption{An illustration of different possibilities for the map $\psi$.}
\label{fig:cutting_map_examples}
\end{figure}

The new domain $\tilde{X}$ obtained from $X$ by cutting is defined by
\begin{equation*}
\tilde{X} \coloneqq \operatorname{clos}((\operatorname{id}_{\BR^2}\times\psi) (\varphi(X)\setminus \BR^2\times [0,+\infty)\times \left\{ 0 \right\})),
\end{equation*}
see Figure \ref{fig:frustum_with_cut_examples}.

\begin{figure}[htpb]
\centering
\def\svgwidth{0.8\textwidth}
%% Creator: Inkscape 1.4.2 (ebf0e940, 2025-05-08), www.inkscape.org
%% PDF/EPS/PS + LaTeX output extension by Johan Engelen, 2010
%% Accompanies image file '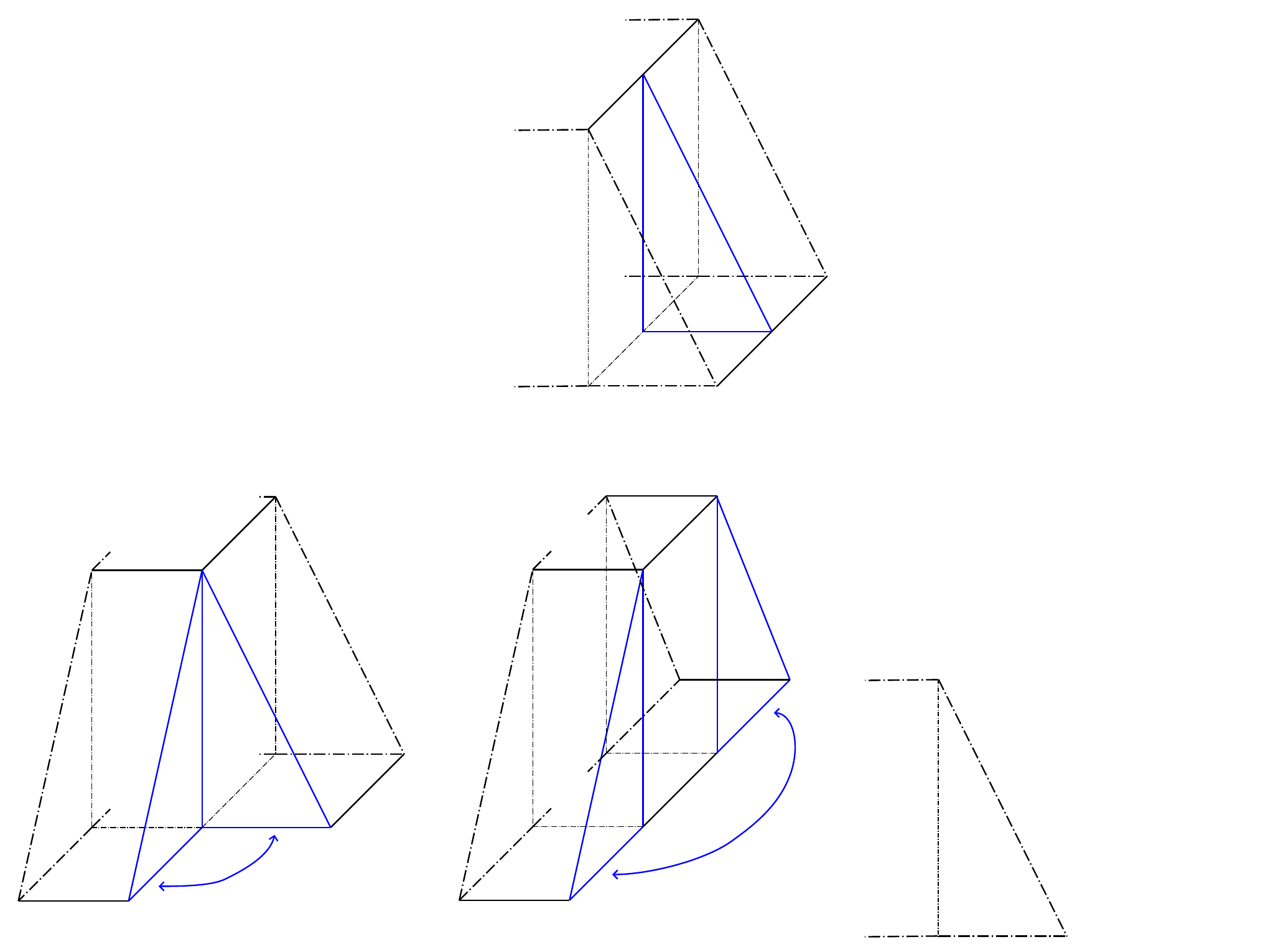' (pdf, eps, ps)
%%
%% To include the image in your LaTeX document, write
%%   \input{<filename>.pdf_tex}
%%  instead of
%%   \includegraphics{<filename>.pdf}
%% To scale the image, write
%%   \def\svgwidth{<desired width>}
%%   \input{<filename>.pdf_tex}
%%  instead of
%%   \includegraphics[width=<desired width>]{<filename>.pdf}
%%
%% Images with a different path to the parent latex file can
%% be accessed with the `import' package (which may need to be
%% installed) using
%%   \usepackage{import}
%% in the preamble, and then including the image with
%%   \import{<path to file>}{<filename>.pdf_tex}
%% Alternatively, one can specify
%%   \graphicspath{{<path to file>/}}
%% 
%% For more information, please see info/svg-inkscape on CTAN:
%%   http://tug.ctan.org/tex-archive/info/svg-inkscape
%%
\begingroup%
  \makeatletter%
  \providecommand\color[2][]{%
    \errmessage{(Inkscape) Color is used for the text in Inkscape, but the package 'color.sty' is not loaded}%
    \renewcommand\color[2][]{}%
  }%
  \providecommand\transparent[1]{%
    \errmessage{(Inkscape) Transparency is used (non-zero) for the text in Inkscape, but the package 'transparent.sty' is not loaded}%
    \renewcommand\transparent[1]{}%
  }%
  \providecommand\rotatebox[2]{#2}%
  \newcommand*\fsize{\dimexpr\f@size pt\relax}%
  \newcommand*\lineheight[1]{\fontsize{\fsize}{#1\fsize}\selectfont}%
  \ifx\svgwidth\undefined%
    \setlength{\unitlength}{977.95275591bp}%
    \ifx\svgscale\undefined%
      \relax%
    \else%
      \setlength{\unitlength}{\unitlength * \real{\svgscale}}%
    \fi%
  \else%
    \setlength{\unitlength}{\svgwidth}%
  \fi%
  \global\let\svgwidth\undefined%
  \global\let\svgscale\undefined%
  \makeatother%
  \begin{picture}(1,0.75072464)%
    \lineheight{1}%
    \setlength\tabcolsep{0pt}%
    \put(0,0){\includegraphics[width=\unitlength,page=1]{frustum_with_cut_examples.pdf}}%
    \put(0.02898552,0.342029){\color[rgb]{0,0,0}\makebox(0,0)[lt]{\lineheight{1.25}\smash{\begin{tabular}[t]{l}(A)\end{tabular}}}}%
    \put(0.37681158,0.342029){\color[rgb]{0,0,0}\makebox(0,0)[lt]{\lineheight{1.25}\smash{\begin{tabular}[t]{l}(B)\end{tabular}}}}%
    \put(0,0){\includegraphics[width=\unitlength,page=2]{frustum_with_cut_examples.pdf}}%
    \put(0.69565215,0.342029){\color[rgb]{0,0,0}\makebox(0,0)[lt]{\lineheight{1.25}\smash{\begin{tabular}[t]{l}(C)\end{tabular}}}}%
    \put(0,0){\includegraphics[width=\unitlength,page=3]{frustum_with_cut_examples.pdf}}%
  \end{picture}%
\endgroup%

\caption{For the different choices of $\psi$ illustrated in Figure \ref{fig:cutting_map_examples}, this figure shows the intersection of $\tilde{X}$ with $\BR \times\left\{ t \right\}\times \BR^2$ near the cut region for $0 \leq t \leq 1$.}
\label{fig:frustum_with_cut_examples}
\end{figure}

We call a domain $X\subset \BR^4$ a \textit{perturbed symplectic frustum with cuts} if, up to ambient symplectomorphism, it can be obtained from a perturbed symplectic frustum by finitely many disjoint cut operations.

\begin{theorem}
\label{thm:packing_perturbed_frusta_with_cuts}
Let $F$ be a symplectic frustum. Then each perturbed symplectic frustum with cuts $X$ obtained by cutting a perturbed symplectic frustum sufficiently $C^\infty$ close to $F$ admits a $\partial$-tame packing by balls and polydisks.
\end{theorem}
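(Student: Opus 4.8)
The plan is to reduce Theorem~\ref{thm:packing_perturbed_frusta_with_cuts} to Theorem~\ref{thm:packing_perturbed_frusta} (packing of perturbed symplectic frusta without cuts) together with the stratification principle Lemma~\ref{lem:boundary_tame_packing_stratification}, much as in the proof of Theorem~\ref{thm:packing_perturbed_frusta}. The key observation is that a cut operation, being disjoint from the rest of the frustum and localized near the line segment $[0,+\infty)\times\{0\}$ in the $(x,y)$-disk factor, only affects the domain in a controlled neighbourhood of the cut locus. Since cuts are carried out disjointly from one another, it suffices to treat a single cut and then iterate.

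First I would set up the geometry near a single cut. By hypothesis, up to ambient symplectomorphism $X$ is obtained from a perturbed symplectic frustum $X_0$ which is $C^\infty$-close to $F$, by applying $(\operatorname{id}_{\BR^2}\times\psi)$ to $\varphi(X_0)$ minus the relevant half-plane, where $\varphi$ brings $X_0$ into the special position \eqref{eq:cut_special_position}. In that position the portion of $\varphi(X_0)$ inside $\BR^2\times(-\varepsilon,+\infty)\times(-\varepsilon,\varepsilon)$ is an explicit product-like region $D(\min\{a,a-ax\})$ over the strip $\{-\varepsilon<x\le 1,\ -\varepsilon<y<\varepsilon\}$; this is literally (a piece of) the subgraph of an affine Hamiltonian on a rectangle. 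I would choose a small number $w\in(0,\varepsilon)$ and cut $X$ along the hypersurface $\{x=w\}$ (suitably smoothed and in special position via an isotopy of the $(s,t)$-coordinates as in Step~1 of Lemma~\ref{lem:bringing_perturbed_frusta_into_good_position}), splitting $X$ into two pieces: an "outer" piece $X'$ and a "cut slab" $Y$. The outer piece $X'$ contains the cut locus no longer — it is, after a $C^\infty$-small correction, again a perturbed symplectic frustum $C^\infty$-close to a (slightly shrunk) symplectic frustum, hence admits a $\partial$-tame packing by Theorem~\ref{thm:packing_perturbed_frusta}. If $X$ carries several disjoint cuts, the outer piece is still a single perturbed symplectic frustum (with all cuts removed), so this step handles all of them simultaneously.

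The cut slab $Y$ is, by construction, $(\operatorname{id}_{\BR^2}\times\psi)$ applied to the subgraph $D(g,h)$ of a pair of Hamiltonians on the punctured strip $\{0<x\le w\}\setminus([0,w]\times\{0\})$ which are $C^\infty$-small perturbations of affine functions vanishing appropriately on the boundary. After cutting open the strip along $[0,w]\times\{0\}$, the hypotheses on $\psi_\pm$ — extending to smooth area-preserving embeddings of the closed quadrants with piecewise-smooth image boundary — guarantee that $Y$ is symplectomorphic to $D(G,H)$ for $\partial$-admissible Hamiltonians $G,H$ on the annulus $\BA(w)$ (or a disjoint union thereof, depending on how $\psi$ slices the two quadrants) which are $C^\infty$-small perturbations of strictly positive affine, or of linear-vanishing-on-one-boundary-component, Hamiltonians. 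Thus $Y$ admits a $\partial$-tame packing by Corollary~\ref{cor:tame_packing_perturbed_affine_subgraph} if the limiting Hamiltonian is strictly positive affine, and by Theorem~\ref{thm:tame_packing_perturbed_linear_subgraph} if it vanishes on one boundary component; in general, a further subdivision of $\BA(w)$ into subannuli as in the proof of Theorem~\ref{thm:packing_perturbed_frusta} reduces to these two cases. Applying Lemma~\ref{lem:boundary_tame_packing_stratification} to the stratification $\{X',\,Y\}$ (and iterating over the finitely many disjoint cuts) then yields a $\partial$-tame packing of $X$.

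\textbf{Main obstacle.} The delicate point is the identification of the cut slab $Y$ with a subgraph of an almost-affine Hamiltonian on an annulus, together with checking that the limiting Hamiltonian falls into one of the two admissible classes (strictly positive affine, or affine vanishing on exactly one boundary component). This requires unwinding the definition of $\psi$: the image of $\psi$ is a bounded region whose boundary is piecewise smooth, so after cutting open along the segment the two quadrant-images $\psi_\pm$ glue to an annulus or strip, but the precise combinatorics of which boundary edges become symplectic versus Lagrangian 2-faces — and hence whether the relevant Hamiltonian vanishes on zero, one, or two boundary circles — depends on the shape of $\operatorname{im}(\psi)$; see Figure~\ref{fig:frustum_with_cut_examples}. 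One must verify in each case that the resulting Hamiltonian is covered by Corollary~\ref{cor:tame_packing_perturbed_affine_subgraph} or Theorem~\ref{thm:tame_packing_perturbed_linear_subgraph} (possibly after one more elementary stratification of the annulus, exactly as is done repeatedly in Sections~\ref{sec:strat_of_subgraphs} and~\ref{sec:tame_packings}), and that all the width and volume-deficit bounds remain uniform. The rest is a routine assembly using the neighbourhood theorem (Remark~\ref{rem:neighbourhood_theorem_for_ellipsoid_ribbon_complexes}) and Lemma~\ref{lem:symplectomorphisms_of_subgraphs}.
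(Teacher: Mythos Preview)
Your approach is genuinely different from the paper's, and has a real gap.

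The paper does \emph{not} stratify $\tilde{X}$ at all. Instead it observes that the cut operation is volume-preserving: $\tilde{X}$ is obtained from the uncut perturbed frustum $X$ by removing a measure-zero half-plane and applying the symplectomorphism $\operatorname{id}_{\BR^2}\times\psi$. The paper then takes a slightly shrunk copy $Z\subset\operatorname{int}(X)$ conformally symplectomorphic to $X$, and constructs an explicit symplectic embedding $\rho:Z\hookrightarrow\operatorname{int}(\tilde X)$ whose image approximates $\tilde X$. The map $\rho$ is $\operatorname{id}_{\BR^2}\times(\psi\circ\alpha)$, where $\alpha$ is a Schlenk-type symplectomorphism from the strip $(-\varepsilon,+\infty)\times(-\varepsilon,\varepsilon)$ onto the slit strip (mapping vertical segments to a family of curves that wrap around the slit), extended by the identity off the strip. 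One then connects any prescribed boundary disks of $\tilde X$ to $\partial(\rho(Z))$ by ribbons. Since $X$ (hence $Z$) already admits a $\partial$-tame packing by Theorem~\ref{thm:packing_perturbed_frusta}, so does $\tilde X$. This sidesteps any analysis of the cut slab or of $\psi$.

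Your stratification approach runs into trouble at both pieces. For the outer piece: in pre-$\psi$ coordinates, the region $\{x<w\}$ of $\varphi(X_0)$ still contains the segment $[0,w)\times\{0\}$ of the removed ray, so what you call $X'$ is a perturbed frustum with a slit, not a perturbed frustum; Theorem~\ref{thm:packing_perturbed_frusta} does not apply to it. For the cut slab: in pre-$\psi$ coordinates each half $\{y\gtrless 0,\ w\le x\le 1\}$ is the subgraph of $a(1-x)$ over a \emph{rectangle}, not an annulus, so neither Corollary~\ref{cor:tame_packing_perturbed_affine_subgraph} nor Theorem~\ref{thm:tame_packing_perturbed_linear_subgraph} applies directly; and after applying $\psi_\pm$ the base region is an arbitrary piecewise-smooth planar domain, not in general an annulus, so your claimed identification ``$Y\cong D(G,H)$ on $\BA(w)$'' is not justified. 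One could likely salvage the slab piece via Example~\ref{ex:boundary_tame_packing_ball} (each half is a $\Delta\times_L Q$), but the outer-piece issue remains, and the paper's embedding argument avoids the whole case analysis you flagged as the ``main obstacle''.
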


\begin{proof}
Let $X$ be a perturbed symplectic frustum $C^\infty$ close to $F$ and let $\tilde{X}$ be a domain obtained from $X$ by finitely many cuts. By Theorem \ref{thm:packing_perturbed_frusta}, the domain $X$ admits a $\partial$-tame packing. In order to deduce that the same is true for $\tilde{X}$, we follow the same strategy that we have already used repeatedly in Section \ref{sec:tame_packings}. Let $Z \subset \operatorname{int}(X)$ be a domain conformally symplectomorphic to $X$ obtained by slightly pushing the boundary of $X$ inwards. Our goal is to produce an explicit controlled symplectic embedding of $Z$ into $\operatorname{int}(\tilde{X})$ which has the property that symplectic disks in the smooth locus of the boundary of $\tilde{X}$ can be connected to the boundary of the image of the symplectic embedding of $Z$ via symplectic ribbons. Once we have this, the fact that $X$ and hence $Z$ admits a $\partial$-tame packing implies that $\tilde{X}$ also admits a $\partial$-tame packing.

For simplicity, let us consider the case that $\tilde{X}$ is obtained from $X$ by a single cut. The general case can be treated by iterating the construction explained below. Let $\varepsilon, a, \varphi, \psi$ be the parameters involved in the cut which transforms $X$ into $\tilde{X}$. In order to simplify notation, let us replace $X$ by the symplectomorphic domain $\varphi(X)$.

We carry out a variant of Schlenk's embedding construction \cite[\S 2]{sch03}, \cite[\S 4.1]{sch05} which we have used several times in Sections \ref{sec:symplectic_ribbons} and \ref{sec:tame_packings}. Consider the strip $(-\varepsilon,+\infty)\times (-\varepsilon,\varepsilon)$. Then there exists a symplectomorphism
\begin{equation*}
\alpha : (-\varepsilon,+\infty)\times (-\varepsilon,\varepsilon) \rightarrow (-\varepsilon,+\infty)\times (-\varepsilon,\varepsilon)\setminus [0,+\infty) \times \left\{ 0 \right\}
\end{equation*}
which maps maps vertical line segments of the form $\left\{ x \right\}\times (-\varepsilon,\varepsilon)$ to members of a family of line segments $\ML$ as depicted in Figure \ref{fig:line_segments_strip}.

\begin{figure}[htpb]
\centering
\def\svgwidth{0.8\textwidth}
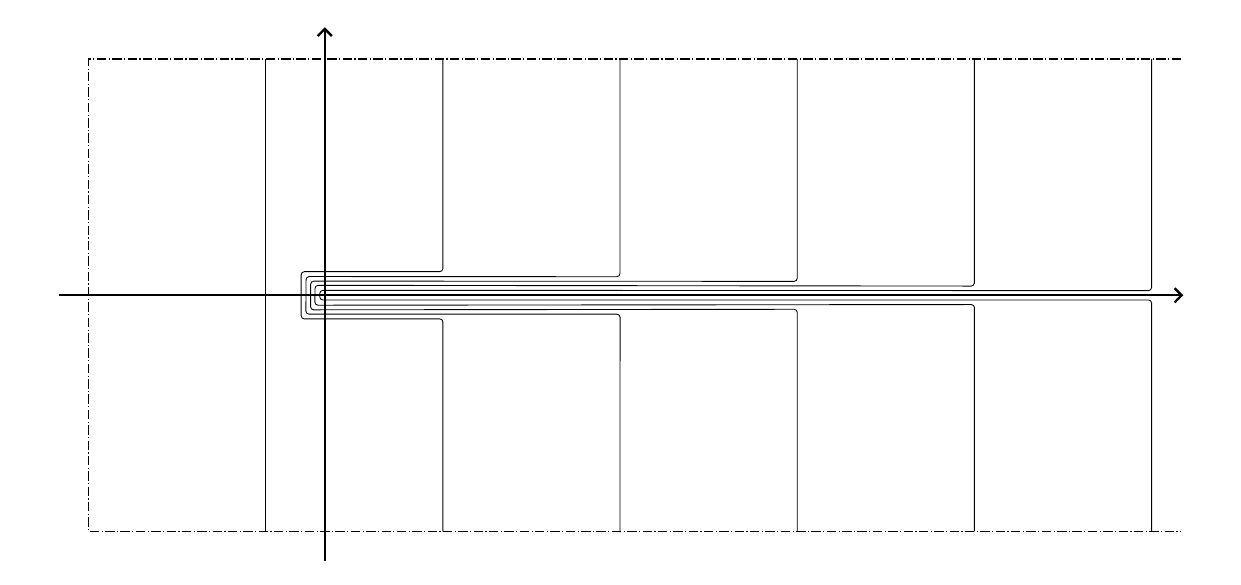
\caption{A family $\ML$ of line segments in $(-\varepsilon,+\infty)\times (-\varepsilon,\varepsilon) \setminus [0,+\infty) \times \left\{ 0 \right\}$.}
\label{fig:line_segments_strip}
\end{figure}

Near the boundary of the strip, the map $\alpha$ is close to the identity. After a small perturbation near the boundary, we can assume that $\alpha$ is exactly equal to the identity in this region. Extend $\alpha$ to a symplectomorphism between $\BR^2$ and $\BR^2\setminus [0,+\infty)\times \left\{ 0 \right\}$ by setting it equal to the identity outside the strip. Consider the symplectic embedding
\begin{equation*}
\rho : \BR^4 \rightarrow \BR^4 \qquad \rho \coloneqq \operatorname{id}_{\BR^2}\times (\psi \circ \alpha).
\end{equation*}
Note that the family of line segments $\ML$ can be arranged such that the image $\rho(X)$ approximates $\tilde{X}$ arbitrarily well. The desired embedding of $Z$ into $\operatorname{int}(\tilde{X})$ is simply given by the restriction $\rho|_Z$.
\end{proof}

\section{Proof of ellipsoid embedding stability}
\label{sec:proof_ellipsoid_embedding_stability}

The goal of this section is to prove Theorem \ref{thm:ellipsoid_embedding_stability}, i.e. that ellipsoid embedding stability holds for every compact connected symplectic $4$-manifold with smooth boundary.\\

We will make use of the following technical stratification lemma, whose proof we postpone to Section \ref{sec:proof_stratification_lemma}. Given positive numbers $a,b>0$, we define the symplectic cuboid $Q(a,b)\coloneqq \square(a,b)\times_L Q$; see Example \ref{ex:boundary_tame_packing_polydisk}.

\begin{lemma}
\label{lem:stratification_lemma}
There exist a compact set $\MF$ of symplectic frusta and a compact set $\MQ$ of symplectic cuboids such that the following is true for every $C^\infty$ open neighbourhood $\MU$ of $\MF$ inside the space of all perturbed symplectic frusta: Let $M$ be a symplectic $4$-manifold which is diffeomorphic to $[0,1]\times Y$ for some closed $3$-manifold $Y$. Then there exists $r>0$ such that, for all $0\leq \sigma_0 < \sigma_1 \leq 1$ satisfying $\sigma_1-\sigma_0 <r$, there exists a stratification of $[\sigma_0,\sigma_1]\times Y$ such that the closure of any top-dimensional stratum is conformally symplectomorphic to a symplectic cuboid contained in $\MQ$ or to a domain obtained from an element of $\MU$ by cutting.
\end{lemma}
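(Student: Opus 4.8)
The plan is to reduce the global statement on $[\sigma_0,\sigma_1]\times Y$ to local pieces, each of which is a perturbed symplectic frustum with cuts. First I would cover $Y$ by finitely many Darboux-type charts. Since $M$ is diffeomorphic to $[0,1]\times Y$, we may fix a symplectic form $\omega$ on $[0,1]\times Y$; by Moser's trick and compactness of $Y$, near each slice $\{\sigma\}\times Y$ the form $\omega$ is close to a product form $ds\wedge dt + \omega_\sigma$ on pieces of the shape $\BR\times[0,1]\times B^2$. Concretely, I would choose a finite atlas of $Y$ adapted to the characteristic foliation on a reference slice, so that on each chart the restriction of $M$ to a short time interval $[\sigma_0,\sigma_1]$ looks, up to a $C^\infty$-small diffeomorphism, like a thickened subgraph of a Hamiltonian on a disk or an annulus. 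The key uniformity comes from compactness: because $Y$ is compact we get finitely many charts, and because we only work on time intervals of length $<r$ the relevant Hamiltonians are $C^\infty$-close to \emph{affine} (in fact, close to constants) with a closeness that can be made uniform. This produces the compact sets $\MF$ of symplectic frusta and $\MQ$ of symplectic cuboids: the frusta arise from the overlap regions of charts (where one disk is glued into another, i.e. where the characteristic leaves exit one chart and enter the next), and the cuboids arise from the ``interior'' of each chart where the local model is simply a polydisk-type product.

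Next I would organize the gluing combinatorially. The charts of $Y$ and their overlaps form a nerve; crossing from one chart to an adjacent one corresponds exactly to a cut operation in the sense of Section \ref{sec:symplectic_frusta}, since the transition map between overlapping disk charts is an area-preserving embedding of a slit plane of precisely the type $\psi$ appearing in the definition of a perturbed symplectic frustum with cuts. Thus each local piece of $[\sigma_0,\sigma_1]\times Y$ is, after a $C^\infty$-small adjustment, either a symplectic cuboid from a fixed compact family $\MQ$ or a domain obtained from a frustum in a fixed compact family $\MF$ by finitely many cuts. Taking $\MU$ to be any $C^\infty$ neighbourhood of $\MF$, and choosing $r$ small enough that on every chart and every time interval of length $<r$ the relevant perturbed frustum lies in $\MU$ (this is where the uniform closeness-to-affine is used), gives the stratification. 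The number $r$ depends only on the fixed atlas and on $M$, as required.

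The main obstacle I anticipate is \emph{globalizing the characteristic-foliation normal form across chart overlaps while keeping everything inside the fixed compact families}. Locally straightening $\omega$ to a product form and straightening the characteristic foliation is routine (Moser plus Gotay-type neighbourhood arguments, cf.\ Remark \ref{rem:neighbourhood_theorem_for_ellipsoid_ribbon_complexes}), but one must arrange the straightening maps on different charts to be compatible on overlaps, so that the overlap regions genuinely present themselves as cut loci rather than as some unmanaged boundary stratum. This is a partition-of-unity/gluing argument that needs care near the corners where three or more charts meet, and near the boundary strata $\partial_0,\partial_1$ coming from $\{\sigma_0\}\times Y$ and $\{\sigma_1\}\times Y$; there one must check that the model remains a \emph{perturbed} frustum (symplectic $2$-strata mapped to symplectic $2$-strata preserving coorientation, Lagrangian strata to Lagrangian strata) rather than degenerating. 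A secondary technical point is ensuring the two compact families $\MF$ and $\MQ$ can be chosen \emph{independently of $M$} — they depend only on the combinatorics of a fixed model atlas — while $r$ is allowed to depend on $M$; I would handle this by first fixing abstract model frusta and cuboids and then showing any $M$ as above admits, for small enough $r$, a stratification whose pieces are $C^\infty$-small perturbations of these fixed models.
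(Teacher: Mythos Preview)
Your proposal has the right instinct that local charts and the characteristic foliation are involved, but it misses the central mechanism of the paper's proof and, as written, would not produce the desired stratification.

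The paper does \emph{not} cover $Y$ by a fixed Darboux atlas and use the nerve. Instead, it fixes the Hamiltonian vector field $R$ of $H(s,p)=-s$ on $\{s_-\}\times Y$ and constructs an \emph{exhaustive transverse surface} $\Sigma\subset\{s_-\}\times Y$ consisting of many small transverse geodesic disks of a common radius $\rho$. The stratification of $[\sigma_0,\sigma_1]\times Y$ is then dictated by the \emph{first-return dynamics} of the flow of $R$ to $\Sigma$: one studies where and how the boundaries of the disks intersect under the flow. This yields a stratification $\MS$ of $\Sigma$ with three types of top strata (small squares at T-shaped singularities, thin rectangles, and the remaining pieces), and the corresponding $4$-dimensional strata are built by flowing these along $R$ and thickening in the $s$-direction via the characteristic foliation on auxiliary hypersurfaces $W_D$. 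The cuboids come from the small squares (type (S\ref{item:smooth_stratification_small_square})), and the perturbed frusta with cuts come from the other two types; the ``cuts'' arise precisely where a frustum-shaped piece meets a neighbouring cuboid, not from chart transition maps as you suggest.

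Your plan also lacks the key idea that makes $\MF$ and $\MQ$ \emph{independent of $M$}. In the paper this comes from Lemma~\ref{lem:transverse_surface_in_delta_good_position}: there is a \emph{universal} constant $\delta>0$ such that, for any $(Y,R,g)$ and all small $\rho$, one can build $\Sigma$ in ``$\delta$-good position''. After rescaling by $\sigma^{-2}$ (Lemma~\ref{lem:pullback_symplectic_form}) and controlling transition maps (Lemma~\ref{lem:transition_maps}), all the model frusta and cuboids have parameters bounded above and below purely in terms of $\delta$, hence lie in fixed compact families. Your approach of ``first fixing abstract model frusta and cuboids and then showing any $M$ admits a stratification close to these'' has no mechanism for this uniformity: the number of charts, their shapes, and their overlap geometry would all depend on $M$ in an uncontrolled way. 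The dynamical construction with a universal $\delta$ is the missing idea.
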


\begin{proof}[Proof of Theorem \ref{thm:ellipsoid_embedding_stability}]

Our first step is to show that every symplectic $4$-manifold diffeomorphic to $[0,1]\times Y$ for some closed connected $3$-manifold $Y$ admits a $\partial$-tame packing. Let $M = [0,1]\times Y$ be such a symplectic $4$-manifold. Let $\MF$ be a compact set of symplectic frusta and $\MQ$ be a compact set of symplectic cuboids as in Lemma \ref{lem:stratification_lemma}. It follows from Theorem \ref{thm:packing_perturbed_frusta_with_cuts} that there exists an open neighbourhood $\MU$ of $\MF$ inside the space of perturbed symplectic frusta such that every domain obtained from an element of $\MU$ by cutting admits a $\partial$-tame packing. This clearly implies that every domain conformally symplectomorphic to a domain obtained from an element of $\MU$ by cutting also admits a $\partial$-tame packing. Moreover, recall from Example \ref{ex:boundary_tame_packing_polydisk} that every symplectic cuboid admits a $\partial$-tame packing.

Pick $r>0$ satisfying the assertion of Lemma \ref{lem:stratification_lemma}. Let $k>0$ be a positive integer such that $k^{-1} <r$. Divide $M$ into $k$ slices $M_i \coloneqq [(i-1)/k,i/k]\times Y$ for $1\leq i \leq k$. Let us stratify each slice $M_i$ into symplectic cuboids and perturbed symplectic frusta with cuts as in Lemma \ref{lem:stratification_lemma}. It follows from Lemma \ref{lem:boundary_tame_packing_stratification} that each $M_i$ admits a $\partial$-tame packing. Applying Lemma \ref{lem:boundary_tame_packing_stratification} again, we see that the same is true for $M$.

Let us now assume that $M$ is a general compact connected symplectic $4$-manifold with smooth boundary. Our goal is to show that $M$ admits a tame packing. Once we know this, it follows from Corollary \ref{cor:from_tame_packing_to_ellipsoid_packing} that ellipsoid embedding stability holds for $M$. Choose a Morse function $f:M\rightarrow \BR$ which is constant on each boundary component of $M$. We may arrange $f$ such that each critical level contains exactly one critical point. We divide $M$ into finitely many strata along regular level sets of $f$ such that each of the resulting strata contains at most one critical point. By Lemma \ref{lem:boundary_tame_packing_stratification}, it suffices to construct a $\partial$-tame packing for each of the strata. Since strata without critical points are of product form and admit a $\partial$-tame packing as proved above, we may reduce ourselves to the case that $f$ has exactly one critical point $p$. After replacing $f$ by $-f$ if necessary, we can further assume that $p$ has Morse index $0$, $1$ or $2$. In addition, we may assume that $f(p) = 0$.

Consider the case of Morse index $0$. We modify $f$ such that it is given by $f(z) = |z|^2$ in local Darboux coordinates $(z_1,z_2) = (x_1,y_1,x_2,y_2)$ near $p=0$. For $\delta>0$ sufficiently small, we divide $M$ into two strata along the level set $f^{-1}(\delta)$. One of the strata is a ball and the other one is of product form. Both strata admit a $\partial$-tame packing. For the balls this is obvious and for the stratum of product form this was proved above. Hence Lemma \ref{lem:boundary_tame_packing_stratification} implies that $M$ admits a $\partial$-tame packing as well.

Next, consider the case of Morse index $2$. We modify $f$ such that it is given by $f(z) = |z_1|^2 - |z_2|^2$ in local Darboux coordinates near $p=0$. By Lemma \ref{lem:boundary_tame_packing_stratification}, it suffices to show that both $f^{-1}( (-\infty,0])$ and $f^{-1} ( [0,\infty) )$ admit $\partial$-tame packings. Up to replacing $f$ by $-f$, it suffices to consider the sublevel set $X\coloneqq f^{-1}( (-\infty,0])$. Consider the standard moment map
\begin{equation*}
\mu : \BC^2 \rightarrow (\BR_{\geq 0})^2 \qquad (z_1,z_2) \mapsto (\pi|z_1|^2 , \pi|z_2|^2)
\end{equation*}
and the moment regions $\Omega,\Omega'\subset \BR_{\geq 0}^2$ defined by
\begin{equation*}
\Omega \coloneqq \left\{ \mu_2\geq \mu_1 \right\} \quad\text{and}\quad \Omega'\coloneqq \left\{ \mu_2\geq \mu_1 + 1 \right\}.
\end{equation*}
Note that the toric domain $\mu^{-1}(\Omega)$ is smooth away from the singular point $0$ and that a neighbourhood of $p$ in $X$ is symplectomorphic to a neighbourhood of $0$ in $\mu^{-1}(\Omega)$. The toric domain $\mu^{-1}(\Omega')$ is smooth everywhere. The moment region $\Omega$ is obtained from $\Omega'$ by translation along the vector $(0,-1)$. This translation lifts to a map
\begin{equation*}
T:\mu^{-1}(\Omega') \rightarrow \mu^{-1} (\Omega) \qquad (\sqrt{\frac{\mu_1}{\pi}}e^{2\pi i \theta_1}, \sqrt{\frac{\mu_2}{\pi}}e^{2\pi i \theta_2}) \mapsto (\sqrt{\frac{\mu_1}{\pi}}e^{2\pi i \theta_1}, \sqrt{\frac{\mu_2-1}{\pi}}e^{2\pi i \theta_2}).
\end{equation*}
which restricts to a symplectomorphism $T:\mu^{-1}(\Omega'\setminus\left\{ (0,1) \right\})\rightarrow \mu^{-1}(\Omega\setminus\left\{ (0,0) \right\})$. We construct a smooth symplectic manifold $X'$ out of $X$ by replacing a neighbourhood of $p$, which we symplectically identify with a neighbourhood $U$ of $0$ in $\mu^{-1}(\Omega)$, with $T^{-1}(U)$. Note that $X'$ is of product form and therefore admits a $\partial$-tame packing. This implies that the same is true for $X$. Indeed, any collection of symplectic disks $\MD$ in the smooth locus of the boundary of $X$ lifts to a collection of smooth disks $\MD'$ in the boundary of $X'$. Moreover, the map $T$ sends any symplectic ribbon complex in $X'$ with free ends at the disks $\MD'$ to a symplectic ribbon complex in $X$ with free ends at $\MD$.

Finally, let us consider the case that $p$ has index $1$. In this case, we modify $f$ such that there exist $\delta > 0$, $a>0$, and $0<b<\sqrt{a/\pi}$ such that the ball $B^4(a)$ is contained in a Darboux neighbourhood of $p$, the function $f$ takes the value $\delta$ on the set $\partial B^4(a) \cap \left\{ |x_1|\leq b \right\}$, and $B^4(a)\cap \left\{ |x_1|\leq b \right\}$ is contained in the sublevel set $f^{-1}( (-\infty,\delta])$. Let us divide $M$ into two strata $f^{-1}( (-\infty,\delta])$ and $f^{-1}( [\delta,\infty))$. Since the latter stratum is of product form, we may replace $M$ by $f^{-1}( (\infty,\delta])$. We cut $M$ along the $3$-dimensional ball $B^4(a) \cap \left\{ x_1=0 \right\}$. The result is a symplectic manifold $X$ with boundary and corners with either one or two connected components. Near the non-smooth regions of the boundary, $X$ looks like a half ball $B^4_+(a)\coloneqq B^4(a) \cap \left\{ x_1 \geq 0 \right\}$. Note that after smoothing the corners, $X$ is diffeomorphic to a manifold of product form.

Our goal is to show that each component of $X$ admits a $\partial$-tame packing. For simplicity, let us assume that $X$ is connected in the following. As in the case of an index $2$ critical point, our strategy is to construct a symplectic $4$-manifold $X'$ with smooth boundary such that the existence of a $\partial$-tame packing for $X'$ implies the existence of such a packing for $X$. The symplectic manifold $X'$ is obtained by carefully smoothing the corners of $X$.

Let us first explain the corner smoothing in the case of the half ball $X = B^4_+(a)$. In this case, $X'$ is given by the ellipsoid $E(a/2,a)$, which has the same volume. We need to explain how to obtain a $\partial$-tame packing of $B^4_+(a)$ from a $\partial$-tame packing of $E(a/2,a)$. For every $r \in (0,1)$, we construct a controlled symplectic embedding of the ellipsoid $E(ra/2,ra)$ into the half ball $B^4_+(a)$. Let us regard the ellipsoid $E(a/2,a)$ as a family of disks of varying sizes over $B^2(a/2)$. The area of the disk over $z \in B^2(a/2)$ is given by $f_1(z) \coloneqq 2(a/2-\pi|z|^2)$. Similarly, we regard the half ball $B^4_+(a)$ as a family of disks over the half disk $B^2_+(a)\coloneqq B^2(a)\cap \left\{ x_1\geq 0 \right\}$. The area of the disk over $z \in B^2_+(a)$ is $f_2(z)\coloneqq a - \pi|z|^2$. Observe that, for every $r\geq 0$, we have
\begin{equation}
\label{eq:weak_tame_embedding_into_half_ball_area_equality}
\operatorname{area}\left(\left\{ z \in B^2(a/2) \mid f_1(z) \geq r \right\}\right) = \operatorname{area}\left(\left\{ z \in B^2_+(a) \mid f_2(z) \geq r \right\}\right).
\end{equation}
We apply the embedding construction of Schlenk \cite[\S 2]{sch03}, \cite[\S 4.1]{sch05} that we have already used several times. To this end, consider the family of admissible loops $\ML$ in the half disk $\operatorname{int}(B^2_+(a))$ depicted in Figure \ref{fig:admissible_curve_family_half_disk}.
\begin{figure}[htpb]
\centering
\def\svgwidth{0.2\textwidth}
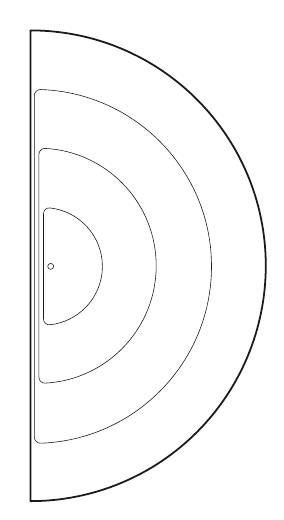
\caption{A special admissible family of loops $\ML$ in the half disk $B^2_+(a)$.}
\label{fig:admissible_curve_family_half_disk}
\end{figure}
Then by \cite[Lemma 2.5]{sch03}, there exists a symplectomorphism $\varphi : \operatorname{int}(B^2(a/2)) \rightarrow \operatorname{int}(B^2_+(a))$ which maps circles in $\operatorname{int}(B^2(a/2))$ centered at the origin to members of the family of loops $\ML$. We can arrange the family of loops $\ML$ such that the image of $E(ra/2,ra)$ unter $\varphi\times \operatorname{id}_\BC$ is contained in $\operatorname{int}(B^4_+(a))$. This makes important use of identity \eqref{eq:weak_tame_embedding_into_half_ball_area_equality}. Moreover, if the family of loops $\ML$ is chosen carefully, we can make sure that the top-dimensional boundary strata of $B^4_+(a)$ are approximated in $C^\infty_{\operatorname{loc}}$ by parts of the boundary of the image of $E(ra/2,ra)$ as $r$ tends to $1$. Symplectic disks in the boundary of $B^4_+(a)$ can therefore be connected to symplectic disks in the boundary of the image of $E(ra/2,ra)$ via symplectic ribbons, showing that the existence of a $\partial$-tame packing of $E(a/2,a)$ implies the existence of a $\partial$-tame packing of $B^4_+(a)$.

The adaptation of the corner smoothing from the special case $B^4_+(a)$ to the case of general $X$ is straightforward. The smoothing $X'$ is obtained by replacing an open neighbourhood $B^4_+(a) \cap \left\{ x_1 < b \right\}$ of the non-smooth locus of the boundary of $X$ by an appropriate open subset of the ellipsoid $E(a/2,a)$. The resulting symplectic manifold $X'$ has smooth boundary and is of product form. It therefore admits a $\partial$-tame packing and we can deduce that the same is true for $X$.
\end{proof}

\section{Proof of the stratification lemma}
\label{sec:proof_stratification_lemma}

The goal of this section is to prove the technical stratification result Lemma \ref{lem:stratification_lemma}.

\subsection{Stratifications compatible with a flow}
\label{subsec:strat_flow}

Let $Y$ be a closed $3$-manifold equipped with a nowhere vanishing vector field $R$. The main example of interest is that $Y$ is the boundary of a symplectic manifold and that $R$ is tangent to the characteristic foliation. Let $\varphi^t$ denote the flow generated by $R$. Let $\Sigma\subset Y$ be a compact embedded surface, possibly with boundary. We say that $\Sigma$ is a {\it transverse surface} if it is everywhere transverse to the vector field $R$. We emphasize that we also assume $\Sigma$ to be transverse to $R$ at the boundary $\partial\Sigma$. In particular, the boundary $\partial \Sigma$ does not consist of periodic orbits, which is a familiar assumption in the context of global surfaces of section. We say that $\Sigma$ is {\it exhaustive} if there exists a compact subset $K\subset \operatorname{int}\Sigma$ such that every flow line of $\varphi^t$ meets $K$ infinitely often forward and backward in time. It is not hard to see that an exhaustive transverse surface always exists. For example, one can construct such a surface by taking the disjoint union of sufficiently many small disks transverse to the flow.

Given an exhaustive transverse surface $\Sigma$, we define
\begin{equation*}
T = T_\Sigma :Y\rightarrow \BR_{>0}\qquad T(p)\coloneqq \inf\{t>0\mid \varphi^t(p)\in \Sigma\}.
\end{equation*}
Note that $T$ indeed takes values in $\BR_{>0}$. Since the flow line $\varphi^t(p)$ intersects $\Sigma$ forward in time and is transverse to $\Sigma$, the infimum $T(p)$ is finite and non-zero. Let us define
\begin{equation*}
\Phi:Y\rightarrow \Sigma \qquad \Phi(p)\coloneqq \varphi^{T(p)}(p).
\end{equation*}
We abbreviate $\tau\coloneqq T|_\Sigma$ and $\varphi\coloneqq \Phi|_{\Sigma}$. We observe that $\varphi$ is a bijection of $\Sigma$. In general, it is not continuous.

We say that an exhaustive transverse surface $\Sigma$ is {\it in good position} if the following is true for every point $p\in \partial\Sigma$. For $k\in \BZ$, set $p_k\coloneqq \varphi^k(p)$.
\begin{itemize}
\item Suppose that $p_1\in \partial\Sigma$. Then the immersion $\BR\times \partial\Sigma\rightarrow Y$ induced by the flow $\varphi^t$ intersects $\partial\Sigma$ transversely at $(\tau(p),p) \in \BR \times \partial\Sigma$. Moreover, the points $p_{-2}$, $p_{-1}$, $p_2$ and $p_3$ are contained in the interior of $\Sigma$.
\item Suppose that $p_1$ is not contained in $\partial\Sigma$. If $p_2$ is contained in $\partial\Sigma$, then the immersion $\BR\times \partial\Sigma\rightarrow Y$ induced by the flow $\varphi^t$ intersects $\partial\Sigma$ transversely at $(\tau(p)+\tau(p_1),p) \in \BR \times \partial\Sigma$.
\end{itemize}
Note that an exhaustive transverse surface can be brought into good position by a generic perburbation.

Given an exhaustive transverse surface $\Sigma\subset Y$ in good position, we construct a stratification $\MY$ of $Y$. Constructing $\MY$ amounts to constructing a filtration
\begin{equation*}
Y = Y_3 \supset Y_2 \supset Y_1 \supset Y_0
\end{equation*}
by closed subsets $Y_i\subset Y$ such that each complement $Y_i\setminus Y_{i-1}$ is a submanifold of dimension $i$. We define $Y_2$ to be the union of a subset $Y_2^\perp$ transverse to the vector field $R$ and a subset $Y_2^\top$ tangent to $R$. The transverse subset $Y_2^\perp$ is simply defined to be $Y_2^\perp\coloneqq \Sigma$. Let $\Sigma^+\subset Y$ be a surface obtained by slightly enlarging $\Sigma$, i.e. by slightly pushing $\partial\Sigma$ in the direction of the outward normal vector field. The tangent subset $Y_2^\top$ is constructed by flowing every point $p\in\partial\Sigma\cup \partial\Sigma^+$ forward and backward until $\Sigma$ is hit for the first time. In formulas this means
\begin{equation*}
Y_2^\perp\coloneqq \Sigma\qquad Y_2^\top\coloneqq \overline{\Phi^{-1}(\partial\Sigma)\cup \Phi^{-1}(\varphi(\partial\Sigma)) \cup \Phi^{-1}(\Phi(\partial\Sigma^+))}\qquad Y_2\coloneqq Y_2^\perp\cup Y_2^\top.
\end{equation*}
We define $Y_1\subset Y_2$ to be the set of non-smooth points of $Y_2$ and $Y_0\subset Y_1$ to be the set of non-smooth points of $Y_1$. It follows from the assumption that $\Sigma$ is in good position that this actually defines a stratification of $\MY$.

Let us now describe the structure of $\MY$ in more detail. Let $S\subset \Sigma$ be the set of all points $p\in \Sigma$ such that the first intersection of the forward flow line starting at $p$ with $\Sigma$ is contained in the boundary $\partial\Sigma$. In other words, we set $S\coloneqq \varphi^{-1}(\partial\Sigma)$. The map $\varphi$ is continuous on $\Sigma\setminus S$. Since $\Sigma$ is in good position, the set $S$ is a smooth $1$-dimensional manifold except for finitely many T-shaped singularities, i.e. singularities modeled on $\BR_{> 0}\cup i\BR\subset\BC$. Each smooth point $p\in S$ has a preferred normal orientation constructed as follows. The function $\tau$ has a jump discontinuity along $S$. We equip $p\in S$ with the normal orientation along which $\tau$ jumps up. Consider a T-shaped singularity of $S$ identified with $\BR_{>0}\cup i\BR$. It is not hard to check that the normal orientation at points in $i\BR$ must point in the direction of positive real part.

Let $S^+$ denote the set of all points $p\in \Sigma$ such that the forward flow line starting at $p$ meets $\partial\Sigma^+$ before it meets $\Sigma$ for the first time, i.e. $S^+\coloneqq \varphi^{-1}(\Phi(\partial\Sigma^+))$. If $\partial\Sigma^+$ is sufficiently close to $\partial\Sigma$, then $S^+$ is a small push-off of $S$ in the direction of the normal orientation. The model at a T-shaped singularity of $S$ is given by
\begin{equation*}
S= \BR_{>0}\cup i\BR\subset \BC \qquad \text{and}\qquad S^+ = (i\varepsilon + \BR_{>0}) \cup (\varepsilon+i\BR)
\end{equation*}
for $\varepsilon>0$. See Figure \ref{fig:stratification_S} for an illustration of the sets $S$ and $S^+$.

\begin{figure}[htpb]
\centering
\def\svgwidth{\textwidth}
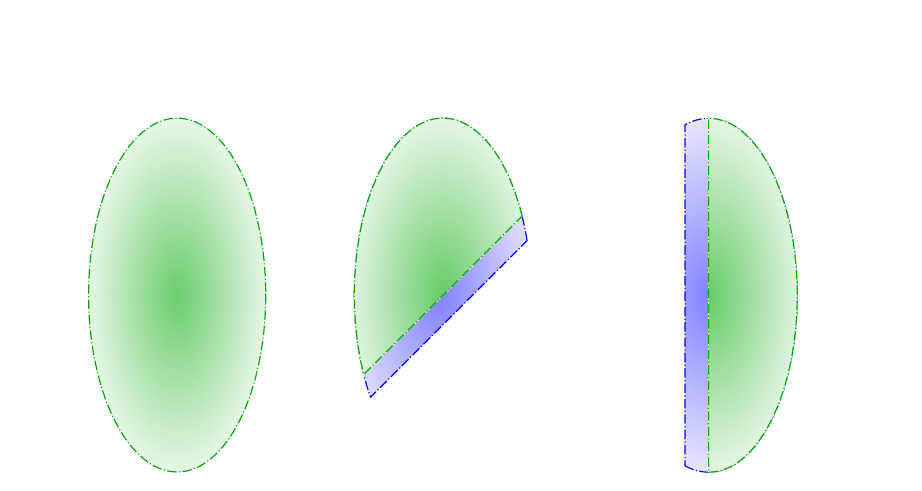
\caption{Intersection of the surfaces $\Sigma$ (green) and $\Sigma^+$ (blue) with a flow box. The sets $S$ and $S^+$ are highlighted by dashed lines.}
\label{fig:stratification_S}
\end{figure}

The sets $S$ and $S^+$ can be used to define a stratification $\MS$ of $\Sigma$. We define the $1$-skeleton of this stratification to be the union $\partial\Sigma \cup S\cup S^+$ and the $0$-skeleton to be the set of non-smooth points of $\partial\Sigma \cup S\cup S^+$. The set of top-dimensional strata of $\MS$ consists of the following (see Figure \ref{fig:stratification_S_disk}):
\begin{enumerate}[\indent (S1)]
\item \label{item:smooth_stratification_small_square} a small square at each T-shaped singularity of $S$;
\item \label{item:smooth_stratification_thin_rectangle} a thin rectangle along each smooth arc of $S$ and a thin annulus along each smooth circle component of $S$;
\item \label{item:smooth_stratification_rest} for each component $C$ of $\Sigma\setminus S$, a set $C^-$ which is obtained from $C$ by slightly pushing inwards some of its boundary arcs.
\end{enumerate}

\begin{figure}[htpb]
\centering
\def\svgwidth{0.5\textwidth}
%% Creator: Inkscape 1.4.2 (ebf0e940, 2025-05-08), www.inkscape.org
%% PDF/EPS/PS + LaTeX output extension by Johan Engelen, 2010
%% Accompanies image file '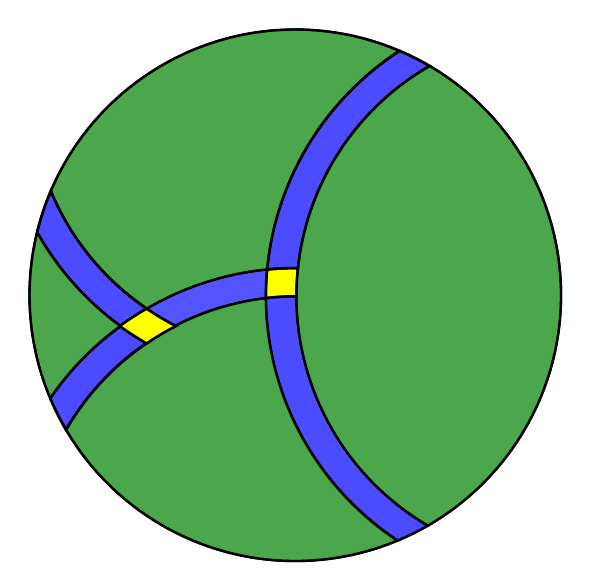' (pdf, eps, ps)
%%
%% To include the image in your LaTeX document, write
%%   \input{<filename>.pdf_tex}
%%  instead of
%%   \includegraphics{<filename>.pdf}
%% To scale the image, write
%%   \def\svgwidth{<desired width>}
%%   \input{<filename>.pdf_tex}
%%  instead of
%%   \includegraphics[width=<desired width>]{<filename>.pdf}
%%
%% Images with a different path to the parent latex file can
%% be accessed with the `import' package (which may need to be
%% installed) using
%%   \usepackage{import}
%% in the preamble, and then including the image with
%%   \import{<path to file>}{<filename>.pdf_tex}
%% Alternatively, one can specify
%%   \graphicspath{{<path to file>/}}
%% 
%% For more information, please see info/svg-inkscape on CTAN:
%%   http://tug.ctan.org/tex-archive/info/svg-inkscape
%%
\begingroup%
  \makeatletter%
  \providecommand\color[2][]{%
    \errmessage{(Inkscape) Color is used for the text in Inkscape, but the package 'color.sty' is not loaded}%
    \renewcommand\color[2][]{}%
  }%
  \providecommand\transparent[1]{%
    \errmessage{(Inkscape) Transparency is used (non-zero) for the text in Inkscape, but the package 'transparent.sty' is not loaded}%
    \renewcommand\transparent[1]{}%
  }%
  \providecommand\rotatebox[2]{#2}%
  \newcommand*\fsize{\dimexpr\f@size pt\relax}%
  \newcommand*\lineheight[1]{\fontsize{\fsize}{#1\fsize}\selectfont}%
  \ifx\svgwidth\undefined%
    \setlength{\unitlength}{283.46456693bp}%
    \ifx\svgscale\undefined%
      \relax%
    \else%
      \setlength{\unitlength}{\unitlength * \real{\svgscale}}%
    \fi%
  \else%
    \setlength{\unitlength}{\svgwidth}%
  \fi%
  \global\let\svgwidth\undefined%
  \global\let\svgscale\undefined%
  \makeatother%
  \begin{picture}(1,0.98)%
    \lineheight{1}%
    \setlength\tabcolsep{0pt}%
    \put(0,0){\includegraphics[width=\unitlength,page=1]{stratification_S_disk.pdf}}%
    \put(0.51000001,0.48999999){\color[rgb]{0,0,0}\makebox(0,0)[lt]{\lineheight{1.25}\smash{\begin{tabular}[t]{l}(S1)\end{tabular}}}}%
    \put(0.61000001,0.74){\color[rgb]{0,0,0}\makebox(0,0)[lt]{\lineheight{1.25}\smash{\begin{tabular}[t]{l}(S2)\end{tabular}}}}%
    \put(0.74000002,0.48000002){\color[rgb]{0,0,0}\makebox(0,0)[lt]{\lineheight{1.25}\smash{\begin{tabular}[t]{l}(S3)\end{tabular}}}}%
  \end{picture}%
\endgroup%

\caption{Stratification $\MS$ of a disk component of $\Sigma$. Strata of type (S\ref{item:smooth_stratification_small_square}) are drawn in yellow, strata of type (S\ref{item:smooth_stratification_thin_rectangle}) in blue, and strata of type (S\ref{item:smooth_stratification_rest}) in green.}
\label{fig:stratification_S_disk}
\end{figure}

The top-dimensional strata of $\MS$ and $\MY$ are in bijection. Given a top-dimensional stratum $C$ of $\MS$, the corresponding top-dimensional stratum of $\MY$ is given by the flow box
\begin{equation*}
C_\MY \coloneqq \left\{ \varphi^t(p) \mid p \in C \enspace\text{and} \enspace t\in (0,\tau(p)) \right\}.
\end{equation*}

We conclude this subsection by constructing exhaustive transverse surfaces $\Sigma$ which are in good position in a precise quantitative sense. Let $g$ be an auxiliary Riemannian metric on $Y$ such that $R$ has unit norm. Given a small positive number $r>0$ and a point $p\in Y$, we define the transverse geodesic disk $D(p,r) \subset Y$ by
\begin{equation*}
D(p,r) \coloneqq \left\{ \exp_p(v) \mid v\in T_pY,\enspace v \perp R_p,\enspace |v| \leq r \right\}.
\end{equation*}
Suppose that $\rho >0$ is a small positive number. If $\rho$ is sufficiently small, we can build an exhaustive transverse surface $\Sigma$ by taking the disjoint union of finitely many transverse geodesic disks of radius $\rho$. By adding more disks if necessary, we can make sure that $\max \tau \leq \rho$. In the following we assume that this is the case.

Now consider an arbitrary point $p\in Y$ and choose an isometric identification $\eta:\BR^2 \cong \langle R_p\rangle^\perp$. Define the embedding
\begin{equation*}
\psi : (-10,10) \times B(0,10) \rightarrow Y \qquad \psi(t,z) \coloneqq \varphi^{\rho t}(\exp_p(\rho z))
\end{equation*}
where $B(0,10)\subset \BR^2$ is the ball of radius $10$ centered at $0$. Let $g_0$ be the Riemannian metric on $(-10,10)\times B(0,10)$ obtained by restricting the standard metric on $\BR^3$. Moreover, define the vector field $R_0\coloneqq \partial_t$, where $t$ is the coordinate of the factor $(-10,10)$. Note that we have
\begin{equation*}
\rho \psi^*R = R_0 \qquad \text{and} \qquad 
\operatorname{dist}_{C^\infty}(\rho^{-2}\psi^*g,g_0) = O(\rho) \quad (\rho \rightarrow 0).
\end{equation*}
Let $E\subset (-10,10)\times B(0,10)$ denote the preimage under $\psi$ of all disk components of $\Sigma$ which are fully contained in the image of $\psi$. Since $\rho$ is small, each component of $E$ is $C^\infty$ close to a disk of the form $\left\{ t \right\} \times \overline{B}(z,1)$ where $(t,z)\in (-10,10)\times B(0,10)$. This $C^\infty$ distance is controlled by $\rho$ and goes to zero as $\rho$ tends to zero.

Let $\delta>0$ be a positive number. We say that the surface $\Sigma$ is in \textit{$\delta$-good position} if the following is true:
\begin{enumerate}
\item $\min \tau \geq \delta\rho$
\item Let $\psi: (-10,10)\times B(0,10)\rightarrow Y$ be an embedding and $E\subset (-10,10)\times B(0,10)$ a surface as defined above for an arbitrary choice of point $p\in Y$ and identification $\BR^2\cong \langle R_p\rangle^\perp$. Let $\operatorname{pr}:(-10,10)\times B(0,10)\rightarrow B(0,10)$ be the projection onto the second factor. Consider the collection of circles $\operatorname{pr}(\partial D) \subset B(0,10)$, where $D$ ranges over all disk components of $E$. Then this collection of circles does not have any tangency or triple intersection points. Moreover, the distance between any two disjoint circles and the distance between any two distinct intersection points is at least $\delta$.
\end{enumerate}

It is not hard to see that if the centers of the disk components of $\Sigma$ are chosen generically, then $\Sigma$ is in $\delta$-good position for all sufficiently small $\delta>0$.

Suppose that $\Sigma$ is in $\delta$-good position. Given $\varepsilon >0$, let $\Sigma^+$ be the collection of transverse geodesic disks obtained from $\Sigma$ by increasing the radius of every disk component from $\rho$ to $(1+\varepsilon)\rho$ while leaving the center fixed. If $\varepsilon$ is sufficiently small, then the pair $(\Sigma,\Sigma^+)$ gives rise to a stratification $\MY$ of $Y$ as described above. We can choose $\varepsilon$ to only depend on $\delta$, for example $\varepsilon(\delta) \coloneqq \delta^2/100$.

\begin{lemma}
\label{lem:transverse_surface_in_delta_good_position}
There exists a universal constant $\delta>0$ with the following property. Let $Y$ be a closed $3$-manifold, $R$ a nowhere vanishing vector field on $Y$, and $g$ a Riemannian metric such that $R$ has unit norm. Then, for all sufficiently small $\rho>0$, there exists an exhaustive transverse surface $\Sigma\subset Y$ constructed as above as the disjoint union of finitely many transverse geodesic disks of radius $\rho$ such that $\Sigma$ is in $\delta$-good position and such that $\max \tau \leq \rho$.
\end{lemma}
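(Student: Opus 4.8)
The plan is to produce $\Sigma$ as a disjoint union of transverse geodesic disks of radius $\rho$ whose forward (and backward) flow boxes of duration less than $\rho$ already cover $Y$; this immediately gives $\max\tau\le\rho$, and taking $K$ to be the union of the concentric radius-$\tfrac{\rho}{2}$ subdisks gives exhaustiveness (every orbit meets $K$ within time $<\rho$, hence infinitely often forwards and backwards). The centres will be chosen from a maximal $c\rho$-separated subset $\{p_1,\dots,p_N\}\subset Y$ for a small universal constant $c$: then the $c\rho$-balls cover $Y$, and by the flow-box normal form recalled before the lemma (after rescaling by $\rho$ the flow is $C^\infty$-$O(\rho)$-close to $\partial_t$ and the disk pieces are $C^\infty$-$O(\rho)$-close to round unit disks) the flow boxes of the disks $D(p_i,\rho)$ of duration at most $\rho$ cover $Y$ once $\rho$ is small. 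Bounded geometry (separation of the $p_i$ together with the normal form) provides a universal bound $\Delta$ on the number of disks whose flow saturations can enter a single flow box of radius $10\rho$.

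\textbf{Disjointness and the lower bound $\min\tau\ge\delta\rho$.} Form the interference graph on $\{p_i\}$ by joining $i$ and $j$ whenever the flow saturations of $D(p_i,\rho)$ and $D(p_j,\rho)$ meet inside a common flow box; its maximal degree is at most $\Delta$, so by Brooks' theorem (exactly as in Lemma~\ref{lem:colorings}) it admits a proper colouring with at most $\Delta+1$ colours. Replace each $D(p_i,\rho)$ by $\varphi^{k_i\eta\rho}(D(p_i,\rho))$, where $k_i$ is the colour of $i$ and $\eta>0$ is a small universal constant, and then apply a generic $o(\rho)$-perturbation of the centres inside their transverse disks to make all disks pairwise disjoint. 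Since the flow moves the transverse coordinate only by $O(\rho)$ per unit time, a forward return from one disk to another within time $o(\rho)$ can occur only between disks whose shadows overlap in a common flow box; such disks are adjacent in the interference graph, hence sit at heights differing by a nonzero multiple of $\eta\rho$, so for small $\rho$ the return takes time at least $\tfrac12\eta\rho$. This establishes condition~(1) of $\delta$-good position with $\delta$ a universal multiple of $\eta$.

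\textbf{The circle configuration and the uniformity of $\delta$ — the main obstacle.} It remains to arrange the conditions in the rescaled flow boxes (no tangency or triple intersection of the projected boundary circles, and distance at least $\delta$ between disjoint circles and between distinct intersection points) with a \emph{universal} $\delta$; this is the upgrade of the paper's remark that a generic choice of centres is $\delta$-good for \emph{some} $\delta$ depending on $\Sigma$. After rescaling by $\rho^{-1}$, the picture in any flow box is $C^\infty$-$O(\rho)$-close to a configuration of at most $\Delta$ round unit disks with shadow-centres in $B(0,10)$, and the $\delta_0$-good such configurations form an open dense subset (of positive-codimension complement) of a compact configuration space, so there are universal $\delta_0>0$ and a universal bound on how far one must perturb to reach a $\delta_0$-good configuration. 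The genuine difficulty is that one disk lies in many overlapping flow boxes, so flow boxes cannot be optimized independently; I would resolve this by a \emph{second} proper colouring — adjacency whenever two disks share a flow box, still boundedly many colours by bounded geometry, so that each flow box contains at most one disk of each colour — and then perturb the centres one colour class at a time, the perturbations within a class acting on disjoint families of flow boxes and hence decomposing into local, independent moves; a uniform transversality/quantitative-inverse-function estimate, run inductively over the boundedly many colour classes, drives every flow box to $\delta_0$-goodness at universally bounded total cost, after which setting $\delta:=\delta_0/2$ and shrinking $\rho$ to absorb the $O(\rho)$ model error finishes the proof. A cleaner alternative, which avoids this bookkeeping, is a compactness argument by contradiction: a sequence of $\rho_n$-configurations $(\rho_n\to0)$ that are ever closer to violating $\delta$-good position, rescaled around points where they are worst, subconverges to a locally finite Euclidean configuration of unit disks carrying the $\partial_t$-flow which admits no good perturbation — contradicting the fact that every such configuration does. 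Either way, the technical heart is precisely this passage from a $\Sigma$-dependent $\delta$ to a single $\delta$ valid for all $Y$ and all small $\rho$.
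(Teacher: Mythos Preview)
Your overall architecture---start with a full net of disks, then perturb---differs from the paper's, which \emph{builds} $\Sigma$ up from the empty surface one colour class at a time. The paper colours a finite family $\Psi$ of flow boxes of scale $\rho$ (not the disks) with a universal number $N$ of colours so that same-coloured boxes are disjoint; its key local observation is that in a single box, any existing disk configuration in $\delta$-good position can be enlarged to one in $\delta'(\delta)$-good position with the property that every orbit through the inner half of the box meets the enlarged configuration within time $<1$. Iterating over the $N$ colours, the good-position constant degrades from $\delta_0=1$ through $\delta_i=\delta'(\delta_{i-1})$ to $\delta=\delta_N$, while the covering by inner halves forces $\max\tau\le\rho$. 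This single inductive mechanism handles conditions (1) and (2) simultaneously; there is no separate treatment of $\min\tau$.

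Your height-shifting argument for condition (1) has a genuine gap. Shifting disk $i$ by $\varphi^{k_i\eta\rho}$ changes the rescaled height \emph{difference} of two adjacent disks $i,j$ by $(k_i-k_j)\eta$, a nonzero multiple of $\eta$; but the \emph{initial} height difference in a shared flow box is an arbitrary number in roughly $(-20,20)$, not zero, so the final difference can vanish (e.g.\ initial difference $2\eta$ with colours $0$ and $2$). The colour shift therefore does not by itself give a universal lower bound on $\min\tau$, and the subsequent generic $o(\rho)$-perturbation of centres restores positivity but not a uniform $\delta\rho$-bound. Your second-colouring scheme for the circle configuration is much closer in spirit to the paper's argument; the paper's insight is that the same colour-by-colour mechanism, applied to the act of \emph{adding} disks rather than perturbing a fixed set, gives full freedom at each step to place each new disk well relative to everything already present, which handles $\min\tau$ for free and sidesteps the accumulation of constraints you flag as the main obstacle.
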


\begin{proof}
The proof of the lemma is based on the following simple observation. For every $\delta>0$, there exists a constant $\delta' = \delta'(\delta)>0$ such that the following is true for every Riemannian metric $g$ on $(-10,10)\times B(0,10)$ which is sufficiently $C^\infty$ close to $g_0$. Consider a surface $E \subset (-10,10)\times B(0,10)$ consisting of finitely many disjoint, transverse (with respect to the vector field $R_0$) geodesic disks of radius $1$. Assume that $E$ is in $\delta$-good position. By this we mean that the flow line of the vector field $R_0$ starting at any point in $E$ can only meet $E$ again after time at least $\delta$ and that the projections of the boundary circles of $E$ to $B(0,10)$ satisfy the properties in item (2) above. Then there exists a surface $E'$ obtained from $E$ by adding disjoint transverse geodesic disks of radius $1$ such that $E'$ is in $\delta'$-good position and such that, for every point $(t,z) \in (-5,5)\times B(0,5)$, the flow line of the vector field $R_0$ starting at $(t,z)$ meets $E'$ within time strictly less than $1$.

Let $\rho >0$ be small. Note that there exist a universal postive integer $N>0$ independent of any of the other quantities involved, a finite set $\Psi$ of embeddings $\psi: (-10,10)\times B(0,10)\rightarrow Y$ defined as above for various choices of points $p\in Y$ and trivializations $\eta$, and an $N$-coloring of $\Psi$ such that the images of any two distinct embeddings of the same color are disjoint and such that the union over all $\psi \in \Psi$ of the sets $\psi( (-5,5)\times B(0,5))$ covers $Y$. Let $\Psi = \Psi_1 \sqcup \cdots \sqcup \Psi_N$ be the partition of $\Psi$ induced by the coloring.

Set $\delta_0 \coloneqq 1$. For $1\leq i \leq N$, recursively define $\delta_i \coloneqq \delta_{i-1}'$, where $\delta_{i-1}'$ is obtained from $\delta_{i-1}$ via the above observation. We claim that the assertion of the lemma holds with $\delta \coloneqq \delta_N$. Set $\Sigma_0 \coloneqq \emptyset$. Then $\Sigma_0$ is clearly in $\delta_0$-good position. Since the images of any two distinct embeddings in $\Psi_1$ are disjoint, we can apply the above observation for each such embedding separately and obtain a surface $\Sigma_1$ consisting of disjoint transverse geodesic disks of radius $\rho$ which is in $\delta_1$-good position and has the property that $T_{\Sigma_1}(p) \leq \rho$ for all points $p$ in the union $\bigcup_{\psi \in \Psi_1}\psi( (-5,5)\times B(0,5))$. We repeat the same step for all embeddings $\psi \in \Psi_2$ and obtain a surface $\Sigma_2$ in $\delta_2$-good position such that $T_{\Sigma_2}(p)\leq \rho$ for all points $p$ in $\bigcup_{\psi \in \Psi_1\cup \Psi_2}\psi( (-5,5)\times B(0,5))$. We iterate this step for all colors. Then the resulting surface $\Sigma \coloneqq \Sigma_N$ is in $\delta$-good position and satisfies $\max \tau \leq \rho$, as desired.
\end{proof}

\subsection{Stratification of thin products}

Let $(M,\omega)$ be a symplectic $4$-manifold. Assume that $M$ is diffeomorphic to $[0,1]\times Y$ for some closed $3$-manifold $Y$. Let $0\leq s_- < s_+ \leq 1$ and assume that the difference $\sigma\coloneqq s_+ - s_- \ll 1$ is small. In order to prove Lemma \ref{lem:stratification_lemma}, we need to construct a special stratification $\MX$ of $[s_-,s_+]\times Y$ whose top-dimensional strata are symplectic cuboids and perturbed symplectic frusta with cuts.

Let $R$ denote the Hamiltonian vector field induced by the Hamiltonian
\begin{equation*}
H: M = [0,1]\times Y \rightarrow \BR \qquad H(s,p) \coloneqq -s.
\end{equation*}
Let $J$ be a compatible almost complex structure on $(M,\omega)$ such that $J\partial_s = R$, where $s$ is the coordinate of the first factor of $[0,1]\times Y$. Let $g$ denote the Riemannian metric induced by $\omega$ and $J$.

Let us fix a constant $\delta>0$ as in Lemma \ref{lem:transverse_surface_in_delta_good_position}. If $\sigma$ is sufficiently small, we may apply Lemma \ref{lem:transverse_surface_in_delta_good_position} with $\rho\coloneqq \sigma/10$ to the $3$-manifold $\left\{ s_- \right\}\times Y$ equipped with the restrictions of $R$ and $g$. Let $\Sigma \subset \left\{ s_- \right\}\times Y$ be the resulting exhaustive transverse surface in $\delta$-good position. Moreover, let $\Sigma^+$ be the surface obtained from $\Sigma$ by increasing the radii of all disk components from $\rho$ to $(1+\varepsilon)\rho$ where $\varepsilon \coloneqq \varepsilon(\delta) = \delta^2/100$ as in Subsection \ref{subsec:strat_flow}. We emphasize that $\delta$ and $\varepsilon$ are fixed universal constants. In particular, they do not change if we shrink $\sigma$.

Let $\MS$ and $\MY$ denote the stratifications of $\Sigma$ and $\left\{ s_- \right\}\times Y$, respectively, induced by the pair of surfaces $(\Sigma,\Sigma^+)$. Recall that the sets of top-dimensional strata of $\MS$ and $\MY$ are in bijective correspondence. The set of top-dimensional strata of the stratification $\MX$ of $[s_-,s_+]\times Y$ we are about to construct will also be in bijective correspondence with these two sets.

Consider a point $p\in Y$ and a unitary trivialization
\begin{equation*}
\eta: \BC \cong \langle \partial_s,R_{(s_-,p)}\rangle^\perp \subset T_{(\sigma_-,p)}M.
\end{equation*}
Given such a tuple $(p,\eta)$, we define an embedding
\begin{equation*}
\psi : [0,1]\times (-1,1)\times B(0,1)\rightarrow [s_-,s_+]\times Y
\end{equation*}
as follows: First, we define $\psi(0,0,z) \coloneqq (s_-,\exp_{p}(\sigma \eta(z)))$, where $\exp$ denotes the exponential map of the restriction of the metric $g$ to $\left\{ s_- \right\}\times Y\cong Y$. Consider the hypersurface
\begin{equation*}
W\coloneqq [s_-,s_+]\times D(p,2\sigma) \subset [s_-,s_+]\times Y,
\end{equation*}
where $D(p,2\sigma)\subset Y$ is the transverse geodesic disk defined via the same exponential map. The symplectic form $\omega$ induces a characteristic foliation on $W$. At the point $(s_-,p)$, this characteristic foliation is tangent to $\partial_s$. If $\sigma$ is sufficiently small, the characteristic folation is close to being tangent to $\partial_s$ everywhere on $W$. In particular, the characteristic leaf starting at a point $(s_-,q)$ with $q \in D(p,\sigma)$ is a small perturbation of the line segment $[s_-,s_+]\times \left\{ q \right\}$. Let $s\in [0,1]$ and $z\in B(0,1)$. We define $\psi(s,0,z)$ to be the unique intersection point of the characteristic leaf starting at the point $\psi(0,0,z)$ and the slice $\left\{ s_- + s\sigma \right\}\times Y$. Finally, we define $\psi(s,t,z)\coloneqq \varphi^{\sigma t}(\psi(s,0,z))$, where $\varphi^t$ denotes the flow induced by $R$.

\begin{lemma}
\label{lem:pullback_symplectic_form}
Let $\Omega_0$ denote the standard area form on $B(0,1)$ and let us define the area form $\Omega$ by
\begin{equation*}
\Omega \coloneqq \sigma^{-2} \psi(0,0,\cdot) ^* \omega.
\end{equation*}
Then we have
\begin{equation*}
\sigma^{-2} \psi^*\omega = \Omega + ds \wedge dt \qquad \text{and} \qquad \operatorname{dist}_{C^\infty}(\Omega, \Omega_0) = O(\sigma) \quad (\sigma\rightarrow 0).
\end{equation*}
\end{lemma}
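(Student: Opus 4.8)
The plan is to analyze the embedding $\psi$ by decomposing it into its three defining building blocks and tracking how each interacts with the symplectic form. The statement consists of two parts: an exact identity $\sigma^{-2}\psi^*\omega = \Omega + ds\wedge dt$, and an asymptotic estimate $\operatorname{dist}_{C^\infty}(\Omega,\Omega_0) = O(\sigma)$. I will treat these separately, with the exact identity requiring the bulk of the work.

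For the exact identity, first I would observe that $\psi$ factors as $\psi(s,t,z) = \varphi^{\sigma t}\circ \Psi(s,z)$, where $\Psi(s,z) \coloneqq \psi(s,0,z)$ is the ``$t=0$ slice'' of the embedding. Since $\varphi^r$ is the Hamiltonian flow of $H = -s$, it preserves $\omega$; hence $\psi^*\omega$ can be computed by first pulling back along $(s,t,z)\mapsto \varphi^{\sigma t}(\cdot)$ and then along $\Psi$. The key point is that $\partial_t \psi = \sigma R$ evaluated along the flow, and $\iota_R\omega = dH = -ds$ (with the appropriate sign convention for the Hamiltonian vector field). A direct computation of $\psi^*\omega$ then shows that the ``mixed'' and ``pure $t$'' components are governed by this contraction: the $ds\wedge dt$ term arises precisely because $\iota_R\omega = -ds$ combined with the factor $\sigma$ in the time-reparametrization (and another $\sigma$ from the spatial rescaling $\sigma^{-2}$), while the purely horizontal $dz$-part of $\psi^*\omega$ is flow-invariant and equals $\Psi^*\omega$. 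The remaining subtlety is to show that $\Psi^*\omega$ has no $ds\wedge dz$ component and that its restriction to each fixed-$s$ slice is independent of $s$, i.e. $\Psi^*\omega = \Psi(0,\cdot)^*\omega$ pulled back to $[0,1]\times B(0,1)$. This is exactly where the definition of $\psi(s,0,z)$ via the characteristic leaves of the hypersurface $W$ enters: by construction, the curves $s\mapsto \Psi(s,z)$ are leaves of the characteristic foliation of $W = \omega|_W$, so $\partial_s\Psi$ lies in the kernel of $\omega|_W$, which forces $\iota_{\partial_s\Psi}(\Psi^*\omega) = 0$. Cartan's formula together with $d(\Psi^*\omega) = 0$ then gives $\mathcal{L}_{\partial_s}(\Psi^*\omega) = 0$, proving $s$-invariance and hence $\Psi^*\omega = \Omega + (\text{terms that vanish})$ after rescaling. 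Assembling these pieces yields $\sigma^{-2}\psi^*\omega = \Omega + ds\wedge dt$.

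For the $C^\infty$ estimate, the strategy is to compare everything with the model situation where $Y$ is replaced by a Darboux chart. One shows that $\psi(0,0,\cdot)$ agrees, to leading order in $\sigma$, with the map $z\mapsto(s_-, p) + \sigma(\text{linear})$ in Darboux coordinates adapted to the unitary frame $\eta$; since $\eta$ is a unitary trivialization, the leading-order term is an isometric linear symplectic embedding, so $\sigma^{-2}\psi(0,0,\cdot)^*\omega = \Omega_0 + O(\sigma)$ where the $O(\sigma)$ bound is in $C^\infty$ on the fixed compact set $B(0,1)$. Concretely, one Taylor-expands the exponential map $\exp_p$ and the symplectic form $\omega$ around $(s_-,p)$: the first-order data reproduce $\Omega_0$, and all higher-order corrections carry at least one extra power of $\sigma$ from the $\sigma\eta(z)$ rescaling, with derivatives in $z$ only producing constants since $z$ ranges over a fixed ball. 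This is a routine but slightly tedious estimate; I would phrase it as ``expanding in normal coordinates and using that $\eta$ is unitary,'' deferring the bookkeeping.

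The main obstacle I anticipate is making the claim $\Psi^*\omega = \Omega_0$-part-plus-$ds$-invariant-stuff fully rigorous near the boundary and at the level of all derivatives, rather than just pointwise at $(s_-,p)$. The definition of $\Psi(s,z)$ as ``the intersection of the characteristic leaf through $\psi(0,0,z)$ with the slice $\{s_- + s\sigma\}\times Y$'' is only well-defined and smooth once $\sigma$ is small enough that the characteristic foliation on $W$ is a $C^\infty$-small perturbation of $\partial_s$; one must verify this transversality is uniform over the (fixed) parameter ranges so that $\Psi$, and hence $\psi$, is a genuine smooth embedding with the stated properties. Once that regularity is in hand, the identity $\iota_{\partial_s\Psi}(\Psi^*\omega)=0$ and the Cartan argument are clean. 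The rescaling factor $\sigma^{-2}$ should be double-checked for consistency: two factors of $\sigma$ enter from the spatial scale $\sigma\eta(z)$ in the definition of $\psi(0,0,z)$ (one per coordinate direction of $B(0,1)$), which is exactly what makes $\sigma^{-2}\Psi(0,0,\cdot)^*\omega$ converge to the standard form; the single factor of $\sigma$ in $\psi(s,0,z)$ (from the step size $s\sigma$) and in $\psi(s,t,z)$ (from $\varphi^{\sigma t}$) then produce the normalized $ds\wedge dt$ term.
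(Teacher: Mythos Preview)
Your proof is correct and reaches the identity via a somewhat different route than the paper. The paper argues top-down: it first observes that the characteristic foliations on the slices $[0,1]\times\{0\}\times B(0,1)$ and $\{s\}\times(-1,1)\times B(0,1)$ are tangent to $\partial_s$ and $\partial_t$ respectively, which forces the restrictions of $\psi^*\omega$ to both slices to equal $\psi(0,0,\cdot)^*\omega$; it then writes $\sigma^{-2}\psi^*\omega = \Omega + ds\wedge\alpha_s$, uses closedness to get $\alpha_s = du_s$, and finally computes $\alpha_s(\partial_t)=1$ from the Hamiltonian relation $\sigma\psi^*R = \partial_t$ together with $\sigma^{-1}\psi^*H = -s - \sigma^{-1}s_-$. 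Your factorization $\psi = \varphi^{\sigma t}\circ\Psi$ uses the same two geometric facts (characteristic leaves of $W$ for the $s$-direction, Hamiltonian flow for the $t$-direction) but computes the pullback directly: the symplectomorphism property of $\varphi^{\sigma t}$ immediately gives the $(s,z)$-block as $\Psi^*\omega$, and $\iota_R\omega = -ds_M$ together with $s_M\circ\psi = s_- + \sigma s$ gives the $ds\wedge dt$ term in one stroke. Your packaging is arguably cleaner since it isolates the role of each step in the construction of $\psi$.

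For the $C^\infty$ estimate the paper takes a different and shorter path than your Taylor expansion: it exploits the scaling self-similarity $\Omega_{r\sigma} = r^{-2}S_r^*\Omega_\sigma$ (where $S_r(z)=rz$), which reduces the $\sigma\to 0$ asymptotics to the single fact that $\Omega_\sigma = \Omega_0$ at the origin. This avoids introducing Darboux coordinates and any bookkeeping of higher-order terms in $\exp_p$ and $\omega$. Your Taylor-expansion argument also works, but the scaling trick is worth knowing since it applies uniformly in $C^k$ without tracking derivative orders.
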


\begin{proof}
It is immediate from the construction of $\psi$ that the characteristic foliation on $[0,1]\times \left\{ 0 \right\}\times B(0,1)$ induced by $\psi^*\omega$ is tangent to the vector field $\partial_s$. This implies that
\begin{equation}
\label{eq:pullback_symplectic_form_proof_a}
\psi^*\omega|_{[0,1]\times \left\{ 0 \right\}\times B(0,1)} = \psi(0,0,\cdot)^*\omega.
\end{equation}
Similarly, it follows from the construction of $\psi$ that the characteristic foliation on $\left\{ s \right\}\times (-1,1)\times B(0,1)$ induced by $\psi^*\omega$ is tangent to the vector field $\partial_t$. We conclude that
\begin{equation*}
\psi^*\omega|_{\left\{ s \right\}\times (-1,1)\times B(0,1)} = \psi(0,0,\cdot)^*\omega.
\end{equation*}
This shows that we can write
\begin{equation*}
\sigma^{-2}\psi^*\omega = \Omega + ds \wedge \alpha_s
\end{equation*}
where $(\alpha_s)_{s\in [0,1]}$ is a family of $1$-forms on $(-1,1)\times B(0,1)$. Note that \eqref{eq:pullback_symplectic_form_proof_a} implies that the restriction of $\alpha_s$ to $\left\{ 0 \right\}\times B(0,1)$ vanishes. We have
\begin{equation*}
0 = d(\Omega + ds \wedge \alpha_s) = ds \wedge d\alpha_s.
\end{equation*}
This implies that $\alpha_s$ is a closed $1$-form. We can therefore write $\alpha_s = du_s$ for a function $u_s:(-1,1)\times B(0,1)\rightarrow \BR$. The function $u_s$ is uniquely determined after imposing the normalization $u_s(0,0) = 0$. Note that since the restriction of $\alpha_s$ to $\left\{ 0 \right\}\times B(0,1)$ vanishes, we then have $u_s(0,z) = 0$ for all $z$. Note that the Hamiltonian vector field of the Hamiltonian $\sigma^{-1}\psi^*H$ with respect to the symplectic form $\sigma^{-2}\psi^*\omega$ is given by $\sigma\psi^* R$. Moreover, note that
\begin{equation*}
(\sigma^{-1}\psi^*H)(s,t,z) = -s - \sigma^{-1}\sigma_- \qquad \text{and} \qquad \sigma\psi^*R = \partial_t.
\end{equation*}
This implies that $\alpha_s(\partial_t) = 1$. We conclude that $u_s(t,z) = t$.

It remains to prove that $\operatorname{dist}_{C^\infty}(\Omega,\Omega_0) = O(\sigma)$ as $\sigma$ tends to zero. Let us write $\Omega_\sigma$ to indicate the dependence of $\Omega$ on $\sigma$. For $0<r\leq 1$, define the map $S_r:B(0,1)\rightarrow B(0,1)$ by $S_r(z) \coloneqq rz$. A simple computation shows that
\begin{equation*}
\Omega_{r\sigma} = r^{-2} S_{r}^*\Omega_\sigma.
\end{equation*}
Since $\Omega_\sigma$ agrees with $\Omega_0$ at the center of $B(0,1)$, this implies that $\operatorname{dist}_{C^\infty}(\Omega_{r\sigma},\Omega_0) = O(r)$ as $r$ tends to zero.
\end{proof}

\begin{lemma}
\label{lem:transition_maps}
Let $\psi,\psi' : [0,1]\times (-1,1)\times B(0,1) \rightarrow [s_-,s_+]\times Y$ be the embeddings induced by two choices of points and unitary trivializations $(p,\eta)$ and $(p',\eta')$, respectively. Assume that the intersection $U \coloneqq \operatorname{im}(\psi) \cap \operatorname{im}(\psi')$ is non-empty and consider the transition map
\begin{equation*}
\chi \coloneqq \psi'^{-1}\circ \psi : \psi^{-1}(U) \rightarrow \psi'^{-1}(U).
\end{equation*}
Then there exist a translation $f:\BR\rightarrow \BR$ and an orientation preserving affine isometry $g:\BR^2\rightarrow \BR^2$ such that
\begin{equation*}
\operatorname{dist}_{C^\infty}(\chi, (\operatorname{id}_{\BR} \times f \times g)|_{\psi^{-1}(U)}) = O(\sigma) \quad (\sigma\rightarrow 0).
\end{equation*}
\end{lemma}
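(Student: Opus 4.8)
The plan is to exploit the fact that the two embeddings $\psi,\psi'$ are built from the same three ingredients in the same order: the exponential map of the metric $g|_{\{s_-\}\times Y}$, the characteristic flow on a hypersurface of the form $[s_-,s_+]\times D(p,2\sigma)$, and the flow $\varphi^t$ of $R$. Each of these, after the rescaling by $\sigma$ that is implicit in the definition of $\psi$, converges in $C^\infty$ (on the relevant compact domain) to its flat model: geodesics become straight lines, the characteristic foliation becomes the $\partial_s$-direction, and $\varphi^{\sigma t}$ becomes the translation $t\mapsto t$. So the strategy is to factor $\chi=\psi'^{-1}\circ\psi$ as a composition of three maps, each comparing one ingredient of $\psi$ with the corresponding ingredient of $\psi'$, show each factor is $O(\sigma)$-$C^\infty$-close to an affine map of the required block-triangular shape, and then compose.

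First I would set up coordinates carefully. Write $\psi=\psi_R\circ\psi_W\circ\psi_E$ where $\psi_E(0,0,z)=(s_-,\exp_p(\sigma\eta(z)))$ records the geodesic disk in $\{s_-\}\times Y$, $\psi_W$ flows along the characteristic foliation of $W=[s_-,s_+]\times D(p,2\sigma)$ to the slice $\{s_-+s\sigma\}\times Y$, and $\psi_R(s,t,z)=\varphi^{\sigma t}(\,\cdot\,)$ applies the Reeb-type flow. Analogously factor $\psi'$. The transition map is then a composition of three ``mixed'' comparison maps. I would handle the $z$-variable first: on the slice $\{s_-\}\times Y$, the composition $\exp_{p'}^{-1}\circ\exp_p$ (suitably rescaled by $\sigma$) is, by the standard comparison of normal coordinates around nearby points, $C^\infty$-close on $B(0,1)$ to the affine isometry $g$ given by the change of orthonormal frame $\eta'^{-1}\circ(\text{parallel transport})\circ\eta$, with error $O(\sigma)$ coming from the curvature of $g|_Y$ and the fact that the two base points are $O(\sigma)$ apart; this uses only that $\psi,\psi'$ are built from geodesic disks of radius $\sigma$. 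The $s$-variable is governed purely by the slicing $\{s_-+s\sigma\}\times Y$, which is intrinsic and identical for $\psi$ and $\psi'$, so $\chi$ preserves $s$ exactly; that is where the literal $\operatorname{id}_\BR$ factor comes from. The $t$-variable is governed by $\varphi^{\sigma t}$ relative to the base surface; since the two base surfaces $\Sigma,\Sigma^+$-type disks differ by flowing, and $\varphi$ has unit-speed generator $R$, the reparametrization of $t$ needed to match time origins is a translation $f$, again up to $O(\sigma)$.

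The key structural point making the error genuinely $O(\sigma)$ and not merely $o(1)$ is the rescaling behaviour already isolated in the proof of Lemma~\ref{lem:pullback_symplectic_form}: the maps $S_r$ conjugate the $\sigma$-model to the $r\sigma$-model, and because all the models agree with their flat limits at the center point, a one-term Taylor estimate in $\sigma$ gives the linear decay. Concretely, I would differentiate the defining ODEs (geodesic equation, the characteristic-leaf equation, the flow equation for $R$) with respect to $\sigma$ at $\sigma=0$, observe that the $\sigma=0$ solutions are exactly the affine model maps, and bound the derivative terms uniformly on the compact domain $[0,1]\times(-1,1)\times B(0,1)$ using the $C^\infty$ bounds on $\omega$, $J$, $g$. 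The off-diagonal terms (e.g. how the $s$- and $t$-flows drag the $z$-coordinate) are themselves $O(\sigma)$ because the characteristic foliation of $W$ is tangent to $\partial_s$ and $R$ is tangent to the slices only up to $O(\sigma)$ corrections; this is precisely why $g$ and $f$ appear in separate blocks rather than entangled.

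The main obstacle I anticipate is bookkeeping rather than conceptual: one must verify that the several compositions do not degrade the $O(\sigma)$ rate — i.e. that composing finitely many maps each of the form $(\text{affine block map})+O(\sigma)_{C^\infty}$, with uniformly bounded derivatives on a fixed compact set, again yields an affine block map plus $O(\sigma)_{C^\infty}$ — and that the domain $\psi^{-1}(U)$ on which everything is compared is contained in a fixed compact subset of $[0,1]\times(-1,1)\times B(0,1)$ independent of $\sigma$ (which follows since $\psi,\psi'$ are embeddings of that fixed domain, so $\psi^{-1}(U)$ is automatically inside it). A secondary subtlety is the exact form of $f$ and $g$: $g$ must be shown orientation-preserving, which follows because both $\eta,\eta'$ are unitary (hence orientation-compatible) trivializations of the symplectic normal bundle and parallel transport is orientation-preserving; and $f$ must be shown to be a genuine translation with no scaling, which follows because $R$ generates a flow at unit speed with respect to the chosen metric, so both $\psi$ and $\psi'$ parametrize $t$ by arclength-$\times\sigma$ along the same orbits. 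Once these points are pinned down the estimate $\operatorname{dist}_{C^\infty}(\chi,(\operatorname{id}_\BR\times f\times g)|_{\psi^{-1}(U)})=O(\sigma)$ follows.
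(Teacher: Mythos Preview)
Your approach is correct but takes a genuinely different route from the paper. You factor $\psi$ and $\psi'$ into their three constituent flows and compare each piece to its flat model; this works but, as you anticipate, the bookkeeping when composing $\psi'^{-1}\circ\psi$ is nontrivial since the factors are nested rather than simply paired. The paper bypasses the factorization entirely with a single Riemannian observation: one shows $\operatorname{dist}_{C^\infty}(\sigma^{-2}\psi^*g,g_0)=O(\sigma)$ via exactly the scaling argument $S_r$ you cite from Lemma~\ref{lem:pullback_symplectic_form}, and likewise for $\psi'$; then $\chi$ is an \emph{exact} isometry between two metrics each $O(\sigma)$-close to the flat metric $g_0$. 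Combined with the two structural facts you also identify---$\chi$ preserves the $s$-coordinate exactly and $\chi_*\partial_t=\partial_t$ exactly (since $\psi_*\partial_t=\sigma R=\psi'_*\partial_t$)---this forces $\chi$ to be $O(\sigma)$-close in $C^\infty$ to a Euclidean isometry of the form $\operatorname{id}_{\BR}\times f\times g$. Your approach has the virtue of being self-contained and making the affine limit maps $f,g$ explicit (via parallel transport and time-origin shift), while the paper's approach buys brevity by packaging all the geometric comparisons into the single metric estimate and avoiding any decomposition of $\chi$.
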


\begin{proof}
Let $g_0$ denote the Riemannian metric on $[0,1]\times (-1,1)\times B(0,1)$ obtained by restricting the standard metric on $\BR^4$. Let us show that $\operatorname{dist}_{C^\infty}(\sigma^{-2}\psi^*g, g_0) = O(\sigma)$ as $\sigma$ tends to zero. For $0<r\leq 1$, let $\psi_r$ denote the embedding associated to $(p,\eta)$, but with the interval $[s_-,s_+]$ replaced by $[s_-,s_- + r (s_+ - s_-)]$. Define a map
\begin{equation*}
S_r : [0,1]\times (-1,1)\times B(0,1) \rightarrow [0,1]\times (-1,1)\times B(0,1) \qquad S_r(s,t,z) \coloneqq (rs,rt,rz).
\end{equation*}
It is straightforward to check that $\psi_r = \psi \circ S_r$ and hence that
\begin{equation*}
(r\sigma)^{-2}\psi_r^*g = r^{-2} S_r^* (\sigma^{-2}\psi^*g).
\end{equation*}
Note that $\sigma^{-2}\psi^*g$ agrees with $g_0$ at the point $(0,0,0)$. This implies that $\operatorname{dist}_{C^\infty}((r\sigma)^{-2}\psi_r^*g,g_0) = O(r)$, as desired.

We observe that the transition map $\chi$ is an isometry with respect to the metric $\sigma^{-2}\psi^*g$ on $\psi^{-1}(U)$ and the metric $\sigma^{-2}(\psi')^*g$ on $(\psi')^{-1}(U)$. Both of these metrics have $C^\infty$ distance $O(\sigma)$ from $g_0$ by the above. Moreover, by definition of $\psi$ and $\psi'$, the map $\chi$ preserves the coordinate of the first factor of $[0,1]\times (-1,1)\times B(0,1)$ and we have $\chi_*\partial_t = \partial_t$. Combining these observations, we see that $\chi$ must have $C^\infty$ distance $O(\sigma)$ from a map of the form $\operatorname{id}_{\BR} \times f \times g$ for a translation $f$ and an affine isometry $g$.
\end{proof}

For each disk component $D = \left\{ s_- \right\}\times D(p,\rho)$ of $\Sigma$, pick an arbitrary unitary trivialization $\eta:\BC \cong \langle \partial_s, R_{(s_-,p)}\rangle^\perp$ and let $\psi_D: [0,1]\times (-1,1)\times B(0,1)\rightarrow [\sigma_-,\sigma_+]\times Y$ be the embedding associated to $(p,\eta)$. Note that $\psi_D^{-1}(D) = \left\{ 0 \right\}\times \left\{ 0 \right\}\times B(0,1/10)$. Moreover, it follows from Lemma \ref{lem:transition_maps} that, for every disk component $D'$ of $\Sigma$ which intersects the image of $\psi_D$, there exist a point $(t,z)\in \BR\times \BR^2$ and a perturbation $E$ of the disk $\left\{ 0 \right\}\times \left\{ t \right\}\times B(z,1/10)$ of $C^\infty$ size $O(\sigma)$ such that $\psi_D^{-1}(D') = E \cap ([0,1]\times (-1,1)\times B(0,1))$. Let us assume that $\sigma$ is sufficiently small such that the $C^\infty$ size of the perturbation is much smaller than $\varepsilon$.

Let us define the auxiliary hypersurface $W_D\coloneqq \psi_D([0,1]\times \left\{ 0 \right\}\times B(0,1))$. Again, it follows from Lemma \ref{lem:transition_maps} that, for each disk component $D'$ of $\Sigma$ such that $W_{D'}$ intersects the image of $\psi_D$, there exist $(t,z)\in \BR\times \BR^2$ and a perturbation $V$ of $[0,1]\times \left\{ t \right\} \times B(z,1)$ of $C^\infty$ size $O(\sigma)$ such that $\psi_D^{-1}(W_{D'}) = V \cap ([0,1]\times (-1,1)\times B(0,1))$. We assume that $\sigma$ is sufficiently small such that the $C^\infty$ size of the perturbation is much smaller than $\varepsilon$. In the following, we perturb the hypersurfaces $W_D$ while preserving this property. 

Consider a top-dimensional stratum $C$ of $\MS$ of type (S\ref{item:smooth_stratification_small_square}). A neighbourhood of the corresponding top-dimensional stratum $C_\MY$ of $\MY$ in $\left\{ s_- \right\}\times Y$ is depicted in Figure \ref{fig:neighbourhood_stratum_cy}. The closure $\overline{C}_\MY$ intersects four disk components of $\Sigma$. We abbreviate them by $D, D_1, D_2, D_3$ as indicated in Figure \ref{fig:neighbourhood_stratum_cy}.

\begin{figure}[htpb]
\centering
\def\svgwidth{0.8\textwidth}
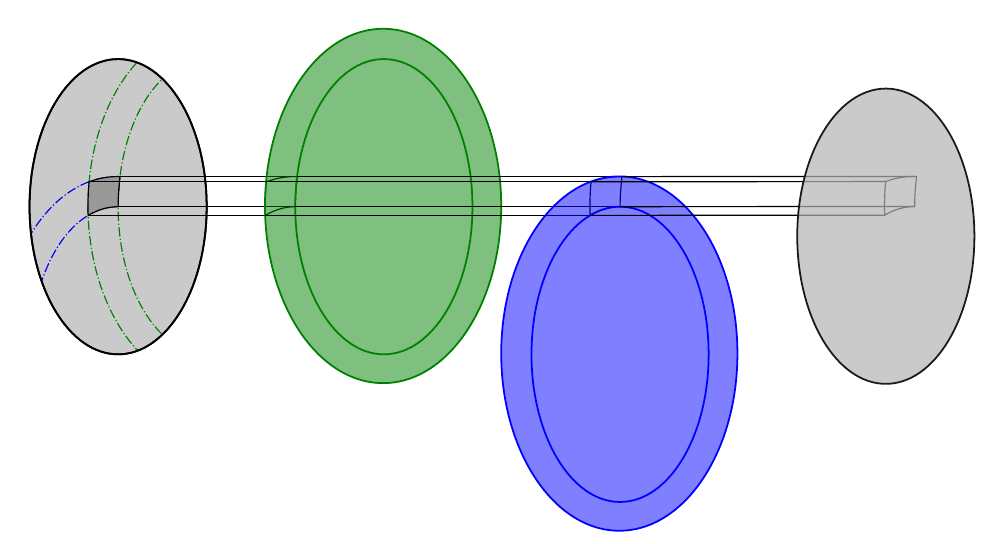
\caption{The stratum $C_\MY$ and the disks $D, D_1, D_2, D_3$.}
\label{fig:neighbourhood_stratum_cy}
\end{figure}

Recall from Subsection \ref{subsec:strat_flow} that the time $\tau(q)$ it takes a flow line of $\varphi^t$ starting at $q\in \Sigma$ to hit $\Sigma$ again is bounded from above by $\rho$. Hence the disks $D_i$ are contained in the image of $\psi_D$. As explained above, $\psi_D^{-1}(D_i)$ is a perturbation of $C^\infty$ size $O(\sigma)$ of the disk $\left\{ 0 \right\}\times \left\{ t_i \right\} \times B(z_i,1/10)$ for some $(t_i,z_i)$. It follows from the bound $\tau\leq \rho$ and the fact that the disks $D_i$ intersect $\overline{C}_\MY$ that we can take $(t_i,z_i) \in (0,1/10)\times B(0,2/10)$. Similarly, the preimage $\psi_D^{-1}(W_{D_i})$ is the intersection with $[0,1]\times (-1,1)\times B(0,1)$ of a perturbation of $[0,1]\times \left\{ t_i \right\} \times B(z_i,1)$ of $C^\infty$ size $O(\sigma)$.

Let $\operatorname{pr}_{23} : [0,1]\times (-1,1)\times B(0,1) \rightarrow \left\{ 0 \right\}\times (-1,1)\times B(0,1)$ denote the projection onto the second and third factor. For $i\in \left\{ 1,2,3 \right\}$, let us now perturb $W_{D_i}$ such that
\begin{equation*}
\psi_D^{-1}(W_{D_i}) \cap N_\varepsilon (\operatorname{pr}_{23}^{-1}\psi_D^{-1}(\overline{C}_\MY))
\end{equation*}
is contained in $[0,1]\times \left\{ t_i \right\}\times B(0,1)$. Here $N_\varepsilon$ denotes the $\varepsilon$-neighbourhood as measured with respect to the standard metric on $[0,1]\times (-1,1)\times B(0,1)$. We can arrange this perturbation to be supported inside $N_{2\varepsilon} (\operatorname{pr}_{23}^{-1}\psi_{D}^{-1}(\overline{C}_\MY))$. Moreover, we can make sure that the $C^\infty$ size of this perturbation measured in the coordinate chart $\psi_{D}$ is $O(\sigma)$ and in particular much smaller than $\varepsilon$. Let us repeat this step for all top-dimensional strata $C$ of $\MS$ of type (S\ref{item:smooth_stratification_small_square}). Note that for two distinct such strata $C$ and $C'$, the modifications take place in disjoint regions because $C_\MY$ and $C'_\MY$ are sufficiently far apart. We abbreviate the perturbed hypersurfaces by the same symbol $W_D$. Let us also perturb $(\Sigma,\Sigma^+)$ by flowing the disk components $(D,D^+)$ along the characteristic foliation on $\left\{ s_- \right\}\times Y$ such that they are contained in $W_D$. The modified surface pair and its induced stratifications are still denoted by $(\Sigma,\Sigma^+)$, $\MS$, and $\MY$.

For each disk component $D$ of $\Sigma$, let $V_D$ denote the union of all characteristic leaves on $W_D$ which intersect $D$. Let $V_\Sigma$ be the union of all $V_D$ as $D$ ranges over the disk components of $\Sigma$. We define a diffeomorphism $\alpha: [0,\sigma]\times \Sigma \rightarrow V_\Sigma$ by declaring $\alpha(s,p)$ to be the unique intersection point of the characteristic leaf starting at $p$ with the slice $\left\{ s_-+s \right\}\times Y$. Moreover, we define a map $\tau : V_{\Sigma} \rightarrow \BR_{>0}$ by
\begin{equation*}
\tau(q) \coloneqq \inf \left\{ t>0 \mid \varphi^t(q) \in V_\Sigma \right\}
\end{equation*}

We are now in a position to describe the top-dimensional strata of $\MX$. Let $C$ be a top-dimensional stratum of $\MS$ of type (S\ref{item:smooth_stratification_small_square}). We define the corresponding stratum $C_\MX$ of $\MX$ to be
\begin{equation*}
C_\MX \coloneqq \left\{ \varphi^t(p) \mid p \in \alpha([0,\sigma]\times C),\enspace 0<t<\tau(p) \right\}.
\end{equation*}
We claim that $\overline{C}_\MX$ is a symplectic cuboid. Indeed, let $D$ denote the disk component of $\Sigma$ containing $C$. Then it follows from our construction of the hypersurfaces $W_D$ that 
\begin{equation*}
\psi_D^{-1}(\overline{C}_\MX) = [0,1]\times [0,T] \times \psi_D^{-1}(\overline{C})
\end{equation*}
for some $T>0$. Since $\sigma^{-2}\psi_D^*\omega = \Omega + ds\wedge dt$ for some area form $\Omega$ on $B(0,1)$ by Lemma \ref{lem:pullback_symplectic_form}, we see that $\overline{C}_\MX$ is a symplectic cuboid. It is conformally symplectomorphic to the cuboid $Q(T,A)$ where $A$ denotes the $\Omega$-area of $\psi_D^{-1}(\overline{C})$. Note that we can bound $T$ and $A$ from above and below purely in terms of the universal constant $\delta$. This shows that we can find a universal compact set $\MQ$ of symplectic cuboids such that every stratum $\overline{C}_\MX$ associated to a stratum $C$ of type (S\ref{item:smooth_stratification_small_square}) is conformally symplectomorphic to an element of $\MQ$.

Next, consider a top-dimensional stratum $C$ of $\MS$ of type (S\ref{item:smooth_stratification_thin_rectangle}). Again, let $D$ denote the disk component of $\Sigma$ containing $C$ and let $A$ be the $\Omega$-area of $\psi_D^{-1}(C)$. Choose an area preserving parametrization $\beta: [0,\sigma A]\times [0,\sigma] \rightarrow \overline{C}$ such that the boundary edges of $C$ bordering strata of $\MS$ of type (S\ref{item:smooth_stratification_rest}) are given by $\beta( \left\{ 0 \right\}\times [0,\sigma])$ and $\beta( \left\{ \sigma A \right\}\times [0,\sigma])$. The boundary edges $\beta( [0,\sigma A]\times \left\{ 0 \right\})$ and $\beta( [0,\sigma A]\times \left\{ \sigma \right\})$ either border strata of $\MS$ of type (S\ref{item:smooth_stratification_small_square}) or are contained in the boundary of $D$.

Define a function $f:[0,\sigma A]\times [0,\sigma]\rightarrow [0,\sigma]$ by the formula
\begin{equation*}
f(x,y) \coloneqq
\begin{cases}
3x/A & \text{if $0 \leq x \leq \sigma A/3$} \\
\sigma & \text{if $\sigma A/3 \leq x \leq 2\sigma A/3$} \\
3\sigma - 3 x/A & \text{if $2\sigma A/3 \leq x \leq \sigma A$}.
\end{cases}
\end{equation*}
We define
\begin{equation*}
C_\MX \coloneqq \left\{ \varphi^t(p) \mid p \in \alpha\circ (\operatorname{id}_\BR\times \beta)( \operatorname{int}D(f) ), \enspace 0<t<\tau(p)  \right\}.
\end{equation*}
Here we recall that $D(f) \subset \BR \times [0,\sigma A] \times [0,\sigma]$ denotes the subgraph of $f$ truncated at zero. See Figure \ref{fig:stratification_X} for an illustration.

\begin{figure}[htpb]
\centering
\def\svgwidth{0.7\textwidth}
%% Creator: Inkscape 1.4.2 (ebf0e940, 2025-05-08), www.inkscape.org
%% PDF/EPS/PS + LaTeX output extension by Johan Engelen, 2010
%% Accompanies image file '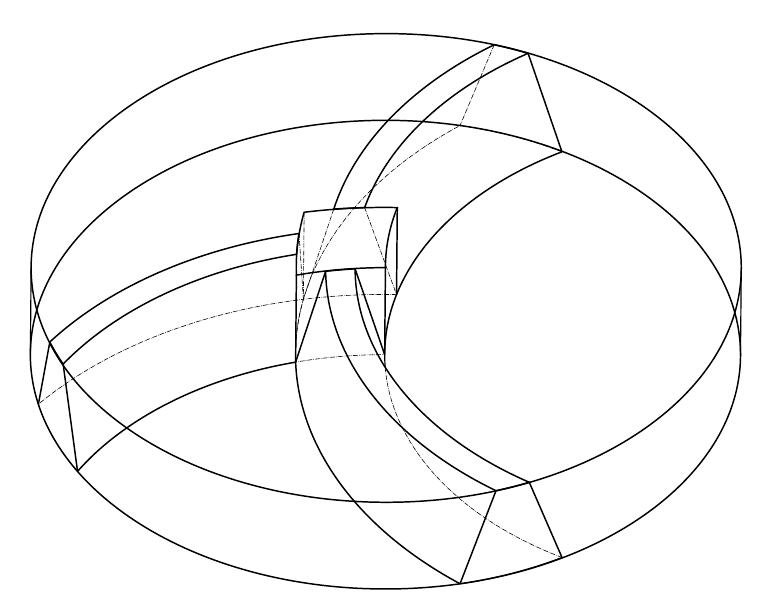' (pdf, eps, ps)
%%
%% To include the image in your LaTeX document, write
%%   \input{<filename>.pdf_tex}
%%  instead of
%%   \includegraphics{<filename>.pdf}
%% To scale the image, write
%%   \def\svgwidth{<desired width>}
%%   \input{<filename>.pdf_tex}
%%  instead of
%%   \includegraphics[width=<desired width>]{<filename>.pdf}
%%
%% Images with a different path to the parent latex file can
%% be accessed with the `import' package (which may need to be
%% installed) using
%%   \usepackage{import}
%% in the preamble, and then including the image with
%%   \import{<path to file>}{<filename>.pdf_tex}
%% Alternatively, one can specify
%%   \graphicspath{{<path to file>/}}
%% 
%% For more information, please see info/svg-inkscape on CTAN:
%%   http://tug.ctan.org/tex-archive/info/svg-inkscape
%%
\begingroup%
  \makeatletter%
  \providecommand\color[2][]{%
    \errmessage{(Inkscape) Color is used for the text in Inkscape, but the package 'color.sty' is not loaded}%
    \renewcommand\color[2][]{}%
  }%
  \providecommand\transparent[1]{%
    \errmessage{(Inkscape) Transparency is used (non-zero) for the text in Inkscape, but the package 'transparent.sty' is not loaded}%
    \renewcommand\transparent[1]{}%
  }%
  \providecommand\rotatebox[2]{#2}%
  \newcommand*\fsize{\dimexpr\f@size pt\relax}%
  \newcommand*\lineheight[1]{\fontsize{\fsize}{#1\fsize}\selectfont}%
  \ifx\svgwidth\undefined%
    \setlength{\unitlength}{368.50393701bp}%
    \ifx\svgscale\undefined%
      \relax%
    \else%
      \setlength{\unitlength}{\unitlength * \real{\svgscale}}%
    \fi%
  \else%
    \setlength{\unitlength}{\svgwidth}%
  \fi%
  \global\let\svgwidth\undefined%
  \global\let\svgscale\undefined%
  \makeatother%
  \begin{picture}(1,0.79230769)%
    \lineheight{1}%
    \setlength\tabcolsep{0pt}%
    \put(0,0){\includegraphics[width=\unitlength,page=1]{stratification_X.pdf}}%
    \put(0.89230771,0.11538463){\color[rgb]{0,0,0}\makebox(0,0)[t]{\lineheight{1.25}\smash{\begin{tabular}[t]{c}$D$\end{tabular}}}}%
  \end{picture}%
\endgroup%

\caption{For some disk component $D$ of $\Sigma$, the figure shows the intersection with $V_D$ of all closures $\overline{C}_\MX$ of strata of $\MX$ associated to strata of $\MS$ of types (S\ref{item:smooth_stratification_small_square}) and (S\ref{item:smooth_stratification_thin_rectangle}).}
\label{fig:stratification_X}
\end{figure}

We claim that $\overline{C}_\MX$ is a perturbed symplectic frustum with cuts. Indeed, it follows from the way we adjusted the hypersurfaces $W_D$ near the already constructed strata $C_\MX'$ with $C'$ of type (S\ref{item:smooth_stratification_small_square}) that, for $y \in \left\{ 0,\sigma \right\}$, a sufficiently small neighbourhood of
\begin{equation*}
\left\{ \varphi^t(p) \mid p \in \alpha\circ (\operatorname{id}_\BR\times \beta)( D(f|_{[0,\sigma A]\times \left\{ y \right\}})) \right\}
\end{equation*}
in $\overline{C}_\MX$ looks eactly like a neighbourhood of a cut region in a perturbed symplectic frustum with cuts; see (B) in Figure \ref{fig:frustum_with_cut_examples}. Moreover, after undoing these two cuts one obtains a perturbed symplectic frustum. Note that $\overline{C}_\MX$ is conformally symplectomorphic to $\psi_D^{-1}(\overline{C}_\MX)$ equipped with the symplectic form $\Omega + ds\wedge dt$. Since the hypersurfaces $\psi_D^{-1}(W_{D'})$ for other disk components $D'$ of $\Sigma$ are contained in perturbations of $C^\infty$ size $O(\sigma)$ of hypersurfaces of the form $[0,1]\times \left\{ t \right\}\times B(0,1)$, the perturbed symplectic frustum underlying $\psi_D^{-1}(\overline{C}_\MX)$ has, up to symplectomorphism, $C^\infty$ distance $O(\sigma)$ from an actual symplectic frustum $F = F(a,b,c)$. Again we can bound $a,b,c$ and also $c-b$ from above and below by universal constants only depending on $\delta$. This implies that $F$ is contained in some universal compact set $\MF$ of symplectic frusta.

For each disk component $D$ of $\Sigma$, let $D \subset D^0 \subset D^+$ be a disk whose boundary circle $\partial D^0$ lies halfway between the boundary circles $\partial D$ and $\partial D^+$. Moreover, let $V_{D^0}$ be the union of all characteristic leaves of $W_D$ intersecting $D^0$. Let $\Sigma^0$ denote the union of all disks $D^0$. Similarly, let $V_{\Sigma^0}$ denote the union of all $V_{D^0}$. Define $A$ to be the union of $V_{\Sigma^0}$ with all $\overline{C}_\MX$ where $C$ ranges over strata of $\MS$ of types (S\ref{item:smooth_stratification_small_square}) and (S\ref{item:smooth_stratification_thin_rectangle}). The remaining top-dimensional strata of $\MX$ are given by the connected components of $((0,\sigma)\times Y) \setminus A$. These connected components are in bijective correspondence with strata of $\MS$ of type (S\ref{item:smooth_stratification_rest}).

We claim that for each stratum $C$ of $\MS$ of type (S\ref{item:smooth_stratification_rest}), the closure of the corresponding stratum $\overline{C}_\MX$ is symplectomorphic to a perturbed symplectic frustum with cuts, sitting inside $[0,\sigma]\times Y$ upside down; see Figure \ref{fig:stratification_X}. Consider a stratum $C'$ of $\MS$ of type (S\ref{item:smooth_stratification_small_square}) such that $(\overline{C}_\MX' \cap \overline{C}_\MX) \setminus V_\Sigma$ is non-empty. Then the intersection of $\overline{C}_\MX$ with a neighbourhood of $\overline{C}_\MX'$ looks like a neighbourhood of a cut region in a perturbed symplectic frustum with cuts; see (A) and (C) in Figure \ref{fig:frustum_with_cut_examples}. Undoing all these cuts yields a perturbed symplectic frustum. As before, let $D$ be the disk component of $\Sigma$ containing $C$ and consider $\psi_D^{-1}(\overline{C}_\MX)$ equipped with $\Omega + ds\wedge dt$. As above, the perturbed symplectic frustum obtained by undoing all cuts of $\psi_D^{-1}(\overline{C}_\MX)$ has, up to symplectomorphism, $C^\infty$ distance $O(\sigma)$ from an actual symplectic frustum $F$, which can be taken to be contained in a universal compact set $\MF$ of symplectic frusta. This concludes the proof of Lemma \ref{lem:stratification_lemma}. \qed

\section{Bounding the subleading asymptotics}
\label{sec:bounding_subleading_asymptotics}

The goal of this section is to prove Theorems \ref{thm:subleading_asymptotics_ech}, \ref{thm:subleading_asymptotics_pfh}, and \ref{thm:subleading_asymptotics_link_invariants}.

\begin{proof}[Proof of Theorem \ref{thm:subleading_asymptotics_ech}]
The proof follows the same strategy as the original proof of the ECH Weyl law in \cite{hut11}. We proceed in several steps.

\emph{Step 1:} We claim that the error terms are bounded for balls, i.e. $e_k(B(a)) = O(1)$. Indeed, by \cite[Example 1.2]{hut22}, we have
\begin{equation*}
\liminf_{k\rightarrow \infty} e_k(B(a)) = -\frac{3}{2}a \qquad \text{and}\qquad \limsup_{k\rightarrow \infty} e_k(B(a)) = -\frac{1}{2}a,
\end{equation*}
which clearly implies that $e_k(B(a)) = O(1)$.

\emph{Step 2:} We show that the error terms are bounded for all finite disjoint unions of balls. In fact, we show the following more general statement: Suppose that $X_1,\dots,X_n$ is a finite collection of symplectic $4$-manifolds such that $e_k(X_i) = O(1)$ for all $i$. Let $X \coloneqq \sqcup_i X_i$ be the disjoint union. Then we also have $e_k(X) = O(1)$.

Recall from \cite[Equation 1.4]{hut22} that the ECH capacities of the disjoint union $X$ are given by
\begin{equation}
\label{eq:subleading_asymptotics_ech_proof_disjoint_sum_formula}
c_k (X) = \max_{\sum_i k_i = k} c_{k_i}(X_i).
\end{equation}
Here the maximum is taken over all tuples of non-negative integers $(k_1,\dots,k_n)$ whose sum is $k$. Set $V\coloneqq \operatorname{vol}(X)$ and $V_i \coloneqq \operatorname{vol}(X_i)$. The desired statement follows immediately if we can verify that
\begin{equation}
\label{eq:subleading_asymptotics_ech_proof_a}
\max_{\sum\limits_ik_i=k} \sum\limits_i \sqrt{V_ik_i} = \sqrt{Vk} + O(1)\qquad (k\rightarrow\infty).
\end{equation}
Consider the function
\begin{equation*}
f:\BR_{\geq 0}^n\rightarrow\BR\qquad f(x_1,\dots,x_n) \coloneqq \sum\limits_i\sqrt{V_ix_i}.
\end{equation*}
Let $k\in\BZ_{> 0}$. It follows from a direct computation that the maximum of $f$ subject to the constraint $\sum\limits_i x_i\leq k$ is attained at the point $k/V\cdot (V_1,\dots,V_n)$. In particular, this implies that
\begin{equation}
\label{eq:subleading_asymptotics_ech_proof_b}
\sum\limits_i \sqrt{V_i\left\lfloor \frac{V_ik}{V} \right\rfloor} \leq \max_{\sum\limits_ik_i=k} \sum\limits_i \sqrt{V_ik_i} \leq \sum\limits_i \sqrt{\frac{V_i^2k}{V}} = \sqrt{Vk}.
\end{equation}
We compute
\begin{equation}
\label{eq:subleading_asymptotics_ech_proof_c}
\sum\limits_i \sqrt{V_i\left\lfloor \frac{V_ik}{V} \right\rfloor} = \sum\limits_i\sqrt{\frac{V_i^2k}{V}} + O(1) = \sqrt{Vk}+ O(1).
\end{equation}
The desired identity \eqref{eq:subleading_asymptotics_ech_proof_a} is immediate from \eqref{eq:subleading_asymptotics_ech_proof_b} and \eqref{eq:subleading_asymptotics_ech_proof_c}.

\emph{Step 3:} Let $X \subset \BR^4$ be a compact domain with smooth boundary. We show that the sequence of error terms $e_k(X)$ is bounded from below. By Theorem \ref{thm:ball_packing_stability}, ball packing stability holds for $X$. Pick a positive integer $n$ such that $p_n(X) = 1$. Let $a>0$ be the unique real number such that $n\operatorname{vol}(B(a)) = \operatorname{vol}(X)$. Then for every $b<a$, the disjoint union of $n$ copies of $B(b)$ admits a symplectic embedding into $\operatorname{int}(X)$. By monotonicity of ECH capacities \cite[Equation 1.2]{hut22}, we obtain the inequality
\begin{equation*}
c_k(\bigsqcup\limits_{i=1}^n B(b)) \leq c_k(X)
\end{equation*}
for all $b < a$. Since the left hand side is continuous in $b$, the inequality continues to hold for $b = a$. Using that $\sqcup_i B(a)$ has the same volume as $X$ and bounded error terms by Step 2, we conclude that the error terms $e_k(X)$ are bounded from below.

\emph{Step 4:} We show that the sequence of error terms $e_k(X)$ is bounded from above for every compact domain $X\subset \BR^4$ with smooth boundary. Let $B(A)$ be a large ball containing $X$ in its interior. By Theorem \ref{thm:ball_packing_stability}, ball packing stability holds for $W \coloneqq B(A) \setminus \operatorname{int}(X)$. Pick $n\geq 1$ such that $p_n(W) = 1$. Let $a>0$ be the real number such that $n \operatorname{vol}(B(a)) = \operatorname{vol}(W)$. Then for every $b<a$, the disjoint union of $n$ copies of $B(b)$ embeds into $\operatorname{int}(W)$ and monotonicity implies the inequality
\begin{equation*}
c_k( X \sqcup \bigsqcup\limits_{i=1}^n B(b)) \leq c_k(B(A)).
\end{equation*}
Again, this inequality continues to hold for $b = a$. The disjoint union formula \eqref{eq:subleading_asymptotics_ech_proof_disjoint_sum_formula} then implies that
\begin{equation}
\label{eq:subleading_asymptotics_ech_proof_d}
c_k(X) + c_\ell(\sqcup_i B(a)) \leq c_{k+\ell}(B(A))
\end{equation}
for all $\ell \geq 0$. Set
\begin{equation*}
V_1 \coloneqq \operatorname{vol}(X), \quad V_2 \coloneqq \operatorname{vol}(\sqcup_i B(a)), \quad V \coloneqq \operatorname{vol}(B(A)) = V_1 + V_2.
\end{equation*}
Since the error terms of $B(A)$ and of $\sqcup_i B(a)$ are bounded by Steps 1 and 2, an upper bound on the error terms of $X$ follows easily from inequality \eqref{eq:subleading_asymptotics_ech_proof_d} if we can show that
\begin{equation*}
\inf_{\ell \geq 0} \sqrt{V (k+\ell)} - \sqrt{V_1 k} - \sqrt{V_2 \ell}
\end{equation*}
admits a finite upper bound independent of $k$. This can be checked via an elementary computation in the same spirit as the one given in Step 2. We omit the details.
\end{proof}

\begin{proof}[Proof of Theorem \ref{thm:subleading_asymptotics_pfh}]
We closely follow the strategy of the proof of \cite[Theorem 8.1]{eh} and adapt the notation from that paper.

We begin by observing that is suffices to show the inequality
\begin{equation}
\label{eq:subleading_asymptotics_pfh_proof_a}
c_{\sigma_i}(\phi,\gamma_i,H_+) - c_{\sigma_i}(\phi,\gamma_i,H_-)+\int_{\gamma_i} (H_+-H_-)dt \geq d_iA^{-1}\int_{Y_\phi} (H_+-H_-)dt\wedge\omega_\phi + O(1)
\end{equation}
for all Hamiltonians $H_\pm \in C^\infty(Y_\phi)$. Indeed, switching $H_+$ and $H_-$ in inequality \eqref{eq:subleading_asymptotics_pfh_proof_a} and multiplying the resulting inequality by $-1$ yields the reverse inequality. Next, we observe that inequality \eqref{eq:subleading_asymptotics_pfh_proof_a} is invariant under shifting the Hamiltonians $H_\pm$ by constants. More precisely, if $C_\pm\in\BR$ are real numbers, then inequality \eqref{eq:subleading_asymptotics_pfh_proof_a} holds for the pair $(H_+,H_-)$ if and only if it holds for the pair $(H_++C_+,H_-+C_-)$. After replacing $H_+$ by $H_++C$ for a sufficiently large constant $C\geq 0$, we can assume that $H_+>H_-$. Let $M$ denote the symplectic cobordism between the graphs of $H_+$ and $H_-$. By Theorem \ref{thm:ball_packing_stability}, ball packing stability holds for $M$. Pick $N \geq 1$ such that $p_N(M) = 1$. Let $a>0$ be the real number such that $N \operatorname{vol}(B(a)) =\operatorname{vol}(M)$. Then for all $b<a$, the disjoint union of $N$ copies of $B(b)$ admits a symplectic embedding into $\operatorname{int}(M)$. It follows from \cite[Lemma 5.2]{eh} that for all non-negative integers $k_i\geq 0$ and for all $b<a$ we have
\begin{equation}
\label{eq:subleading_asymptotics_pfh_proof_b}
c_{\sigma_i}(\phi,\gamma_i,H_+)-c_{U^{k_i}\sigma_i}(\phi,\gamma_i,H_-) + \int_{\gamma_i}(H_+-H_-)dt \geq c_{k_i}^{\operatorname{alt}}(\bigsqcup\limits_{j=1}^N B(b)).
\end{equation}
Here $c_k^{\operatorname{alt}}$ denote the elementary alternatives to ECH capacities defined in \cite{hut22b}, see Remark \ref{rem:elementary_ech_capacities}. It is a consequence of \cite[Theorem 6]{hut22b} that the alternative capacities $c_k^{\operatorname{alt}}$ agree with the usual ECH capacities $c_k$ on balls and disjoint unions of balls. Note that inequality \eqref{eq:subleading_asymptotics_pfh_proof_b} continues to hold for $b=a$ by continuity.

Let $m\geq 1$ be an integer such that all classes $\sigma_i$ are $U$-cyclic of order $m$. Such $m$ exists by \cite{cppz}. We abbreviate
\begin{equation*}
V\coloneqq \operatorname{vol}(\sqcup_j B(a)) = \operatorname{vol}(M) = \int_{Y_\phi}(H_+-H_-)dt\wedge\omega_\phi.
\end{equation*}
and set
\begin{equation*}
n_i\coloneqq \left\lfloor \frac{d_i^2V}{mA^2(d_i-g+1)}\right\rfloor\qquad\text{and}\qquad k_i=mn_i(d_i-g+1).
\end{equation*}
By \cite[Proposition 4.2(a)]{eh}, we have
\begin{equation*}
c_{U^{k_i}\sigma_i}(\phi,\gamma_i,H_-) = c_{\sigma_i}(\phi,\gamma_i,H_-) - mn_iA.
\end{equation*}
Using this, inequality \eqref{eq:subleading_asymptotics_pfh_proof_b} with $b=a$ can be rewritten as
\begin{equation}
\label{eq:subleading_asymptotics_pfh_proof_c}
c_{\sigma_i}(\phi,\gamma_i,H_+)-c_{\sigma_i}(\phi,\gamma_i,H_-) + \int_{\gamma_i}(H_+-H_-)dt
\geq c_{k_i}(\sqcup_j B(a))-mn_iA.
\end{equation}
Using Theorem \ref{thm:subleading_asymptotics_ech}, we compute
\begin{align*}
c_{k_i}(\sqcup_j B(a)) &= 2 \sqrt{V k_i} + O(1) \\
&= 2\sqrt{V} \sqrt{m(d_i-g+1)\left\lfloor\frac{d_i^2V}{mA^2(d_i-g+1)}\right\rfloor} + O(1) \\
&= 2\sqrt{V}\sqrt{\frac{d_i^2V}{A^2}+O(d_i)} + O(1) \\
&= 2A^{-1}Vd_i\sqrt{1+O(d_i^{-1})} + O(1) \\
&= 2A^{-1}Vd_i+O(1).
\end{align*}
Moreover, we compute
\begin{align*}
mn_iA &= mA\left\lfloor\frac{d_i^2V}{mA^2(d_i-g+1)}\right\rfloor \\
&= A^{-1}Vd_i\frac{d_i}{d_i-g+1} + O(1) \\
&= A^{-1}Vd_i+O(1).
\end{align*}
Combining these two computations with inequality \eqref{eq:subleading_asymptotics_pfh_proof_c} yields the desired inequality \eqref{eq:subleading_asymptotics_pfh_proof_a}.
\end{proof}

\begin{proof}[Proof of Theorem \ref{thm:subleading_asymptotics_link_invariants}]
We begin by observing that it suffices to establish the inequality
\begin{equation}
\label{eq:subleading_asymptotics_link_invariants_proof_a}
c_{\underline{L}_d}(H) \leq A^{-1} \int_{[0,1]\times \Sigma} H dt\wedge\omega + O(d^{-1})
\end{equation}
for every Hamiltonian $H \in C^\infty([0,1]\times \Sigma)$. Indeed, substituting $\overline{H}$ for $H$ in inequality \eqref{eq:subleading_asymptotics_link_invariants_proof_a} yields
\begin{equation}
\label{eq:subleading_asymptotics_link_invariants_proof_b}
c_{\underline{L}_d}(\overline{H}) \leq A^{-1} \int_{[0,1]\times \Sigma} \overline{H} dt\wedge \omega + O(d^{-1}) = - A^{-1} \int_{[0,1]\times \Sigma} H dt\wedge \omega + O(d^{-1}).
\end{equation}
By the subadditivity property in \cite[Theorem 1.13]{chmss22}, we have
\begin{equation}
\label{eq:subleading_asymptotics_link_invariants_proof_c}
0 = c_{\underline{L}_d}(0) = c_{\underline{L}_d}(H \# \overline{H}) \leq c_{\underline{L}_d}(H) + c_{\underline{L}_d}(\overline{H}).
\end{equation}
Combining inequalities \eqref{eq:subleading_asymptotics_link_invariants_proof_b} and \eqref{eq:subleading_asymptotics_link_invariants_proof_c} yields the desired reverse inequality
\begin{equation*}
c_{\underline{L}_d}(H) \geq A^{-1} \int_{[0,1]\times \Sigma} H dt\wedge\omega + O(d^{-1}).
\end{equation*}

It remains to prove inequality \eqref{eq:subleading_asymptotics_link_invariants_proof_a}. This involves the relationship between PFH and link spectral invariants established by Chen \cite{chea,cheb}. He considers PFH spectral invariants $c_\sigma(\phi,\gamma,H)$ in the special case $\phi = \operatorname{id}_\Sigma$. In this case, the mapping torus $Y_{\phi}$ is simply given by the product $\BT \times \Sigma$. For every degree $d>0$, Chen considers the special reference cycle $\gamma_d \coloneqq d \cdot (\BT \times \left\{ * \right\})$ for some point $*\in \Sigma$. He constructs a distinguished non-zero PFH class $e_d \in HP(\operatorname{id}_{\Sigma},\gamma_d)$ called the ``PHF unit'' and shows the inequality
\begin{equation}
\label{eq:subleading_asymptotics_link_invariants_proof_d}
d c_{\underline{L}_d}(H) \leq c_{e_d}(\operatorname{id}_{\Sigma},\gamma_d,H) + \int_{\gamma_d} H dt.
\end{equation}
Here we follow the notational conventions for link and PFH spectral invariants from \cite{chmss22} and \cite{eh}, which differ from Chen's conventions. In particular, Chen absorbs the factor $d$ into the link spectral invariant $c_{\underline{L}_d}(H)$. By Theorem \ref{thm:subleading_asymptotics_pfh}, we have
\begin{equation}
\label{eq:subleading_asymptotics_link_invariants_proof_e}
c_{e_d}(\operatorname{id}_\Sigma,\gamma_d,H) - c_{e_d}(\operatorname{id}_\Sigma,\gamma_d,0) + \int_{\gamma_d}H dt = d A^{-1} \int_{[0,1]\times \Sigma} H dt \wedge \omega + O(1).
\end{equation}
It can be seen from the description given in \cite{chea,cheb} of the class $e_d$ in the PFH chain complex of $\varphi_G^1$ for a small Morse function $G$ that the spectral invariant $c_{e_d}(\operatorname{id}_\Sigma,\gamma_d,0)$ vanishes. The desired inequality \eqref{eq:subleading_asymptotics_link_invariants_proof_a} hence follows from \eqref{eq:subleading_asymptotics_link_invariants_proof_d} and \eqref{eq:subleading_asymptotics_link_invariants_proof_e}.
\end{proof}

\section{\texorpdfstring{$C^{2-\varepsilon}$}{TEXT} failure of packing stability}
\label{sec:packing_failure}

In this section we prove Theorem \ref{thm:failure_packing_stability}, i.e. we construct a star-shaped domain $X\subset \BR^4$ which is $C^1$ close to the unit ball, whose boundary is $C^{1,\alpha}$ for all $\alpha \in (0,1)$ and smooth on the complement of a single point, and for which packing stability fails.\\

We begin by recalling a construction from \cite{edt24} which, given a $1$-periodic Hamiltonian $H$ on the unit disk $\BD\subset \BC$, yields a domain $X(H)\subset \BC^2$ such that the characteristic flow on the boundary of $X(H)$ lifts the Hamiltonian flow on $\BD$ induced by $H$. It will be convenient to follow the exposition in \cite[\S 1]{abe}. Recall that $\BT \coloneqq \BR/\BZ$ denotes the circle. Define the domain
\begin{equation*}
U \coloneqq \left\{ (s,t,z) \in \BR \times \BT \times \BC \mid s> \pi(|z|^2-1) \right\} \subset \BR \times \BT \times \BC
\end{equation*}
and consider the diffeomorphism
\begin{equation*}
\Phi: U \rightarrow \BC^*\times \BC \qquad \Phi(s,t,z) \coloneqq e^{2\pi i t}\left(\sqrt{1+\frac{s}{\pi}-|z|^2},z\right).
\end{equation*}
We equip $U$ with the symplectic form $ds\wedge dt + \omega$, where $\omega$ denotes the standard area form on $\BC$. Moreover, we equip $\BC^*\times \BC$ with the restriction of the standard symplectic form $\omega_0$ on $\BC^2$. A simple computation shows that $\Phi$ is a symplectomorphism with respect to these symplectic forms. Note that the image under $\Phi$ of a set of the form $\left\{ (s,t,z) \in U \mid s<s_0 \right\}$ is an open ball with the complex line $\left\{ z_1=0 \right\}$ removed. Let $H: \BT \times \BD \rightarrow \BR$ be a compactly supported Hamiltonian such that
\begin{equation*}
H(t,z) > \pi(|z|^2 - 1) \qquad \text{for all $(t,z) \in \BT \times \BD$.}
\end{equation*}
We define
\begin{equation*}
U(H) \coloneqq \left\{ (s,t,z) \in U \mid z \in \BD, s<H(t,z) \right\} \subset U
\end{equation*}
and set
\begin{equation*}
X(H) \coloneqq \operatorname{clos}(\Phi(U(H))) \subset \BC^2.
\end{equation*}
The volume of $X(H)$ is given by
\begin{equation}
\label{eq:volume_of_domain_associated_to_hamiltonian}
\operatorname{vol}(X(H)) = \operatorname{vol}(B^4(\pi)) + \int_{[0,1]\times \BD} H dt\wedge \omega.
\end{equation}
If the Hamiltonian $H$ has regularity $C^{k,\alpha}$, then the same is true for the boundary of $X(H)$. If $H$ is $C^{k,\alpha}$ small, then $X(H)$ is $C^{k,\alpha}$ close to the unit ball in $\BC^2$. The desired domain $X\subset \BC^2$ for which packing stability fails will be of the form $X(H)$ for a carefully constructed $C^1$ small Hamiltonian $H$ which is $C^{1,\alpha}$ for all $\alpha \in (0,1)$.

We turn to the definition of $H$. Fix a positive parameter $\lambda>0$. For $n\geq 10$, consider the annulus $\BA_n \subset \BD$ with the same center as $\BD$, inner radius $R_n^-\coloneqq \frac{2}{\ln(n)}$, and outer radius $R_n^+\coloneqq \frac{2}{\ln(n-1)}$. Note that the interiors of the annuli $\BA_n$ are pairwise disjoint and that the width $R_n^+ - R_n^-$ of $\BA_n$ is strictly bigger than $\frac{2}{n\ln(n)^2}$. For each $n$, we choose $n$ pairwise disjoint disks $D_n^1,\dots,D_n^n$ of radius $r_n \coloneqq \frac{1}{n\ln(n)^2}$ inside the interior of $\BA_n$. In order to see that this is possible, note that the diameter $2r_n = \frac{2}{n\ln(n)^2}$ of each of the disks $D_n^i$ is strictly smaller than the width of $\BA_n$ and the sum of the diameters of the $n$ disks $D_n^i$, which is given by $2r_nn = \frac{2}{\ln(n)^2}$, is strictly smaller than the circumference of the inner boundary circle of $\BA_n$, which is given by $2\pi R_n^- = \frac{4\pi}{\ln(n)}$. For $r>0$, let $\BD_r\subset \BC$ denote the disk of radius $r$ centered at the origin. Fix a smooth radially symmetric function $f:\BD\rightarrow \BR$ which is non-increasing in the radial direction, satisfies $f(0) = 1$, and is supported inside $\BD_{\frac{1}{10}}$. For each $n$, we set
\begin{equation*}
f_n : \BD_{r_n} \rightarrow \BR \qquad f_n(z) \coloneqq \frac{\ln(n)}{n^2} f(\frac{z}{r_n}).
\end{equation*}
For $1\leq i \leq n$, let $f_n^i$ be the function obtained by pushing $f_n$ forward via the translation moving $\BD_{r_n}$ to $D_n^i$. Fix a compactly supported smooth function $g: (-\frac{1}{2},\frac{1}{2}) \rightarrow [0,\infty)$ with integral $\int g(t) dt = 1$ and set
\begin{equation*}
g_n(t) \coloneqq \ln(n) g(\ln(n) t).
\end{equation*}
Note that the support of $g_n$ is contained in $(-\frac{1}{2\ln(n)},\frac{1}{2\ln(n)})$ and that we have $\int g_n(t) dt = 1$. We regard each $g_n$ as a function $g_n : \BT \rightarrow [0,\infty)$. Now define
\begin{equation*}
H_k : \BT \times \BD \rightarrow \BR \qquad H_k(t,z) \coloneqq \lambda \sum\limits_{n=10}^k g_n(t) \sum\limits_{i=1}^n f_n^i(z)
\end{equation*}
and
\begin{equation*}
H : \BT \times \BD \rightarrow \BR \qquad H(t,z) \coloneqq \lim_{k\rightarrow \infty} H_k(t,z).
\end{equation*}
The limit is uniform, implying that $H$ is continuous. Each compact subset of $\BT \times \BD$ which does not contain the point $(0,0)$ intersects the support of $g_n(t)f_n^i(z)$ for only finitely many values of $n$. Thus $H$ is smooth on the complement of the point $(0,0)$. The sequence of Hamiltonian isotopies $(\varphi_{H_k}^t)_{t\in [0,1]}$ converges in the $C^0$ topology to an isotopy in $\operatorname{Homeo}_c(\BD,\omega)$, the space of compactly supported area preserving homeomorphisms of $\BD$. We abbreviate this isotopy by $(\varphi_H^t)_{t\in [0,1]}$. By definition, this means that $\varphi_H^1$ is contained in the group of hameomorphisms $\operatorname{Hameo}_c(\BD,\omega)$. This group was introduced by Oh an M\"{u}ller in \cite{om07}, see also \cite[Definition 2.2]{chmss}. It will also be convenient to have an autonomous Hamiltonian $\tilde{H}$ generating the same hameomorphism $\varphi_H^1$. We set
\begin{equation*}
\tilde{H}_k : \BD \rightarrow \BR \qquad \tilde{H}_k(z) \coloneqq \lambda\sum\limits_{n=10}^k \sum\limits_{i=1}^n f_n^i(z)
\end{equation*}
and
\begin{equation*}
\tilde{H} : \BD \rightarrow \BR \qquad \tilde{H}(z) \coloneqq \lim_{k\rightarrow \infty} \tilde{H}_k(z).
\end{equation*}
Then the Hamiltonian isotopies $(\varphi_{\tilde{H}_k}^t)_{t\in [0,1]}$ converge in the $C^0$ topology to an isotopy $(\varphi_{\tilde{H}}^t)_{t\in [0,1]}$ in $\operatorname{Homeo}_c(\BD,\omega)$. Since $\varphi_{H_k}^1 = \varphi_{\tilde{H}_k}^1$ for all $k$, we have $\varphi_H^1 = \varphi_{\tilde{H}}^1$.

\begin{lemma}
\label{lem:C1alpha_regularity}
The Hamiltonians $H$ and $\tilde{H}$ are contained in the H\"{o}lder space $C^{1,\alpha}$ for every $\alpha \in (0,1)$, but not in $C^{1,1}$.
\end{lemma}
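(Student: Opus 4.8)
The statement concerns the regularity of the Hamiltonians $H$ and $\tilde{H}$, which are built as uniformly convergent sums of bump functions supported on the annuli $\BA_n$. Since the building blocks of $H$ and $\tilde{H}$ are identical (the only difference is the presence of the time-cutoffs $g_n(t)$, which are smooth and nonnegative with integral one), it suffices to analyze the sum $\sum_n \sum_i f_n^i$ in the space variable and treat the time variable separately; the time dependence only contributes smooth factors on compact subsets away from $(0,0)$. I would therefore reduce the problem to estimating, near the origin, the H\"older norms of the functions $\tilde{H}_k - \tilde{H}_{k-1} = \lambda \sum_{i=1}^k f_k^i$ and summing the resulting bounds.

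The plan is as follows. First I would record the relevant scales: the disks $D_n^i$ live in the annulus $\BA_n$ of inner radius $R_n^- = 2/\ln n$, they have radius $r_n = 1/(n\ln(n)^2)$, and the bump $f_n^i$ has height $h_n \coloneqq \lambda \ln(n)/n^2$. By the chain rule, $\|f_n^i\|_{C^0} = h_n$, $\|\nabla f_n^i\|_{C^0} = O(h_n/r_n)$, and the $C^{0,\alpha}$-seminorm of $\nabla f_n^i$ is $O(h_n/r_n^{1+\alpha})$, while $\|\nabla^2 f_n^i\|_{C^0} = O(h_n/r_n^2)$. Since the $n$ disks $D_n^i$ at level $n$ are pairwise disjoint and moreover the annuli $\BA_n$ for distinct $n$ have disjoint interiors, at any point of $\BD \setminus \{0\}$ at most one term of the whole double sum is nonzero. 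Hence for the $C^0$, $C^1$, and $C^{1,\alpha}$ norms one does not sum over $i$ and $n$ but rather takes a supremum over $n$: one needs $\sup_n h_n < \infty$, $\sup_n h_n/r_n < \infty$, and $\sup_n h_n/r_n^{1+\alpha} < \infty$ respectively, together with the fact that the supports accumulate only at the single point $(0,0)$ (so continuity of the derivative there must be checked by hand, using that $h_n \to 0$ and $h_n/r_n \to 0$). Plugging in the scales: $h_n = O(\ln n / n^2) \to 0$; $h_n/r_n = O(\ln(n)^3/n) \to 0$; and $h_n / r_n^{1+\alpha} = O\big(\ln(n)^{2+3\alpha} \, n^{\alpha - 1}\big) \to 0$ for every $\alpha < 1$, whereas for $\alpha = 1$ we get $h_n/r_n^2 = O(\ln(n)^5) \to \infty$. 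This gives $H, \tilde{H} \in C^{1,\alpha}$ for all $\alpha \in (0,1)$, and shows the second derivative blows up, which is the mechanism behind failure of $C^{1,1}$.

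To make the positive statement rigorous I would argue: (i) each partial sum $\tilde{H}_k$ is smooth; (ii) the sequences $\tilde{H}_k$, $\nabla \tilde{H}_k$ converge uniformly on $\BD$ (uniform convergence of $\nabla \tilde{H}_k$ follows because the tails $\sum_{n > k}\sum_i \nabla f_n^i$ have sup-norm $O(\sup_{n>k} h_n/r_n) \to 0$), hence $\tilde{H} \in C^1(\BD)$; (iii) for each fixed $\alpha \in (0,1)$, the $C^{0,\alpha}$-seminorm of $\nabla \tilde{H}_k$ is bounded uniformly in $k$ — here one checks separately the case of two points lying in the same disk $D_n^i$ (where the bound is the single-bump estimate $O(h_n / r_n^{1+\alpha})$) and the case of two points lying in different supports or with one in $\BD \setminus \bigcup_n \BA_n$ (where one interpolates: $|\nabla \tilde{H}_k(p) - \nabla \tilde{H}_k(q)| \le |\nabla \tilde{H}_k(p)| + |\nabla \tilde{H}_k(q)|$, and one uses that $\nabla \tilde{H}_k$ vanishes outside the disks so that the distance $|p-q|$ is at least comparable to $r_n$ when $p \in D_n^i$, or uses the crude bound $2\|\nabla f_n^i\|_{C^0} \le 2 (h_n/r_n^{1+\alpha}) |p-q|^\alpha$ valid whenever $|p-q| \gtrsim r_n$); (iv) since the $C^{0,\alpha}$-seminorms are uniformly bounded and $\nabla \tilde{H}_k \to \nabla \tilde{H}$ uniformly, the limit gradient $\nabla \tilde{H}$ is itself $C^{0,\alpha}$, giving $\tilde{H} \in C^{1,\alpha}$. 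The identical argument with an extra factor of $\|g_n\|_{C^{0,\alpha}(\BT)}$-type contribution handles $H$; note $\|g_n\|_{C^1} = O(\ln(n)^2)$, which only contributes polylogarithmic factors and does not change the conclusion. For the negative statement, that $H, \tilde{H} \notin C^{1,1}$, I would exhibit, on each disk $D_n^i$, two points at distance comparable to $r_n$ across which $\nabla \tilde{H}$ oscillates by an amount comparable to $h_n/r_n = \Theta(\ln(n)^3/n)$ while their distance is $\Theta(r_n) = \Theta(1/(n\ln(n)^2))$, so the difference quotient $|\nabla\tilde{H}(p)-\nabla\tilde{H}(q)|/|p-q|$ is $\Theta(\ln(n)^5) \to \infty$; this rules out a Lipschitz gradient. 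One must make sure $f$ is chosen so that $\nabla f$ genuinely oscillates on scale $1$ (e.g. $\nabla f$ is nonzero somewhere), which is automatic since $f$ is non-constant.

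\textbf{Main obstacle.} The only genuinely delicate point is the H\"older estimate (iii): the two-point H\"older difference must be controlled \emph{uniformly in $k$} and \emph{uniformly over all pairs of points}, including the awkward case where $p$ and $q$ lie in disks $D_m^i$ and $D_n^j$ at different levels $m \ne n$ but are nonetheless close (which can happen when $m, n$ are both large and the annuli $\BA_m, \BA_n$ are adjacent near the origin). Here the uniform disjointness of the annuli and the quantitative lower bound $|p - q| \gtrsim \min(r_m, r_n)$ for $p,q$ in distinct supports — combined with $h_n / r_n \to 0$ — is exactly what saves the estimate, and this is the step I would write out most carefully. Everything else is a routine chain-rule-and-geometric-series computation.
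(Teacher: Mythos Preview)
Your approach is essentially the same as the paper's: compute the $C^{0,\alpha}$ seminorm of $df_n$ by rescaling, observe that it equals $n^{\alpha-1}\ln(n)^{3+2\alpha}[df]_{C^{0,\alpha}}$ (you have a harmless arithmetic slip writing $\ln(n)^{2+3\alpha}$), and conclude from disjointness of the supports. If anything you are more careful than the paper, which does not explicitly address the cross-support case you flag as the main obstacle; your treatment of that point is correct and fills a gap the paper leaves implicit.
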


\begin{proof}
For $x\neq y \in \BD_{r_n}$ and $\alpha \in (0,1]$, we compute
\begin{align*}
\frac{|df_n(x) - df_n(y)|}{|x-y|^{\alpha}} &= \frac{\ln(n)}{n^2}\cdot \frac{|\frac{1}{r_n}df(\frac{x}{r_n}) - \frac{1}{r_n}df(\frac{y}{r_n})|}{r_n^\alpha |\frac{x}{r_n}-\frac{y}{r_n}|^\alpha} \\
&= n^{\alpha-1} \ln(n)^{3+2\alpha} \frac{|df(\frac{x}{r_n}) - df(\frac{y}{r_n})|}{|\frac{x}{r_n} - \frac{y}{r_n}|^{\alpha}}.
\end{align*}
This implies that
\begin{equation*}
[df_n]_{C^{0,\alpha}} = n^{\alpha-1} \ln(n)^{3+2\alpha} [df]_{C^{0,\alpha}},
\end{equation*}
where $[\cdot]_{C^{0,\alpha}}$ denotes the $C^{0,\alpha}$ seminorm. If $\alpha\in (0,1)$, then $n^{\alpha-1} \ln(n)^{3+2\alpha}$ converges to zero as $n$ tends to infinity. Using this, we see that $\tilde{H}$ is $C^{1,\alpha}$ for $\alpha<1$. If $\alpha = 1$, then $n^{\alpha-1} \ln(n)^{3+2\alpha}$ diverges, which implies that $\tilde{H}$ is not $C^{1,1}$.

The proof that $H$ is in $C^{1,\alpha}$ for $\alpha<1$ but not for $\alpha = 1$ is similar. A short computation shows that $[dg_n]_{C^{0,\alpha}} = \ln(n)^{2+\alpha} [dg]_{C^{0,\alpha}}$. Since logarithmic terms are dominated by negative powers of $n$, we see that $[d(g_n f_n)]_{C^{0,\alpha}}$ converges to zero for $\alpha<1$ and diverges for $\alpha = 1$.
\end{proof}

Lemma \ref{lem:C1alpha_regularity} implies that the domain $X(H)$ is $C^{1,\alpha}$ for all $\alpha<1$, but not $C^{1,1}$. Since $H$ is smooth on the complement of a single point, the same is true for the domain $X(H)$. By choosing the parameter $\lambda>0$ small, we can make sure that $X(H)$ is arbitrarily $C^1$ close to the unit ball in $\BC^2$. Our goal is to show that the error terms in the Weyl law for the alternative ECH capacities (see Remark \ref{rem:elementary_ech_capacities}) of $X(H)$ diverge to $-\infty$. More precisely, let us write
\begin{equation*}
c_k^{\operatorname{alt}}(X(H)) = 2\sqrt{\operatorname{vol}(X(H)) k} + e_k^{\operatorname{alt}}(X(H)).
\end{equation*}
We will show:

\begin{proposition}
\label{prop:divergent_error_terms_ech}
We have $\lim_{k\rightarrow \infty} e_k^{\operatorname{alt}}(X(H)) = -\infty$.
\end{proposition}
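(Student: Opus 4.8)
The strategy is to relate the alternative ECH capacities of $X(H)$ to the link spectral invariants of the hameomorphism $\varphi_H^1$, and then to estimate the latter directly using the very concrete structure of $H$ as a sum of bump functions supported on shrinking disks. The key mechanism linking the two is the construction $X(H)$ from Section \ref{sec:packing_failure} together with Chen's relationship between PFH spectral invariants (specialized appropriately) and link spectral invariants, as used in the proof of Theorem \ref{thm:subleading_asymptotics_link_invariants}; concretely, an inequality of the shape (see \cite[\S 5]{eh} and \cite{chea,cheb})
\begin{equation*}
c_k^{\operatorname{alt}}(X(H)) \leq c_k^{\operatorname{alt}}(X(\tilde H)) + \text{correction},
\end{equation*}
where the correction compares the two domains $X(H)$ and $X(\tilde H)$, which have the same volume by \eqref{eq:volume_of_domain_associated_to_hamiltonian} since $H$ and $\tilde H$ have the same time-integral against $\omega$. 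Since $\tilde H$ is autonomous and explicitly a sum $\lambda \sum_n \sum_i f_n^i$ of disjointly supported radial bumps, the capacities $c_k^{\operatorname{alt}}(X(\tilde H))$ can be read off from the weight expansion / ball packing description: $X(\tilde H)$ contains a ball $B^4(\pi)$ together with, for each bump $f_n^i$, an extra symplectic region of volume $\lambda \|f_n\|_{L^1}$ attached near the corresponding solid torus in the boundary.

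First I would make precise the comparison between $c_k^{\operatorname{alt}}(X(H))$ and the link spectral invariants. The point is that $\operatorname{graph}(H) \subset (U, ds\wedge dt + \omega)$ is a mapping torus of $\varphi_H^1$, and the region $X(H) \setminus B^4(\pi-\varepsilon)$ is, up to controlled error, the symplectic cobordism realizing $H$; the alternative ECH spectrum of $X(H)$ is then squeezed between the spectrum of $B^4(\pi)$ and that of $B^4(\operatorname{vol})$ with a defect governed by $c_{\underline L_d}(H)$ for equidistributed links $\underline L_d$ on $\BD$ (or, more robustly, by the PFH spectral invariants $c_{e_d}(\operatorname{id},\gamma_d,H)$). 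This is exactly the inequality \eqref{eq:subleading_asymptotics_pfh_proof_b}--\eqref{eq:subleading_asymptotics_pfh_proof_c} mechanism run in reverse: instead of using ball packing stability to get an $O(1)$ lower bound, here we exploit that $H$ is \emph{not} $C^2$ to produce a lower bound on the Calabi-type defect that grows without bound. Concretely, for each fixed $k$ the Hamiltonian $H_k$ is smooth, and one shows
\begin{equation*}
e_k^{\operatorname{alt}}(X(H)) \leq e_k^{\operatorname{alt}}(X(H_k)) + o(1),
\end{equation*}
so it suffices to drive $e_k^{\operatorname{alt}}(X(H_k)) \to -\infty$.

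Next I would compute, or rather lower-bound in absolute value, the negative contribution of the bumps. For the autonomous smooth Hamiltonian $\tilde H_k$, the domain $X(\tilde H_k)$ is obtained from $B^4(\pi)$ by, for each $n \le k$ and each $i \le n$, grafting on a thin "finger" of volume $\approx \lambda \frac{\ln n}{n^2}\|f\|_{L^1} \cdot (\pi r_n^2$-normalization$)$ — more precisely the extra volume per bump is $\lambda \int f_n^i\, \omega = \lambda \frac{\ln n}{n^2}\int f\,\omega \cdot r_n^2$... here I would be careful: actually $\int_{\BD_{r_n}} f_n^i\,\omega = \frac{\ln n}{n^2} \int_{\BD} f(\zeta)\,r_n^2\,\omega(\zeta)$, so the per-bump volume is $\sim \lambda \frac{\ln n}{n^2}\cdot \frac{1}{n^2\ln(n)^4} = \lambda \frac{1}{n^4 \ln(n)^3}$, and there are $n$ of them at level $n$, giving total level-$n$ volume $\sim \lambda n^{-3}\ln(n)^{-3}$, which is summable — good, so $\operatorname{vol}(X(H))$ is finite and close to $\operatorname{vol}(B^4(\pi))$. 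The divergence of $e_k^{\operatorname{alt}}$ must therefore come not from volume but from the \emph{action spectrum}: each bump $f_n^i$ creates, on the boundary $\partial X(\tilde H_k)$, a solid-torus family of short closed characteristics whose actions cluster near a value determined by $\lambda \max f_n^i = \lambda \frac{\ln n}{n^2}$, and these produce, in the alternative ECH spectrum, "steps" that lag behind the volume curve. I would quantify this by showing that the $k$-th capacity $c_k^{\operatorname{alt}}(X(H_k))$ is bounded above by the $k$-th capacity of a ball plus a packing term, and that the optimal packing is obstructed precisely at the scale of the $n$-th cluster of bumps when $k \asymp \sum_{n' \le n} n'$, forcing $e_k^{\operatorname{alt}} \lesssim -\lambda \cdot(\text{number of active bump-levels}) \to -\infty$.

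\textbf{The main obstacle.} The hard part is the second step: extracting a \emph{divergent negative} lower bound on the defect $|e_k^{\operatorname{alt}}(X(H_k))|$, uniformly controlled as $k \to \infty$, from the fine structure of $H$. One needs a quantitative version of the statement "a Hamiltonian with many disjointly supported tall-thin bumps forces a large negative error term in the ECH/link Weyl law," which is essentially a packing obstruction argument: the bumps are engineered (via the radii $r_n = \frac{1}{n\ln(n)^2}$ and heights $\frac{\ln n}{n^2}$, and the time-supports of width $\frac{1}{\ln n}$) so that the associated link spectral invariants $c_{\underline L_d}(H)$ exceed $A^{-1}\int H\,dt\wedge\omega$ by an amount growing like $\log\log$ in the relevant range of $d$, which is incompatible with an $O(1)$ error term. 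Making this rigorous requires either (i) a direct lower bound on $c_{\underline L_d}(H)$ via a sub-level-set / capacity argument on $\BD$ — estimating how much of the disk's area can be "captured below level $s$" by equidistributed links in the presence of the bumps — or (ii) transporting the argument of \cite{ch} (where unbounded toric tails obstruct packing) to this non-toric setting through the $X(H)$ correspondence. I would pursue route (i), since the bumps are genuinely radial and one has exact formulas for the characteristic actions on each grafted finger; the delicate point is summing the per-level contributions with the correct signs and verifying that the $\log\log$-type growth survives after taking the infimum over $\ell$ implicit in the disjoint-union formula \eqref{eq:subleading_asymptotics_ech_proof_disjoint_sum_formula}. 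Once that lower bound on the defect is in hand, Proposition \ref{prop:divergent_error_terms_ech} follows by letting $k \to \infty$ and noting that each finite truncation $H_k$ already exhibits a defect of size $\gtrsim \lambda \log\log k$, which passes to the limit by the $C^0$-continuity of alternative ECH capacities.
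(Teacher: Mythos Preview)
Your high-level instinct---pass from alternative ECH capacities of $X(H)$ to PFH spectral invariants via a cobordism inequality from \cite{eh}, then to link spectral invariants via Chen, and finally estimate the latter by hand using the bump structure of $H$---is exactly the route the paper takes. But the proposal misimplements every one of these steps, and in its current form does not constitute a proof.

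\textbf{Sign.} You write that the link spectral invariants $c_{\underline L_d}(H)$ \emph{exceed} $A^{-1}\int H\,dt\wedge\omega$. The opposite is true and is the whole point: one constructs monotone links $\underline L_d$ whose components lie on level sets of $\tilde H$, so that by Lagrangian control $c_{\underline L_d}(\tilde H)=d^{-1}\sum_L \tilde H|_L$; for large $d$ the link cannot resolve the bumps at levels $n>N(d)$, so those bumps contribute to the Calabi integral but not to the link average. The computation gives $dc_{\underline L_d}(\tilde H)-dA^{-1}\int\tilde H\,\omega\le A^{-1}\int\tilde H\,\omega-dA^{-1}\int_{D_d^-}\tilde H\,\omega\to-\infty$, because $d\int_{D_d^-}\tilde H\,\omega\gtrsim\ln N(d)$. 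With your sign the cobordism inequality (which has the form $\text{PFH}\ge c_k^{\operatorname{alt}}$) yields nothing.

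\textbf{The domain is not toric.} You propose to read off $c_k^{\operatorname{alt}}(X(\tilde H))$ from a ``weight expansion / ball packing description'' because the bumps are radial. Each bump $f_n^i$ is radial about its own center, but these centers are scattered over the annuli $\BA_n$, so $\tilde H$ is not radial and $X(\tilde H)$ is not a toric domain. No weight-expansion formula is available; this is why the paper detours through PFH and link invariants rather than computing capacities directly.

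\textbf{Index conflation and the missing embedding.} The displayed inequality $e_k^{\operatorname{alt}}(X(H))\le e_k^{\operatorname{alt}}(X(H_k))+o(1)$ conflates the capacity index with the truncation index and is unjustified (there is no monotonicity of error terms under $C^0$ approximation). More seriously, you never say how $c_k^{\operatorname{alt}}(X(H))$ gets bounded above by a PFH quantity. In the paper this is the content of a separate lemma: one shows that every compact subset of $\operatorname{int}(X(H))\sqcup\operatorname{int}(B^4(A))$ embeds into the PFH cobordism $M=D(H+A)\subset\BR\times\BT\times S^2$, and then invokes \cite[Lemma~5.2]{eh} to get $c_{e_d}(\operatorname{id},\gamma_d,H+A)-c_{U^{k_d}e_d}(\operatorname{id},\gamma_d,0)+\int_{\gamma_d}(H+A)\,dt\ge c_{k_d}^{\operatorname{alt}}(X(H)\sqcup B(A))$ for a well-chosen sequence $k_d$. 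The disjoint union formula then isolates $e_{i_d}^{\operatorname{alt}}(X(H))$, and the divergence of the left side (from the link computation above) forces $e_{i_d}^{\operatorname{alt}}(X(H))\to-\infty$; monotonicity of $c_k^{\operatorname{alt}}$ in $k$ extends this to the full sequence. None of this scaffolding appears in your plan.
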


\begin{remark}
It should also be the case that the error terms in the Weyl law for the usual ECH capacities of $X(H)$ diverge to $-\infty$. The reason we formulate Proposition \ref{prop:divergent_error_terms_ech} for the alternative ECH capacities instead of the original ones is that we rely on certain inequalities between PFH spectral invariants and capacities from \cite{eh}. These inequalities are stated for the alternative ECH capacities, but also expected to hold for the usual ones.
\end{remark}

\begin{remark}
The domain $X(H)$ is the limit of smooth domains $X(H_k)$. A direct computation shows that the Ruelle invariants $\operatorname{Ru}(X(H_k))$ of these domains diverge to $+\infty$. Therefore, Proposition \ref{prop:divergent_error_terms_ech} is expected in view of Hutchings' conjecture, see equation \eqref{eq:hutchings_conjecture}.
\end{remark}

Theorem \ref{thm:failure_packing_stability} is an easy consequence of Theorem \ref{thm:subleading_asymptotics_ech}, Remark \ref{rem:elementary_ech_capacities}, and Proposition \ref{prop:divergent_error_terms_ech}. Indeed, let $W\subset \BR^4$ be a compact domain with smooth boundary and assume by contradiction that $p^W(X(H))= 1$. After rescaling, we can assume that $W$ has the same volume as $X(H)$. The identity $p^W(X(H))=1$ then implies that $rW$ admits a symplectic embedding into $X(H)$ for all $r\in (0,1)$. By monotonicity of capacities, we obtain $c_k^{\operatorname{alt}}(W) \leq c_k^{\operatorname{alt}}(X(H))$. Since $\operatorname{vol}(W) = \operatorname{vol}(X(H))$, we deduce that $e_k^{\operatorname{alt}}(W) \leq e_k^{\operatorname{alt}}(X(H))$ for all $k$. But $e_k^{\operatorname{alt}}(W) = O(1)$ by Theorem \ref{thm:subleading_asymptotics_ech} and Remark \ref{rem:elementary_ech_capacities}, contradicting Proposition \ref{prop:divergent_error_terms_ech}.

It remains to prove Proposition \ref{prop:divergent_error_terms_ech}. We obtain the necessary estimates on the error terms $e_k^{\operatorname{alt}}(X(H))$ in a slightly roundabout way which involves both link spectral invariants and PFH spectral invariants. The reason for this detour is that while we do not know how to compute the (alternative) ECH capacities of $X(H)$, the link spectral invariants of the Hamiltonian $H$ are easy to compute for appropriate choices of links. PFH is used as an intermediary between link spectral invariants and alternative ECH capacities. We make use of the work of Chen \cite{chea,cheb}, which relates link spectral invariants and PFH spectral invariants. We also rely on methods from \cite{eh} to relate PFH spectral invariants and alternative ECH capacities.

We begin by estimating the link spectral invariants of $H$. Let $(S^2,\omega)$ be the sphere of area $A\coloneqq \pi$ obtained from $\BD$ by collapsing the boundary circle to a point. Any compactly supported Hamiltonian $G$ on $\BD$ descends to a Hamiltonian on $S^2$. In the following, we will use the same letter $G$ for this Hamiltonian. Recall from \cite{chmss22} that a Lagrangian link $\underline{L}\subset S^2$ is the disjoint union of finitely many smoothly embedded circles. We call a link $\underline{L}$ monotone if all components of $S^2\setminus \underline{L}$ have equal area. This area must be equal to $\frac{A}{d+1}$, where $d$ denotes the number of components of $\underline{L}$. Given a monotone link $\underline{L}\subset S^2$, the paper \cite{chmss22} introduces a spectral invariant
\begin{equation*}
c_{\underline{L}} : C^{\infty}([0,1]\times S^2) \rightarrow \BR
\end{equation*}
called link spectral invariant. The main properties of link spectral invariants are summarized in \cite[Theorem 1.1 \& 1.13]{chmss22}. By the Hofer Lipschitz property, each $c_{\underline{L}}$ is Lipschitz continuous with respect to the $C^0$ norm on $C^{\infty}([0,1]\times S^2)$. This implies that $c_{\underline{L}}$ admits a continuous extension to $C^0([0,1]\times S^2)$. In particular, $c_{\underline{L}}(H)$ is defined for every link $\underline{L}$.

\begin{lemma}
\label{lem:divergent_error_term_link_invariants}
For each $d\geq 1$, let $\underline{L}_d$ be a monotone $d$-component link in $(S^2,\omega)$. Then
\begin{equation}
\label{eq:divergent_error_term_link_invariants}
d c_{\underline{L}_d}(H) - d A^{-1} \int_{[0,1]\times S^2} H dt \wedge \omega \rightarrow - \infty \qquad (d\rightarrow + \infty).
\end{equation}
\end{lemma}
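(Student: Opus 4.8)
The plan is to strip the statement down to a question about the \emph{autonomous, mean-normalized} Hamiltonian and then read off the obstruction from the fixed points of $\varphi_H^1$. First I would use the shift property of link spectral invariants, $c_{\underline L}(G+c(t))=c_{\underline L}(G)+\int_0^1 c(t)\,dt$ for functions $c$ of time alone: writing $\overline H_t:=H_t-A^{-1}\int_{S^2}H_t\,\omega$ for the mean-normalized Hamiltonian this gives
\[
c_{\underline L_d}(H)-A^{-1}\int_{[0,1]\times S^2}H\,dt\wedge\omega=c_{\underline L_d}(\overline H),
\]
so it suffices to show $d\,c_{\underline L_d}(\overline H)\to-\infty$. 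Next, since $\varphi_H^1=\varphi_{\tilde H}^1$ and, for $\lambda$ small, both $(\varphi_{H_k}^t)$ and $(\varphi_{\tilde H_k}^t)$ are $C^0$-small paths from the identity to $\varphi_{H_k}^1$, their concatenation is a $C^0$-small, hence contractible, loop in $\operatorname{Ham}(S^2,\omega)$; thus $[\varphi_{H_k}^t]=[\varphi_{\tilde H_k}^t]$ in $\widetilde{\operatorname{Ham}}(S^2,\omega)$, and passing to the $C^0$-limit via the Hofer--Lipschitz property of $c_{\underline L_d}$ yields $c_{\underline L_d}(\overline H)=c_{\underline L_d}(\overline{\tilde H})$, where $\overline{\tilde H}$ is the mean-normalization of the autonomous Hamiltonian $\tilde H$. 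So the claim reduces to $d\,c_{\underline L_d}(\overline{\tilde H})\to-\infty$.

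Now I would exploit the fixed-point structure. Each disk $D_n^i$ is a neighbourhood of a nondegenerate fixed point $p_n^i$ of $\varphi_{\tilde H}^1$ (the maximum of the radial bump $\lambda f_n^i$), on which $\overline{\tilde H}$ equals the radial profile $\lambda f_n^i-\mu_0$ with $\mu_0=A^{-1}\int_{S^2}\tilde H\,\omega$; a direct computation shows the rotation number of $\varphi_{\tilde H}^1$ at $p_n^i$ is of order $\lambda\ln(n)^5$, which diverges, while $\operatorname{area}(D_n^i)=\pi r_n^2=\pi/(n^2\ln(n)^4)$ and there are $n$ such disks at level $n$. The key estimate is that for a monotone $d$-component link $\underline L_d$ every fixed point $p_n^i$ whose disk is not contained in the interior of a single complementary region --- which happens precisely when $\operatorname{area}(D_n^i)\gtrsim A/(d+1)$, i.e.\ for $n\lesssim\sqrt d$ --- forces $c_{\underline L_d}(\overline{\tilde H})$ to undershoot its mean $0$: on the region containing $p_n^i$ the Hamiltonian is a fast-spinning bump, and the short periodic orbits produced by the spinning pin the associated contribution to $c_{\underline L_d}$ at a value bounded above independently of the amplitude of the bump, which is strictly below the (positive) local average of $\overline{\tilde H}$ there. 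I would establish this first for a link adapted to the disk configuration, where $c_{\underline L_d}$ decouples over the complementary regions via the Lagrangian-control property of \cite{chmss22} and the per-region invariant is an explicit ``spinning-disk'' spectral number, and then extend it to arbitrary monotone $d$-links by a comparison argument. Summing the resulting deficit, of order $\lambda\ln(n)/n^2$ per disk, over the $n$ disks at level $n$ and over $n\lesssim\sqrt d$, and dividing by $d+1$, gives
\[
c_{\underline L_d}(\overline{\tilde H})\le-\frac{c\lambda}{d+1}\sum_{n\lesssim\sqrt d}\frac{\ln n}{n}+o\!\left(\frac{1}{d}\right)=-\frac{c'\lambda(\log d)^2}{d+1}+o\!\left(\frac{(\log d)^2}{d}\right),
\]
so $d\,c_{\underline L_d}(\overline{\tilde H})\le-c'\lambda(\log d)^2(1+o(1))\to-\infty$. (The rate $(\log d)^2$ matches the divergence $\operatorname{Ru}(X(H_k))\sim\lambda(\log k)^2$ of the Ruelle invariants of the smooth approximations, as it must in view of Hutchings' conjecture.)

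The main obstacle will be the two ingredients feeding this estimate. One is the local spinning-disk computation: showing that a fast-spinning positive radial bump contributes to the link (equivalently disk) spectral invariant a quantity bounded above uniformly in its amplitude, by analysing the actions of the periodic orbits occurring at the radii where the rotation is an integer multiple of $2\pi$ and identifying which of these actions is selected by the fundamental class. The more delicate point is making the deficit lower bound uniform over \emph{all} monotone $d$-component links, not merely the adapted ones: here one needs either a Hausdorff-type continuity of $c_{\underline L_d}$ under perturbation of the link, or a monotonicity argument to the effect that an arbitrary monotone link cannot avoid the obstruction created by the fixed points $p_n^i$, which accumulate at the single singular point of $\partial X(H)$ with rotation numbers tending to infinity.
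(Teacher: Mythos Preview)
Your approach diverges substantially from the paper's, and the central mechanism you propose does not appear to work.

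The paper's proof is much more direct. First, it reduces to a \emph{single} sequence of carefully chosen links: using that $f_{\underline L_d}$ is a quasimorphism of defect $\le A/d$ and that its homogenization is link-independent (\cite[Theorem~7.7]{chmss22}), one gets $|c_{\underline L_d}(H)-c_{\underline L'_d}(H)|\le 2A/d$, so proving the claim for one sequence suffices. For the special link, components are placed as concentric level circles inside each big disk $D_n^i$ (those with area $\ge A/(d+1)$, i.e.\ $n\le N(d)$), and all remaining components are placed \emph{outside the support} of $\tilde H$. By Lagrangian control, $c_{\underline L_d}(\tilde H)=\tfrac1d\sum_L \tilde H|_L$; since $\tilde H$ is radially non-increasing on each $D_n^i$, this Riemann sum is bounded above by $\tfrac{d+1}{A}\int_{D_n^i}\tilde H\,\omega$. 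The upshot is
\[
d\,c_{\underline L_d}(\tilde H)-dA^{-1}\!\int\tilde H\,\omega \;\le\; A^{-1}\!\int\tilde H\,\omega \;-\; dA^{-1}\!\int_{D_d^-}\tilde H\,\omega,
\]
and the divergence to $-\infty$ comes entirely from the \emph{small} disks $D_n^i$ with $n>N(d)$, where the link has no components at all: one computes $d\int_{D_d^-}\tilde H\,\omega\gtrsim \ln N(d)\to\infty$.

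Your mechanism is essentially the opposite: you look for a deficit at the \emph{big} disks ($n\lesssim\sqrt d$) created by ``spinning.'' But for a link adapted to the disk configuration with components on level circles---exactly the case where Lagrangian control applies---the contribution is $\tfrac1d\sum\tilde H|_{L_i}$, which depends only on the values of $\tilde H$ on the circles and not at all on rotation numbers. There is no pinning by periodic orbits here; that intuition belongs to Hamiltonian/PFH spectral invariants, not to the Lagrangian (link) theory. In particular your claim that the per-disk contribution is ``bounded above independently of the amplitude'' is false for level circles near the center, where Lagrangian control gives values close to the bump height. So the big disks do \emph{not} produce the deficit in the adapted case, and you have no argument for general links (you correctly flag the comparison step as the main obstacle but do not resolve it). The quasimorphism bound from \cite{chmss22} that the paper uses is exactly what makes this reduction clean, and once you have it, the whole problem collapses to an elementary Riemann-sum estimate plus a tail computation---no dynamics required.
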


\begin{remark}
The factor $A^{-1}$ is not present in the Weyl law \cite[Theorem 1.1]{chmss22} because there the surface is normalized to have area $1$.
\end{remark}

\begin{proof}
We claim that
\begin{equation}
\label{eq:bounded_difference_different_links}
|c_{\underline{L}_d}(H) - c_{\underline{L}_d'}(H)| \leq \frac{2A}{d}
\end{equation}
for any two monotone $d$-component links $\underline{L}_d$ and $\underline{L}_d'$. This inequality implies that if \eqref{eq:divergent_error_term_link_invariants} holds for one sequence of monotone links, then it holds for every other sequence of monotone links as well. This means that once we know inequality \eqref{eq:bounded_difference_different_links}, we can reduce ourselves to verifying \eqref{eq:divergent_error_term_link_invariants} for one single carefully constructed sequence of links. We prove inequality \eqref{eq:bounded_difference_different_links}. Let $\underline{L}_d$ be a monotone $d$-component link. If $G \in C^\infty_c([0,1]\times \BD)$ is a smooth Hamiltonian compactly supported in the interior of the disk, then $c_{\underline{L}_d}(G)$ only depends on the time-$1$ map $\varphi_G^1$ by the homotopy invariance property in \cite[Theorem 1.13]{chmss22}. The resulting map
\begin{equation*}
f_{\underline{L}_d}: \operatorname{Ham}_c(\BD) \rightarrow \BR \qquad \varphi_G^1 \mapsto c_{\underline{L}_d}(G)
\end{equation*}
is uniformly continuous with respect to the $C^0$ topology on $\operatorname{Ham}_c(\BD)$ and continuously extends to a map $f_{\underline{L}_d}: \operatorname{Homeo}_c(\BD) \rightarrow \BR$, see \cite[Proposition 3.4]{chmss22}. We have
\begin{equation}
\label{eq:equality_f_and_c}
f_{\underline{L}_d}(\varphi_H^1) = \lim_{k\rightarrow \infty} f_{\underline{L}_d}(\varphi_{H_k}^1) = \lim_{k\rightarrow \infty} c_{\underline{L}_d}(H_k) = c_{\underline{L}_d}(H).
\end{equation}
The map $f_{\underline{L}_d}$ is a quasimorphism with defect at most $\frac{A}{d}$, see \cite[Proof of Proposition 7.9]{chmss22}. It is proved in \cite[Theorem 7.7]{chmss22} that the homogenization
\begin{equation*}
f_d : \operatorname{Homeo}_c(\BD) \rightarrow \BR \qquad f_d(\varphi) \coloneqq \lim_{n\rightarrow \infty} \frac{f_{\underline{L}_d}(\varphi^{n})}{n}
\end{equation*}
is independent of the monotone $d$-component link $\underline{L}_d$. A simple computation using the fact that $f_{\underline{L}_d}$ has defect at most $\frac{A}{d}$ shows that $|f_d - f_{\underline{L}_d}|\leq \frac{A}{d}$. We deduce that $|f_{\underline{L}_d} - f_{\underline{L}_d'}| \leq \frac{2A}{d}$ for any other monotone $d$-component link $\underline{L}_d'$. In combination with \eqref{eq:equality_f_and_c}, this implies inequality \eqref{eq:bounded_difference_different_links}.

Note that since $\varphi_{H}^1 = \varphi_{\tilde{H}}^1$, we have
\begin{equation*}
c_{\underline{L}_d}(H) = f_{\underline{L}_d}(\varphi_H^1) = f_{\underline{L}_d}(\varphi_{\tilde{H}}^1) = c_{\underline{L}_d}(\tilde{H}).
\end{equation*}
Moreover,
\begin{equation*}
\int_{[0,1]\times S^2}H dt\wedge\omega = \int_{[0,1]\times S^2}\tilde{H} dt\wedge\omega. 
\end{equation*}
When checking \eqref{eq:divergent_error_term_link_invariants}, we may therefore replace $H$ by the autonomous Hamiltonian $\tilde{H}$.

It remains to construct a special sequence of monotone links $\underline{L}_d$ for which we can show \eqref{eq:divergent_error_term_link_invariants} to hold. Monotonicity means that each component of $S^2 \setminus \underline{L}_d$ must have area $\frac{A}{d+1}$. Let $N = N(d)>0$ be the maximal positive integer such that the area of $D_N^i$ is at least $\frac{A}{d+1}$. For each of the disks $D_n^i$ with $n\leq N$, we place $\lfloor \operatorname{area}(D_n^i) / \frac{A}{d+1} \rfloor$ components of $\underline{L}_d$ inside $D_n^i$ such that they form a nested sequence of circles centered at the center of $D_n^i$. We adjust the radii of these circles such that the disk bounded by the innermost circle and the annuli bounded by two consecutive circles have area $\frac{A}{d+1}$. Note that since $f$ was chosen to be supported inside $\BD_{\frac{1}{10}}$, the outermost circle component of $\underline{L}_d$ in $D_n^i$ is disjoint from the support of $\tilde{H}$.

We show that we can insert the remaining components of the link $\underline{L}_d$ to be disjoint from the support of $\tilde{H}$. Let $\Sigma$ denote the surface obtained from $S^2$ be removing, for every disk $D_n^i$ with $n \leq N$, the disk bounded by the outermost component of $\underline{L}_d$ inside $D_n^i$. We place the remaining components of $\underline{L}_d$ inside $\Sigma$ such that each of them bounds a topological disk $D$ of area $\frac{A}{d+1}$. As a first step, we arrange the components such that for each of the resulting disks $D$, the total area of all disks $D_n^i$ with $n>N$ intersecting $D$ is at most $\frac{16A}{d+1}$. This is not particularly subtle. For example, it is enough to make sure that each of the topological disks $D$ is contained in a metric disk of area $\frac{4A}{d+1}$, where we use the standard metric on $\BD$. We can also make sure that each component of $\underline{L}_d$ only intersects finitely many of the disks $D_n^i$.

Our second step is to move the link $\underline{L}_d$ via a compactly supported Hamiltonian diffeomorphism of $\Sigma$ to make it disjoint from the support of $\tilde{H}$. For each disk $D_n^i$, let $\tilde{D}_n^i$ be the disk with the same center as $D_n^i$ and with radius $\frac{r_n}{10}$. Note that $\tilde{H}$ is supported in the closure of the union of all $\tilde{D}_n^i$. Moreover, note that for each component $L$ of $\underline{L}_d$, the total area of all disks $\tilde{D}_n^i$ with $n>N$ intersecting the disk bounded by $L$ is strictly less than $\frac{A}{d+1}$. We can therefore find a compactly supported Hamiltonian diffeomorphism $\psi$ of $\Sigma$ which moves every disk $\tilde{D}_n^i$ with $n>N$ intersecting $\underline{L}_d$ into the interior of the disk bounded by some component $L$ of $\underline{L}_d$ it intersects. Moreover, we can choose $\psi$ such that it does not move any disk $\tilde{D}_n^i$ which does not intersect $\underline{L}_d$. After replacing $\underline{L}_d$ by $\psi^{-1}(\underline{L}_d)$, we can assume that $\underline{L}_d$ is disjoint from the support of $\tilde{H}$.

We observe that each component of $\underline{L}_d$ is contained in a level set of $\tilde{H}$. One can therefore easily compute $c_{\underline{L}_d}(\tilde{H})$ via the Lagrangian control property in \cite[Theorem 1.13]{chmss22}. For each component $L$ of $\underline{L}_d$, let $\tilde{H}|_L\in \BR$ be the value $\tilde{H}$ takes on $L$. Then
\begin{equation}
\label{eq:support_control_property}
c_{\underline{L}_d}(\tilde{H}) = \frac{1}{d}\sum_{L\subset \underline{L}_d} \tilde{H}|_L.
\end{equation}
Let $\underline{L}_d^+$ be the collection of all components of $\underline{L}_d$ which are contained in a disk $D_n^i$ for $n\leq N$. By construction of $\underline{L}_d$, only components $L\subset \underline{L}_d^+$ contribute to the sum \eqref{eq:support_control_property}.

Consider a disk $D_n^i$ with $n\leq N$. Let $L_1, \dots, L_k$ denote the components of $\underline{L}_d$ inside $D_n^i$. Suppose that they are ordered such that $L_i$ is inside $L_j$ for $i<j$. We claim that
\begin{equation}
\label{eq:existence_special_link_with_divergent_error_terms_proof_a}
\frac{A}{d+1} \sum\limits_{i=1}^k \tilde{H}|_{L_i} \leq \int_{D_n^i}\tilde{H}\omega.
\end{equation}
Indeed, let $G$ be the function on $D_n^i$ whose value on the disk bounded by $L_1$ is equal to $\tilde{H}|_{L_1}$, whose value on the annulus between $L_i$ and $L_{i+1}$ is equal to $\tilde{H}|_{L_{i+1}}$, and which vanishes outside of $L_k$. Since the function $f$ was chosen to be non-increasing in the radial direction, we have $G\leq \tilde{H}$. Moreover, the left hand side of \eqref{eq:existence_special_link_with_divergent_error_terms_proof_a} is simply the integral of $G$ over $D_n^i$. Thus identity \eqref{eq:existence_special_link_with_divergent_error_terms_proof_a} is an immediate consequence.

Let $D_d^+$ denote the union of all disks $D_n^i$ with $n\leq N$ and $D_d^-$ the union of all disks $D_n^i$ with $n>N$. Then we can estimate using \eqref{eq:existence_special_link_with_divergent_error_terms_proof_a}:
\begin{align*}
dc_{\underline{L}_d}(\tilde{H}) - d A^{-1} \int_\BD \tilde{H}\omega &= \sum\limits_{L\subset \underline{L}_d^+} \tilde{H}|_L - d A^{-1}\int_{D_d^+}\tilde{H}\omega - d A^{-1}\int_{D_d^-} \tilde{H}\omega \\
&\leq (d+1)A^{-1} \int_{D_d^+}\tilde{H}\omega - dA^{-1}\int_{D_d^+}\tilde{H}\omega - d A^{-1}\int_{D_d^-} \tilde{H}\omega \\
&\leq A^{-1}\int_\BD \tilde{H}\omega - d A^{-1}\int_{D_d^-}\tilde{H}\omega.
\end{align*}
Therefore, it suffices to show that $d \int_{D_d^-}\tilde{H}\omega$ diverges to $+\infty$ as $d$ tends to infinity. By construction of $\tilde{H}$ we have
\begin{align*}
d \int_{D_d^-} \tilde{H}\omega &= d\sum\limits_{n>N(d)} n \int_{\BD_{r_n}}f_n \omega \\
&= d \sum\limits_{n>N(d)} \frac{n\ln(n)r_n^2}{n^2}  \int_{\BD}f\omega \\
&= d \sum\limits_{n>N(d)} \frac{1}{n^3 \ln(n)^3} \int_\BD f\omega.
\end{align*}
We estimate
\begin{align*}
d\sum\limits_{n>N(d)}\frac{1}{n^3 \ln(n)^3} & \geq d\int_{N(d)+1}^\infty \frac{1}{x^3 \ln(x)^3} dx \\
&= d\int_{\ln(N(d)+1)}^\infty u^{-3} e^{-2u} du \\
&= d(N(d)+1)^{-2}\int_0^\infty \frac{1}{(v + \ln(N(d)+1))^3} e^{-2v} dv \\
&= d(N(d)+1)^{-2} \ln(N(d)+1)^{-3} \int_0^\infty \frac{1}{(1 + \frac{v}{\ln(N(d)+1)})^3} e^{-2v} dv \\
&\geq C dN(d)^{-2} \ln(N(d))^{-3}
\end{align*}
for some constant $C>0$ which can be chosen uniformly among all sufficiently large $d$. Recall that $N(d)$ is the maximal integer $n$ such that the area of $D_n^i$ is at least $\frac{A}{d+1}$. This means that
\begin{equation*}
\pi N(d)^{-2} \ln(N(d))^{-4} \geq \frac{A}{d+1}.
\end{equation*}
We can therefore further estimate
\begin{equation*}
d N(d)^{-2} \ln(N(d))^{-3} \geq C \ln(N(d)).
\end{equation*}
Since $N(d)$ tends to infinity as $d$ goes to infinity, we conclude that $d \int_{D_d^-} \tilde{H}\omega$ diverges to $+\infty$ as $d$ goes to infinity.
\end{proof}

Our next step is to use Chen's work \cite{chea,cheb} relating PFH and link spectral invariants to obtain information on the subleading asymptotics of the PFH spectral invariants of $H$. Recall from \cite{eh} that for every area preserving map $\phi \in \operatorname{Diff}(S^2,\omega)$, every reference cycle $\gamma\subset Y_\phi$ in the mapping torus $Y_\phi$ of $\phi$, and every non-zero PFH class $\sigma \in HP(\phi,\gamma)$, we have a spectral invariant
\begin{equation*}
C^{\infty}(Y_\phi) \rightarrow \BR \qquad G \mapsto c_{\sigma}(\phi,\gamma,G).
\end{equation*}
Similarly to the case of link spectral invariants, this map is Lipschitz continuous with respect to the $C^0$ norm on $C^{\infty}(Y_\phi)$ and continuously extends to $C^0(Y_\phi)$. Chen considers the special case $\phi = \operatorname{id}_{S^2}$. In this case, the mapping torus $Y_\phi$ is simply given by $\BT\times S^2$. For every $d\geq 1$, Chen considers the special reference cycle $\gamma_d \coloneqq d \cdot (\BT \times \left\{ * \right\})$, i.e. $d$ copies of the circle $\BT \times \left\{ * \right\}$ for some point $*\in S^2$. For each $d\geq 1$, he constructs a distinguished non-zero class $e_d \in HP(\operatorname{id}_{S^2},\gamma_d)$ called the ``PFH unit''. Let $\underline{L}_d\subset S^2$ be a monotone $d$-component link. Suppose that every component of $\underline{L}_d$ bounds a disk in $S^2$ of area $\frac{A}{d+1}$. For such links $\underline{L}_d$, Chen shows that
\begin{equation}
\label{eq:pfh_link_inequality}
c_{e_d}(\operatorname{id}_{S^2},\gamma_d,G) + \int_{\gamma_d}G dt - A \leq d c_{\underline{L}_d}(G) \leq c_{e_d}(\operatorname{id}_{S^2},\gamma_d,G) + \int_{\gamma_d}G dt
\end{equation}
for every $1$-periodic Hamiltonian $G$ on $S^2$.

\begin{remark}
In \eqref{eq:pfh_link_inequality} we follow the notations and conventions for PFH and link spectral invariants introduced in \cite{eh} and \cite{chmss22}. These differ slightly from the ones used by Chen. Most notably, Chen's definition of link spectral invariants differs from the one used in \cite{chmss22} by a factor $d$. For this reason, the factor $d$ in front of the link spectral invariant in \eqref{eq:pfh_link_inequality} does not show up in Chen's work. Moreover, Chen normalizes surfaces to have area $1$. Thus the summand $-A$ becomes $-1$ in Chen's work.
\end{remark}

\begin{lemma}
\label{lem:divergent_error_term_pfh}
We have
\begin{equation*}
c_{e_d}(\operatorname{id}_{S^2},\gamma_d,H) + \int_{\gamma_d}H - d A^{-1}\int_{[0,1]\times S^2} H dt\wedge \omega \rightarrow - \infty \quad (d\rightarrow \infty).
\end{equation*}
\end{lemma}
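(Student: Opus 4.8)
The plan is to combine the estimate on link spectral invariants from Lemma~\ref{lem:divergent_error_term_link_invariants} with Chen's inequality \eqref{eq:pfh_link_inequality}, applied to a sequence of monotone links $\underline{L}_d$ of the special type Chen requires, namely links each of whose components bounds a disk of area $\frac{A}{d+1}$. Note that the links constructed in the proof of Lemma~\ref{lem:divergent_error_term_link_invariants} are precisely of this form: every component is a circle bounding a nested disk or a topological disk of area $\frac{A}{d+1}$. So we may fix such a sequence $\underline{L}_d$ once and for all; it satisfies both the hypotheses needed for Chen's inequality and the conclusion of Lemma~\ref{lem:divergent_error_term_link_invariants}.

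First I would invoke the left-hand inequality in \eqref{eq:pfh_link_inequality} with $G = H$ (extended by continuity to $C^0$, as discussed in the text just before Lemma~\ref{lem:divergent_error_term_pfh}), which gives
\begin{equation*}
c_{e_d}(\operatorname{id}_{S^2},\gamma_d,H) + \int_{\gamma_d} H\, dt - A \leq d\, c_{\underline{L}_d}(H).
\end{equation*}
Subtracting $d A^{-1}\int_{[0,1]\times S^2} H\, dt\wedge\omega$ from both sides yields
\begin{equation*}
c_{e_d}(\operatorname{id}_{S^2},\gamma_d,H) + \int_{\gamma_d} H\, dt - d A^{-1}\!\!\int_{[0,1]\times S^2}\!\! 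H\, dt\wedge\omega \;\leq\; d\, c_{\underline{L}_d}(H) - d A^{-1}\!\!\int_{[0,1]\times S^2}\!\! H\, dt\wedge\omega + A.
\end{equation*}
By Lemma~\ref{lem:divergent_error_term_link_invariants} the right-hand side tends to $-\infty$ as $d\to\infty$ (the additive constant $A$ is harmless), and therefore so does the left-hand side. This is exactly the assertion of Lemma~\ref{lem:divergent_error_term_pfh}.

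There is essentially no obstacle here beyond bookkeeping: the one point that needs a sentence of care is that Chen's inequality \eqref{eq:pfh_link_inequality} is stated for smooth $1$-periodic Hamiltonians $G$, whereas $H$ is only continuous (smooth away from one point). Since both sides of \eqref{eq:pfh_link_inequality} are $C^0$-Lipschitz in $G$ — the PFH spectral invariants by the Hofer-type continuity recalled before Lemma~\ref{lem:divergent_error_term_pfh}, and the link spectral invariants by the Hofer Lipschitz property used in the proof of Lemma~\ref{lem:divergent_error_term_link_invariants} — the inequality passes to the $C^0$ closure, so it holds for $H$ after approximating by the smooth Hamiltonians $H_k$ and passing to the limit. (Alternatively, one replaces $H$ by the autonomous $\tilde H$, for which the same limiting argument applies and which has the same PFH and link spectral invariants and the same integral, as noted in the proof of Lemma~\ref{lem:divergent_error_term_link_invariants}.) Only the lower bound in \eqref{eq:pfh_link_inequality} is needed; the upper bound will be used later when deriving the statement about alternative ECH capacities.
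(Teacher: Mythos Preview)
Your proof is correct and follows exactly the same approach as the paper, which simply states that the lemma is an immediate consequence of Lemma~\ref{lem:divergent_error_term_link_invariants} and Chen's inequalities~\eqref{eq:pfh_link_inequality}. You have supplied the details the paper omits, including the harmless but worthwhile remark about passing Chen's inequality to $C^0$ Hamiltonians by continuity.
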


\begin{proof}
This is an immediate consequence of Lemma \ref{lem:divergent_error_term_link_invariants} and Chen's inequalities \eqref{eq:pfh_link_inequality}.
\end{proof}

Consider $\BR \times \BT \times S^2$ equipped with the symplectic form $ds\wedge dt + \omega$. Let $M \coloneqq D(H+A) \subset \BR \times \BT \times S^2$ be the symplectic cobordism between the graph of $H + A$ and the graph of the Hamiltonian which is constant equal to $0$. It follows from \eqref{eq:volume_of_domain_associated_to_hamiltonian} that the volume of $M$ agrees with the volume of the disjoint union of $X(H)$ and $B^4(A)$.

\begin{lemma}
\label{lem:weak_embedding_into_pfh_cobordism}
Every compact subset of the disjoint union of the interiors of $X(H)$ and $B^4(A)$ admits a symplectic embedding into the interior of $M$.
\end{lemma}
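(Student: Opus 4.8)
The plan is to exhibit an explicit symplectic embedding by working in the coordinates $U \subset \BR \times \BT \times \BC$ and $\Phi : U \to \BC^* \times \BC$ already set up in the construction of $X(H)$. Recall that $\Phi$ is a symplectomorphism from $(U, ds\wedge dt + \omega)$ onto $(\BC^*\times \BC, \omega_0)$, that $X(H) = \operatorname{clos}(\Phi(U(H)))$ where $U(H) = \{(s,t,z)\in U \mid z\in \BD,\ s < H(t,z)\}$, and that the image under $\Phi$ of a sublevel set $\{s < s_0\} \cap U$ is a ball minus the complex line $\{z_1 = 0\}$. The key observation is that the cobordism $M = D(H+A) \subset \BR \times \BT \times S^2$ differs from a model for $X(H) \sqcup B^4(A)$ only by the difference between the sphere $S^2$ (of area $A$) and the disk $\BD$ together with the passage through $\Phi$.

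First I would set up the comparison at the level of the disk. Collapsing $\partial\BD$ gives $(S^2,\omega)$ of area $A = \pi$, and $H$ descends to a Hamiltonian on $S^2$; since $H$ is compactly supported in $\operatorname{int}(\BD)$, the graph of $H+A$ over $S^2$ lies in $\BR\times\BT\times S^2$ with $M = D(H+A)$ the region between this graph and the zero section. Second, I would produce the embedding of (a compact subset of the interior of) $X(H)$. Pull back via $\Phi^{-1}$: the interior of $X(H)$ is identified with the open region $U(H) = \{s < H(t,z)\}$ inside $U$, sitting over the disk $\BD$. Shifting the fibre coordinate $s \mapsto s + A$ (which is the symplectomorphism $(s,t,z)\mapsto(s+A,t,z)$) carries $U(H)$ to $\{-A < s < H(t,z) + A\}$. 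The part of this region with $s > \pi(|z|^2-1) + A$, i.e. the part which genuinely lands inside $U$ after shifting, maps into $D(H+A|_{[0,1]\times\BD}) \subset D(H+A|_{[0,1]\times S^2}) = M$ via inclusion of $\BD$ into $S^2$; one checks that any compact subset of $\operatorname{int}(X(H))$ has image, under $\Phi^{-1}$ and the shift, contained in this good region, because compactness bounds $s$ away from $\pi(|z|^2-1)$ from above and hence the shift by $A$ pushes it strictly inside. Third, I would handle $B^4(A)$: a ball $B^4(A)$ minus a complex line is symplectomorphic, via $\Phi$, to a sublevel set $\{s < 0\}\cap U$ over $\BC$; intersecting with the disk and noting that the zero-Hamiltonian graph is the $s=0$ slice, one sees the interior of $B^4(A)$, minus a line (which is codimension two and so any compact subset of the interior of $B^4(A)$ can be pushed off it by an arbitrarily small Hamiltonian isotopy — or rather, shrunk slightly and embedded into the complement of the line), embeds into $\{-A < s < 0\}$, the lower portion of $M$. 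Finally I would check the two images are disjoint: after the shift, $X(H)$ occupies the region with $s > -A$ and over the support region, while $B^4(A)$ occupies $s<0$ away from the $H$-support — more carefully, one uses that $X(H)$ and $B^4(A)$ sit in disjoint ranges of the fibre coordinate once one is placed over a region where $H+A > 0$ and the other below the zero graph, so a minor adjustment of the regions makes them disjoint.

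The main obstacle I anticipate is the bookkeeping near the complex line $\{z_1=0\}$ (equivalently near $\partial\BD$ after collapsing) and at the ``outer'' boundary where $s$ approaches $\pi(|z|^2-1)$: the map $\Phi$ degenerates there, and $X(H)$, $B^4(A)$ each contain such a line in their interior which is not visible in the $U$-picture. The standard fix is the usual one for ball-packing-type arguments: one only needs to embed \emph{compact subsets of the interiors}, so one first shrinks slightly (replacing $X(H)$ by a slightly smaller conformally symplectomorphic copy, and $B^4(A)$ likewise), which moves everything strictly into the region where $\Phi$ and the fibrewise coordinates are well-behaved, and then removes the offending complex line by an arbitrarily $C^0$-small Hamiltonian isotopy supported near it (a complex line has codimension two). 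Once this is done the embedding is essentially the composition of $\Phi^{-1}$, a fibre translation by $A$, and the inclusion $\BD \hookrightarrow S^2$, and the verification that it lands in $\operatorname{int}(M)$ and is injective is a direct inequality check using $\operatorname{vol}(M) = \operatorname{vol}(X(H)) + \operatorname{vol}(B^4(A))$ from \eqref{eq:volume_of_domain_associated_to_hamiltonian}. I would present the argument in this order: (1) recall the $\Phi$-coordinates and collapse $\partial\BD$; (2) embed a shrunk copy of $\operatorname{int}(X(H))$ into the upper part of $M$; (3) embed a shrunk copy of $\operatorname{int}(B^4(A))$ into the lower part of $M$, disjoint from the first; (4) conclude by taking an exhaustion by compact subsets.
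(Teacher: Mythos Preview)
Your treatment of $X(H)$ is essentially correct and matches the paper: translate $U(H)$ by $A$ in the $s$-direction, then displace a given compact $K\subset\operatorname{int}(X(H))$ off the complex line $L=\{z_1=0\}$ by a compactly supported Hamiltonian isotopy of $\operatorname{int}(X(H))$ (the paper does this explicitly by pushing $L$ along the characteristic rays of the half-space $\BR_{\geq 0}\times\BC$).

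The gap is in your placement of $B^4(A)$. First a bookkeeping error: the shift $s\mapsto s+A$ carries $U(H)=\{\pi(|z|^2-1)<s<H(t,z)\}$ to $\{\pi|z|^2<s<H(t,z)+A\}$, not to $\{-A<s<H(t,z)+A\}$. More importantly, the cobordism $M=D(H+A)$ sits entirely in $\{0\leq s\leq H+A\}$; there is no region $\{-A<s<0\}\subset M$ into which to place the ball. After inserting the shifted $U(H)$, what remains in $\operatorname{int}(M)$ over $z\in\operatorname{int}(\BD)$ is the wedge
\[
W=\{(s,t,z)\mid z\in\operatorname{int}(\BD),\ 0<s<\pi|z|^2\},
\]
which has height zero over $z=0$ and height approaching $A$ only as $|z|\to 1$. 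No fibre translation will put a ball of capacity $A$ inside $W$; a genuine coordinate change is needed. The paper supplies the symplectomorphism
\[
W\longrightarrow X_{\operatorname{int}(T)},\qquad (s,t,re^{2\pi i\theta})\longmapsto\Bigl(\sqrt{\tfrac{s}{\pi}}\,e^{2\pi i t},\ \sqrt{1-r^2}\,e^{-2\pi i\theta}\Bigr),
\]
where $T$ is the triangle with vertices $(0,0),(A,0),(0,A)$, and then invokes the standard fact that $\operatorname{int}(B^4(A))$ embeds into the toric domain $X_{\operatorname{int}(T)}$. This toric step is precisely what is missing from your plan.
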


\begin{remark}
Using the notation introduced in Theorem \ref{thm:failure_packing_stability}, this statement is equivalent to the assertion that $p^{X(H)\sqcup B^4(A)}(M) = 1$.
\end{remark}

\begin{proof}
Note that the domain $U(H)$ showing up in the definition of $X(H)$ is contained in $\BR \times \BT \times \operatorname{int}(\BD)$ and can therefore be naturally regarded as a subset of $\BR \times \BT \times S^2$. It is straightforward to check that translating $U(H)$ by $A$ along the $\BR$ factor of $\BR\times \BT \times S^2$ yields a domain contained in $\operatorname{int}(M)$. In other words, $U(H)$ symplectically embeds into $\operatorname{int}(M)$. The complement of the image of this symplectic embedding contains the set
\begin{equation*}
W\coloneqq \left\{ (s,t,z) \in \BR \times \BT \times \operatorname{int}(\BD) \mid z\in \operatorname{int}(\BD), 0<s<A|z|^2 \right\}.
\end{equation*}
We claim that the interior of the ball $B(A)$ admits a symplectic embedding into $W$. Indeed, let $T$ denote the triangle in the first quadrant of $\BR^2$ with vertices $(0,0)$, $(A,0)$, and $(0,A)$. Let
\begin{equation*}
X_{\operatorname{int}(T)} \coloneqq \left\{ (z_1,z_2)\in \BC^2 \mid \pi(|z_1|^2,|z_2|^2) \in \operatorname{int}(T) \right\}
\end{equation*}
denote the toric domain associated to the interior of $T$. It is well-known that the interior of $B(A)$ symplectically embeds into $X_{\operatorname{int}(T)}$, see e.g. \cite{tra95}. The claim follows in view of the symplectomorphism
\begin{equation*}
W \rightarrow X_{\operatorname{int}(T)} \qquad (s,t,r e^{2\pi i \theta}) \mapsto \left(\sqrt{\frac{s}{\pi}}e^{2\pi i t}, \sqrt{1-r^2}e^{-2\pi i\theta}\right).
\end{equation*}

It remains to show that every compact subset $K$ of the interior of $X(H)$ admits a symplectic embedding into $U(H)$. Let $L\subset \BC^2$ denote the complex axis $\left\{ z_1 = 0 \right\}$. By construction, $U(H)$ is symplectomorphic to $\operatorname{int}(X(H)) \setminus L$. This means that if we can find a compactly supported symplectomorphism $\varphi$ of $\operatorname{int}(X(H))$ which disjoins $L$ from $K$, then $K$ symplectically embeds into $U(H)$. One can construct $\varphi$ as follows: Consider the $3$-dimensional half space $Q\coloneqq \BR_{\geq 0}\times \BC\subset \BC^2$. The boundary of $Q$ is precisely $L$ and the intersection of $Q$ with $X(H)$ is of the form $\left\{ (g(z),z) \mid z\in \BD \right\}$ for a suitable function $g:\BD \rightarrow \BR_{\geq 0}$. The leaves of the characteristic foliation on $Q$ induced by the symplectic form are rays of the form $\BR_{\geq 0} \times \left\{ * \right\}$. Now consider an isotopy of $L$ supported inside $\operatorname{int}(X(H))$ which pushes $L$ along the characteristic rays on $Q$ towards the boundary of $X(H)$. Any such isotopy can be realized by a Hamiltonian flow compactly supported in the interior of $X(H)$ and it is clearly possible to disjoin $L$ from $K$ via such an isotopy. This concludes the proof of the lemma.
\end{proof}

We are finally in a position to prove Proposition \ref{prop:divergent_error_terms_ech}, i.e. that the error terms $e_k^{\operatorname{alt}}(X(H))$ diverge to $-\infty$ as $k$ tends to $\infty$.

\begin{proof}[Proof of Proposition \ref{prop:divergent_error_terms_ech}]
It follows from Lemma \ref{lem:weak_embedding_into_pfh_cobordism} and \cite[Lemma 5.2]{eh} that
\begin{equation}
\label{eq:divergent_error_terms_ech_proof_a}
c_{e_d}(\operatorname{id}_{S^2},\gamma_d,H+A) - c_{U^k e_d}(\operatorname{id}_{S^2},\gamma_d,0) + \int_{\gamma_d}(H+A)dt \geq c_{k}^{\operatorname{alt}}(X(H)\sqcup B(A))
\end{equation}
for all $k$. Set $V \coloneqq \operatorname{vol}(M) = \int_{[0,1]\times S^2} (H+A) dt\wedge \omega$. Moreover, abbreviate
\begin{equation*}
n_d \coloneqq \left\lfloor \frac{d^2V}{A^2(d+1)}\right\rfloor \qquad \text{and}\qquad k_d\coloneqq n_d(d+1).
\end{equation*}
Since our underlying surface is the sphere $S^2$, every PFH class $\sigma$ is $U$-cyclic of order $1$. Combining inequality \eqref{eq:divergent_error_terms_ech_proof_a} with \cite[Proposition 4.2(a)]{eh}, we see that
\begin{equation*}
c_{e_d}(\operatorname{id}_{S^2},\gamma_d,H+A) - c_{e_d}(\operatorname{id}_{S^2},\gamma_d,0) + \int_{\gamma_d}(H+A)dt \geq c_{k_d}^{\operatorname{alt}}(X(H)\sqcup B(A)) - n_dA.
\end{equation*}
Set
\begin{equation*}
i_d \coloneqq \left\lfloor \frac{\operatorname{vol}(X(H))k_d}{V}\right\rfloor \qquad \text{and}\qquad j_d \coloneqq k_d - i_d.
\end{equation*}
By the disjoint union property for the capacities $c_k^{\operatorname{alt}}$, we have
\begin{equation*}
c_{k_d}^{\operatorname{alt}}(X(H) \sqcup B(A)) \geq c_{i_d}^{\operatorname{alt}}(X(H)) + c_{j_d}^{\operatorname{alt}}(B(A)) = 2\sqrt{\operatorname{vol}(X(H))i_d} + 2\sqrt{\operatorname{vol}(B(A))j_d} + e_{i_d}^{\operatorname{alt}}(X(H)) + O(1)
\end{equation*}
An elementary computation shows that
\begin{equation*}
2\sqrt{\operatorname{vol}(X(H))i_d} + 2\sqrt{\operatorname{vol}(B(A))j_d} - n_dA = dA^{-1}V + O(1).
\end{equation*}
We deduce that
\begin{equation}
\label{eq:divergent_error_terms_ech_proof_b}
c_{e_d}(\operatorname{id}_{S^2},\gamma_d,H+A) - c_{e_d}(\operatorname{id}_{S^2},\gamma_d,0) + \int_{\gamma_d}(H+A)dt - dA^{-1}V \geq e_{i_d}^{\operatorname{alt}}(X(H)) + O(1).
\end{equation}
Using \cite[Remark 4.4]{eh} and Lemma \ref{lem:divergent_error_term_pfh}, we see that
\begin{multline*}
c_{e_d}(\operatorname{id}_{S^2},\gamma_d,H+A)+ \int_{\gamma_d}(H+A)dt - dA^{-1}V \\ = c_{e_d}(\operatorname{id}_{S^2},\gamma_d,H) + \int_{\gamma_d}Hdt - dA^{-1} \int_{[0,1]\times S^2}Hdt\wedge\omega \quad \longrightarrow \quad -\infty \qquad (d\rightarrow \infty).
\end{multline*}
Moreover, as already observed in the proof of Theorem \ref{thm:subleading_asymptotics_link_invariants}, the spectral invariant $c_{e_d}(\operatorname{id}_{S^2},\gamma_d,0)$ vanishes. Combining these observations with inequality \eqref{eq:divergent_error_terms_ech_proof_b}, we see that $e_{i_d}^{\operatorname{alt}}(X(H))$ tends to $-\infty$ as $d$ goes to infinity. From this one can further deduce that the entire sequence of error terms $e_k^{\operatorname{alt}}(X(H))$ diverges to $-\infty$. Indeed, since the sequence $c_k^{\operatorname{alt}}(X(H))$ is non-decreasing in $k$, we have
\begin{equation*}
e_{k}^{\operatorname{alt}}(X(H)) - e_{\ell}^{\operatorname{alt}}(X(H)) \leq 2\sqrt{\operatorname{vol}(X(H))} (\sqrt{k} - \sqrt{\ell}) \qquad \text{for all $k\geq \ell$}.
\end{equation*}
Since $i_d$ grows quadratically in $d$, this estimate is enough to control all error terms $e_k^{\operatorname{alt}}(X(H))$ just in terms of the subsequence $e_{i_d}^{\operatorname{alt}}(X(H))$.
\end{proof}

\medskip
\medskip
\medskip
\noindent Oliver Edtmair\\
\noindent ETH-ITS, ETH Z\"{u}rich, Scheuchzerstrasse 70, 8006 Z\"{u}rich, Switzerland.\\
{\it Email address:} {\tt oliver.edtmair@eth-its.ethz.ch}

\end{document}